\definecolor{darkblue}{cmyk}{1,0.3,0,0.1}  
\newtheorem{proposition}{Proposition}[section]
\newtheorem{theorem}[proposition]{Theorem}
\newtheorem{corollary}[proposition]{Corollary}
\newtheorem{lemma}[proposition]{Lemma}
\theoremstyle{definition}
\newtheorem{definition}[proposition]{Definition}
\newtheorem{remark}[proposition]{Remark}
\numberwithin{equation}{section}
\newcommand{\newword}[1]{\textbf{\textit{#1}}}
\newcommand{\integers}{\mathbb Z}
\newcommand{\reals}{\mathbb R}
\newcommand{\field}{\mathbb K}
\newcommand{\ep}{\varepsilon}
\newcommand{\covered}{\lessdot}
\newcommand{\set}[1]{{\left\lbrace #1 \right\rbrace}}
\newcommand{\br}[1]{{\langle #1 \rangle}}
\newcommand{\A}{{\mathcal A}}
\newcommand{\B}{{\mathcal B}}
\newcommand{\C}{{\mathcal C}}
\newcommand{\Cx}{{\C_{\mathsf{e}}}}
\newcommand{\Bx}{{B_{\mathsf{e}}}}
\newcommand{\Cpx}{{\C_{\mathsf{pe}}}}
\newcommand{\Bpx}{{B_{\mathsf{pe}}}}
\newcommand{\Crpx}{{\C_{\mathsf{rpe}}}}
\newcommand{\Brpx}{{B_{\mathsf{rpe}}}}
\newcommand{\Ct}{{\C_{\T}}}
\newcommand{\Bt}{{B_{\T}}}
\newcommand{\E}{{\mathcal E}}
\renewcommand{\P}{{\mathcal P}}
\newcommand{\R}{{\mathcal R}}
\newcommand{\T}{{\mathcal T}}
\renewcommand{\S}{{\mathcal S}}
\newcommand{\W}{{\mathcal W}}
\newcommand{\U}{{\mathcal U}}
\newcommand{\V}{{\mathcal V}}
\newcommand{\X}{{\mathcal X}}
\newcommand{\Y}{{\mathcal Y}}
\newcommand{\excep}{\mathcal{E}}
\newcommand{\pexcep}{\mathrm{p}\excep}
\newcommand{\nrig}{\mathcal{F}}
\newcommand{\homo}{{\mathcal H}}
\newcommand{\nonhomo}{{\mathcal N}}
\newcommand{\N}{\mathfrak{T}}
\newcommand{\sW}{{\mathsf W}}
\newcommand{\e}{\mathbf{e}}
\newcommand{\join}{\vee}
\newcommand{\meet}{\wedge}
\newcommand{\ck}{\spcheck}
\renewcommand{\th}{^\text{th}}
\newcommand{\fin}{\mathrm{fin}}
\newcommand{\perm}{\mathsf{perm}}
\newcommand{\mods}{\operatorname{mod}}
\newcommand{\reg}{\mathcal{R}}
\newcommand{\exreg}{\nonhomo}
\newcommand{\Hom}{\operatorname{Hom}}
\newcommand{\Ext}{\operatorname{Ext}}
\newcommand{\End}{\operatorname{End}}
\newcommand{\brick}{\operatorname{brick}}
\newcommand{\wide}{\operatorname{wide}}
\newcommand{\pewide}{\mathsf{pe\text{-}wide}}
\newcommand{\ewide}{\operatorname{e-wide}}
\newcommand{\add}{\operatorname{add}}
\newcommand{\Filt}{\operatorname{Filt}}
\newcommand{\undim}{\underline{\dim}}
\newcommand{\simp}{\operatorname{simp}}
\newcommand{\red}{\mathrm{red}}
\newcommand{\aug}{\mathrm{aug}}
\newcommand{\Pre}{\mathrm{Pre}}
\newcommand{\Post}{\mathrm{Post}}
\newcommand{\Preeq}{\mathrm{P\widetilde{re}}}
\newcommand{\Posteq}{\mathrm{P\widetilde{os}t}}
\newcommand{\pre}{\mathrm{pre}}
\newcommand{\post}{\mathrm{post}}
\newcommand{\afftype}[1]{{\widetilde{\raisebox{0pt}[6pt][0pt]{#1}}}}
\title[Para-exceptional sequences and the McCammond-Sulway lattice]{Para-exceptional sequences for tame hereditary algebras and McCammond-Sulway lattices}
\author{Eric J. Hanson}
\author{Nathan Reading}
\address{\hspace{-16pt} Department of Mathematics, North Carolina State University, Raleigh, NC 27695, USA}
\email{ejhanso3@ncsu.edu}
\email{reading@math.ncsu.edu}
\thanks{Nathan Reading was partially supported by the National Science Foundation under award number DMS-2054489.  Eric Hanson was supported by an AMS-Simons travel grant.}
\begin{document}

\begin{abstract} 
Noncrossing partition posets in a Coxeter group $W$ can fail to be lattices when $W$ is not finite.
When the lattice property fails for $W$ of affine type, McCammond and Sulway's construction provides a larger lattice that contains the noncrossing partition poset and that furthermore is a combinatorial Garside structure.
We construct a lattice, isomorphic to McCammond and Sulway's lattice, using the representation theory of a corresponding connected tame hereditary algebra and give a representation-theoretic proof that it is a combinatorial Garside structure.
To construct the lattice, we introduce para-exceptional sequences and para-exceptional subcategories in the module categories of tame hereditary algebras.
Para-exceptional sequences are generalizations of exceptional sequences obtained by enlarging the set of allowed entries to include all non-homogeneous bricks.
A para-exceptional subcategory is a subcategory obtained by applying a certain closure-like operator to the wide subcategory generated by a para-exceptional sequence.
\end{abstract}
\maketitle

\setcounter{tocdepth}{1}
\tableofcontents

\section{Introduction} \label{intro}
The fundamental combinatorial information underlying (crystallographic) Coxeter groups, exceptional sequences, and exceptional subcategories is a symmetrizable Cartan matrix.
Given such a Cartan matrix, one can construct a Kac-Moody root system and a crystallographic Coxeter group, and then a poset of noncrossing partitions in the Coxeter group.
On the other hand, a finite-dimensional hereditary algebra specifies a Cartan matrix and a Coxeter element of the associated Coxeter group, which can be used to describe the exceptional sequences and exceptional subcategories. 

Fix a Cartan matrix, an associated hereditary algebra $\Lambda$, and the corresponding Coxeter group $W$ and Coxeter element~$c$. 
The noncrossing partition poset specified by $W$ and~$c$ is the interval $[1,c]_T$ between the identity and~$c$ in a partial order on $W$ called the absolute order.
Maximal chains in $[1,c]_T$ are in bijection with 
complete exceptional sequences in the category $\mods\Lambda$ \cite{IgusaSchiffler,Krause} and elements of $[1,c]_T$ are in bijection with exceptional subcategories of $\mods\Lambda$ \cite{HuberyKrause,IgusaSchiffler,IngTho}.

When $[1,c]_T$ is a lattice, it can be used to prove important structural results~\cite{Bessis,Bra-Wa} about the Artin group associated to $W$.
When $W$ is of affine type, $[1,c]_T$ fails to be a lattice in many cases, specifically when the horizontal root system associated to~$W$ is reducible~\cite{McFailure}.
In those cases, McCammond and Sulway~\cite{McSul} obtained the same structural results about the Artin group using a supergroup of~$W$ generated by $W$ and some additional elements called \newword{factored translations}.
The interval between~$1$ and~$c$ in the supergroup is a lattice having $[1,c]_T$ as an induced subposet, which we call the \newword{McCammond-Sulway lattice}.
(In the case where the horizontal root system is irreducible, so that the noncrossing partition poset is already a lattice, there is no separate construction of the McCammond-Sulway lattice.)

The case where $W$ is of affine type is the case where the hereditary algebra~$\Lambda$ is connected and tame. 
The associated horizontal root system is reducible precisely when $\Lambda$ contains more than one non-homogeneous tube.
The purpose of this paper is to construct the McCammond-Sulway lattice using the representation theory of~$\Lambda$ and give representation-theoretic proofs of the lattice property and other properties.
The basic representation-theoretic notions are those of \newword{para-exceptional modules}, \newword{para-exceptional sequences} and the corresponding \newword{para-exceptional subcategories}.
Para-exceptional sequences are defined analogously to exceptional sequences, but the set of para-exceptional modules includes not only the exceptional modules but also all non-homogeneous non-exceptional bricks. 
We emphasize that these representation-theoretic notions are defined for all connected tame hereditary algebras $\Lambda$, including those with zero or one non-homogeneous tubes. 
(When $\Lambda$ has fewer than two non-homogeneous tubes, para-exceptional subcategories are the same as exceptional subcategories, but the para-exceptional sequences are a proper superset of the exceptional sequences.)

\subsection{Organization and main results}
In Section~\ref{back sec}, we recall background information about Coxeter groups and hereditary algebras, particularly the connection between noncrossing partitions, exceptional sequences, and exceptional subcategories (Theorems~\ref{thm:IS} and~\ref{thm:NC}).
In the process, we generalize exceptional sequences by defining $\mathcal{B}$-brick sequences for $\mathcal{B}$ a set of bricks (Definition~\ref{def:exceptional_sequence}). 
Many constructions appearing later in the paper fit into this generalized framework.

Section~\ref{chain sys sec} develops the notion of a chain system to automate the study of edge-labeled posets in terms of label sequences on their maximal chains.
Chain systems are used in this paper to describe known results connecting exceptional sequences and the noncrossing partition poset and then to extend these results para-exceptional sequences and the McCammond-Sulway lattice.

In Section~\ref{ex chain sec}, by combining and reinterpreting results from the literature, we prove that the set of complete exceptional sequences of any hereditary algebra is a chain system (Theorem~\ref{thm:exceptional_binary_chain}). 
We also give a representation-theoretic version of a known result (see \cite[Theorem~0.5.2]{Bessis}) in the theory of Artin groups:
The lattice of exceptional subcategories, with the labeling coming from exceptional sequences, is a combinatorial Garside structure (see Section~\ref{Garside sec}) whenever it is a lattice (Corollary~\ref{cor:garside_if_lattice}).

In Section~\ref{nc chain sec}, we describe maximal chains in the noncrossing partition poset in terms of a binary compatibility relation that corresponds to the binary compatibility relation that defines exceptional sequences.
(In the language of chain systems, the labels on maximal chains constitute a binary chain system.)
Surprisingly, this basic fact about noncrossing partitions is currently only known through a representation-theoretic proof due to Hubery and Krause \cite{HuberyKrause}, drawing on Igusa and Schiffler \cite{IgusaSchiffler}.

In Section~\ref{McSul chain sec}, we recall the construction of the McCammond-Sulway lattice 
and describe it explicitly in terms of a binary chain system (Theorem~\ref{Ccplus chain sys}). 
A crucial part of that description is a new fact about the McCammond-Sulway lattice:
Factored translations are in bijection with simple horizontal roots
(which correspond to the quasi-simple modules in the non-homogeneous tubes).

Section~\ref{type sec} completes the proof of Theorem~\ref{Ccplus chain sys}, using combinatorial models for the classical affine types in terms of (symmetric) noncrossing partitions of surfaces (with double points) developed in \cite{affncA,affncD}.
In light of a later result (Corollary~\ref{McSul to pewide}), the combinatorial models from \cite{affncA,affncD} are models for the lattice of para-exceptional subcategories in classical affine type.

In Section~\ref{sec:brick_sequences}, we introduce para-exceptional modules and para-exceptional sequences for a connected tame hereditary algebra~$\Lambda$. 
When $\Lambda$ has at least two non-homogeneous tubes, we give a bijection from the set of para-exceptional modules to the set of generators of the McCammond-Sulway supergroup that induces a bijection from the set of maximal para-exceptional sequences to the binary chain system for the McCammond-Sulway lattice (Theorem~\ref{thm:bijection_on_chain_systems}).
That is, maximal para-exceptional sequences correspond to maximal chains of the McCammond-Sulway lattice.

In Section~\ref{sec:one tube}, we study $\brick(\T)$-brick sequences and wide subcategories for $\T$ a tube category. 
Our results on $\brick(\T)$-brick sequences sequences draw on \cite{IgusaSen}, where they were introduced under the name ``soft exceptional sequences''.
We show that $\brick(\T)$-brick sequences constitute a binary chain system and that the corresponding poset is isomorphic to the lattice of wide subcategories of $\T$ (Theorem~\ref{thm:tube_binary_chain}). Building on a result of \cite{IgusaSen}, we show that the lattice of wide subcategories of $\T$ is isomorphic to a noncrossing partition lattice of type C (Corollary~\ref{cor:tube_type_C}).

In Section~\ref{sec:rep_theory_tubes}, we extend the results of Section~\ref{sec:one tube} to describe the regular para-exceptional sequences of a connected tame hereditary algebra.
With no assumption on the number of non-homogeneous tubes, we show that the maximal regular para-exceptional sequences constitute a binary chain system and that the corresponding labeled poset is isomorphic to the lattice of wide subcategories of the category of non-homogeneous regular modules (Theorem~\ref{thm:soft_binary_chain}).

In Section~\ref{sec:rep_theory_model}, we introduce para-exceptional subcategories for connected tame hereditary algebras,  using a closure-like operator $\W\mapsto\overline{\W}$ on wide subcategories (Definition~\ref{def:E_perp}).
When there are at least two non-homogeneous tubes, we use para-exceptional subcategories 
to give a purely representation-theoretic proof that the set of maximal para-exceptional sequences is a binary chain system and that the induced labeled poset is isomorphic to the poset of para-exceptional subcategories (Theorem~\ref{thm:binary_chain_full}).
As a consequence, we conclude that the poset of para-exceptional subcategories is isomorphic to the corresponding McCammond-Sulway lattice (Corollary~\ref{McSul to pewide}). 

In Section~\ref{sec:lattice}, we study the lattice and Garside propeties of the poset of para-exceptional subcategories.
Indeed, the isomorphism in Corollary~\ref{McSul to pewide} implies (in the case of more than one non-homogeneous tube) that 
this poset
is a lattice.
We also give a purely representation-theoretic proof of the lattice property (Theorem~\ref{thm:join}), with no assumption on the number of non-homogeneous tubes.  
We show furthermore that the poset of exceptional subcategories is a lattice if and only if it equals the poset of para-exceptional subcategories if and only if $\Lambda$ has fewer than two non-homogeneous tubes (Corollary~\ref{lattice iff}).
This is a representation-theoretic version of the fact, due to Digne~\cite{Digne1,Digne2} and McCammond~\cite{McFailure}, that the noncrossing partition poset is a lattice if and only if the horizontal root system is irreducible.
We also give a representation-theoretic proof (Theorem~\ref{thm:garside} and Corollary~\ref{lattice iff}) that the poset of para-exceptional subcategories is a combinatorial Garside structure (see Section~\ref{Garside sec}).
In the case where $\Lambda$ has at least two non-homogeneous tubes, this fact is central to McCammond and Sulway's results on Euclidean Artin groups.

Finally, in Section~\ref{sec:other}, we discuss the relationship between our results and other constructions in the literature.

\subsection{Acknowledgments} 
The authors wish to thank Claire Amiot, Grant Barkley, Judith Marquardt, Pierre-Guy Plamondon, Cyril Matousek, Salvatore Stella, and Hugh Thomas for helpful conversations.

\section{Background}\label{back sec}
In this section, we fill in  background information about Coxeter groups and hereditary algebras and the connection between them.

\subsection{The noncrossing partition poset in a Coxeter group}\label{nc sec}
A symmetrizable Cartan matrix in the sense of~\cite{Kac} is a matrix $A = (a_{i,j})_{1\le i,j\le n}$ such that the $a_{ij}$ are integers, the diagonal entries are $a_{ii}=2$ for all $i=1,\ldots,n$, the off-diagonal entries are non-positive, and there exist positive constants $d_1,\ldots,d_n$ such that $d_ia_{ij}=d_ja_{ji}$ for $1\le i,j\le n$.
The matrix $A$ is said to be \newword{symmetrizable}.
(Throughout this paper, we will take a choice of the $d_i$ that comes from representation theory.
See Section~\ref{sec:reps}.)
An important special case is the case where $A$ is symmetric (as opposed to merely symmetrizable). 
Equivalently, all the $d_i$ are equal. 

Let $V$ be a real vector space with a basis $\set{\alpha_i:i=1\ldots,n}$ called the \newword{simple roots}.
Define the \newword{simple co-roots} to be the vectors $\alpha_i\ck=d_i^{-1}\alpha_i$ for $i=1,\ldots,n$.
Define a bilinear form $K$ by $K(\alpha_i\ck,\alpha_j)=a_{ij}$.
One can check that this is symmetric. 

For each $i$, define a linear map $s_i$ on $V$ called the \newword{simple reflection} for~$i$, by setting ${s_i(\alpha_j)=\alpha_j-a_{ij}\alpha_i}$ for all $i$ and $j$.  
The simple reflections $S=\set{s_1,\ldots,s_n}$ generate a group $W$ of transformations of $V$ called the \newword{Weyl group} of $A$, which is in particular a crystallographic Coxeter group with defining generators~$S$. 

The set of \newword{real roots} is $\set{w(\alpha_i):w\in W,i=1,\ldots,n}$.
In addition, there are \newword{imaginary roots}, which we do not need to define here, except in the case where~$A$ is of affine type, which we will handle separately in Section~\ref{McSul chain sec}.
The set of all real and imaginary roots is the \newword{root system} $\Phi$ associated to $A$.

Each root $\beta\in\Phi$ is either \newword{positive} (nonzero, with nonnegative entries) or \newword{negative} (nonzero, with nonpositive entries).
We write $\Phi^+$ for the set of positive roots in~$\Phi$ and $\integers\Phi$ for the \newword{root lattice}, the set of integer-linear combinations of roots.  
Every root is an integer-linear combination of simple roots, so the simple roots also generate the root lattice.
For each real root $\beta$, define a corresponding \newword{co-root} $\beta\ck=\frac{2}{K(\beta,\beta)}\beta$.
A \newword{reflection} is a linear map with a codimension-$1$ fixed space and an eigenvalue $-1$.
Every real root $\beta$ defines a reflection $t_\beta$ that maps $x\in V$ to $x-K(\beta\ck,x)\beta$.
The reflection $t_\beta$ is an element of $W$, and specifically, if $\beta = w\alpha_i$, then $t_\beta = ws_iw^{-1}$.
This correspondence restricts to a bijection between the set of positive real roots and the set of elements of $W$ that act as reflections in $V$.
Accordingly, the set $T$ of elements of $W$ that act as reflections is $\set{ws_iw^{-1}:w\in W,i=1,\ldots,n}$.

A \newword{$T$-word} is a sequence of reflections.
Any element $w\in W$ can be written as a product of a $T$-word.
A $T$-word for $w$ that has minimal length among $T$-words for $W$ is called a \newword{reduced $T$-word}.
The length of a reduced $T$-word for $w$ is the \newword{reflection length} or \newword{absolute length} $\ell_T(w)$.
The \newword{absolute order} on $W$ is the partial order with $v\le w$ if and only if there exists a reduced $T$-word $t_1\cdots t_\ell$ for $w$ and an index $i\in\set{1,\ldots,\ell}$ such that $t_1\cdots t_i$ is a $T$-word for $v$.  
(The $T$-word $t_1\cdots t_i$ will then always be reduced.)

A \newword{Coxeter element} is the product of the elements of $S$ in some order, each appearing exactly once.
Given a Coxeter element $c$ of $W$, the interval $[1,c]_T$ in the absolute order is known as the \newword{noncrossing partition poset} for~$W$ and~$c$.

The maximal chains in $[1,c]_T$ are naturally in bijection with reduced $T$-words for $c$.
By definition, every reduced $T$-word for $c$ has the same finite length, and it is immediate from a result of Dyer~\cite[Theorem~1.1]{DyerLength} that this length is~$|S|=n$.

Noncrossing partition posets were introduced by Bessis \cite{Bessis} and by Brady and Watt \cite{Bra-Wa} in the case where $W$ is finite, in order to give a ``dual presentation'' of the associated Artin group of spherical type.
Analogous results for Artin groups of Euclidean type were obtained by McCammond and Sulway~\cite{McSul}, and the purpose of this paper is to realize McCammond and Sulway's construction in the representation theory of hereditary algebras. 

\subsection{Hereditary algebras} \label{sec:reps}
We now recall background information about finite-dimensional hereditary algebras. 
We follow exposition from \cite{PreprojTame,RingelBraid}.
See those papers for additional information and historical context.

Let $\Lambda$ be a finite-dimensional associative algebra over a field $\field$. 
Let $\mods\Lambda$ be the category of finite-dimensional (over $\field$) left $\Lambda$-modules. 
By a \newword{module}, we always mean an object of $\mods\Lambda$. 
We assume that $\Lambda$ is \newword{hereditary}, meaning that $\Ext^i(X,Y) = 0$ for all modules $X, Y$ and all $i > 1$.  

We always consider modules up to isomorphism, and this point of view has important effects on exposition:
The word ``unique'' means ``unique up to isomorphism''; when $X$ and $Y$ are isomorphic, we write $X = Y$ and consider $X$ and $Y$ to be equal; and in a ``set of modules'', distinct elements are assumed to be non-isomorphic.

We now describe how $\Lambda$ specifies a Cartan matrix and a Coxeter element of the associated Coxeter group.
We denote by $\{S_1,\ldots,S_n\}$ the set of simple modules. 
The endomorphism rings of a simple module $S_i$ and its projective cover $P_i$ are isomorphic division $\field$-algebras.
For $i \in \{1,\ldots,n\}$, we set $d_i = \dim_\field(\End(S_i))$. 
For $i, j \in \{1,\ldots,n\}$, $\Ext^1(S_i,S_j)$ carries the structure of an $(\End(S_j),\End(S_i))$-bimodule.
We set
\[a_{i,j} = \begin{cases} 2 & \text{if $i = j$}\\-\dim_{\End(S_i)}\Ext^1(S_i,S_j)-\dim_{\End(S_i)}\Ext^1(S_j,S_i) & \text{if $i \neq j$}.\end{cases}\]
Now clearly $d_ia_{i,j} = d_j a_{j,i}$ for all $i, j$. Thus $A = (a_{i,j})_{1 \leq i,j \leq n}$ is a symmetrizable (crystallographic, generalized) Cartan matrix in the sense of Section~\ref{nc sec}. 
Conversely, every symmetrizable (crystallographic, generalized) Cartan matrix arises from a finite-dimensional hereditary algebra, 
see e.g. \cite[Proposition~VIII.6.7]{ARS}.

We identify the Grothendieck group $K_0(\mods\Lambda)$ with the root lattice $\integers\Phi\subseteq V$ by identifying the class of $S_i$ with $\alpha_i$. 
We write $\undim X$ for the class of a module $X$ and call this the \newword{dimension vector}. 
Explicitly, the $\alpha_i$-coordinate of $\undim X$ is
\[(\undim X)_i = \dim_{\End(P_i)}\Hom(P_i,X).\]
Note that $(\undim X)_i$ is also the multiplicity of $S_i$ as a composition factor of $X$. 

Order the simple reflections $S$ as $s_{i_1},\ldots,s_{i_n}$ such that $\Ext^1(S_{i_j},S_{i_k})=0$ whenever $j>k$.
This condition does not uniquely determine the total order, but does completely determine the Coxeter element $c = s_{i_1}\cdots s_{i_n}$, because $s_is_j=s_js_i$ whenever $\Ext^1(S_i,S_j)=\Ext^1(S_j,S_i) = 0$.  
(This convention and the opposite convention for defining $c$, i.e.\ requiring $\Ext^1(S_{i_j},S_{i_k})=0$ for $j<k$, both appear in the literature.)
Writing $\tau$ for the Auslander-Reiten translate of $\mods\Lambda$, our convention implies that $\undim \tau X = c(\undim X)$ for any module $X$ which does not have a nonzero projective direct summand.

We conclude this subsection with some comments on quivers and their representations. We refer to \cite{ASS,ARS} for additional background information about quiver representations and to \cite{DlabRingel} for additional background information about the more general $\field$-species.

An \newword{acyclic quiver} is a finite acyclic directed graph $Q$. The ($\field$-)\newword{path algebra} of an acyclic quiver $Q$ is the finite-dimensional hereditary algebra $\field Q$ with basis (as a vector space) the set of paths in $Q$ and multiplication given by concatenation of paths (or 0 when concatenation is not possible). A ($\field$-)\newword{representation} $X$ of $Q$ is the data of a vector space $X(i)$ for each vertex $i$ of $Q$ and a linear map $X(a): X(i) \rightarrow X(j)$ for each arrow $i \xrightarrow{a} j$ of $Q$. With the correct notion of morphism, the category of finite-dimensional representations of $Q$ is then equivalent to the category $\mods \field Q$. Under this equivalence, the simple modules $S_i$ are in bijection with the vertices of $Q$ (and as representations consist of a 1-dimensional vector space at a single vertex and 0-dimensional vector spaces at the other vertices). The dimension vector of a module $X$ then records the dimensions of the vector spaces comprising the corresponding representation.

Now if $\field$ is algebraically closed, then every finite-dimensional hereditary algebra is Morita equivalent to a path algebra. 
Moreover, each path algebra specifies a \emph{symmetric} Cartan matrix.
Since we wish to consider Cartan matrices that are symmetrizable but not necessarily symmetric, it is necessary to work more generally over hereditary algebras. 
There are also generalizations of quivers and their path algebras known as \newword{$\field$-species} and their \newword{tensor algebras}, and every symmetrizable Cartan matrix is specified by the tensor algebra of a $\field$-species for some (finite) field $\field$. 
Moreover, if $\field$ is a perfect field, then every finite-dimensional hereditary $\field$-algebra is Morita equivalent to the tensor algebra of a $\field$-species. 
Over arbitrary fields, however, there exist finite-dimensional algebras which are not Morita equivalent to tensor algebras of species, see e.g. \cite[Theorem~1]{DlabRingelTame}.

\subsection{Exceptional sequences}\label{ex sec}

A module $X$ is \newword{indecomposable} if ${X \neq 0}$ and any direct sum decomposition $X = X' \oplus X''$ has $X' = 0$ or $X'' = 0$.

We write $\W \subseteq \mods\Lambda$ to mean that $\W$ is a subcategory of $\mods\Lambda$. By this, we will always mean that $\W$ is a full additive subcategory which is closed under isomorphisms. To specify a subcategory $\W \subseteq \mods\Lambda$, it therefore suffices to specify the set of indecomposable modules which lie in $\W$. We say that $\W$ is \newword{representation-finite} if this set is finite. Otherwise, we say that $\W$ is \newword{representation-infinite}. 

Certain types of modules will play an important role throughout the paper.
A module $X \in \mods\Lambda$ is said to be a \newword{brick} if $\End(X)$ is a division ring, \newword{rigid} if $\Ext^1(X,X) = 0$, and \newword{exceptional} if it is both rigid and a brick.
The symbol~$\excep$ will stand for the set of exceptional modules. 
Furthermore, for $\W \subseteq \mods\Lambda$ a subcategory, $\brick(\W)$ will be the set of bricks that lie in $\W$.  

\begin{definition}\label{def:exceptional_sequence}  
Let $\mathcal{B}$ be a set of bricks.
A \newword{$\mathcal{B}$-brick sequence} is a sequence $(X_1,\ldots,X_k)$ of bricks in $\B$ satisfying $\Hom(X_i,X_j) = 0 = \Ext^1(X_i,X_j)$ for all $1 \leq j < i \leq k$. 
A $\mathcal{B}$-brick sequence $(X_1,\ldots,X_k)$ is \newword{maximal} if there does not exist an index $i \in \{1,\ldots,k+1\}$ and a brick $Y \in \mathcal{B}$ such that $(X_1,\ldots,X_{i-1},Y,X_i,\ldots,X_k)$ is a $\mathcal{B}$-brick sequence.
If $\W \subseteq \mods\Lambda$ is a subcategory, then a $\mathcal{B}$-brick sequence $(X_1,\ldots,X_k)$ is \newword{maximal in $\W$} if it is a maximal $(\mathcal{B} \cap \brick(\W))$-brick sequence.
 
An $\excep$-brick sequence is called an \newword{exceptional sequence}. 
An exceptional sequence $(X_1,\ldots,X_k)$ is called \newword{complete} if $k = n$.
\end{definition}

We will see later in Proposition~\ref{prop:complete_ex_maximal} that maximal and complete exceptional sequences coincide.
The main results of this paper concern para-exceptional sequences in a connected tame algebra, which are $\B$-brick sequences for $\B$ slightly larger than~$\E$. Para-exceptional sequences also include the ``soft exceptional sequences'' in tube categories studied in \cite{IgusaSen}, see Sections~\ref{sec:one tube} and~\ref{sec:rep_theory_tubes}  for details.

The study of exceptional sequences of modules over hereditary algebras was initiated in \cite{CB,RingelBraid}. 
These papers in particular establish a transitive braid group action on the set of complete exceptional sequences. 
This action, and the analogous action on $T$-words, are the key to the proof of the following theorem, which follows from \cite[Theorem~4.1]{IgusaSchiffler} and \cite[Lemma~6.2]{Krause}. (Note that \cite[Theorem~4.1]{IgusaSchiffler} is proved under the assumption that the field $\field$ is algebraically closed, but the proof is readily adapted to arbitrary fields. See also \cite[Proposition~4.6]{HuberyKrause}, \cite[Corollary~6.4]{Krause}, or \cite[Corollary~3.8.1.2]{RingelCatalan}.)

\begin{theorem}\label{thm:IS}
The map $(X_1,\ldots,X_n) \mapsto t_{\undim X_1}\cdots t_{\undim X_n}$ is a bijection from the set of complete exceptional sequences of $\mods\Lambda$ to the set of reduced $T$-words for~$c$.
\end{theorem}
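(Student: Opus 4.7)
The plan is to prove the theorem by a braid-group equivariance argument combined with transitivity on both sides.

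First, for well-definedness: for every exceptional module $X$, the dimension vector $\undim X$ is a positive real Schur root, so $t_{\undim X}$ is a genuine reflection in $W$. The base case is the complete exceptional sequence of simples $(S_{i_1},\ldots,S_{i_n})$ ordered so that $\Ext^1(S_{i_j},S_{i_k})=0$ whenever $j>k$; this satisfies Definition~\ref{def:exceptional_sequence} since $\Hom(S_i,S_j)=0$ for $i\ne j$. The map sends this sequence to $s_{i_1}\cdots s_{i_n}=c$ by the convention fixed in Section~\ref{sec:reps}, and this $T$-word has length $n=\ell_T(c)$ by the Dyer length formula cited just before Section~\ref{sec:reps}, hence is reduced.

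Next, the braid group $B_n$ acts on the domain via the mutation operations of Crawley-Boevey and Ringel \cite{CB,RingelBraid}: the generator $\sigma_i$ replaces the pair $(X_i,X_{i+1})$ by $(X_{i+1},L_{X_{i+1}}X_i)$, where $L_{X_{i+1}}X_i$ is the mutation of $X_i$ across $X_{i+1}$ defined by an appropriate universal triangle. This action is transitive on the set of complete exceptional sequences. On the codomain, $B_n$ acts by Hurwitz moves $\sigma_i(t_1,\ldots,t_n)=(t_1,\ldots,t_{i-1},t_{i+1},t_{i+1}^{-1}t_it_{i+1},t_{i+2},\ldots,t_n)$, which preserve the product and act transitively on the set of reduced $T$-words for $c$ (a standard fact in the dual Artin-group literature, cf.~\cite{Bessis}).

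The central step is equivariance. Taking dimension vectors of the mutation triangle and invoking the Euler form identity $K(\undim X_{i+1},\undim X_i)=\dim\Hom(X_{i+1},X_i)-\dim\Ext^1(X_{i+1},X_i)$ yields $\undim L_{X_{i+1}}X_i=t_{\undim X_{i+1}}(\undim X_i)$, hence $t_{\undim L_{X_{i+1}}X_i}=t_{\undim X_{i+1}}^{-1}t_{\undim X_i}t_{\undim X_{i+1}}$, which is exactly the Hurwitz move on the reflection sequence. Equivariance combined with the base case and transitivity of both actions then shows that every complete exceptional sequence maps to a reduced $T$-word for $c$ (and thus that the map is surjective). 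For injectivity, one recovers each $X_i$ inductively from $t_{\undim X_1},\ldots,t_{\undim X_i}$ by noting that $\undim X_i$ is a real Schur root in the perpendicular category of $\{X_1,\ldots,X_{i-1}\}$ and that such a category contains at most one brick with any given Schur dimension vector.

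The main obstacle will be the equivariance computation: the mutation operation comes in two flavors (built from a Hom-approximation triangle or its universal-extension dual), and one must pick the convention that produces exactly the Hurwitz action on reflections. The Euler-form sign conventions and the choice of which generator of $B_n$ acts on which side have to be tracked with care so that the dimension-vector identity above comes out with the correct sign.
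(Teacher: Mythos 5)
Your proposal is essentially the argument that the paper points to: Theorem~\ref{thm:IS} is not proved from scratch here but cited to Igusa--Schiffler and Krause, and the paper explicitly says that the two transitive braid group actions (mutation of exceptional sequences on one side, Hurwitz moves on $T$-words on the other) together with equivariance are the key to that proof. Your sketch reproduces that strategy, including the correct base case (the ordered simples mapping to the defining word for $c$) and a workable injectivity argument via the fact that an exceptional module is determined by its dimension vector.

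Two caveats. First, your citation for Hurwitz transitivity on reduced $T$-words for $c$ is too weak: Bessis covers only finite $W$, whereas $\Lambda$ is an arbitrary finite-dimensional hereditary algebra, so $W$ may be infinite (affine or indefinite). Transitivity in that generality is precisely \cite[Theorem~1.4]{IgusaSchiffler} (see also \cite{BDSW}), and it is a genuinely nontrivial input, not a routine fact; the paper itself singles it out in the proof of Theorem~\ref{Ec nc}. Second, your displayed Euler-form identity is written in the wrong order: for an exceptional pair the terms $\Hom(X_{i+1},X_i)$ and $\Ext^1(X_{i+1},X_i)$ both vanish by definition, so the right-hand side as written is identically zero; the nonzero pairing is $E_{c^{-1}}(\undim X_i,\undim X_{i+1})$, and one must also track the division-ring dimensions $d_i$ when the Cartan matrix is symmetrizable but not symmetric. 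You flag this bookkeeping yourself, and it does work out, but as stated the identity is incorrect.
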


It is a consequence of Theorem~\ref{thm:IS} that the map $X \mapsto t_{\undim X}$ is a bijection from~$\excep$ to the set $\{t \in T \mid t \leq_T c\}$ of reflections in the noncrossing partition poset. Given a positive real root $\gamma$ with $t_\gamma \leq_T c$, we write $X_{t_\gamma}$ or~$X_\gamma$ for the unique (up to isomorphism) exceptional module with $\undim X_\gamma = \gamma$.

\subsection{Wide subcategories}\label{sec:subcats}

A subcategory $\W \subseteq \mods\Lambda$ is called \newword{wide} if it is closed under extensions, kernels, and cokernels.   
(These are sometimes called \newword{thick} subcategories in the literature.)

\begin{remark}\label{rem:exact}
Wide subcategories can equivalently be defined as exact-embedded abelian subcategories. 
In particular, they are abelian categories in their own right.   
Requiring that the inclusion $\W \subseteq \mods\Lambda$ be an exact embedding (or equivalently that $\W$ be closed under extensions) then says that $\Ext^1(-,-)|_\W = \Ext^1_\W(-,-)$, where the left hand side is the restriction of the Ext-functor of $\mods\Lambda$ and the right hand side is the Ext-functor computed using the inherited abelian structure of $\W$. 
This allows us to unambiguously write $\Ext^1(X,Y)$ for $X, Y \in \W$ without needing to specify the category in which this functor is constructed. 
Note also that the hereditary property is inherited by wide subcategories; that is, the higher Ext-functors likewise satisfy $0 = \Ext^i(-,-)|_\W = \Ext^i_\W(-,-)$.
\end{remark}

We write $\wide \Lambda$ for the poset of wide subcategories ordered by inclusion. 
It is well known, and easily checked, that $\wide \Lambda$ is a complete lattice with meet operation given by intersection. 
The maximum and minimum elements of this lattice are the whole category $\mods\Lambda$ and the subcategory $0$ consisting of only the zero module. 
A lower interval $[0,\W]$ in this lattice is precisely the lattice of wide subcategories of $\W$ (viewing $\W$ as an abelian category in its own right).

Since $\wide \Lambda$ is a complete lattice and the meet is intersection, for any subcategory $\X \subseteq \mods\Lambda$, there is a smallest wide subcategory $\sW(\X)$ containing~$\X$, 
and given modules $X_1,\ldots,X_k$, there is a smallest wide subcategory $\sW(X_1,\ldots,X_k)$ containing them.
A wide subcategory $\W$ is called \newword{exceptional} if there exists an exceptional sequence $(X_1,\ldots,X_k)$ such that $\W = \sW(X_1,\ldots,X_k)$.   We write $\ewide \Lambda$ for the subposet of $\wide \Lambda$ consisting of the exceptional subcategories. 
This subposet is generally not a lattice, see e.g. \cite[Example~3.2.3]{RingelCatalan} or Corollary~\ref{lattice iff}.

Our interest in $\ewide \Lambda$ is due to it relationship to the noncrossing partition poset. 
Given $\W \in \ewide\Lambda$, order the modules that are simple in $\W$ into an exceptional sequence $(X_1,\ldots,X_k)$ and define $\phi(\W) = t_{\undim X_1}\cdots t_{\undim X_k}$. The following theorem is proved in \cite[Section~3.2]{IngTho} for path algebras of finite or affine type, as \cite[Theorem~4.3]{IgusaSchiffler} for $\field$ algebraically closed, and as part of \cite[Corollary~7.5]{HuberyKrause} in general.

\begin{theorem}\label{thm:NC}The map $\phi$ is a poset isomorphism from $\ewide\Lambda$ to $[1,c]_T$.
\end{theorem}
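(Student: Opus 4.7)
The plan is to use Theorem~\ref{thm:IS} to simultaneously construct $\phi$ and a candidate inverse $\psi:[1,c]_T\to\ewide\Lambda$, and then deduce that both are order-preserving bijections. First I would check well-definedness of $\phi$. An exceptional subcategory $\W=\sW(X_1,\ldots,X_k)$, viewed via Remark~\ref{rem:exact} as a hereditary abelian category, has as its simple objects exactly the modules $X_1,\ldots,X_k$ in any order making them an exceptional sequence. Any two such orderings differ by the Crawley-Boevey/Ringel \cite{CB,RingelBraid} braid group action on exceptional sequences; each generating mutation replaces a pair $(X_i,X_{i+1})$ by a pair $(X_i',X_{i+1}')$ whose corresponding reflections satisfy $t_{\undim X_i'}t_{\undim X_{i+1}'}=t_{\undim X_i}t_{\undim X_{i+1}}$ (they are related by the Hurwitz relation $t_1 t_2=(t_1 t_2 t_1)t_1$). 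Hence the full reflection product $\phi(\W)$ is invariant under reordering. That $\phi(\W)\le_T c$ follows by extending $(X_1,\ldots,X_k)$ to a complete exceptional sequence and applying Theorem~\ref{thm:IS}: the resulting reduced $T$-word for $c$ has $\phi(\W)$ as a length-$k$ prefix product, so by definition $\phi(\W)\le_T c$.

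To construct $\psi$, given $w\le_T c$ with reduced $T$-word $t_1\cdots t_k$, I would extend to a reduced $T$-word $t_1\cdots t_n$ for $c$, apply Theorem~\ref{thm:IS} to produce a complete exceptional sequence $(X_1,\ldots,X_n)$, and set $\psi(w)=\sW(X_1,\ldots,X_k)$. Independence from the choice of extension is automatic, since by Theorem~\ref{thm:IS} different choices of $t_{k+1}\cdots t_n$ alter only $X_{k+1},\ldots,X_n$. Independence from the choice of reduced word for $w$ uses the standard property of Crawley-Boevey/Ringel mutations that they preserve the wide subcategory generated by the mutated pair: $\sW(X_i',X_{i+1}')=\sW(X_i,X_{i+1})$. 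Hence any internal braid move among $t_1,\ldots,t_k$ leaves $\sW(X_1,\ldots,X_k)$ unchanged. Once $\psi$ is well-defined, the identities $\phi\circ\psi=\mathrm{id}$ and $\psi\circ\phi=\mathrm{id}$ follow from the observation that $\sW(X_1,\ldots,X_k)$ is exceptional with simples exactly $X_1,\ldots,X_k$ whenever the latter form an exceptional sequence, together with the injectivity of Theorem~\ref{thm:IS}.

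Order preservation in both directions would be handled by a prefix argument: if $\W\subseteq\W'$ as exceptional subcategories, any exceptional sequence of simples of $\W$ extends inside $\W'$ (the ambient hereditary abelian category) to an exceptional sequence of simples of $\W'$, so a reduced $T$-word for $\phi(\W)$ arises as a prefix of a reduced $T$-word for $\phi(\W')$, yielding $\phi(\W)\le_T\phi(\W')$. The reverse implication is dual via $\psi$: a prefix relation $w\le_T w'$ lifts to a pair of complete exceptional sequences whose first-$k$ and first-$k'$ prefixes produce nested $\sW$s.

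The main obstacle I anticipate is the well-definedness of $\psi$, specifically the compatibility of the abstract Hurwitz braid-connectedness of reduced $T$-words for $w\in[1,c]_T$ with the representation-theoretic braid group action on exceptional sequences via $X\mapsto t_{\undim X}$. This compatibility is closely tied to Theorem~\ref{thm:IS} itself, so a careful organization is needed to avoid a circular argument. The cleanest path is to first invoke the Bessis-style Hurwitz transitivity on reduced $T$-words for $w$, then transport it through Theorem~\ref{thm:IS} to obtain the corresponding transitivity on exceptional sequences, and only at that point conclude that $\sW(X_1,\ldots,X_k)$ depends only on $w$.
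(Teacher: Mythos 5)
The paper does not actually prove Theorem~\ref{thm:NC}: it is quoted as a known result, with the proof delegated to \cite[Section~3.2]{IngTho}, \cite[Theorem~4.3]{IgusaSchiffler}, and \cite[Corollary~7.5]{HuberyKrause}. So there is no in-paper argument to compare against; what you have written is essentially a reconstruction of the Igusa--Schiffler/Hubery--Krause proof, and it is also close in spirit to the route the paper itself takes later when it reproves the statement in chain-system language (Theorem~\ref{thm:exceptional_binary_chain}.\ref{Pc to fwide} together with Proposition~\ref{PCc nc} and Corollary~\ref{chain sys nc}). Your overall architecture --- build $\psi$ from reduced $T$-words via Theorem~\ref{thm:IS}, use Hurwitz transitivity on reduced $T$-words for $w\le_T c$ and its compatibility with the mutation of exceptional sequences to get well-definedness, then prove order preservation by prefix extension --- is sound, and you correctly identify the genuine delicate point: the intertwining of the Hurwitz action with the braid action is \emph{not} a formal consequence of the bijectivity in Theorem~\ref{thm:IS} alone. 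It requires the explicit mutation formula ($t_{\undim Y}=t_{\undim X_i}t_{\undim X_{i+1}}t_{\undim X_i}$ for the mutated term), which must be imported from \cite{CB,RingelBraid} or \cite[Lemma~2.7]{HuberyKrause}; with that citation in hand the circularity you worry about is avoided.

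Two imprecisions are worth fixing. First, it is not true that $\sW(X_1,\ldots,X_k)$ ``has as its simple objects exactly the modules $X_1,\ldots,X_k$'': already for the $A_2$ quiver, $(S_2,P_1)$ is an exceptional sequence generating $\mods\Lambda$, whose simples are $S_1,S_2$. What you actually need (and what your braid-transitivity argument delivers, once transitivity of the braid action on exceptional sequences generating a fixed $\W$ is invoked via Proposition~\ref{prop:wide_fg}\ref{wfg4}) is the stronger statement that \emph{every} exceptional sequence generating $\W$ has the same reflection product, equal to $\phi(\W)$. Second, in the order-preservation step the extension of a sequence of simples of $\W$ inside $\W'$ is to a \emph{complete exceptional sequence of $\W'$} (Lemma~\ref{lem:exceptional_index_arbitrary} applied in $\W'\simeq\mods\Lambda'$), not to ``an exceptional sequence of simples of $\W'$''; concluding $\phi(\W)\le_T\phi(\W')$ therefore again relies on the stronger well-definedness statement, so that statement should be proved first and quoted in both places. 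With these repairs the argument goes through.
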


For $X \in \mods\Lambda$, the \newword{additive closure} $\add(X)$ of $X$ is the smallest subcategory of $\mods\Lambda$ containing $X$.
Its indecomposable modules are precisely the indecomposable direct summands of $X$. 

The \newword{right-perpendicular category} of a subcategory $\W\subseteq\mods\Lambda$ is 
\[\W^\perp = \{X \in \mods\Lambda \mid \forall W \in \W: \Hom(W,X) = 0 = \Ext^1(W,X)|\}.\]
The \newword{left-perpendicular category} of $\W$ is 
\[{}^\perp \W = \{X \in \mods\Lambda \mid \forall W \in \W: \Hom(X,W) = 0 = \Ext^1(X,W)\}.\]
We treat $(-)^\perp$ and ${}^{\perp}(-)$ like exponents and intersections like products for order of operations. For example, ${}^\perp \W \cap \V = ({}^\perp \W) \cap \V$.

The \newword{extension-closure} of $\W$ is the subcategory $\Filt(\W)$ that consists of all modules $X$ admitting a finite filtration $0 = X_0 \subseteq X_1 \subseteq \cdots \subseteq X_k = X$ with $X_i/X_{i-1} \in \W$ for all $i$.
We also extend each of these definitions to a sequence of modules $(X_1,\ldots,X_k)$
by replacing $\W$ with $\add(X_1 \oplus \cdots \oplus X_k)$, so that, for example, $(X_1,\ldots,X_k)^\perp=(\add(X_1 \oplus \cdots \oplus X_k))^\perp$.

An \newword{orthogonal decomposition} of a wide subcategory $\W$ is an identity $\W = \U \oplus \V$ such that $\U$ and $\V$ are wide subcategories, $\V \subseteq \U^\perp \cap {}^\perp \U$, and every $X \in W$ can be decomposed as $X = X_\U \oplus X_\V$ with $X_\U \in \U$ and $X_\V \in \V$. 
We say that $\W$ is \newword{connected} if every orthogonal decomposition $\W = \U \oplus \V$ has $\U = 0$ or $\V = 0$. 
The algebra $\Lambda$ is said to be \newword{connected} if $\mods\Lambda$ is connected as a wide subcategory.

A module $X$ in a wide subcategory $\W$ is \newword{simple in $\W$} if $\W$ contains no proper submodules of $X$.
We write $\simp(\W)$ for the set of modules that are simple in $\W$. 
A \newword{semibrick} is a set $\X$ of bricks such that $\Hom(X,Y)=0=\Hom(Y,X)$ for all $X\neq Y\in\X$.
The following is proved in \cite[Section~1.2]{RingelSpecies}.

\begin{proposition}\label{prop:semibrick}
The map $\W\mapsto\simp{\W}$ is a bijection from $\wide \Lambda$ to the set of semibricks in $\mods\Lambda$, with inverse $\X\mapsto\Filt(\X)$.
\end{proposition}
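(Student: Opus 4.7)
The plan is to verify three claims: (a) $\simp(\W)$ is a semibrick for every wide subcategory $\W$; (b) $\Filt(\X)$ is a wide subcategory for every semibrick $\X$; and (c) the assignments $\W \mapsto \simp(\W)$ and $\X \mapsto \Filt(\X)$ are mutually inverse.

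Claim (a) is Schur's lemma applied inside the abelian category $\W$ (see Remark~\ref{rem:exact}): every simple object has a division ring of endomorphisms, and distinct simple objects of any abelian category admit no nonzero morphisms between them. Because $\W$ is a full subcategory of $\mods\Lambda$, the relevant Hom spaces coincide, so $\simp(\W)$ is a semibrick.

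Claim (c) splits into two parts. For $\Filt(\simp(\W)) = \W$, the right-to-left containment is immediate from the fact that wide subcategories are closed under extensions, while the left-to-right containment uses that $\W$ is an abelian subcategory of the length-finite category $\mods\Lambda$, so every object of $\W$ admits a composition series whose factors lie in $\simp(\W)$. For $\simp(\Filt(\X)) = \X$, the forward containment is immediate once we know sublemma (S1) below (an element of $\X$ has no proper subobjects in $\Filt(\X)$), and the reverse containment follows because any simple object $Y$ of $\Filt(\X)$ coincides with the bottom term $Y_1 \in \X$ of any filtration of $Y$ by elements of $\X$.

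The heart of the argument is claim (b), and specifically closure of $\Filt(\X)$ under kernels and cokernels. I plan to prove two sublemmas by simultaneous induction on the length of a filtration of $Y$ by elements of $\X$:
(S1) for $S \in \X$ and $Y \in \Filt(\X)$, every morphism $S \to Y$ is either zero or a monomorphism;
(S2) if moreover $S \hookrightarrow Y$, then $Y/S \in \Filt(\X)$.
Both inductions rest on choosing a short exact sequence $0 \to Y' \to Y \to Y'' \to 0$ with $Y' \in \X$ and $Y''$ shorter. For (S1), the dichotomy comes from the inductive hypothesis applied to the composition $S \to Y \to Y''$: if it vanishes the map factors through $Y' \in \X$ and the semibrick/brick properties close the case, otherwise it is already a monomorphism. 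For (S2), these two cases determine $Y/S$ as either the extension $Y''$ itself or the extension of $Y'$ by $Y''/S$, both in $\Filt(\X)$. Given the sublemmas, closure of $\Filt(\X)$ under kernels, images, and cokernels is proved by induction on the length of $X$ for a map $f\colon X \to Y$ with $X,Y \in \Filt(\X)$: writing $0 \to X_1 \to X \to X' \to 0$ with $X_1 \in \X$, sublemma (S1) makes $f|_{X_1}$ either zero (so $f$ factors through $X'$ and we apply induction) or a monomorphism (so we replace $Y$ by $Y/f(X_1)$, which lies in $\Filt(\X)$ by (S2), and again apply induction to $X'$), reassembling $\ker f$, $\mathrm{im}(f)$, and $\coker(f)$ from extensions already known to lie in $\Filt(\X)$.

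The principal obstacle is keeping the nested inductions straight: (S2) uses (S1), the main statement uses both, and one must be careful that each step decreases either $\ell(Y)$ or $\ell(X)$ strictly so that the induction is well founded. Organizing every induction by filtration length, and proving (S1) and (S2) together before tackling the main kernel/cokernel step, avoids any circularity.
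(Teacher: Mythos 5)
The paper does not prove this statement; it simply cites \cite[Section~1.2]{RingelSpecies}, so there is no in-paper argument to compare against. Your proposal is a correct, self-contained proof, and it follows what is essentially the classical line of argument (Ringel's original one): the only nontrivial point is that $\Filt(\X)$ is closed under kernels and cokernels, and your simultaneous induction on filtration length via (S1) and (S2) handles exactly that, with the semibrick axioms entering only at the bottom layer. The reductions all check out: in (S1) the composite $S\to Y\to Y''$ is zero or mono by induction, and in the zero case the factorization through $Y'\in\X$ is settled by the semibrick property together with $\End(S)$ being a division ring; in (S2) the two cases give $Y/S\cong Y''$ or an extension of $Y'$ by $Y''/S$; and in the main induction each recursive call strictly decreases $\ell(X)$ while (S2) keeps the target inside $\Filt(\X)$. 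Two very minor wording slips, neither of which is a gap: (i) your labels ``right-to-left'' and ``left-to-right'' for the two containments of $\Filt(\simp(\W))=\W$ appear to be swapped relative to the arguments you attach to them, though both containments are proved correctly; (ii) the inclusion $\X\subseteq\simp(\Filt(\X))$ does not literally follow from (S1), which concerns maps \emph{out of} $S\in\X$ rather than submodules of $S$ --- but it follows immediately from the semibrick axioms by applying them to the bottom layer $Z_1\in\X$ of a nonzero subobject $Z\in\Filt(\X)$ of $S$ (a nonzero map $Z_1\to S$ within a semibrick is an isomorphism, forcing $Z=S$). Note also that your argument nowhere uses that $\Lambda$ is hereditary, which is consistent with the cited result holding for arbitrary finite-dimensional algebras.
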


\begin{corollary}\label{cor:semibrick}
Let $X$ be a brick. Then $X$ is the unique brick in $\sW(X) = \Filt(X)$.
\end{corollary}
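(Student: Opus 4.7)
The proof has two parts: establishing the equality $\sW(X) = \Filt(X)$ and then showing that $X$ is the unique brick in this subcategory.

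For the equality, one direction is immediate from the definitions. Every wide subcategory is closed under extensions, so $\sW(X)$, containing $X$, must contain every iterated self-extension of $X$, giving $\Filt(X) \subseteq \sW(X)$. For the reverse containment, I would apply Proposition~\ref{prop:semibrick} to the (trivially) semibrick $\{X\}$: it tells us that $\Filt(X)$ is itself a wide subcategory. Since $\Filt(X)$ contains $X$ and $\sW(X)$ is by definition the smallest wide subcategory containing $X$, we conclude $\sW(X) \subseteq \Filt(X)$.

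For uniqueness, the same application of Proposition~\ref{prop:semibrick} also gives $\simp(\Filt(X)) = \{X\}$. Let $Y \in \Filt(X)$ be a brick, and suppose for contradiction that $Y \neq X$. The key point is that $\Filt(X)$ is an abelian category (Remark~\ref{rem:exact}), and every $Y \in \Filt(X)$ has finite length in this category (bounded by $\dim_\field Y$), with $X$ the unique simple. Hence $Y$ admits a simple subobject giving an inclusion $\iota : X \hookrightarrow Y$ and a simple quotient giving a surjection $\pi : Y \twoheadrightarrow X$. The composition $\iota \pi : Y \to Y$ is nonzero (since $\iota$ is injective and $\pi$ is surjective) and has image $\iota(X) \cong X$. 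If $Y \neq X$, then $\iota(X)$ is a proper subobject of $Y$, so $\iota \pi$ is a nonzero non-invertible endomorphism of $Y$, contradicting the assumption that $Y$ is a brick.

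The main obstacle, such as it is, is producing both the inclusion $\iota$ and the surjection $\pi$: this relies on the standard length-category observation that every nonzero object in a length abelian category with a unique simple $X$ has both a simple subobject and a simple quotient isomorphic to $X$. Once that is in hand, the rest of the corollary follows essentially formally from Proposition~\ref{prop:semibrick} together with the closure properties built into the definition of a wide subcategory.
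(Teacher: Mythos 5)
Your proof is correct and follows essentially the same route as the paper's: the equality $\sW(X)=\Filt(X)$ via Proposition~\ref{prop:semibrick}, then an injection $X\hookrightarrow Y$ and a surjection $Y\twoheadrightarrow X$ whose composite is a nonzero endomorphism of the brick $Y$, forcing $Y=X$. The only cosmetic difference is that the paper extracts the injection and surjection directly from the defining filtration of $Y$ in $\Filt(X)$ and argues directly (the nonzero endomorphism is invertible), whereas you phrase it via the length-category structure and by contradiction; both are fine.
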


\begin{proof}
The fact that $\sW(X) = \Filt(X)$ follows from Proposition~\ref{prop:semibrick}.  
Now let ${Y \in \Filt(X)}$ be a brick. By the definition of $\Filt(X)$, it follows that there is an injection $\iota: X \rightarrow Y$ and a surjection $q: Y \rightarrow X$. The composition $\iota \circ q$ is nonzero, and so it is invertible by the definition of a brick. 
We conclude that $X = Y$.
\end{proof}

The following lemma is a straightforward extension of \cite[Proposition~1.1]{GL}.

\begin{lemma}\label{lem:ortho_wide}
Let $\X \subseteq \mods\Lambda$ be an arbitrary subcategory. 
Then $\X^\perp = (\sW(\X))^\perp$ and ${}^\perp \X = {}^\perp(\sW(\X))$. 
Moreover, both of these are wide subcategories.
\end{lemma}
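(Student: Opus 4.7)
The plan is to prove the lemma in two stages: first establish that the right-perpendicular category $\X^\perp$ of any subcategory $\X$ is itself wide (and symmetrically that ${}^\perp\X$ is wide), and then deduce the two equalities as a formal consequence.

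For the first stage, I fix $X \in \X$ and apply $\Hom(X,-)$ to short exact sequences to obtain long exact sequences in $\Hom$ and $\Ext^1$, using the hereditary hypothesis to kill the $\Ext^2$ terms at the right-hand end. Closure of $\X^\perp$ under extensions is immediate: if $0 \to A \to B \to C \to 0$ is exact with $A, C \in \X^\perp$, then the long exact sequence sandwiches $\Hom(X,B)$ and $\Ext^1(X,B)$ between zeros. For closure under kernels and cokernels, I take $f : B \to B'$ with $B, B' \in \X^\perp$, image $I = \mathrm{im}\, f$, and split $f$ into the two short exact sequences $0 \to \ker f \to B \to I \to 0$ and $0 \to I \to B' \to \coker f \to 0$. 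Applying $\Hom(X,-)$ to each and chasing step by step, I deduce successively that $\Hom(X,\ker f)$, $\Hom(X,I)$, $\Ext^1(X,\ker f)$, $\Hom(X,\coker f)$, $\Ext^1(X,I)$, and $\Ext^1(X,\coker f)$ all vanish, giving $\ker f, \coker f \in \X^\perp$. The symmetric argument with $\Hom(-,Y)$ in place of $\Hom(X,-)$ shows that ${}^\perp\X$ is wide.

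For the second stage, the inclusion $(\sW(\X))^\perp \subseteq \X^\perp$ holds trivially from $\X \subseteq \sW(\X)$. For the reverse inclusion, I consider the subcategory $\Y := {}^\perp(\X^\perp)$. By the left-perp case of the first stage, $\Y$ is wide, and by the definition of $\X^\perp$ it contains $\X$, so $\sW(\X) \subseteq \Y$. Unpacking the definition of $\Y$, this says precisely that $\Hom(W,Z) = 0 = \Ext^1(W,Z)$ for all $W \in \sW(\X)$ and all $Z \in \X^\perp$, i.e.\ $\X^\perp \subseteq (\sW(\X))^\perp$. The equality ${}^\perp\X = {}^\perp(\sW(\X))$ follows by the symmetric argument, using the wideness of $(^\perp\X)^\perp$.

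The main technical obstacle is the diagram chase showing that $\X^\perp$ is closed under kernels and cokernels: one must track six $\Hom$ and $\Ext^1$ groups arranged in two interlocking long exact sequences, and the hereditary hypothesis is exactly what forces each successive vanishing. Once this wideness statement is in place, the two asserted equalities reduce to the formal observation that perpendicular categories are already closed under the wide-subcategory operations, so they automatically see only the wide closure of $\X$.
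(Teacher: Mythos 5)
Your proof is correct and is essentially the intended argument: the paper gives no proof of this lemma, deferring to Geigle--Lenzing's Proposition~1.1 for the wideness of perpendicular categories (your long-exact-sequence chase, with the hereditary hypothesis killing $\Ext^2$) and calling the two equalities a ``straightforward extension,'' which is exactly what your second stage supplies via the wideness of ${}^\perp(\X^\perp)$ and the minimality of $\sW(\X)$. One small ordering quibble in the diagram chase: you should derive $\Ext^1(X,\operatorname{im} f)=0$ from the first short exact sequence \emph{before} concluding $\Hom(X,\coker f)=0$ from the second, since $\Hom(X,\coker f)$ injects into $\Ext^1(X,\operatorname{im} f)$ once $\Hom(X,B')=0$; all six vanishings you list are nonetheless correct.
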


\begin{remark}\label{ortho rem}
The operators $(-)^\perp$ and $^\perp(-)$ both yield order-reversing maps $\wide \Lambda \rightarrow \wide \Lambda$, but they are generally not anti-automorphisms. We will recall in Corollary~\ref{cor:anti_isom_fg} that they do, however, induce anti-automorphisms of $\ewide \Lambda$.
\end{remark}

We conclude by mentioning the following fact about wide subcategories, which is \cite[Theorem~A.4 and Remark~A.5]{HuberyKrause}.
\begin{lemma}\label{lem:rep_finite_exceptional}
Let $\W \in \wide \Lambda$. If $\W$ is representation-finite, then $\W$ is exceptional. 
\end{lemma}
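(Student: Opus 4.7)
The plan is to exhibit $\W$ as generated by an exceptional sequence built from its own simple objects. By Proposition~\ref{prop:semibrick}, $\W = \Filt(\simp(\W))$ and $\simp(\W)$ is a semibrick; since $\W$ is representation-finite, $\simp(\W) = \{X_1,\ldots,X_k\}$ is in particular finite. The goal is to show these simples can be ordered into a sequence satisfying the vanishing conditions of Definition~\ref{def:exceptional_sequence}, after which $\sW(X_1,\ldots,X_k) = \Filt(\simp(\W)) = \W$ would give the conclusion.

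First I would verify that each $X_i$ is exceptional (not merely a brick). Each $X_i$ is already a brick, so I only need rigidity. If some $X_i$ had $\Ext^1(X_i,X_i) \neq 0$, then Corollary~\ref{cor:semibrick} identifies $\sW(X_i) = \Filt(X_i)$ as a wide subcategory of $\W$ in which iterated non-split self-extensions of $X_i$ produce arbitrarily long indecomposable filtrations with all composition factors equal to $X_i$; these are pairwise non-isomorphic, contradicting representation-finiteness of $\W$.

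Second, I would order the $X_i$ so that $\Ext^1(X_i,X_j) = 0$ whenever $j < i$. The $\Hom$ conditions required for an exceptional sequence are automatic from the semibrick property. Combinatorially, what is needed is the acyclicity of the directed graph $Q_\W$ on vertex set $\simp(\W)$ with an arrow $X_i \to X_j$ whenever $\Ext^1(X_i,X_j) \neq 0$; given acyclicity, a reverse topological ordering produces the desired exceptional sequence.

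The main obstacle is establishing this acyclicity, which is the hereditary wide-subcategory analogue of the classical fact underlying Gabriel's theorem. I would argue it directly by iterated extensions: an oriented cycle $X_{i_1} \to X_{i_2} \to \cdots \to X_{i_m} \to X_{i_1}$ in $Q_\W$ permits one to build, for each $N$, an indecomposable object of $\W$ whose composition factors cycle through $X_{i_1},\ldots,X_{i_m}$ at least $N$ times (the hereditary hypothesis via Remark~\ref{rem:exact} ensures that these extensions remain available inside $\W$, and a suitable choice keeps the result indecomposable). This produces infinitely many indecomposables in $\W$, contradicting representation-finiteness, and so $Q_\W$ is acyclic. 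With the simples ordered accordingly, $(X_1,\ldots,X_k)$ is an exceptional sequence generating $\W$, finishing the proof.
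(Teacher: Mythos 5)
Your proof is correct in strategy, and it is a genuinely different route from the paper's: the paper does not prove this lemma at all but quotes it from \cite{HuberyKrause} (Theorem~A.4 and Remark~A.5), where it comes packaged with the structure theory identifying exceptional subcategories with module categories of hereditary algebras (cf.\ Proposition~\ref{prop:wide_fg}). You instead argue directly from Proposition~\ref{prop:semibrick}: write $\W=\Filt(\simp\W)$ with $\simp\W=\{X_1,\ldots,X_k\}$ a finite semibrick, show representation-finiteness forces each $X_i$ to be rigid and the $\Ext^1$-quiver on $\simp\W$ to be acyclic, and then observe that a topological ordering yields an exceptional sequence (the $\Hom$-vanishing being the semibrick condition) with $\sW(X_1,\ldots,X_k)=\Filt(\simp\W)=\W$. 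That last chain of equalities and the reduction to the two finiteness obstructions are both sound. What your approach buys is a self-contained, elementary proof; what it costs is that you must actually manufacture the infinitely many indecomposables in the two contradiction steps.

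That manufacture is the one place where you assert rather than argue: in both the self-extension step and the cycle step you say the iterated extensions can be ``suitably chosen'' indecomposable, and this claim is the technical heart of the proof. It is true, and the standard patch is short: build $E_{j+1}$ from a nonzero class in $\Ext^1(S_{j+1},E_j)$, where $S_{j+1}$ is the next simple around the cycle (traversed against the arrows) and $S_j$ is the top of $E_j$. Such a class exists because $\Ext^1(S_{j+1},-)$ is right exact ($\Lambda$ hereditary), so $\Ext^1(S_{j+1},E_j)$ surjects onto $\Ext^1(S_{j+1},S_j)\neq 0$; this right-exactness, rather than the compatibility of $\Ext^1$ with $\W$ from Remark~\ref{rem:exact}, is the real use of the hereditary hypothesis. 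Then show by induction that $E_{j+1}$ has simple socle: any simple submodule either lies in $E_j$, hence in its simple socle, or maps isomorphically onto the quotient $S_{j+1}$ and splits the sequence. Simple socle gives indecomposability, and the strictly increasing lengths give pairwise non-isomorphy; the $m=1$ ``loop'' case of this construction is exactly your rigidity step. With that argument written out, your proof is complete.
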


\begin{remark}\label{fin lat remark}
When $\mods\Lambda$ is itself representation-finite, all of its wide subcategories are representation-finite and thus exceptional by Lemma~\ref{lem:rep_finite_exceptional}.
In particular, $\ewide\Lambda$ is a lattice in this case.
By Theorem~\ref{thm:NC}, we recover the fact that $[1,c]_T$ is a lattice when $W$ is finite \cite{BWlattice}.
\end{remark}

\section{Chain systems}\label{chain sys sec}
In this section, we define the notion of a chain system on an alphabet and show that a chain system is equivalent information to an edge-labeled poset satisfying certain conditions.
We also define a special kind of chain system called a binary chain system.
The section ends with some remarks on a motivating case, the theory of combinatorial Garside structures. 
Later in the paper, we use binary chain systems to formalize the connection between, on one side, reduced $T$-words for $c$ and noncrossing partitions and, on the other side, exceptional sequences and wide subcategories.
Having formalized that connection, we further use binary chain systems to extend the connection, in the affine case, and prove our main result:  A representation-theoretic construction of the McCammond-Sulway lattice. 

\subsection{Definition of a chain system}

Given an \newword{alphabet} (i.e.\ a set) $\A$, write $\A^*$ for the set of finite-length words in~$\A$ and write $\emptyset$ for the empty word.
We view~$\A$ as the subset of~$\A^*$ consisting of one-letter words.
Given $x,y\in\A^*$, write $xy$ for their concatenation.
Let $\C$ be a subset of $\A^*$.  
Let $\A_\C$ be the set of all letters occurring in words in $\C$.
We can harmlessly take $\A=\A_\C$ (as opposed to $\A\supsetneq\A_\C$), because letters in $\A$ that don't occur in $\C$ are irrelevant in what follows.
We allow~$\A$ and $\C$ to be infinite.
A \newword{prefix} of $\C$ is a word that is a prefix (initial segment) of a word in~$\C$.
A \newword{postfix} of $\C$ is a word that is a postfix (final segment) of a word in $\C$.
Write $\Pre(\C)$ for the set of prefixes of $\C$ and $\Post(\C)$ for the set of postfixes of $\C$.

Two prefixes $p_1,p_2\in\Pre(\C)$ are equivalent if there exists ${x\in\Post(\C)}$ such that $p_1x$ and $p_2x$ are both in~$\C$.
Write $\Preeq(\C)$ for the set of equivalence classes of prefixes of $\C$.
Similarly, two postfixes $x_1,x_2\in\Post(\C)$ are equivalent if there exists $p\in\Pre(\C)$ such that $px_1$ and $px_2$ are both in~$\C$.
Write $\Posteq(\C)$ for the set of equivalence classes of postfixes of $\C$.
We will use the symbol~$\equiv$ for both of these equivalence relations, trusting to context to make clear whether we are referencing the equivalence relation on prefixes or the equivalence relation on postfixes.

\begin{definition}\label{chain sys def}
The set $\C$ is a \newword{chain system} on $\A$ if it satisfies the following conditions.
\begin{enumerate}[label=\rm(\roman*), ref=(\roman*)]
\item \label{bound}
There is a finite upper bound on the length of words in $\C$.
\item \label{conv}
Given $P\in\Preeq(\C)$ and $X\in\Posteq(\C)$, the following are equivalent: 
\begin{enumerate}[label=\rm(\alph*), ref=(\alph*)]
\item \label{conv exists}
There exists $p\in P$ and $x\in X$ with $px\in\C$;
\item \label{conv all}
$px\in\C$ for all $p\in P$ and $x\in X$.
\end{enumerate}
\item \label{non subst}
If $p\in\Pre(\C)$, $a\in \A$, $w\in \A^*$, and $x\in\Post(\C)$ satisfy $pax,pwx\in\C$, then $w=a$.
\end{enumerate}
Suppose $\C$ and $\C'$ are chain systems.
An \newword{isomorphism} from $\C$ to $\C'$ is a bijection from $\A_\C$ to $\A_{\C'}$ that induces a bijection from $\C$ to $\C'$.
\end{definition}

\begin{remark}\label{something like a converse}
By definition, given $p\in\Pre(\C)$ and any two postfixes $x_1,x_2\in\Post(\C)$, if $px_1,px_2\in C$ then $x_1\equiv x_2$.
Similarly, given $x\in\Post(\C)$ and $p_1,p_2\in\Pre(\C)$, if $p_1x,p_2x\in\C$ then $p_1\equiv p_2$.
Condition \ref{conv} of Definition~\ref{chain sys def} provides converses to these facts:
If $p\in\Pre(\C)$ and $x_1,x_2\in\Post(\C)$ have $px_1\in\C$ and $x_1\equiv x_2$, then $px_2\in\C$; and 
if $p_1p_2\in\Pre(\C)$ and $x\in\Post(\C)$ have $p_1x\in\C$ and $p_1\equiv p_2$, then $p_2x\in\C$.
\end{remark}

The following useful lemma addresses what might have seemed like a missing condition in Definition~\ref{chain sys def}.

\begin{lemma}\label{maximal}
Suppose $\C$ satisfies Conditions \ref{bound} and~\ref{conv} in the definition of a chain system.
If $p\in\Pre(\C)$, $w\in \A^*$, and $x\in\Post(\C)$ satisfy $px,pwx\in\C$, then $w=\emptyset$.
\end{lemma}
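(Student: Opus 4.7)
The plan is to argue by contradiction using the length bound in \ref{bound}. Assuming $w$ is nonempty, so $|w| \geq 1$, the strategy is to exhibit words $pw^k x \in \C$ for every $k \geq 0$; since these have lengths $|p| + k|w| + |x|$, they violate the bound in \ref{bound}.

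The preparatory observation is that the hypothesis $px, pwx \in \C$ yields two equivalences in one stroke. On one hand, $x$ and $wx$ are both postfixes preceded by the prefix $p$ to give a word in $\C$, so $x \equiv wx$ in $\Posteq(\C)$. On the other hand, $p$ and $pw$ are both prefixes followed by the postfix $x$ to give a word in $\C$, so $p \equiv pw$ in $\Preeq(\C)$. With these in hand, the remainder of the argument is a straightforward induction on $k$.

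The inductive step is the heart of the argument: assuming $pw^k x \in \C$ for some $k \geq 1$, combining this with $px \in \C$ shows that $p$ and $pw^k$ are both prefixes followed by $x$ to land in $\C$, so $p \equiv pw^k$ in $\Preeq(\C)$. Let $P$ be this common prefix class and let $X$ be the postfix class of $x$ (which also contains $wx$ by the preparatory step). The element $pw^k \cdot x \in \C$ witnesses \ref{conv}\ref{conv exists} for the pair $(P, X)$, so by \ref{conv}\ref{conv all} every concatenation of an element of $P$ with an element of $X$ lies in $\C$; applying this to $pw^k \in P$ and $wx \in X$ yields $pw^{k+1}x \in \C$.

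I do not expect any real obstacle here. The only subtle point is correctly matching the prefix and postfix equivalence classes when invoking \ref{conv}, which is why the two equivalences are extracted first; once they are on the table, the induction runs mechanically and immediately contradicts \ref{bound}.
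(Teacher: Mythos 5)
Your proof is correct and takes essentially the same approach as the paper: an induction producing $pw^kx\in\C$ for all $k\ge 0$ via Condition~(ii), contradicting the length bound of Condition~(i) when $w\neq\emptyset$. The only cosmetic difference is that the paper runs the induction using the single prefix equivalence $p\equiv pw$ (pairing $pw$ with the postfix $w^{k-1}x$), while you pair $pw^k$ with the postfix $wx$; both are valid instances of Condition~(ii).
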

\begin{proof}
Suppose $px,pwx\in\C$, so that $p\equiv pw$ is the equivalence relation on prefixes.
Writing $w^i$ for the concatenation of $i$ copies of $w$, we prove by induction that $pw^ix\in\C$.
The base case $i=0$ is given.
If $i>0$, then by induction $pw^{i-1}x\in\C$.
By Condition \ref{conv}, since $p\equiv pw$, we have $pww^{i-1}x=pw^ix\in\C$.
If $w\neq\emptyset$, this is a contradiction to Condition~\ref{bound}.
\end{proof}

We define maps $\post:\Preeq(\C)\to\Posteq(\C)$ and $\pre:\Posteq(\C)\to\Preeq(\C)$ as follows.
If $p\in P\in\Preeq(\C)$ and $px\in\C$, then $\post(P)$ is the class of $x$ in the equivalence relation on $\Post(\C)$.
If $x\in X\in\Posteq(\C)$ and $px\in\C$, then $\pre(X)$ is the class of $p$ in the equivalence relation on $\Pre(\C)$.

If $p_1,p_2\in P\in\Preeq(\C)$ then any $x\in\Post(\C)$ has $p_1x\in\C$ if and only if $p_2x\in\C$, by the definition of a chain system.
Thus $\post(P)$ is independent of the choice of a representative of $P$.
Choosing some $p\in P$, $\post(P)$ is independent of the choice of $x$ such that $px\in\C$.
As noted in Remark~\ref{something like a converse}, if $x_1,x_2\in\Post(\C)$ have $px_1,px_2\in\C$, then $x_1\equiv x_2$.
Thus $\post$ is a well defined map from $\Preeq(\C)$ to $\Posteq(\C)$.
The left-right dual argument shows that $\pre$ is a well defined map from $\Posteq(\C)$ to $\Preeq(\C)$.
Knowing that the maps are well defined, the following fact is immediate.

\begin{proposition}\label{chain sys bij}
If $\C$ is a chain system, then the map $\post:\Preeq(\C)\to\Posteq(\C)$ is a bijection with inverse $\pre$.
\end{proposition}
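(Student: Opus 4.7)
The plan is to observe that the proposition amounts to checking that the maps $\post$ and $\pre$, once known to be well defined, are mutually inverse, and this follows immediately from unwinding the definitions using a single witness for both directions.

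First I would fix $P\in\Preeq(\C)$ and show $\pre(\post(P))=P$. Since $P$ is a nonempty equivalence class of prefixes, I can pick any representative $p\in P$. Because $p\in\Pre(\C)$, there exists some $x\in\Post(\C)$ with $px\in\C$, and by definition $\post(P)$ is the class $[x]\in\Posteq(\C)$. To evaluate $\pre$ on this class, I am free to choose any representative of $[x]$ together with any compatible prefix; using the same pair $(p,x)$, I get $\pre([x])=[p]=P$. The fact that this output does not depend on my choice of $p$ or $x$ is precisely the well-definedness already established above the statement, so no further verification is required here.

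The identity $\post(\pre(X))=X$ for $X\in\Posteq(\C)$ is proved by the left-right dual argument: pick $x\in X$ and $p\in\Pre(\C)$ with $px\in\C$ (which exists since $x\in\Post(\C)$), and use the same pair $(p,x)$ as a witness when applying $\post$ to $\pre(X)=[p]$. Together these two identities show that $\post\circ\pre$ and $\pre\circ\post$ are identity maps on their respective domains, so $\post$ is a bijection with inverse $\pre$.

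There is no real obstacle here; the content of the proposition is packaged into the well-definedness of $\post$ and $\pre$, which rests on Condition~\ref{conv} of Definition~\ref{chain sys def} and the observation (noted in Remark~\ref{something like a converse}) that equivalence of one side is forced by a common continuation on the other. If anything, the only subtlety to flag is the need to check that the witnesses exist at all, i.e.\ that any prefix $p\in\Pre(\C)$ admits some $x\in\Post(\C)$ with $px\in\C$ and dually for postfixes; but this is built into the definitions of $\Pre(\C)$ and $\Post(\C)$ as initial and final segments of complete words in $\C$.
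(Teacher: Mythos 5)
Your proof is correct and matches the paper's approach: the paper simply declares the bijectivity ``immediate'' once $\post$ and $\pre$ are shown to be well defined, and your argument of evaluating both compositions on a single witness pair $(p,x)$ with $px\in\C$ is exactly the reasoning that makes it immediate.
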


\begin{definition}\label{bin chain sys def}
A \newword{binary compatibility relation} on an alphabet $\A$ is a set of two-letter words in $\A^*$.
If $B$ and $B'$ are binary compatibility relations, respectively on alphabets~$\A$ and~$\A'$, an \newword{isomorphism} from $B$ to $B'$ is a bijection from~$\A$ to~$\A'$ that induces a bijection from $B$ to~$B'$.
Given a binary compatibility relation $B$, a \newword{$B$-sequence} is a word $a_1\cdots a_k\in \A^*$ such that $a_ia_j\in B$ for all $1\le i<j\le k$.
A \newword{maximal $B$-sequence} is a $B$-sequence $a_1\cdots a_k$ such that, for all $i\in\set{0,\ldots,k}$ and $a\in \A$, the word $a_1\cdots a_iaa_{i+1}\cdots a_k$ is not a $B$-sequence.
Write $\C_B$ for the set of maximal $B$-sequences.
A chain system $\C$ is a \newword{binary chain system} if there exists a binary compatibility relation $B$ such that $\C=\C_B$.
\end{definition}

Not every binary compatibility relation $B$ defines a binary chain system.
(For example, if $\A=\set{a,b,c}$ and $B=\set{bc}$, then $\C_B=\set{a,bc}$ fails Condition~\ref{non subst}.)
We will not consider the question of which binary compatibility relations define binary chain systems.
Instead, our interest is in the following fact, which is immediate from the definitions.

\begin{proposition}\label{bin isom}
Suppose $B$ and $B'$ are binary compatibility relations on alphabets~$\A$ and $\A'$ and $\omega:\A\to\A'$ is a bijection. 
\begin{enumerate}[label=\bf\arabic*., ref=\arabic*]
\item \label{already}
If $\C_B$ and $\C_{B'}$ are binary chain systems, then $\omega$ is an isomorphism from~$\C_B$ to~$\C_{B'}$ if and only if it is an isomorphism from $B$ to $B'$.
\item \label{not yet}
If $\omega$ is an isomorphism from $B$ to $B'$ and $\C_B$ is a binary chain system, then~$\C_{B'}$ is a binary chain system and $\omega$ is an isomorphism from~$\C_B$ to~$\C_{B'}$.
\end{enumerate}
\end{proposition}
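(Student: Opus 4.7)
The plan is to exploit the fact that, since the compatibility relation is binary, whether a word is a $B$-sequence depends only on its constituent ordered pairs of letters, and the property of being a maximal $B$-sequence depends only on the set of $B$-sequences. So any letter-wise bijection that respects $B$ automatically respects everything built from $B$.

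For the forward direction of Part~\ref{already}, I would extend $\omega\colon\A\to\A'$ letter-wise to a length-preserving bijection $\omega\colon\A^*\to(\A')^*$. Since $\omega$ induces a bijection $B\to B'$, the definition of $B$-sequence gives that $a_1\cdots a_k$ is a $B$-sequence if and only if $\omega(a_1)\cdots\omega(a_k)$ is a $B'$-sequence. The maximality clause in Definition~\ref{bin chain sys def} is stated solely in terms of the inability to extend by an inserted letter, so $\omega$ carries maximal $B$-sequences to maximal $B'$-sequences, and hence induces a bijection $\C_B\to\C_{B'}$.

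For the converse direction of Part~\ref{already}, I need to recover $B$ canonically from $\C_B$ on the alphabet $\A_{\C_B}$. The observation is that $ab\in B$ if and only if $ab$ is itself a $B$-sequence of length two, and by Condition~\ref{bound} any such sequence can be extended (finitely many times, one letter at a time) to a maximal $B$-sequence, so $ab\in B$ if and only if there exist $a_1\cdots a_k\in\C_B$ and indices $i<j$ with $a_i=a$ and $a_j=b$. This characterization is preserved under any bijection $\omega$ that induces a bijection $\C_B\to\C_{B'}$: applying $\omega$ to such a maximal $B$-sequence yields a maximal $B'$-sequence witnessing $\omega(a)\omega(b)\in B'$, and applying $\omega^{-1}$ gives the reverse implication.

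For Part~\ref{not yet}, the forward direction of Part~\ref{already} already produces a bijection $\C_B\to\C_{B'}$, so it remains only to check that $\C_{B'}$ satisfies Conditions~\ref{bound}, \ref{conv}, and~\ref{non subst} of Definition~\ref{chain sys def}. Since the extended map $\omega\colon\A^*\to(\A')^*$ preserves length, concatenation, prefixes, and postfixes, it carries $\Pre(\C_B)$ bijectively to $\Pre(\C_{B'})$ and $\Post(\C_B)$ bijectively to $\Post(\C_{B'})$, and transports each of the three conditions directly. The main (very mild) subtlety to track will be keeping $\A_{\C_B}$ straight from the ambient alphabet, but the convention noted just after the definition of $\A_{\C}$ allows us to identify them without loss, making the entire argument a routine transport of structure.
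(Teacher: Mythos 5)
Your handling of the forward direction of Part~\ref{already} and of all of Part~\ref{not yet} is correct, and it is exactly the transport-of-structure argument the paper has in mind: the paper offers no proof beyond declaring the proposition ``immediate from the definitions,'' so for those parts there is nothing to compare against except the obvious letter-wise extension of $\omega$, which you carry out properly. The one place where something non-vacuous is being asserted is the converse direction of Part~\ref{already}, and that is where your argument has a genuine gap.

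The gap is the sentence claiming that ``by Condition~\ref{bound} any such sequence can be extended (finitely many times, one letter at a time) to a maximal $B$-sequence.'' Condition~\ref{bound} of Definition~\ref{chain sys def} bounds the lengths of the words in $\C_B$, that is, of the \emph{maximal} $B$-sequences; it says nothing about the lengths of arbitrary $B$-sequences, so the greedy extension of the two-letter $B$-sequence $ab$ has no a priori reason to terminate. When $\A_{\C_B}$ is finite you can close the gap: no letter $c$ occurring in a word of $\C_B$ can satisfy $cc\in B$ (otherwise that word could be lengthened by inserting a second copy of $c$ immediately after the first, contradicting maximality), hence every $B$-sequence over $\A_{\C_B}$ has pairwise distinct letters and length at most $|\A_{\C_B}|$, and the extension terminates. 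For infinite alphabets, which the paper explicitly allows, the claim can genuinely fail: one can construct $B$ on an infinite alphabet together with a pair $ab\in B$ such that $\C_B$ is a binary chain system with $\A_{\C_B}=\A$, yet every $B$-sequence containing $a$ before $b$ admits an endless chain of one-letter insertions; then $ab$ occurs as a two-letter subsequence of no word of $\C_B$, and $\C_B=\C_{B\setminus\{ab\}}$, so the identity map is an isomorphism of the chain systems but not of the compatibility relations. So your characterization of $B$ from $\C_B$ (and with it the ``only if'' half of Part~\ref{already}) needs either a finiteness hypothesis or the extra assumption that every pair in $B$ is witnessed inside some word of $\C_B$ --- an assumption that does hold in every instance the paper actually uses, where extendability is supplied separately (e.g.\ by Lemma~\ref{lem:exceptional_index_arbitrary}). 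The rest of your write-up, including the observation that the ``if'' half of your characterization is automatic from the definition of a $B$-sequence, is fine.
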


\subsection{Chain systems and labeled posets}
The term ``chain system'' refers to what we will use $\C$ to construct:
a finite-height edge-labeled partially ordered set and a bijection from the set of its maximal chains to~$\C$, specifically the map sending a maximal chain to the word given by reading edge-labels on the chain from bottom to top.

We define a binary relation $\le_{\pre}$ on $\Preeq(\C)$.
Classes $P$ and $Q$ have $P\le_{\pre}Q$ if and only if there exists $p\in P$ and $q\in Q$ such that $p$ is a prefix of $q$ (that is, there exists $x\in \A^*$ such that $q=px$).
We also define a binary relation $\le_{\post}$ on $\Posteq(\C)$.
Classes $X$ and $Y$ have ${X\le_{\post}Y}$ if and only if there exists $x\in X$ and $y\in Y$ such that $y$ is a postfix of $x$ (that is, there exists $p\in \A^*$ such that $x=py$).

\begin{proposition}\label{chain sys poset}
Suppose $\C$ is a chain system.
\begin{enumerate}[label=\bf\arabic*., ref=\arabic*]
\item \label{pre partial}
$\le_{\pre}$ is a partial order on $\Preeq(\C)$ with finite height, a unique minimal element $\set{\emptyset}$, and a unique maximal element $\C$.
\item \label{isom}
The map $\post$ is an isomorphism from $(\Preeq(\C),\le_{\pre})$ to $(\Posteq(\C),\le_{\post})$ with inverse map $\pre$.
\item \label{edges pre}
If $P\covered_{\pre} Q$, given $p\in P$, there exists a unique $a\in \A$ with $pa\in Q$.
This $a$ depends only on $P$ and $Q$, not the choice of $p\in P$.
\item \label{edges post}
If $X\covered_{\post} Y$, given $y\in Y$, there exists a unique $a\in \A$ with $ay\in X$.
This $a$ depends only on $X$ and $Y$, not the choice of $y\in Y$.
\item \label{isom label}
The isomorphism $\post$ takes the edge-labeling of $(\Preeq(\C),\le_{\pre})$ obtained from Assertion~\ref{edges pre} to the edge-labeling of $(\Posteq(\C),\le_{\post})$ obtained from Assertion~\ref{edges post}.
\item \label{bottom unique conclusion}
If two maximal chains $P_0\covered_{\pre}\cdots\covered_{\pre}P_k$ and $Q_0\covered_{\pre}\cdots\covered_{\pre}Q_\ell$ have the same labels on the first $i$ edges \emph{from the bottom}, then $P_i=Q_i$.
\item \label{top unique conclusion}
If two maximal chains $P_0\covered_{\pre}\cdots\covered_{\pre}P_k$ and $Q_0\covered_{\pre}\cdots\covered_{\pre}Q_\ell$ have the same labels on the first $i$ edges \emph{from the top}, then $P_{k-i}=Q_{\ell-i}$.
\item \label{chains}
The map that takes a maximal chain of $(\Preeq(\C),\le_{\pre})$ to a word by reading labels, bottom to top, is a bijection from the set of maximal chains to~$\C$.
\end{enumerate}
\end{proposition}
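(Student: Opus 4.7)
The plan is to work through the eight assertions in order, leveraging Proposition~\ref{chain sys bij}, the three defining conditions of a chain system, and Lemma~\ref{maximal}. A useful preliminary observation is that \emph{length} is a well-defined function on $\Preeq(\C)$ and on $\Posteq(\C)$: if $p\equiv p'$ share a common completion $py,p'y\in\C$, then $|p|=|py|-|y|=|p'|$. Combined with Condition~\ref{bound}, this length function turns the otherwise abstract quotient posets into something tightly controlled.

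For Assertion~\ref{pre partial}, reflexivity is immediate and finite height follows from the length bound. For transitivity, take witnesses $p_1\in P$, $q_1=p_1x_1\in Q$, $q_2\in Q$, and $r=q_2x_2\in R$; if $rz\in\C$ is any completion, then Condition~\ref{conv} transports $q_2\equiv q_1$ across the postfix class of $x_2z$ to give $p_1x_1x_2z\in\C$, so $p_1x_1x_2$ is equivalent to $r$ and extends $p_1$. Antisymmetry is then immediate: mutual inclusions produce $p_1\equiv p_1x_1x_2$, and Lemma~\ref{maximal} forces $x_1x_2=\emptyset$. The minimum $\{\emptyset\}$ is a single application of Lemma~\ref{maximal} to any prefix equivalent to $\emptyset$. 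For the maximum $\C$, note that if $c\in\C$ and $cy\in\C$, then Lemma~\ref{maximal} (with $p=c$, $w=y$, $x=\emptyset$) gives $y=\emptyset$, so the equivalence class of any $c\in\C$ already lies in $\C$.

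For Assertion~\ref{isom}, if $q=px\in Q$ and $qy\in\C$, then $\post(Q)$ is the class of $y$ and $\post(P)$ is the class of $xy$, witnessing $\post(P)\le_\post\post(Q)$; together with Proposition~\ref{chain sys bij} this gives the isomorphism. For Assertions~\ref{edges pre} and~\ref{edges post}, the length function forces a cover to correspond to a length-one step, so any extension of $p\in P$ into $Q$ is a single letter $a$. Existence of such an $a$ for an arbitrary $p\in P$ comes from Condition~\ref{conv}: picking one witness $p_0a\in Q$ with completion $p_0ax\in\C$, the condition transports $p_0\equiv p$ across the postfix class of $ax$ to give $pax\in\C$, so $pa\in\Pre(\C)$ and $pa\equiv p_0a$. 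Uniqueness of $a$ for fixed $p$ is an immediate appeal to Condition~\ref{non subst} applied to $pay,pa'y\in\C$, and independence from the choice of representative uses both conditions: Condition~\ref{conv} supplies a joint completion across the equivalence and Condition~\ref{non subst} extracts equality of the letters. Assertion~\ref{isom label} then reads off the same letter $a$ on the dual side, since $p\mapsto pa$ with completion $pay\in\C$ yields the postfix-side witness $ay\mapsto y$.

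Assertions~\ref{bottom unique conclusion} and~\ref{top unique conclusion} follow by induction on $i$, using uniqueness of the minimum (resp.\ maximum) at the base case and uniqueness of the label extension in Assertion~\ref{edges pre} (resp.\ Assertion~\ref{edges post}) in the step. For Assertion~\ref{chains}, iterating Assertion~\ref{edges pre} turns a maximal chain $P_0\covered_\pre\cdots\covered_\pre P_k$ into a word $a_1\cdots a_k$ with $a_1\cdots a_i\in P_i$, and maximality forces $P_k=\C$, so the word lies in~$\C$; injectivity is Assertion~\ref{bottom unique conclusion}; surjectivity starts from $a_1\cdots a_n\in\C$ and takes $P_i$ to be the class of $a_1\cdots a_i$, which are distinct and consecutive covers by the length function, forming a maximal chain from $\{\emptyset\}$ to $\C$. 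I expect the main delicacy to be the independence-of-representative claim in Assertion~\ref{edges pre}: each appeal to Conditions~\ref{conv} and~\ref{non subst} is individually short, but correctly threading the chain of equivalences (and in particular picking the right common postfix) is the piece of bookkeeping on which most of the subsequent assertions silently rely.
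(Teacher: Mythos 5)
Your preliminary observation that length is well defined on equivalence classes of prefixes is false, and it is load-bearing. The computation $|p|=|py|-|y|=|p'|$ silently assumes that the two completions $py$ and $p'y$ have the same length, but Condition~\ref{bound} of Definition~\ref{chain sys def} only imposes an upper \emph{bound} on the lengths of words in $\C$; it does not force all words of $\C$ (or all members of a prefix class) to have the same length. Indeed, in the chain system $\C_c^+$ to which this proposition is later applied, words of $\C_c$ have $n$ letters while words of $\C_c^F$ have $n-2+m$ letters, and when $m=3$ a two-reflection prefix $t_1t_2$ representing a translation $w$ is equivalent to the three-letter prefix $f_1f_2f_3$ of its factors (both complete to elements of $\C_c^+$ by the same string of $n-2$ horizontal reflections). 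The same phenomenon occurs in $\Cpx$, where maximal exceptional sequences have $n$ terms but maximal regular para-exceptional sequences have $n-2+m$ terms. This breaks three steps of your argument: the finite-height claim in Assertion~\ref{pre partial}, the ``a cover is a length-one step'' claim on which Assertion~\ref{edges pre} rests, and the ``distinct and consecutive covers by the length function'' claim in the surjectivity half of Assertion~\ref{chains}.

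The paper avoids the length function entirely. For Assertion~\ref{edges pre} it observes that if $p\in P$ and $px\in Q$ with $x=a_1\cdots a_k$, then the classes of $p,pa_1,pa_1a_2,\ldots,px$ form a weakly increasing chain from $P$ to $Q$; when $P\covered_{\pre}Q$ this chain visits only two classes, so some consecutive pair coincides, and Condition~\ref{conv} together with Lemma~\ref{maximal} then yields a contradiction unless $k=1$. For Assertion~\ref{chains} it shows $P_{i-1}<_{\pre}P_i$ by a pumping argument (if $a_1\cdots a_{i-1}\equiv a_1\cdots a_i$ one could insert arbitrarily many copies of $a_i$, violating Condition~\ref{bound}) and rules out intermediate classes $Q$ with $P_{i-1}\le_{\pre}Q\le_{\pre}P_i$ by a direct appeal to Condition~\ref{non subst}; finite height is then deduced from Assertion~\ref{chains} at the end rather than proved up front. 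The remainder of your proof---transitivity and antisymmetry via Lemma~\ref{maximal}, the transport arguments via Condition~\ref{conv}, uniqueness of the label via Condition~\ref{non subst}, and Assertions~\ref{isom}, \ref{isom label}, \ref{bottom unique conclusion}, and~\ref{top unique conclusion}---matches the paper's and is correct, so the fix is localized: replace every appeal to the length function with the two arguments just described.
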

\begin{proof}
It is immediate that $\le_{\pre}$ is reflexive and transitive, that 
$\set{\emptyset}\in\Preeq(\C)$ has $\set{\emptyset}\le_{\pre}P$ for all $P\in\Preeq(\C)$, and that 
$\C\in\Preeq(\C)$ has $P\le_{\pre}\C$ for all $P\in\Preeq(\C)$.

To prove that $\le_{\pre}$ is antisymetric, suppose $P,Q\in\Preeq(\C)$ have $P\le_{\pre}Q$ and $Q\le_{\pre}P$.
Thus there exist $p\in P$, $q\in Q$, and $x\in \A^*$ such that $q=px$ and $p'\in P$, $q'\in Q$, and $x'\in \A^*$ such that $p'=q'x'$.
There exists $z\in \A^*$ such that $pxz=qz\in\C$.
Since $p\equiv p'$, also $p'xz\in\C$, so that $q'x'xz\in\C$.
Since $q\equiv q'$, also $qx'xz\in\C$, so that $pxx'xz\in\C$.
But now, since both $pxz$ and $pxx'xz$ are in $\C$, Lemma~\ref{maximal} says that $xx'=\emptyset$.
Therefore $p=q$, so that $P=Q$.
We have established Assertion~\ref{pre partial}, except for the assertion on finite height.

If $P\le_{\pre}Q$, then there exist $p\in P$, $q\in Q$, and $x\in \A^*$ such that $px=q$.
Let $Y=\post(P)$ and $Z=\post(Q)$.
Then for any $y\in Y$ and $z\in Z$, the words $py$ and $qz=pxz$ are in $\C$.
Thus $y\equiv xz$, so that $xz\in Y$.
Since $z$ is a postfix of $xz$, we have $Y\le_{\post}X$.
This and the left-right dual argument establish Assertion~\ref{isom}.

Suppose $P\covered_{\pre} Q$ and let $p\in P$.
Since $P<_{\pre} Q$, there exists a nonempty word $x=a_1\cdots a_k\in \A^*$ such that $px\in Q$.
We will prove that $k=1$.
The equivalence classes of $p,pa_1,pa_1a_2,\ldots,px$ constitute a weakly increasing sequence in $(\Preeq(\C),\le_{\pre})$.
Since $P\covered_{\pre} Q$, this sequence contains only two equivalence classes.
Thus if $k>1$, then at least one $i\in\set{0,\ldots,k-1}$ has $pa_1\cdots a_i\equiv pa_1\cdots a_{i+1}$.
Let $y\in\Post(\C)$ have $pa_1\cdots a_iy\in\C$.
Then also $pa_1\cdots a_ia_{i+1}y\in\C$, contradicting Lemma~\ref{maximal}.
We see that $k=1$, so that $pa=q$ for some $a\in \A$.
If another element $\tilde a\in \A$ has $p\tilde a\in Q$, then choosing $y\in\Post(\C)$ with $qy\in\C$, we have $pay,p\tilde ay\in\C$, so $a=\tilde a$ because $\C$ is a chain system.
If $p'\in P$, then by the same proof, there is a unique $a'$ with $p'a'\in Q$.
Since~$\C$ is a chain system and $p'a'\equiv pa=q$, we have $p'a'y\in\C$.  
But since $p'\equiv p$, also $pa'y\in\C$, so $pa'\equiv q$, or in other words $pa'\in Q$.
By the uniqueness already proved, $a'=a$.
We have proved Assertion~\ref{edges pre}.
The left-right dual argument proves Assertion~\ref{edges post}.

Suppose $P\covered_{\pre} Q$ and let $a\in \A$ be the label associated to this edge.
Thus for $p\in P$, we have $pa\in Q$.
If $\post(Q)=Y$, and $y\in Y$, then $pay\in\C$ and thus $ay$ is a representative of the class $\post(P)$.
We see that $a$ is the label associated to the edge $\post(P)\covered\post(Q)$.
We have proved Assertion~\ref{isom label}.

Given a maximal chain $P_0\covered_{\pre}\cdots\covered_{\pre}P_k$ with label sequence $a_1\cdots a_k$ from bottom to top, Assertion~\ref{edges pre} implies that $a_1\cdots a_i\in P_i$.
Assertion~\ref{bottom unique conclusion} follows.
The left-right dual of this argument, using Assertion~\ref{edges post}, implies a version of Assertion~\ref{top unique conclusion} with ``$\covered_{\pre}$'' replaced by ``$\covered_{\post}$'' throughout.
Assertion~\ref{top unique conclusion} then follows by Assertion~\ref{isom label}.

Given a maximal chain $P_0\covered\cdots\covered P_k$ in $(\Preeq(\C),\le_{\pre})$, Assertion~\ref{edges pre} lets us build a word in $\C$ one letter at a time from the bottom to the top, and this word is precisely the word obtained by reading the labels on the chain from the bottom to the top.
Thus the map described in Assertion~\ref{chains} is well defined.
Assertion~\ref{bottom unique conclusion} (or separately Assertion~\ref{top unique conclusion}) implies that the map is one-to-one.

Given $w=a_1\cdots a_k\in\C$, let $P_i$ be the equivalence class of the prefix $a_1\cdots a_i$, so that  $P_0\le_{\pre}P_1\le_{\pre}\cdots\le_{\pre}P_k$.
If $P_{i-1}=P_i$ then $a_1\cdots a_{i-1}\equiv a_1\cdots a_i$, and arguing as in the proof of Assertion~\ref{edges pre}, we see that $a_1\cdots a_{i-1}a_i^\ell a_{i+1}\cdots a_k\in\C$ for all $\ell\ge0$, and this contradiction shows that $P_{i-1}<P_i$ for all~$i$.
Suppose there exists $Q\in\Preeq(\C)$ with $P_{i-1}\le_{\pre}Q\le_{\pre}P_i$.
Define $p_-=a_1\cdots a_{i-1}\in P_{i-1}$ and $p=a_1\cdots a_i\in P_i$ and let $y\in\Post(\C)$ have $py\in\C$.
Since $P_{i-1}\le_{\pre}Q$, there exists $q\in Q$ and $w\in \A^*$ such that $q=p_-w$.
Since $Q\le_{\pre}P_i$, there exists $q'\in Q$ and $w'\in \A^*$ such that $p=q'w'$.
Then $q'w'y\in\C$, and since $\C$ is a chain system and $q'\equiv q$, also $qw'y\in\C$, and therefore $p_-ww'y\in\C$.
But also $p_-a_iy=py\in\C$.
By the definition of a chain system, $ww'=a$, so either $w=\emptyset$ and $Q=P_{i-1}$ or $w'=\emptyset$ and $Q=P_i$.
We see that $P_{i-1}\covered_{\pre}P_i$ and that the label of this cover is $a_i$.
Thus $P_0\covered_{\pre}P_1\covered_{\pre}\cdots\covered_{\pre}P_k$ maps to $w$, and we have shown that the map from maximal chains to $\C$ is onto.
We have proved Assertion~\ref{chains}. 
Finally, since the lengths of the words in $\C$ have a finite upper bound (by definition), Assertion~\ref{chains} implies the finite height property of Assertion~\ref{pre partial}.
\end{proof}

Proposition~\ref{chain sys poset} shows that a chain system $\C$ determines an edge-labeled poset with labels in $\A$.
To make the notation less cluttered, we refer to this edge-labeled poset as $\P(\C)$, with partial order $\le$.
In light of Proposition~\ref{chain sys bij}, it is equally reasonable to realize $\P(\C)$ as $(\Preeq(\C),\le_{\pre})$ or $(\Posteq(\C),\le_{\post})$.
Conversely, we have the following proposition, suggested by Proposition~\ref{chain sys poset}.\ref{bottom unique conclusion} and Proposition~\ref{chain sys poset}.\ref{top unique conclusion}.

\begin{proposition}\label{poset to chain sys}
Suppose $\P$ is a poset with finite height having edge-labels in $\A$ satisfying the following properties.
\begin{enumerate}[label=\rm(\roman*), ref=(\roman*)]
\item \label{bottom unique}
If two maximal chains $x_0\covered\cdots\covered x_k$ and $y_0\covered\cdots\covered y_\ell$ have the same labels on the first $i$ edges \emph{from the bottom} for some $i\ge0$, then $x_i=y_i$.
\item \label{top unique}
If two maximal chains $x_0\covered\cdots\covered x_k$ and $y_0\covered\cdots\covered y_\ell$ have the same labels on the first $i$ edges \emph{from the top} for some $i\ge0$, then $x_{k-i}=y_{\ell-i}$.
\end{enumerate}
Then the set $\C(\P)$ of words obtained by reading edge-labels of maximal chains from bottom to top is a chain system.
The map sending $x\in\P$ to the set of label sequences of saturated chains with top element $x$ is an isomorphism from $\P$ to $(\Preeq(\C),\le_{\pre})$.
\end{proposition}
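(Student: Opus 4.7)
The plan is to use Conditions~\ref{bottom unique} and~\ref{top unique} to identify prefix- and postfix-equivalence classes with elements of $\P$, thereby reducing the chain-system axioms to elementary facts about $\P$.

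First, setting $i=0$ in Conditions~\ref{bottom unique} and~\ref{top unique} forces $\P$ to have a unique minimum $\hat 0$ and a unique maximum $\hat 1$, so words in $\C:=\C(\P)$ are label sequences of maximal chains from $\hat 0$ to $\hat 1$. I would define $x\colon\Pre(\C)\to\P$ by sending $a_1\cdots a_i$ to the element at position $i$ in any saturated chain from $\hat 0$ with label sequence $a_1\cdots a_i$; this is well-defined by Condition~\ref{bottom unique} and surjective because every element of $\P$ extends to a maximal chain. Dually, define $y\colon\Post(\C)\to\P$. To see that $p\equiv p'$ in $\Pre(\C)$ if and only if $x(p)=x(p')$, the backward direction follows by taking $u$ to be the label sequence of any saturated chain from the common value to $\hat 1$; the forward direction follows by applying Condition~\ref{top unique} to any common extension $u$. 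Thus $x$ descends to a bijection $\Preeq(\C)\to\P$, and similarly for $y$.

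I would then verify the three chain-system axioms of Definition~\ref{chain sys def}. Condition~\ref{bound} is immediate from finite height. For Condition~\ref{conv}, the existence of $p\in P$ and $u\in X$ with $pu\in\C$ is equivalent to $x(P)=y(X)$; when that common element $z\in\P$ exists, any $p\in P$ labels a saturated chain from $\hat 0$ to $z$, any $u\in X$ labels a saturated chain from $z$ to $\hat 1$, and their concatenation is a maximal chain of $\P$, giving $pu\in\C$ for \emph{all} such $p,u$. For Condition~\ref{non subst}, suppose $pax,pwx\in\C$. Condition~\ref{bottom unique} with $i=|p|$ identifies an element $y$ at position $|p|$ common to both chains, and Condition~\ref{top unique} with $i=|x|$ identifies a second element $z$ at position $|p|+1$ in the first chain and position $|p|+|w|$ in the second. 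Since $y\covered z$ is a cover of $\P$ witnessed by the first chain, every saturated chain from $y$ to $z$ has length $1$; hence $|w|=1$, and since the edge $y\covered z$ carries a single label, $w=a$.

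Finally, for the isomorphism claim, the map sending $x\in\P$ to the set of label sequences of saturated chains from $\hat 0$ to $x$ is exactly the inverse of the bijection $\Preeq(\C)\to\P$ constructed above. If $x\le y$ in $\P$, concatenating a saturated chain from $\hat 0$ to $x$ with one from $x$ to $y$ exhibits the corresponding prefix relation; conversely, the witnessing extension of any prefix comparison provides a saturated chain from $x$ to $y$. I expect the main obstacle to be Condition~\ref{non subst}: it is the only step that simultaneously exploits both uniqueness hypotheses together with the fact that the edge-labeling is, by definition, a function on covers.
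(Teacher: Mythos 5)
Your proof is correct and follows essentially the same route as the paper's: identifying each prefix (resp.\ postfix) with the top (resp.\ bottom) element of the unique saturated chain it labels, characterizing $pu\in\C$ by the matching condition $\max(p)=\min(u)$, and deducing the three chain-system axioms plus the isomorphism from there. Your handling of Condition~(iii) via the cover $y\covered z$ and the functionality of the edge-labeling is the same argument the paper gives, just phrased directly through the top-unique hypothesis rather than through the prefix-equivalence characterization.
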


\begin{proof}
To avoid confusing the numbered conditions in this proposition with the numbered conditions in Definition~\ref{chain sys def}, we refer to the ``first condition of Definition~\ref{chain sys def}'', etc.\ and refer to conditions in this proposition as ``Condition~(i)'', etc.

The finite height of $\P$ implies the first condition of Definition~\ref{chain sys def}.

Condition~\ref{bottom unique} implies that for any $p\in\Pre(\C(\P))$, there is a unique saturated chain including the bottom element of $\P$ whose label sequence, reading from the bottom up, is~$p$.
Write $\max(p)$ for the top element of that chain.
Similarly, by Condition~\ref{top unique}, for any $x\in\Post(\C(\P))$, there is a unique saturated chain including the top element of $\P$ whose label sequence, reading up to the top, is~$x$.
Write $\min(x)$ for the bottom element of that chain.
Then $px\in\C(\P)$ if and only if $\max(p)=\min(x)$.
Therefore also, $p,p'\in\Pre(\C(\P))$ have $p\equiv p'$ if and only if $\max(p)=\max(p')$, and $x,x'\in\Post(\C(\P))$ have $x\equiv x'$ if and only if $\min(x)=\min(x')$.
The second condition of Definition~\ref{chain sys def} follows.

Finally, suppose for some label $a\in \A$ and some word $w=a_1\cdots a_k\in \A^*$, that there are maximal chains with labels $pax$ and $pwx$.
Then $\max(p)\covered\max(pa)$ and $\max(p)\covered\max(pa_1)\covered\cdots\covered\max(pa_1\cdots a_k)$.
But $pa\equiv pw$, so $\max(pa)=\max(pw)$, and therefore $k=1$ and $a_1=a$.
This is the third condition of Definition~\ref{chain sys def}.

We see that $\C(\P)$ is a chain system.
The isomorphism from $\P$ to $(\Preeq(\C),\le_{\pre})$ is now immediate.
\end{proof}

An \newword{isomorphism} between \emph{edge-labeled} posets $\P$ and $\P'$ is an isomorphism of posets $\eta:\P\to\P'$, in the usual sense, together with a bijection~$\lambda$ from the set of labels appearing on $\P$ to the set of labels appearing on $\P'$ such that, for each edge $x\covered y$ in $\P$ with label~$a$, the label on $\eta(x)\covered\eta(y)$ in $\P'$ is $\lambda(a)$.
The following proposition is immediate from Propositions~\ref{chain sys poset} and~\ref{poset to chain sys}.

\begin{proposition}\label{isom isom}
Two chain systems $\C$ and $\C'$ are isomorphic if and only if $\P(\C)$ and $\P(\C')$ are isomorphic as labeled posets.
\end{proposition}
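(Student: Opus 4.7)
The plan is to prove the two directions separately, in each case using the correspondence between chain systems and edge-labeled posets supplied by Propositions~\ref{chain sys poset} and~\ref{poset to chain sys} to reduce everything to a routine check.

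For the forward direction, I would start with a chain system isomorphism $\omega\colon\A_\C \to \A_{\C'}$. Its letterwise extension to $\A_\C^*$ sends $\C$ bijectively to $\C'$ by hypothesis, and therefore sends prefixes to prefixes and postfixes to postfixes. Since the equivalence relations on $\Pre(\C)$ and $\Post(\C)$ are defined purely in terms of membership in $\C$, they are preserved by $\omega$, so $\omega$ descends to a bijection $\eta\colon \Preeq(\C) \to \Preeq(\C')$. This bijection preserves the prefix relation on representatives and is therefore an order isomorphism with respect to $\le_{\pre}$. Finally, Proposition~\ref{chain sys poset}.\ref{edges pre} characterizes the label of a cover $P \covered_{\pre} Q$ as the unique letter $a$ with $pa \in Q$ for any $p \in P$, and the same characterization in $\C'$ identifies the label on $\eta(P) \covered_{\pre} \eta(Q)$ as $\omega(a)$. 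Thus $(\eta,\omega)$ is a labeled poset isomorphism.

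For the reverse direction, I would start with a labeled poset isomorphism $(\eta,\lambda)\colon \P(\C)\to\P(\C')$. By Proposition~\ref{chain sys poset}.\ref{chains}, reading edge labels bottom-to-top gives bijections between the maximal chains of $\P(\C)$ and $\C$, and between the maximal chains of $\P(\C')$ and $\C'$. Since $\eta$ sends maximal chains to maximal chains and $\lambda$ transports edge labels letter-by-letter, the letterwise extension of $\lambda$ realizes a bijection $\C \to \C'$. It remains to note that $\lambda$ is a bijection between $\A_\C$ and $\A_{\C'}$: this follows because, by our convention $\A_\C$ is exactly the set of letters appearing in words of $\C$, and each such letter appears as the label of some cover in $\P(\C)$, and vice versa. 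Hence $\lambda$ qualifies as a chain system isomorphism.

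There is no real obstacle; the only moment of care is verifying that the letterwise extension of an alphabet bijection preserves the equivalence relations $\equiv$ on prefixes and postfixes, which is immediate from their definition in terms of membership in $\C$. Everything else is bookkeeping that is already done once one invokes Proposition~\ref{chain sys poset}.
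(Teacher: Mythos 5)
Your proof is correct and follows exactly the route the paper intends: the paper gives no written argument, declaring the statement immediate from Propositions~\ref{chain sys poset} and~\ref{poset to chain sys}, and your two directions are precisely the bookkeeping that justifies that claim. The only point worth a half-sentence more of care is that an order-preserving bijection need not be an order isomorphism in general; here it is, because $\omega^{-1}$ is also a chain-system isomorphism and so preserves $\le_{\pre}$ in the reverse direction by the same argument.
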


\subsection{The shuffle product of chain systems}\label{shuffle sec}
Given two words $x=a_1a_2\cdots a_k$ and $y=b_1b_2\cdots b_\ell$ with $\set{a_1,a_2,\ldots,a_k}\cap\set{b_1,b_2,\ldots,b_\ell}=\emptyset$, a \newword{shuffle} of $x$ and $y$ is a word $z=c_1c_2\cdots c_{k+\ell}$ with the following two properties:
If we delete the letters in $\set{b_1,b_2,\ldots,b_\ell}$ from $z$, we obtain $x$.
If we delete the letters in $\set{a_1,a_2,\ldots,b_k}$ from $z$, we obtain $y$.
There are $\binom{k+\ell}{k}$ shuffles of $x$ and~$y$.
Write $x\shuffle y$ for the set of all shuffles of $x$ and~$y$.

Suppose $\C$ and $\C'$ are sets of words on \emph{disjoint} alphabets $\A$ and $\A'$.
The set
\[\C\shuffle\C'=\bigcup_{x\in\C,y\in\C'}x\shuffle y\]
of all shuffles of words in $\C$ with words in $\C'$ is called the \newword{shuffle product} of $\C$ and~$\C'$.
Given an element $z$ of $\C\shuffle\C'$, if $x\in\C$ and $y\in\C'$ are the words such that $z\in x\shuffle y$, then write $z|_\A$ for $x$ and $z|_{\A'}$ for $y$.
It is immediate that ${\Pre(\C\shuffle\C')=\Pre(\C)\shuffle\Pre(\C')}$ and ${\Post(\C\shuffle\C')=\Post(\C)\shuffle\Post(\C')}$.
Furthermore, if $p_1,p_2\in\Pre(\C\shuffle\C')$, then $p_1\equiv p_2$ as prefixes of $\C\shuffle\C'$ if and only if $p_1|_\A\equiv p_2|_\A$ as prefixes of $\C$ and $p_1|_{\A'}\equiv p_2|_{\A'}$ as prefixes of $\C'$.
The analogous statement is true for postfixes.

Given two edge-labeled posets $\P$ and $\P'$, we edge-label the product $\P\times\P'$ according to the following rules:
The label on an edge $(x,x')\covered(y,x')$ is the same as the label on $x\covered y$ in $\P$.
The label on an edge $(x,x')\covered(x,y')$ is the same as the label on $x'\covered y'$ in $\P'$.

The following proposition is immediate from the definitions.

\begin{proposition}\label{shuf prod}
Suppose $\C$ is a set of words in the alphabet $\A$, suppose $\C'$ is a set of words in the alphabet $\A'$, and suppose $\A\cap\A'=\emptyset$.
Then 
\begin{enumerate}[label=\bf\arabic*., ref=\arabic*]
\item \label{shuf chain sys}
$\C\shuffle\C'$ is a chain system if and only if $\C$ and $\C'$ are both chain systems.
In that case, $\P(\C\shuffle\C')$ and $\P(\C)\times\P(\C')$ are isomorphic as edge-labeled posets.
(The bijection on the set of edge labels is the identity map on $\A\cup\A'$.)  
\item \label{shuf bin}
Suppose $B$ and $B'$ are binary compatibility relations on $\A$ and $\A'$ respectively.
The following are equivalent.
\begin{enumerate}[label=\rm(\roman*), ref=(\roman*)]
\item $\C$ and $\C'$ are binary chain systems with binary compatibility relations $B$ and $B'$ respectively. \item
$\C\shuffle\C'$ is a binary chain system with binary compatibility relation ${B\cup B'\cup\set{aa':a\in\A,a'\in\A'}\cup\set{a'a:a\in\A,a'\in\A'}}$.
\end{enumerate}
\end{enumerate}
\end{proposition}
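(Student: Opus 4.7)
The plan is to reduce both parts to two structural observations recorded just before the statement: $\Pre(\C\shuffle\C')=\Pre(\C)\shuffle\Pre(\C')$ with the analogous identity for postfixes, and that prefix (resp.\ postfix) equivalence in $\C\shuffle\C'$ decomposes coordinate-wise into the equivalences in $\C$ and $\C'$. Everything then reduces to verifying the three conditions of Definition~\ref{chain sys def} by restricting to each alphabet.

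For Assertion~\ref{shuf chain sys}, I would first prove the backward direction. Assuming $\C$ and $\C'$ are chain systems, Condition~(i) is immediate since the length of $z\in\C\shuffle\C'$ equals the sum of the lengths of $z|_\A$ and $z|_{\A'}$. For Condition~(ii), given $P\in\Preeq(\C\shuffle\C')$, $X\in\Posteq(\C\shuffle\C')$, and some $p\in P$, $x\in X$ with $px\in\C\shuffle\C'$, the restrictions satisfy $(px)|_\A\in\C$ and $(px)|_{\A'}\in\C'$; then for any other $p'\in P$ and $x'\in X$, the coordinate-wise decomposition of equivalence combined with Condition~(ii) for $\C$ and $\C'$ gives $(p'x')|_\A\in\C$ and $(p'x')|_{\A'}\in\C'$, which forces $p'x'\in\C\shuffle\C'$. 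For Condition~(iii), given $pax,pwx\in\C\shuffle\C'$, the single letter $a$ lies in exactly one alphabet, say $\A$; restricting both words to $\A$ and applying Condition~(iii) in $\C$ forces $w|_\A=a$, while restricting to $\A'$ yields $(p|_{\A'})(x|_{\A'})$ and $(p|_{\A'})(w|_{\A'})(x|_{\A'})$ in $\C'$, and Lemma~\ref{maximal} applied to $\C'$ gives $w|_{\A'}=\emptyset$. Together these force $w=a$. The forward direction is handled by contrapositive: for nonempty $\C$ and $\C'$, any failure of (i), (ii), or (iii) in one factor transports to $\C\shuffle\C'$ by shuffling with a fixed word from the other factor. (The degenerate case in which one factor is empty is trivial, since then $\C\shuffle\C'$ is empty.) For the isomorphism of edge-labeled posets, the map $[p]\mapsto([p|_\A],[p|_{\A'}])$ from $\P(\C\shuffle\C')$ to $\P(\C)\times\P(\C')$ is well-defined and bijective by the coordinate-wise decomposition of equivalence, order-preserving since the prefix relation is preserved under restriction, and label-preserving because, by Proposition~\ref{chain sys poset}.\ref{edges pre}, each cover corresponds to appending a single letter, which lies in exactly one alphabet.

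For Assertion~\ref{shuf bin}, I would observe that for any $z\in(\A\cup\A')^*$, $z$ is a sequence with respect to the enlarged relation $B\cup B'\cup\set{aa':a\in\A,a'\in\A'}\cup\set{a'a:a\in\A,a'\in\A'}$ if and only if $z|_\A$ is a $B$-sequence and $z|_{\A'}$ is a $B'$-sequence, because every mixed-alphabet pair is automatically compatible. Maximality decomposes similarly, since any extension of $z$ by a letter of $\A$ in the enlarged relation corresponds exactly to an extension of $z|_\A$ in $B$, and similarly for $\A'$. Hence $\C_{B\cup B'\cup\cdots}=\C_B\shuffle\C_{B'}$, and Assertion~\ref{shuf chain sys} yields the desired equivalence.

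The main obstacle will be the bookkeeping in Condition~(iii) of the backward direction of Assertion~\ref{shuf chain sys}: one must simultaneously extract, from a single-letter-versus-word replacement in $\C\shuffle\C'$, the correct single-letter replacement in one factor and the triviality of the replacement in the other, which is what forces the simultaneous use of Condition~(iii) and Lemma~\ref{maximal}.
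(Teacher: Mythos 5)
Your proof is correct and follows the route the paper intends: the paper offers no argument beyond declaring the proposition immediate from the definitions together with the observations recorded just before the statement ($\Pre(\C\shuffle\C')=\Pre(\C)\shuffle\Pre(\C')$, its postfix analog, and the coordinate-wise decomposition of prefix/postfix equivalence), and these are exactly the tools you use to verify the three conditions of Definition~\ref{chain sys def} --- including the nice combined use of Condition~(iii) in one factor and Lemma~\ref{maximal} in the other --- and to build the labeled-poset isomorphism. The only points you leave implicit are that the inverse map $([u],[v])\mapsto[uv]$ is also order-preserving (needed because a bijective order-preserving map is not automatically a poset isomorphism) and that the forward implication of Assertion~\ref{shuf chain sys} genuinely requires both factors to be nonempty (an empty factor makes $\C\shuffle\C'$ vacuously a chain system regardless of the other factor); neither affects the substance.
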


\subsection{Chain systems and combinatorial Garside structures}\label{Garside sec}
Our interest in labeled posets, and thus in chain systems, is motivated in part by the theory of combinatorial Garside structures.
For a gentle introduction to Garside structures in a slightly more restricted setting, see~\cite{McCammondGarside}, or for more generality, see \cite[Section~2]{McSul}.  
One difference between the two references is that \cite{McSul} relaxes the requirement that the poset be graded.  
We give a simpler definition that is less restrictive than \cite{McCammondGarside} but more restrictive than \cite{McSul}.
Suppose each label has a weight and that the set of all weights is a finite subset of the positive real numbers.
The weight of a maximal chain is the sum of the weights of its labels.
We will say that a labeled poset is \newword{weighted-graded} there exists such an assignment of weights with the property that every maximal chain has the same weight. 
A \newword{combinatorial Garside structure} is a weighted-graded labeled poset that has a unique minimal element and a unique maximal element and finite height, that it is balanced and group-like, and that is a lattice.
We close this section by defining the balanced and group-like conditions in the language of chain systems and proving some simple results.
For the rest of the section,~$\C$ is a chain system.

The labeled poset $\P(\C)$ is \newword{balanced} if and only if $\Pre(\C)=\Post(\C)$.

Given $x\le y\in\P(\C)$, define the \newword{restriction} $\C|_{[x,y]}$ of $\C$ to $[x,y]$ to be the set of words obtained by reading labels, from $x$ up to $y$ on saturated chains between $x$ and $y$.
It follows from Proposition~\ref{poset to chain sys} that $\C|_{[x,y]}$ is a chain system whenever $x\le y\in\P(\C)$.
The labeled poset $\P(\C)$ is \newword{group-like} if and only if the following condition holds whenever $x\le y\le z$ and $x'\le y'\le z'$:
If two of the sets $\C|_{[x,y]}$, $\C|_{[x,z]}$ and $\C|_{[y,z]}$ are equal to their counterparts with primed symbols, then so is the third.
We give a simpler and formally stronger condition that implies that $\P(\C)$ is group-like.
Say that $\C$ has the \newword{restriction property} if whenever $x\le y$ and $x'\le y'$ in $\P(\C)$ and $\C|_{[x,y]}$ and $\C|_{[x',y']}$ are not disjoint, then $\C|_{[x,y]}=\C|_{[x',y']}$.

\begin{proposition}\label{res prop}
If $\C$ is a chain system with the restriction property, then $\P(\C)$ is group-like.
\end{proposition}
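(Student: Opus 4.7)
The plan is to verify the group-like condition case-by-case: in each of the three possible cases, I would produce a single word that lies in both of the restriction sets to be equated and then invoke the restriction property directly. Since $\P(\C)$ has finite height, every restriction $\C|_{[a,b]}$ with $a\le b$ is nonempty, so the representatives needed below can always be chosen.

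For the case where $\C|_{[x,y]}=\C|_{[x',y']}$ and $\C|_{[y,z]}=\C|_{[y',z']}$ are given, I would pick $u_1\in\C|_{[x,y]}$ and $u_2\in\C|_{[y,z]}$ and concatenate the underlying saturated chains: the concatenation witnesses $u_1u_2\in\C|_{[x,z]}\cap\C|_{[x',z']}$, so the restriction property closes the case. For the case where $\C|_{[x,y]}=\C|_{[x',y']}$ and $\C|_{[x,z]}=\C|_{[x',z']}$ are given, I would choose any $u_2\in\C|_{[y,z]}$ and any $u_1\in\C|_{[x,y]}$, so that $u_1u_2$ labels some saturated chain from $x'$ to $z'$. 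The crux is to show that this chain passes through $y'$: after fixing a saturated chain from the minimum element of $\P(\C)$ to $x'$, the extension by $u_1$ to the intermediate vertex and the extension by $u_1$ to $y'$ (the latter existing because $u_1\in\C|_{[x',y']}$) are maximal chains sharing an initial label sequence, so Proposition~\ref{chain sys poset}.\ref{bottom unique conclusion} forces them to agree at the relevant rank. Hence $u_2\in\C|_{[y',z']}$, and the restriction property again closes the case. The third case, with $\C|_{[y,z]}=\C|_{[y',z']}$ and $\C|_{[x,z]}=\C|_{[x',z']}$ given, is symmetric: one argues ``from the top'' via Proposition~\ref{chain sys poset}.\ref{top unique conclusion} that the intermediate vertex of a chain from $x'$ to $z'$ whose final segment has label sequence $u_2$ must coincide with $y'$.

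The main obstacle will be the bookkeeping in the latter two cases, specifically the precise translation of the intuitive claim ``the chain from $x'$ to $z'$ passes through $y'$'' into an application of the prefix or postfix uniqueness statements for chain systems. Once that translation is in place, the restriction property disposes of every case uniformly.
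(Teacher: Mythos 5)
Your proof is correct and follows essentially the same route as the paper's: the identical concatenation argument in the first case, and the prefix/postfix uniqueness statements (Proposition~\ref{chain sys poset}, Assertions~\ref{bottom unique conclusion} and~\ref{top unique conclusion}) to locate the intermediate vertex in the other two. The only (harmless) difference is that you invoke the restriction property in all three cases, whereas the paper uses it only in the first and establishes the remaining two equalities of restrictions directly.
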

\begin{proof}
Suppose $\C$ has the restriction property and $x\le y\le z$ and $x'\le y'\le z'$ in~$\P(\C)$.
If $\C|_{[x,y]}=\C|_{[x',y']}$ and $\C|_{[y,z]}=\C|_{[y',z']}$, then for any $w_1\in\C|_{[x,y]}$ and $w_2\in\C|_{[y,z]}$, the concatenation $w_1w_2$ is in $\C|_{[x,z]}$ and in $\C|_{[x',z']}$.
By the restriction property, $\C|_{[x,z]}=\C|_{[x',z']}$.

Suppose $\C|_{[x,y]}=\C|_{[x',y']}$ and $\C|_{[x,z]}=\C|_{[x',z']}$.
Since $\C|_{[x,z]}=\C|_{[x',z']}$, the labeled posets $\P(\C|_{[x,z]})$ and $\P(\C|_{[x',z']})$ coincide, so $[x,z]$ and $[x',z']$ are isomorphic as labeled posets.
We describe the isomorphism explicitly using the description in Proposition~\ref{poset to chain sys}.
Fix a saturated chain from~$\emptyset$ to~$x$, with label sequence $w_1$ and a saturated chain from~$\emptyset$ to~$x'$ with label sequence $w'_1$.
For every element $u$ of $[x,z]$, take a saturated chain from $x$ to $u$ with label sequence $w_2$.
The isomorphism sends $u$ to the unique element $u'$ of $[x',z']$ such that $w'_1w_2$ labels a chain from $\emptyset$ to $u'$
Since also $\C|_{[x,y]}=\C|_{[x',y']}$, the isomorphism sends $y$ to $y'$.
Suppose $w_2$ is such that $w_1w_2$ labels a saturated chain from $\emptyset$ to $y$, so that $w'_1w_2$ labels a chain from $\emptyset$ to $y'$.
For any $w_3$ such that $w_1w_2w_3$ labels a chain from $\emptyset$ to $z$, the word $w'_1w_2w_3$ labels a chain from $\emptyset$ to $z'$.
On the other hand, for any $w'_3$ such that $w'_1w_2w'_3$ labels a chain from $\emptyset$ to $z'$, the word $w_1w_2w'_3$ labels a chain from $\emptyset$ to~$z$.
We see that $\C|_{[y,z]}=\C|_{[y',z']}$.

If instead $\C|_{[x,z]}=\C|_{[x',z']}$ and $\C|_{[y,z]}=\C|_{[y',z']}$, then we argue similarly, considering saturated chains ending at $\C$ rather than starting at $\emptyset$.
\end{proof}

\section{The chain system of exceptional sequences}\label{ex chain sec}
In this section and the next, we reinterpret Theorems~\ref{thm:IS} and~\ref{thm:NC} in terms of binary chain systems.
This section is devoted to showing that the complete exceptional sequences in $\mods\Lambda$ form a binary chain system and using exceptional subcategories to realize the prefix (equivalently postfix) order on this chain system.
This mostly amounts to collecting results from \cite{CB,HuberyKrause,IgusaSchiffler,RingelBraid}. 
We also show that the corresponding labeled poset is a combinatorial Garside structure whenever the poset of exceptional subcategories is a lattice.
This fact is a representation-theoretic version of an observation of Michel that is recorded in \cite[Theorem~0.5.2]{Bessis} and the paragraphs before and after that theorem.

Consider the binary compatibility relation $\Bx$ on the alphabet $\excep$ consisting of the two-term exceptional sequences $(X,Y)$.  
We write $\Cx$ for the set~$\C_{\Bx}$ of maximal $\Bx$-sequences in the sense of Definition~\ref{bin chain sys def}.
The set $\Cx$ is precisely the set of maximal exceptional sequences in the sense of Definition~\ref{def:exceptional_sequence}.
The following proposition says that~$\Cx$ also coincides with the set of complete exceptional sequences. 

\begin{proposition}\label{prop:complete_ex_maximal}
Let $(X_1,\ldots,X_k)$ be an exceptional sequence. Then the following are equivalent.
\begin{enumerate}[label=\rm(\roman*), ref=(\roman*)]
\item \label{cm1} $(X_1,\ldots,X_k) \in \Cx$, that is, $(X_1,\ldots,X_k)$ is maximal.
\item \label{cm2} $(X_1,\ldots,X_k)$ is complete, that is, $k = n$.
\item \label{cm3} $\sW(X_1,\ldots,X_k) = \mods\Lambda$.
\item \label{cm4} $(X_1,\ldots,X_k)^\perp = 0$.
\item \label{cm5} ${}^\perp(X_1,\ldots,X_k) = 0$.
\end{enumerate}
\end{proposition}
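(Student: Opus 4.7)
The plan is to reduce everything to two classical inputs from the theory of exceptional sequences over hereditary algebras, both going back to \cite{CB,RingelBraid}: first, every exceptional sequence extends to one of length~$n$; and second, if $(X_1,\ldots,X_k)$ is an exceptional sequence, then $\sW(X_1,\ldots,X_k)$, $(X_1,\ldots,X_k)^\perp$, and ${}^\perp(X_1,\ldots,X_k)$ are all exceptional subcategories, with each of the two perpendicular categories equivalent as an abelian category to $\mods\Lambda'$ for some hereditary $\Lambda'$ having exactly $n-k$ simple objects. Given these inputs, the rest is bookkeeping.

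I would first establish the equivalence \ref{cm2} $\Leftrightarrow$ \ref{cm3} $\Leftrightarrow$ \ref{cm4} $\Leftrightarrow$ \ref{cm5} in one stroke. By Lemma~\ref{lem:ortho_wide}, the perpendicular categories of the sequence coincide with those of $\sW(X_1,\ldots,X_k)$, so \ref{cm3} implies \ref{cm4} and \ref{cm5} trivially. Conversely, if either perp vanishes, the second classical input forces $n-k=0$; then $\sW(X_1,\ldots,X_k)$ is an exceptional subcategory with $n$ distinct simple objects, and a short perpendicular-reduction argument (inducting on $k$ by peeling off $X_1$ via $\sW(X_2,\ldots,X_n) = (X_1)^\perp$) shows that such a subcategory of $\mods\Lambda$ must equal $\mods\Lambda$.

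For \ref{cm1} $\Leftrightarrow$ \ref{cm2}: If $k<n$, the first classical input gives an extension, so $(X_1,\ldots,X_k)$ is not maximal in the sense of Definition~\ref{def:exceptional_sequence}. Conversely, if $k=n$, any hypothetical insertion would yield an exceptional sequence of length $n+1$; but by the already-proved equivalence applied to this longer sequence, its generated wide subcategory would have $n+1$ simple objects, contradicting the fact that $\mods\Lambda$ has only $n$ simples.

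The main obstacle is housed entirely in the two classical inputs; once they are cited, the five-way equivalence is a careful unwinding of definitions. One small side-check worth flagging is the left-right convention: a module $Y \in (X_1,\ldots,X_k)^\perp$ satisfies $\Hom(X_i,Y)=0=\Ext^1(X_i,Y)$, so it extends the sequence on the \emph{left} to $(Y,X_1,\ldots,X_k)$; a module in ${}^\perp(X_1,\ldots,X_k)$ extends on the right. This matters if one prefers a direct proof of \ref{cm1} $\Rightarrow$ \ref{cm2} that avoids invoking the general extension theorem, extracting a suitable $Y$ directly from a nonvanishing perpendicular category.
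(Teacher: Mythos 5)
Your proposal is correct in substance and runs on the same two classical engines as the paper's proof: the extension lemma (Lemma~\ref{lem:exceptional_index_arbitrary}, i.e.\ \cite[Lemma~1]{CB}) and the structure of perpendicular categories. The organization differs in two places. First, where the paper closes the cycle via the contrapositive \ref{cm4}$\implies$\ref{cm1} (a nonmaximal sequence extends, and the inserted module lands in the perpendicular category), you instead invoke the count of simples in $(X_1,\ldots,X_k)^\perp \simeq \mods\Lambda'$ to get \ref{cm4}$\implies$\ref{cm2} directly; both work, and the paper's version has the advantage of needing only the extension lemma rather than the finer Geigle--Lenzing count. Second, and more importantly, for \ref{cm2}$\implies$\ref{cm3} the paper simply cites \cite[Lemma~3]{CB}, whereas you propose to reprove it by peeling off $X_1$ and inducting. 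That step is where your ``bookkeeping'' hides real content: the induction needs the fact that the smallest wide subcategory containing an exceptional module $X_1$ together with its perpendicular category is all of $\mods\Lambda$, and that fact \emph{is} essentially \cite[Lemma~3]{CB}. If you try to supply it instead from $({}^\perp\sW(X_1))^\perp=\sW(X_1)$, you would be using Corollary~\ref{cor:anti_isom_fg}, whose proof in this paper depends on Proposition~\ref{prop:complete_ex_maximal} itself, so that route is circular here. Either cite \cite[Lemma~3]{CB} as the paper does, or give the recollement-style argument in full.

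One small correction to your peeling step: under the conventions of Definition~\ref{def:exceptional_sequence}, prepending on the left corresponds to the \emph{right}-perpendicular category (as you correctly note at the end), so for a complete exceptional sequence $(X_1,\ldots,X_n)$ one has $X_1\in(X_2,\ldots,X_n)^\perp$ and hence $\sW(X_2,\ldots,X_n)={}^\perp(X_1)$, not $(X_1)^\perp$ as written in your sketch.
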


Proposition~\ref{prop:complete_ex_maximal} mostly follows from results of \cite{CB}, which as observed in \cite{RingelBraid} hold even when $\field$ is not algebraically closed.
See also \cite[Corollary~A.7]{HuberyKrause}.
We state one of these results here for future reference, namely \cite[Lemma~1]{CB}.

\begin{lemma}\label{lem:exceptional_index_arbitrary}
If $(X_1,\ldots,X_k)$ is an exceptional sequence, then $k \leq n$ and for any $0 \leq i \leq k$ there exist $Y_1,\ldots,Y_{n-k}$ such that $(X_1,\ldots,X_i,Y_1,\ldots,Y_{n-k},X_{i+1},\ldots,X_k)$ is a complete exceptional sequence.
\end{lemma}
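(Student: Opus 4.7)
The plan is to prove both assertions simultaneously by induction on $k$, with the key tool being Schofield's theorem on perpendicular categories of exceptional modules. Specifically, I would establish the stronger statement that for any exceptional sequence $(X_1,\ldots,X_k)$ and any $0 \leq i \leq k$, the bi-perpendicular category
\[
\W_i = (X_1,\ldots,X_i)^\perp \cap {}^\perp(X_{i+1},\ldots,X_k)
\]
is a wide subcategory of $\mods\Lambda$ that is equivalent, as an abelian category, to $\mods\Lambda'$ for some finite-dimensional hereditary algebra $\Lambda'$ with exactly $n-k$ simple modules. Granting this, the bound $k \leq n$ is immediate, and any complete exceptional sequence $(Y_1,\ldots,Y_{n-k})$ inside $\W_i$, viewed as modules in $\mods\Lambda$, slots into position $i$ of $(X_1,\ldots,X_k)$ to yield the desired complete exceptional sequence of $\mods\Lambda$: the orthogonality conditions demanded of the $Y_j$ against the $X_\ell$ are built directly into the definition of $\W_i$, and Remark~\ref{rem:exact} guarantees that the ambient $\Hom$ and $\Ext^1$ agree with those computed inside $\W_i$.

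The base case $k = 0$ reduces to producing a complete exceptional sequence in $\mods\Lambda$, for which one takes the indecomposable projectives $P_1,\ldots,P_n$, ordered so that $\Hom(P_{i_j},P_{i_k}) = 0$ whenever $j > k$; such an ordering exists because $\Lambda$ is hereditary, and it forces both the required vanishing of $\Hom$ (by construction) and of $\Ext^1$ (since projectives have no self-extensions and the order is compatible with acyclicity). The inductive step rests on Schofield's theorem: if $E$ is exceptional in a wide subcategory $\W$ equivalent to $\mods\Lambda'$ with $m$ simples, then $E^\perp \cap \W$ and ${}^\perp E \cap \W$ are each wide subcategories of $\W$ equivalent to $\mods\Lambda''$ for some finite-dimensional hereditary algebra $\Lambda''$ with exactly $m - 1$ simples. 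Applying this result $i$ times (taking right perpendiculars to $X_1,\ldots,X_i$ in order) and then $k - i$ times (taking left perpendiculars to $X_k,\ldots,X_{i+1}$ in reverse order) produces $\W_i$ with the claimed count of simples.

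The principal obstacle is organizing the iterated application of Schofield's theorem. One must confirm that the orthogonality relations built into an exceptional sequence guarantee that each successive $X_j$ remains exceptional in the ambient wide subcategory at the moment one passes to its perpendicular, and that the resulting category $\W_i$ does not depend on the order in which right and left perpendicular operations are performed. This independence follows from Lemma~\ref{lem:ortho_wide} together with the observation that, for modules satisfying the orthogonality relations of an exceptional sequence, left and right perpendicular operations commute in the appropriate sense, so that the $i$ right-perpendicular steps and the $k-i$ left-perpendicular steps can be interleaved freely without altering the final subcategory.
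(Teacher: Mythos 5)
The paper gives no proof of this lemma --- it is quoted as \cite[Lemma~1]{CB} --- and your strategy (iterated perpendicular categories via Schofield's theorem, with the projectives supplying a complete exceptional sequence in the base case) is essentially the proof in that source, so the route is the right one. There is, however, a concrete orientation error that breaks the argument under the conventions of this paper. Definition~\ref{def:exceptional_sequence} requires $\Hom(X_i,X_j)=0=\Ext^1(X_i,X_j)$ for $j<i$, i.e.\ later terms are \emph{left}-perpendicular to earlier terms. Hence a module $Y$ can be inserted between $X_i$ and $X_{i+1}$ exactly when $Y \in {}^\perp(X_1,\ldots,X_i)\cap(X_{i+1},\ldots,X_k)^\perp$ (compare Proposition~\ref{prop:maximal_exceptional}), whereas your $\W_i=(X_1,\ldots,X_i)^\perp\cap{}^\perp(X_{i+1},\ldots,X_k)$ is the bi-perpendicular category for the opposite convention, and a complete exceptional sequence of $\W_i$ spliced into position $i$ would satisfy the wrong vanishing conditions against the $X_\ell$. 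The swap is not merely cosmetic downstream either: your iteration ``right perpendiculars to $X_1,\ldots,X_i$ in order'' needs $X_2\in X_1^\perp$, i.e.\ $\Hom(X_1,X_2)=0=\Ext^1(X_1,X_2)$, which does \emph{not} follow from the definition of an exceptional sequence here, so Schofield's theorem cannot be applied to $X_2$ inside $X_1^\perp$. The fix is to reverse everything: compute $(X_{i+1},\ldots,X_k)^\perp$ by right-perpendiculars in the order $X_k,X_{k-1},\ldots,X_{i+1}$ (each $X_j$ lies in the perpendicular category of the later terms precisely because of Definition~\ref{def:exceptional_sequence}), and then compute ${}^\perp(X_1,\ldots,X_i)$ inside that category by left-perpendiculars in the order $X_1,\ldots,X_i$.

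A second, smaller point: the worry about whether left- and right-perpendicular operations ``commute'' is not really the issue, and Lemma~\ref{lem:ortho_wide} does not supply what you need. What the iteration actually requires is the containment statement above --- at each step the next module lies in the wide subcategory already constructed and remains exceptional there (its $\Hom$ and $\Ext^1$ are unchanged by Remark~\ref{rem:exact}) --- together with the observation that a perpendicular category computed inside a full exact-embedded wide subcategory equals the intersection of the ambient perpendicular category with that subcategory. Once the orientation is corrected and this containment is made explicit, the rest of your argument (the count of simples dropping by one at each step, the projectives of $\Lambda'$ furnishing a complete exceptional sequence of length $n-k$ in $\W_i$, and the splicing) is sound.
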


\begin{proof}[Proof and citations for Proposition~\ref{prop:complete_ex_maximal}]
The equivalence \ref{cm1}$\iff$\ref{cm2} is immediate from Lemma~\ref{lem:exceptional_index_arbitrary}.
The implication \ref{cm2}$\implies$\ref{cm3} is \cite[Lemma~3]{CB}.
The implications \ref{cm3}$\implies$\ref{cm4} and \ref{cm3}$\implies$\ref{cm5} follow from Lemma~\ref{lem:ortho_wide}.
We demonstrate the contrapositive of \ref{cm4}$\implies$\ref{cm1}:
If $(X_1,\ldots,X_k)$ is not maximal, then Lemma~\ref{lem:exceptional_index_arbitrary} says that there is an exceptional sequence $(Y,X_1,\ldots,X_k)$. 
Thus by definition, $Y \in (X_1,\ldots,X_k)^\perp$, so \ref{cm4} fails. 
The implication \ref{cm5}$\implies$\ref{cm1} is analogous. 
\end{proof}

In light of Proposition~\ref{prop:complete_ex_maximal}, we freely use ``maximal exceptional sequence'' to mean ``complete exceptional sequence'' when referencing results from the literature.

We will use the following characterization of exceptional subcategories to prove an extension of Proposition~\ref{prop:complete_ex_maximal} (Proposition~\ref{prop:maximal_exceptional}) and two further consequences (Corollaries~\ref{cor:middle_exceptional} and~\ref{cor:anti_isom_fg}).
See also \cite[Lemma~4]{CB}.

\begin{proposition}\label{prop:wide_fg}
Let $\W$ be a wide subcategory. 
Then the following are equivalent.
\begin{enumerate}[label=\rm(\roman*), ref=(\roman*)]
\item \label{wfg1} $\W$ is exceptional. 
\item \label{wfg2} $\W = (X_1,\ldots,X_\ell)^\perp$ for some exceptional sequence $(X_1,\ldots,X_\ell)$.
\item \label{wfg3} $\W = {}^\perp(X_1,\ldots,X_\ell)$ for some exceptional sequence $(X_1,\ldots,X_\ell)$. 
\item \label{wfg4} There exists a finite-dimensional hereditary algebra $\Lambda'$ admitting an exact equivalence $\W \simeq \mods\Lambda'$.
\end{enumerate}
\end{proposition}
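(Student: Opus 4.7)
The plan is to prove the four conditions equivalent by establishing the cycle (i)$\Rightarrow$(ii)$\Rightarrow$(iv)$\Rightarrow$(i) and symmetrically (i)$\Rightarrow$(iii)$\Rightarrow$(iv). The substantive input is the perpendicular calculus of Geigle--Lenzing \cite{GL} and Schofield, which we will cite (or extract via \cite[Lemma~4]{CB}) in the following form: for any exceptional sequence $(X_1,\ldots,X_\ell)$ in $\mods\Lambda$, both $(X_1,\ldots,X_\ell)^\perp$ and ${}^\perp(X_1,\ldots,X_\ell)$ are wide subcategories admitting an exact equivalence to $\mods\Lambda'$ for some finite-dimensional hereditary algebra $\Lambda'$, moreover of rank $n-\ell$. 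This immediately delivers (ii)$\Rightarrow$(iv) and (iii)$\Rightarrow$(iv), given Remark~\ref{rem:exact} to identify the Ext-functors.

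For (iv)$\Rightarrow$(i), suppose $F\colon \W\overset{\sim}{\to}\mods\Lambda'$ is an exact equivalence and let $k$ denote the rank of $\Lambda'$. Since $\Lambda'$ is finite-dimensional hereditary, each simple $\Lambda'$-module is exceptional in $\mods\Lambda'$: bricks because simple, and rigid because the associated species/quiver is necessarily acyclic. Transporting back through $F$, we obtain a semibrick $\{T_1,\ldots,T_k\}=\simp(\W)$ of modules which, by Remark~\ref{rem:exact}, are exceptional in $\mods\Lambda$. Ordering the $T_i$ as in Section~\ref{sec:reps} so that $\Ext^1(T_i,T_j)=0$ whenever $j<i$, we see that $(T_1,\ldots,T_k)$ is an exceptional sequence. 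By Proposition~\ref{prop:semibrick}, $\W=\Filt(T_1,\ldots,T_k)=\sW(T_1,\ldots,T_k)$, so $\W$ is exceptional.

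For (i)$\Rightarrow$(ii), write $\W=\sW(X_1,\ldots,X_k)$ for an exceptional sequence $(X_1,\ldots,X_k)$ and use Lemma~\ref{lem:exceptional_index_arbitrary} to extend it to a complete exceptional sequence $(X_1,\ldots,X_k,Y_1,\ldots,Y_{n-k})$. The defining vanishing conditions force each $X_i\in(Y_1,\ldots,Y_{n-k})^\perp$, hence $\W\subseteq(Y_1,\ldots,Y_{n-k})^\perp$. By perpendicular calculus, $(Y_1,\ldots,Y_{n-k})^\perp$ is exact-equivalent to $\mods\Lambda''$ for some hereditary $\Lambda''$ of rank $k$, inside which $(X_1,\ldots,X_k)$ is then a \emph{complete} exceptional sequence. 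Applying Proposition~\ref{prop:complete_ex_maximal} inside $(Y_1,\ldots,Y_{n-k})^\perp$ yields $\sW(X_1,\ldots,X_k)=(Y_1,\ldots,Y_{n-k})^\perp$, as wanted. The implication (i)$\Rightarrow$(iii) is the same argument, extending instead to $(Y_1,\ldots,Y_{n-k},X_1,\ldots,X_k)$ and using ${}^\perp$.

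The main obstacle is the perpendicular calculus input itself, in particular the fact that the perpendicular category to an exceptional sequence is again equivalent to a hereditary module category of the correctly reduced rank; we plan to invoke this from the literature rather than reproduce it. The remaining work is essentially bookkeeping: tracking that exactness of the embedding aligns the ambient and intrinsic Ext-functors (Remark~\ref{rem:exact}), that the rank count forces the inclusions in Step~3 to be equalities, and that the simples of a hereditary algebra can be ordered compatibly with the exceptional-sequence convention fixed in Section~\ref{sec:reps}.
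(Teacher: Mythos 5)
Your proof is correct, but it routes around the paper's main citation. The paper disposes of the equivalence \ref{wfg1}$\iff$\ref{wfg2}$\iff$\ref{wfg3} by quoting \cite[Theorem~A.4]{HuberyKrause}, cites \cite[Lemma~5]{CB} for \ref{wfg1}$\implies$\ref{wfg4}, and handles \ref{wfg4}$\implies$\ref{wfg1} exactly as you do, by applying Proposition~\ref{prop:complete_ex_maximal} inside $\mods\Lambda'$ (your detour through the simples of $\Lambda'$ and Proposition~\ref{prop:semibrick} is a concrete way of producing the needed complete exceptional sequence; one could equally extend the empty sequence via Lemma~\ref{lem:exceptional_index_arbitrary}). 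What you do differently is prove \ref{wfg1}$\implies$\ref{wfg2} and \ref{wfg1}$\implies$\ref{wfg3} from scratch: extend $(X_1,\ldots,X_k)$ to a complete exceptional sequence, observe that the $X_i$ land in the perpendicular category of the appended terms, and use the rank count from the Geigle--Lenzing/Schofield perpendicular calculus together with Proposition~\ref{prop:complete_ex_maximal} to force the inclusion to be an equality. This is essentially how the paper later proves Proposition~\ref{prop:maximal_exceptional}, so your argument is consistent with the paper's toolkit and introduces no circularity (you correctly avoid invoking Proposition~\ref{prop:maximal_exceptional} or Corollary~\ref{cor:anti_isom_fg}, both of which depend on the present proposition). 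The trade-off is that the paper's version is shorter but leans on an external black box for the hardest equivalence, while yours makes the logical dependence on the perpendicular calculus explicit and self-contained; the cost is that you must separately justify that the simples of a finite-dimensional hereditary algebra are rigid and admit an acyclic ordering, a standard fact the paper already presupposes in Section~\ref{sec:reps} but which you should state with a citation rather than the parenthetical about acyclicity of the species.
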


\begin{proof}[Proof and citations]
The equivalence of \ref{wfg1}--\ref{wfg3} can be found as part of \cite[Theorem~A.4]{HuberyKrause}. The implication \ref{wfg1}$\implies$\ref{wfg4} is \cite[Lemma~5]{CB}. 
The implication \ref{wfg4}$\implies$\ref{wfg1} follows from applying Proposition~\ref{prop:complete_ex_maximal} in the category $\mods\Lambda'$. 
\end{proof}

\begin{proposition}\label{prop:maximal_exceptional}
Let $(X_1,\ldots,X_k)$ be an exceptional sequence.   
Then each of the following conditions is equivalent to $(X_1,\ldots,X_k)$ being maximal.
\begin{enumerate}[label=\rm(\roman*), ref=(\roman*)]
\item For $0\le i\le j\le k$, $\sW(X_{i+1},\ldots,X_j) = {}^\perp(X_1,\ldots,X_i) \cap (X_{j+1},\ldots,X_k)^\perp$.
\item There exist $i\le j$ with $\sW(X_{i+1},\ldots,X_j) = {}^\perp(X_1,\ldots,X_i) \cap (X_{j+1},\ldots,X_k)^\perp$.
\end{enumerate}
\end{proposition}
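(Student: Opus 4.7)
The plan is to prove that maximality is equivalent to both (i) and (ii) by establishing (ii) $\Rightarrow$ maximal (via contrapositive) and maximal $\Rightarrow$ (i); the implication (i) $\Rightarrow$ (ii) is immediate since any single witnessing pair $(i,j)$ suffices for (ii).

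For (ii) $\Rightarrow$ maximal, I will argue the contrapositive. Assuming $(X_1,\ldots,X_k)$ is not maximal (so $k < n$ by Proposition~\ref{prop:complete_ex_maximal}), I fix any $i \leq j$ and produce an element of the right-hand side of the equality in (ii) that is not in the left-hand side. Applying Lemma~\ref{lem:exceptional_index_arbitrary} with insertion position $j$, I obtain an exceptional module $Y$ (one of the inserted $Y_l$'s) such that $(X_1,\ldots,X_j, Y, X_{j+1},\ldots,X_k)$ is an exceptional sequence. The exceptional-sequence vanishings immediately place $Y$ in ${}^\perp(X_1,\ldots,X_i) \cap (X_{j+1},\ldots,X_k)^\perp$ and also in ${}^\perp(X_{i+1},\ldots,X_j) = {}^\perp\sW(X_{i+1},\ldots,X_j)$ (the latter by Lemma~\ref{lem:ortho_wide}). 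Since $Y$ is a nonzero brick, $\id_Y \neq 0$ in $\Hom(Y,Y)$, so $Y$ cannot simultaneously lie in a wide subcategory and its left-perpendicular; hence $Y \notin \sW(X_{i+1},\ldots,X_j)$, and the equality in (ii) fails at $(i,j)$.

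For maximal $\Rightarrow$ (i), I will assume $k = n$ and fix $0 \leq i \leq j \leq n$. The containment $\sW(X_{i+1},\ldots,X_j) \subseteq {}^\perp(X_1,\ldots,X_i) \cap (X_{j+1},\ldots,X_n)^\perp$ follows immediately from the exceptional-sequence vanishings together with Lemma~\ref{lem:ortho_wide}. For the reverse inclusion, I set $\W = (X_{j+1},\ldots,X_n)^\perp$, which is an exceptional wide subcategory by Proposition~\ref{prop:wide_fg} and satisfies $\W \simeq \mods\Lambda'$ for some finite-dimensional hereditary $\Lambda'$. The modules $X_1,\ldots,X_j$ lie in $\W$ and form an exceptional sequence there. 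Using Remark~\ref{rem:exact} to identify perpendiculars inside $\W$ with intersections-with-$\W$ of perpendiculars computed in $\mods\Lambda$, I compute $(X_1,\ldots,X_j)^\perp \cap \W = (X_1,\ldots,X_n)^\perp = 0$ by Proposition~\ref{prop:complete_ex_maximal} applied to the complete sequence $(X_1,\ldots,X_n)$. Applying Proposition~\ref{prop:complete_ex_maximal} within $\W$, it follows that $(X_1,\ldots,X_j)$ is complete in $\W$, so $\sW(X_1,\ldots,X_j) = \W$. I then iterate the same argument inside $\W$, taking the left-perpendicular of $(X_1,\ldots,X_i)$ within the complete exceptional sequence $(X_1,\ldots,X_j)$ of $\W$, to conclude $\sW(X_{i+1},\ldots,X_j) = {}^\perp(X_1,\ldots,X_i) \cap \W$, which is the desired equality.

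The hard part will be the bookkeeping in the last paragraph: verifying that passing to $\W$ and applying Propositions~\ref{prop:complete_ex_maximal} and~\ref{prop:wide_fg} inside $\W$ (via the equivalence $\W \simeq \mods\Lambda'$) yields identities that translate correctly to $\mods\Lambda$, and confirming that perpendiculars computed within $\W$ coincide with intersections of $\mods\Lambda$-perpendiculars with $\W$. Both facts are direct consequences of the exact-embedding property of wide subcategories recalled in Remark~\ref{rem:exact}, but they are the crucial bridge between the global ambient category and the inductive step inside $\W$.
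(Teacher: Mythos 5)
Your proposal is correct and rests on exactly the same ingredients as the paper's proof: Lemma~\ref{lem:exceptional_index_arbitrary} to insert or extend, Proposition~\ref{prop:wide_fg} to realize perpendicular categories as module categories of hereditary algebras, and Proposition~\ref{prop:complete_ex_maximal} applied inside those categories via Remark~\ref{rem:exact}. The paper packages everything as a single chain of equivalences (the sequence is maximal iff $(X_{i+1},\ldots,X_j)$ is maximal in ${}^\perp(X_1,\ldots,X_i)\cap(X_{j+1},\ldots,X_k)^\perp$ iff it generates that category), whereas you separate the two implications and peel off the two perpendiculars in two steps; this is an organizational difference only.
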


\begin{proof}
Fix indices $i \leq j \in \set{0,\ldots,k}$ and let $\W = {}^\perp(X_1,\ldots,X_i) \cap (X_{j+1},\ldots,X_k)^\perp$.
Lemma~\ref{lem:exceptional_index_arbitrary} implies that $(X_1,\ldots,X_k)$ is maximal if and only if $(X_{i+1},\ldots,X_j)$ is maximal in $\W$. 
Proposition~\ref{prop:wide_fg} says that $\W\simeq\mods\Lambda'$ for some finite-dimensional hereditary algebra $\Lambda'$. 
Applying Proposition~\ref{prop:complete_ex_maximal} in the category $\mods\Lambda'$, we see that $(X_{i+1},\ldots,X_j)$ is maximal in $\W$ if and only if $\sW(X_{i+1},\ldots,X_j) = \W$. 
\end{proof}

\begin{corollary}\label{cor:middle_exceptional}
If $(X_1,\ldots,X_k)$ is an exceptional sequence, then for $i \in \set{0,\ldots,k}$, ${}^{\perp}(X_1,\ldots,X_i) \cap (X_{i+1},\ldots,X_k)^\perp$ is an exceptional subcategory .
\end{corollary}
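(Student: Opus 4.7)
The plan is to deduce Corollary~\ref{cor:middle_exceptional} directly from the two immediately preceding results, namely Lemma~\ref{lem:exceptional_index_arbitrary} and Proposition~\ref{prop:maximal_exceptional}. The idea is to realize the subcategory ${}^{\perp}(X_1,\ldots,X_i) \cap (X_{i+1},\ldots,X_k)^\perp$ as $\sW(Y_1,\ldots,Y_{n-k})$ for an explicit exceptional sequence $(Y_1,\ldots,Y_{n-k})$, which by definition shows it is exceptional.

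Specifically, I would apply Lemma~\ref{lem:exceptional_index_arbitrary} to the given exceptional sequence $(X_1,\ldots,X_k)$ with the chosen index $i$, producing modules $Y_1,\ldots,Y_{n-k}$ such that
\[(X_1,\ldots,X_i,Y_1,\ldots,Y_{n-k},X_{i+1},\ldots,X_k)\]
is a complete exceptional sequence. By Proposition~\ref{prop:complete_ex_maximal}, this extended sequence is maximal, so Proposition~\ref{prop:maximal_exceptional} (applied with the middle block equal to $(Y_1,\ldots,Y_{n-k})$) yields the identity
\[\sW(Y_1,\ldots,Y_{n-k}) = {}^{\perp}(X_1,\ldots,X_i) \cap (X_{i+1},\ldots,X_k)^\perp.\]
The left-hand side is by definition an exceptional subcategory, witnessed by the exceptional sequence $(Y_1,\ldots,Y_{n-k})$, so this equality finishes the proof.

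There is essentially no obstacle; the entire content of the corollary is that the specific perpendicular intersection is generated by the ``completion'' of the exceptional sequence, which is precisely what Lemma~\ref{lem:exceptional_index_arbitrary} and Proposition~\ref{prop:maximal_exceptional} together provide. The only minor point to verify is the degenerate cases $i=0$ and $i=k$, where one of the perpendicular factors is interpreted as all of $\mods\Lambda$; but these are already subsumed in Lemma~\ref{lem:exceptional_index_arbitrary} and Proposition~\ref{prop:maximal_exceptional} by the convention that an empty perpendicular imposes no condition.
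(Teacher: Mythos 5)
Your proof is correct and is essentially identical to the paper's own argument: both complete the sequence via Lemma~\ref{lem:exceptional_index_arbitrary} by inserting modules $Y_1,\ldots,Y_{n-k}$ at position $i$, then apply Proposition~\ref{prop:maximal_exceptional} to identify ${}^{\perp}(X_1,\ldots,X_i) \cap (X_{i+1},\ldots,X_k)^\perp$ with $\sW(Y_1,\ldots,Y_{n-k})$, which is exceptional by definition.
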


\begin{proof}
Let $i \in \set{0,\ldots,k}$. 
By Lemma~\ref{lem:exceptional_index_arbitrary}, there exist modules $Y_1,\ldots,Y_j$ such that $(X_1,\ldots,X_i,Y_1,\ldots,Y_j,X_{i+1},\ldots,X_k)$ is a maximal exceptional sequence. 
Thus ${}^{\perp}(X_1,\ldots,X_{i-1}) \cap (X_i,\ldots,X_k)^\perp = \sW(Y_1,\ldots,Y_j)$ by Proposition~\ref{prop:maximal_exceptional}.
\end{proof}

\begin{corollary}\label{cor:anti_isom_fg}
The map $\W\mapsto\W^\perp$ is an anti-automorphism of the poset $\ewide \Lambda$, with inverse $\W \mapsto{}^\perp\W$.
\end{corollary}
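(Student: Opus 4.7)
The plan is to verify three things: that both $(-)^\perp$ and ${}^\perp(-)$ send exceptional subcategories to exceptional subcategories, that they are mutually inverse on $\ewide\Lambda$, and that they are order-reversing. Order-reversal is already recorded in Remark~\ref{ortho rem}, so the remaining work is to use Proposition~\ref{prop:maximal_exceptional} together with Lemma~\ref{lem:exceptional_index_arbitrary} to realize $\W^\perp$ and ${}^\perp\W$ as wide subcategories generated by the ``complementary'' portion of a completed exceptional sequence.

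Given $\W\in\ewide\Lambda$, choose an exceptional sequence $(X_1,\ldots,X_k)$ with $\W=\sW(X_1,\ldots,X_k)$. By Lemma~\ref{lem:exceptional_index_arbitrary} I would extend it in two complementary ways: first to a complete exceptional sequence $(X_1,\ldots,X_k,Y_1,\ldots,Y_{n-k})$, and second to a complete exceptional sequence $(Z_1,\ldots,Z_{n-k},X_1,\ldots,X_k)$. Applying Proposition~\ref{prop:maximal_exceptional} to the first extension with $i=k$, $j=n$ gives
\[\sW(Y_1,\ldots,Y_{n-k})={}^\perp(X_1,\ldots,X_k)\cap(\emptyset)^\perp={}^\perp\W,\]
where the last equality uses Lemma~\ref{lem:ortho_wide}. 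Applying Proposition~\ref{prop:maximal_exceptional} to the second extension with $i=0$, $j=n-k$ gives analogously $\sW(Z_1,\ldots,Z_{n-k})=\W^\perp$. Thus both ${}^\perp\W$ and $\W^\perp$ lie in $\ewide\Lambda$.

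To see the two operators are mutually inverse, apply Proposition~\ref{prop:maximal_exceptional} a second time to the second extension, now with $i=n-k$ and $j=n$, obtaining $\sW(X_1,\ldots,X_k)={}^\perp(Z_1,\ldots,Z_{n-k})$. Combined with the identification $\W^\perp=\sW(Z_1,\ldots,Z_{n-k})$ from the previous step and Lemma~\ref{lem:ortho_wide}, this yields ${}^\perp(\W^\perp)=\W$. The equality $(\,{}^\perp\W)^\perp=\W$ is obtained by the symmetric argument applied to the first extension with $i=0$, $j=k$. Combined with Remark~\ref{ortho rem}, this completes the verification that $\W\mapsto\W^\perp$ and $\W\mapsto{}^\perp\W$ are inverse anti-automorphisms of $\ewide\Lambda$.

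There is no real obstacle here: the content is entirely contained in the setup, and the only point requiring any care is choosing the \emph{two} different completions so that Proposition~\ref{prop:maximal_exceptional} can be invoked with the generators of $\W$ either at the end or at the beginning of a complete exceptional sequence.
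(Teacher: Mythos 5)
Your argument is correct and follows essentially the same route as the paper: both proofs hinge on completing the exceptional sequence generating $\W$ and then reading off $\W^\perp$ and ${}^\perp\W$ (and the inverse identities) from Proposition~\ref{prop:maximal_exceptional} together with Lemma~\ref{lem:ortho_wide}. The only cosmetic difference is that you produce the complementary generators directly via Lemma~\ref{lem:exceptional_index_arbitrary}, whereas the paper first invokes Proposition~\ref{prop:wide_fg} to realize $\W^\perp$ as an exceptional subcategory and then concatenates; the two mechanisms are interchangeable here.
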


\begin{proof}
Let $\W \in \ewide \Lambda$, and specifically, let $(X_1,\ldots,X_k)$ be an exceptional sequence such that $\W = \sW(X_1,\ldots,X_k)$.
We will show that $\W^\perp \in \ewide \Lambda$ and that ${}^\perp(\W^\perp) = \W$. 
The proof that ${}^\perp\W \in \ewide \Lambda$ and $({}^\perp\W)^\perp = \W$ is analogous.
Proposition~\ref{prop:wide_fg} and Lemma~\ref{lem:ortho_wide} imply that there is an exceptional sequence $(Y_1,\ldots,Y_i)$ with $\sW(Y_1,\ldots,Y_i) = \W^\perp = (X_1,\ldots,X_k)^\perp$.
Thus,  by Proposition~\ref{prop:maximal_exceptional}, $(Y_1,\ldots,Y_i,X_1,\ldots,X_k)$ is a maximal exceptional sequence. 
Applying Proposition~\ref{prop:maximal_exceptional} and Lemma~\ref{lem:ortho_wide} once more, we conclude that $\W = \sW(X_1,\ldots,X_k) = {}^\perp(Y_1,\ldots,Y_i) = {}^\perp(\W^\perp)$.
\end{proof}

The following proposition and theorem are the main results of the section.

\begin{proposition}\label{prop:perp_prefix}
Let $(X_1,\ldots,X_k)$ and $(X'_1,\ldots,X'_i)$ be exceptional sequences. Then the following are equivalent.
\begin{enumerate}[label=\rm(\roman*), ref=(\roman*)]
\item \label{pp1} $(X_1,\ldots,X_k)$ and $(X'_1,\ldots,X'_i)$ are equivalent as prefixes in $\Cx$.
\item \label{pp2} $(X_1,\ldots,X_k)$ and $(X'_1,\ldots,X'_i)$ are equivalent as postfixes in $\Cx$.
\item \label{pp3} $\sW(X_1,\ldots,X_k) = \sW(X'_1,\ldots,X'_i)$.
\item \label{pp4} $(X_1,\ldots,X_k)^\perp = (X'_1,\ldots,X'_i)^\perp$.
\item \label{pp5} ${}^\perp(X_1,\ldots,X_k) = {}^{\perp}(X'_1,\ldots,X'_i)$.
\end{enumerate}
\end{proposition}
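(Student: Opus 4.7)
The plan is to split the five conditions into two groups—the wide-subcategory/perpendicular group $\{(\mathrm{iii}),(\mathrm{iv}),(\mathrm{v})\}$ and the prefix/postfix group $\{(\mathrm{i}),(\mathrm{ii})\}$—prove equivalences within each group and then tie them together through (iii).

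First I would handle (iii)$\iff$(iv)$\iff$(v). The implication (iii)$\Longrightarrow$(iv) is immediate from Lemma~\ref{lem:ortho_wide}, since $(X_1,\ldots,X_k)^\perp=\sW(X_1,\ldots,X_k)^\perp$ and analogously for the primed sequence. The converse (iv)$\Longrightarrow$(iii) uses the fact that $\sW(X_1,\ldots,X_k)$ and $\sW(X'_1,\ldots,X'_i)$ both lie in $\ewide\Lambda$ (being generated by exceptional sequences), combined with Corollary~\ref{cor:anti_isom_fg}, which says $(-)^\perp$ is an anti-automorphism of $\ewide\Lambda$ and hence injective on $\ewide\Lambda$. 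The equivalence (iii)$\iff$(v) is proved by the same argument with ${}^\perp(-)$ in place of $(-)^\perp$.

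Next I would establish (i)$\iff$(iii); the equivalence (ii)$\iff$(iii) will be symmetric. For (i)$\Longrightarrow$(iii), the definition of prefix equivalence in $\Cx$ produces a common postfix $(Y_1,\ldots,Y_m)$ such that both $(X_1,\ldots,X_k,Y_1,\ldots,Y_m)$ and $(X'_1,\ldots,X'_i,Y_1,\ldots,Y_m)$ lie in $\Cx$. Applying Proposition~\ref{prop:maximal_exceptional} to each of these complete sequences at the index boundary between the $X$-block and the $Y$-block (that is, taking the ``$i$'' parameter in Proposition~\ref{prop:maximal_exceptional} to be $0$ and the ``$j$'' parameter to be $k$, respectively~$i$) shows that both $\sW(X_1,\ldots,X_k)$ and $\sW(X'_1,\ldots,X'_i)$ equal $(Y_1,\ldots,Y_m)^\perp$, giving (iii).

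For (iii)$\Longrightarrow$(i), the first step is to observe that (iii) already forces $k=i$: letting $\W$ denote the common wide-subcategory closure, Proposition~\ref{prop:wide_fg} provides an exact equivalence $\W\simeq\mods\Lambda'$ for some finite-dimensional hereditary $\Lambda'$, under which both sequences become complete exceptional sequences in $\mods\Lambda'$ by Proposition~\ref{prop:complete_ex_maximal}, forcing $k=i$ equal to the rank of $\Lambda'$. I would then use Lemma~\ref{lem:exceptional_index_arbitrary} to extend $(X_1,\ldots,X_k)$ to a complete exceptional sequence $(X_1,\ldots,X_k,Y_1,\ldots,Y_{n-k})$ and verify that $(X'_1,\ldots,X'_k,Y_1,\ldots,Y_{n-k})$ is also complete exceptional. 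The nontrivial check is that $\Hom(Y_b,X'_a)=0=\Ext^1(Y_b,X'_a)$ for all $a,b$; this follows from the chain of identities
\[{}^\perp(X_1,\ldots,X_k)={}^\perp\sW(X_1,\ldots,X_k)={}^\perp\sW(X'_1,\ldots,X'_k)={}^\perp(X'_1,\ldots,X'_k),\]
which combines Lemma~\ref{lem:ortho_wide} with the assumption that $\sW(X_1,\ldots,X_k)=\sW(X'_1,\ldots,X'_k)$. The main obstacle will be the length-matching step, which requires passing between a wide subcategory and its incarnation as $\mods\Lambda'$ in order to make sense of completeness inside a proper wide subcategory; once that structural step is set up via Propositions~\ref{prop:wide_fg} and~\ref{prop:complete_ex_maximal}, the remaining verifications are mechanical applications of Lemmas~\ref{lem:ortho_wide} and~\ref{lem:exceptional_index_arbitrary}.
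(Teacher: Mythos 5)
Your proof is correct and takes essentially the same approach as the paper: both directions hinge on the same toolkit (a common postfix plus Proposition~\ref{prop:maximal_exceptional} for (i)$\implies$(iii), Lemma~\ref{lem:ortho_wide} for passing to perpendiculars, and Lemma~\ref{lem:exceptional_index_arbitrary} to extend and splice for the return direction). The only organizational differences are that you route the converses (iv)$\implies$(iii) and (v)$\implies$(iii) through Corollary~\ref{cor:anti_isom_fg} and certify maximality of the spliced sequence $(X'_1,\ldots,X'_k,Y_1,\ldots,Y_{n-k})$ by the length count $k=i$ (via Propositions~\ref{prop:wide_fg} and~\ref{prop:complete_ex_maximal}), whereas the paper closes the implication cycle (i)$\implies$(iii)$\implies$(v)$\implies$(i) and gets maximality directly from Proposition~\ref{prop:maximal_exceptional}, avoiding the length-matching step; both are valid and neither creates a circularity.
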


\begin{proof}
If \ref{pp1} holds, then there exist $Y_1,\ldots,Y_j$ such that $(X_1,\ldots,X_k,Y_1,\ldots,Y_j)$ and $(X'_1,\ldots,X'_i,Y_1,\ldots,Y_j)$ are maximal exceptional sequences. Proposition~\ref{prop:maximal_exceptional} then implies that $\sW(X_1,\ldots,X_k) = (Y_1,\ldots,Y_j)^\perp = \sW(X'_1,\ldots,X'_i)$, so \ref{pp3} holds.

If \ref{pp3} holds, then \ref{pp5} holds by Lemma~\ref{lem:ortho_wide}.

Suppose \ref{pp5} holds.
By Lemma~\ref{lem:exceptional_index_arbitrary}, there exists a maximal exceptional sequence $(X_1,\ldots,X_k,Y_1,\ldots,Y_j)$. 
Proposition~\ref{prop:maximal_exceptional} and \ref{pp5} together say that $\sW(Y_1,\ldots,Y_j) = {}^{\perp}(X_1,\ldots,X_k) = {}^{\perp}(X'_1,\ldots,X'_i)$.
By \ref{pp5}, $(X'_1,\ldots,X'_i,Y_1,\ldots,Y_j)$ is also an exceptional sequence.  Proposition~\ref{prop:maximal_exceptional} implies that $(X'_1,\ldots,X'_i,Y_1,\ldots,Y_j)$ is maximal. 
We have shown that \ref{pp1}$\implies$\ref{pp3}$\implies$\ref{pp5}$\implies$\ref{pp1}.
A similar argument proves that \ref{pp2}$\implies$\ref{pp3}$\implies$\ref{pp4}$\implies$\ref{pp1}.
\end{proof}

\begin{theorem}\label{thm:exceptional_binary_chain}
Let $\Lambda$ be a finite-dimensional hereditary algebra. 
\begin{enumerate}[label=\bf\arabic*., ref=\arabic*]
\item \label{Cx bcs}
The set $\Cx$ of maximal exceptional sequences is a binary chain system.
\item \label{Pc to fwide}
The map $\Preeq(\Cx) \rightarrow \ewide \Lambda$ that sends $P \in \Preeq(\Cx)$ to $\sW(X_1,\ldots,X_k)$, for any representative $(X_1,\ldots,X_k) \in P$, is an isomorphism of posets.
\item \label{Cx labels cov}
If $\V,\W\in\ewide\Lambda$ have $\W\subseteq\V$, then $\V$ covers $\W$ in $\ewide\Lambda$ if and only if $\V\cap{}^\perp\W$ contains a unique brick.
\item \label{Cx labels}
If $P\covered P'$ in $\Preeq(\Cx)$ is labeled by $X$ and sent by the  isomorphism in Assertion~\ref{Pc to fwide} to $\W \covered \V$, then $X$ is the unique brick in $\V \cap {}^\perp \W$.
\end{enumerate}
\end{theorem}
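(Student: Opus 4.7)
The plan is to extract Assertions~\ref{Cx bcs}--\ref{Cx labels} from the results of Sections~\ref{back sec} and~\ref{chain sys sec} by a sequence of reductions.

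For Assertion~\ref{Cx bcs}, I would verify directly that $\Cx$ satisfies the three conditions of Definition~\ref{chain sys def}; since $\Cx = \C_{\Bx}$ by construction, it is then automatically a binary chain system. Condition~\ref{bound} is immediate from Proposition~\ref{prop:complete_ex_maximal}, which identifies maximality with completeness (length $n$). For Condition~\ref{conv}, if $p_0 x_0 \in \Cx$ with $p \equiv p_0$ and $x \equiv x_0$, Proposition~\ref{prop:perp_prefix} gives $\sW(p) = \sW(p_0)$ and $\sW(x) = \sW(x_0)$, so that $\sW(x) \subseteq {}^\perp \sW(p_0) = {}^\perp \sW(p)$ makes $px$ an exceptional sequence of length $n$ and hence in $\Cx$. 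For Condition~\ref{non subst}, given $pax, pwx \in \Cx$, a length count forces $|w|=1$ (write $w = b$), and then Proposition~\ref{prop:maximal_exceptional} combined with Lemma~\ref{lem:ortho_wide} identifies $\sW(a) = {}^\perp\sW(p) \cap \sW(x)^\perp = \sW(b)$, so Corollary~\ref{cor:semibrick} forces $a = b$.

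For Assertion~\ref{Pc to fwide}, well-definedness and injectivity of the map $P \mapsto \sW(X_1,\ldots,X_k)$ follow from the equivalence \ref{pp1}$\Leftrightarrow$\ref{pp3} of Proposition~\ref{prop:perp_prefix}, and surjectivity from the definition of $\ewide \Lambda$ together with Lemma~\ref{lem:exceptional_index_arbitrary}. One direction of order-preservation is obvious from literal prefix containment. For the converse, given $\W \subseteq \V$, I would construct an element of $\Cx$ of the form $(W_1,\ldots,W_i,Z_1,\ldots,Z_l,U_1,\ldots,U_m)$ with $\sW(W_1,\ldots,W_i) = \W$ and $\sW(W_1,\ldots,W_i,Z_1,\ldots,Z_l) = \V$ as follows: start with an exceptional sequence for $\W$, extend it inside $\V \simeq \mods\Lambda'$ (using Proposition~\ref{prop:wide_fg}) to a complete exceptional sequence of $\V$, then append an exceptional sequence generating $\V^\perp$ (again via Proposition~\ref{prop:wide_fg}), certifying maximality by Proposition~\ref{prop:maximal_exceptional}.

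For Assertions~\ref{Cx labels cov} and~\ref{Cx labels}, the same construction combined with Corollary~\ref{cor:middle_exceptional} identifies $\V \cap {}^\perp \W = \sW(Z_1,\ldots,Z_l)$ as an exceptional subcategory, equivalent via Proposition~\ref{prop:wide_fg} to $\mods\Lambda''$ for some hereditary $\Lambda''$ of rank $l$. This category contains a unique brick precisely when $l = 1$: for $l = 1$, $\Lambda''$ is a division ring and $Z_1$ is unique by Corollary~\ref{cor:semibrick}; for $l \geq 2$, the distinct simples of $\mods\Lambda''$ are already distinct bricks. Meanwhile, the isomorphism of Assertion~\ref{Pc to fwide} transports covers in $\ewide\Lambda$ to covers in $\Preeq(\Cx)$, and these correspond via Proposition~\ref{chain sys poset}.\ref{edges pre} to saturated chains of length $1$, i.e.\ to the case $l = 1$. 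This proves Assertion~\ref{Cx labels cov}, and Assertion~\ref{Cx labels} is then immediate since the unique edge label supplied by Proposition~\ref{chain sys poset}.\ref{edges pre} is precisely $Z_1$. The principal obstacle is Condition~\ref{non subst}, where one must pin down the inserted brick itself rather than merely the wide subcategory it generates; Corollary~\ref{cor:semibrick} is the key ingredient that makes this clean.
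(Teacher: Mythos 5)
Your proposal is correct in outline and follows essentially the same route as the paper's proof: all four assertions are reduced to Propositions~\ref{prop:complete_ex_maximal}, \ref{prop:maximal_exceptional}, and~\ref{prop:perp_prefix}, Lemma~\ref{lem:exceptional_index_arbitrary}, and Corollary~\ref{cor:semibrick}, and Corollary~\ref{cor:semibrick} does exactly the work you identify in Condition~\ref{non subst} (the paper in fact skips your preliminary length count: once $\sW(X_i)=\sW(Y_1,\ldots,Y_j)$, every $Y_a$ is a brick in $\sW(X_i)$ and hence equals $X_i$, forcing $j=1$). The one place you should be careful is the order-preservation step in Assertion~\ref{Pc to fwide}: the sequence $(U_1,\ldots,U_m)$ you append after a complete exceptional sequence for $\V$ must satisfy $\Hom(U_a,V)=0=\Ext^1(U_a,V)$ for all $V\in\V$, i.e.\ it must generate ${}^\perp\V$ --- equivalently, satisfy $(U_1,\ldots,U_m)^\perp=\V$, which is what condition~\ref{wfg2} of Proposition~\ref{prop:wide_fg} together with Corollary~\ref{cor:anti_isom_fg} supplies --- and not $\V^\perp$ as written; a sequence generating $\V^\perp$ is what one would \emph{prepend}, and appending it need not yield an exceptional sequence. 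With that left/right correction, the concatenation is exceptional, Proposition~\ref{prop:maximal_exceptional} certifies maximality exactly as in the paper, and your treatment of Assertions~\ref{Cx labels cov} and~\ref{Cx labels} via Corollary~\ref{cor:middle_exceptional} and the count of simples of $\Lambda''$ is a harmless repackaging of the paper's direct count of the bricks $X_{i+1},\ldots,X_j$.
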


\begin{proof}
We will prove that $\Cx$ is a chain system, and it is then immediate that $\Cx$ is a binary chain system.
First, Proposition~\ref{prop:complete_ex_maximal} implies Condition \ref{bound} in Definition~\ref{chain sys def}.
To prove Condition~\ref{conv}, let $\X\in \Preeq(\Cx)$ and $\Y\in \Posteq(\Cx)$ and suppose $(X_1,\ldots,X_k)\in\X$ and $(Y_1,\ldots,Y_\ell)\in\Y$ have  $(X_1,\ldots,X_k,Y_1,\ldots,Y_\ell) \in \Cx$.
For any $(X'_1,\ldots,X'_{k'})\in\X$ and $(Y'_1,\ldots,Y'_{\ell'})\in\Y$, 
Proposition~\ref{prop:perp_prefix} says that $\sW(X'_1,\ldots,X'_{k'})=\sW(X_1,\ldots,X_k)$, which equals $(Y_1,\ldots,Y_\ell)^\perp$ by Proposition~\ref{prop:maximal_exceptional}.
Thus applying Lemma~\ref{lem:ortho_wide}, Proposition~\ref{prop:perp_prefix}, and then Lemma~\ref{lem:ortho_wide} again, this is
\[((\sW(Y_1,\ldots,Y_\ell))^\perp= (\sW(Y'_1,\ldots,Y'_{j'}))^\perp= (Y'_1,\ldots,Y'_{j'})^\perp.\]
Since $\sW(X'_1,\ldots,X'_{k'})=(Y'_1,\ldots,Y'_{j'})^\perp$, the sequence $(X'_1,\ldots,X'_{k'},Y'_1,\ldots,Y'_j)$ is exceptional, and is in $\Cx$ by Proposition~\ref{prop:maximal_exceptional}. 
This is Condition~\ref{conv}.

Condition \ref{non subst} in Definition~\ref{chain sys def} follows from \cite[Lemma~2]{CB}, but we give an argument here. 
Suppose that $(X_1,\ldots,X_n)$ is a maximal exceptional sequence and that there exists an index $1 \leq i \leq n$ and exceptional modules $(Y_1,\ldots,Y_j)$ such that $(X_1,\ldots,X_{i-1},Y_1,\ldots,Y_j,X_{i+1},\ldots,X_n)$ is a maximal exceptional sequence. 
Proposition~\ref{prop:maximal_exceptional} implies that $\sW(X_i) = \sW(Y_1,\ldots,Y_j)$.
Corollary~\ref{cor:semibrick} says that $X_i$ is the unique brick in $\sW(X_i)$, so $j=1$ and $Y_1=X_i$.
This is Condition \ref{non subst} in Definition~\ref{chain sys def}, and we have proved Assertion~\ref{Cx bcs}.

We prove Assertion~\ref{Pc to fwide} using an argument similar to the proof of \cite[Theorem~4.3]{IgusaSchiffler}. 
The fact that the map from $\Preeq(\Cx)$ to $\ewide\Lambda$ is well-defined and one-to-one follows immediately from Proposition~\ref{prop:perp_prefix}.
Moreover, Lemma~\ref{lem:exceptional_index_arbitrary} implies that any exceptional sequence is a prefix of a word in $\Cx$, and so the map is bijective. 
It remains to show that order relations are preserved. 
Let $\W, \V \in \ewide\Lambda$ with corresponding equivalence classes $P_\W, P_\V \in \Preeq(\Cx)$. 
It is clear that $P_\W \leq_\pre P_\V$ implies $\W \subseteq \V$. 
Thus suppose that $\W \subseteq \V$.
Let $(X_1,\ldots,X_k)$ be an exceptional sequence such that $\W = \sW(X_1,\ldots,X_k)$. 
By Proposition~\ref{prop:wide_fg}, there exists an exceptional sequence $(Y_1,\ldots,Y_j)$ such that $\V = (Y_1,\ldots,Y_j)^\perp$. 
Since $\W \subseteq \V$, it follows that $(X_1,\ldots,X_k,Y_1,\ldots,Y_j)$ is an exceptional sequence. 
Lemma~\ref{lem:exceptional_index_arbitrary} lets us extend this to a maximal exceptional sequence $(X_1,\ldots,X_k,Z_1,\ldots,Z_i,Y_1,\ldots,Y_j)$. 
Then Proposition~\ref{prop:perp_prefix} implies that $\sW(X_1,\ldots,X_k,Z_1,\ldots,Z_i) = (Y_1,\ldots,Y_j)^\perp = \V$.
Thus $(X_1,\ldots,X_k,Z_1,\ldots,Z_i) \in P_\V$, and so $P_\W \leq_\pre P_\V$.
This is Assertion~\ref{Pc to fwide}.

Suppose $\V,\W\in\ewide\Lambda$ have $\W\subseteq\V$.
By Assertion~\ref{Pc to fwide}, there is a maximal exceptional sequence $(X_1,\ldots,X_n)$ and indices $i\le j$ such that ${\W = \sW(X_1,\ldots,X_i)}$ and $\V = \sW(X_1,\ldots,X_j)$. 
Arguing as in the previous paragraph, we see that ${{}^\perp \W \cap \V= {}^\perp(X_1,\ldots,X_{i-1}) \cap (X_{j+1},\ldots,X_n)^\perp = \sW(X_i,X_{i+1},\ldots,X_j)}$.
Thus each of $X_i,X_{i+1},\ldots,X_j$ is a brick in $\V\cap{}^\perp\W$. Now if $\V\cap{}^\perp \W$ contains a unique brick, then $j = i$ and so $\V \covered \W$ by Assertion~\ref{Pc to fwide}. Conversely, suppose that $\V \covered \W$. Then $i = j$, the cover relation is labeled by $X_i$, and Corollary~\ref{cor:semibrick} implies that $X_i$ is the unique brick in $\sW(X_i) = \V\cap {}^\perp \W$. $X_i$ is also the label.
We have thus proved Assertions~\ref{Cx labels cov} and~\ref{Cx labels}.
\end{proof}

We conclude this section by showing that $\P(\Cx)$ is a combinatorial Garside structure whenever $\ewide \Lambda$ is a lattice.
Recall from Section~\ref{chain sys sec} the notation $\C|_{[x,y]}$ for $x\le y$ in $\P(\C)$.
The following lemma can be seen as an extension of Theorem~\ref{thm:exceptional_binary_chain}.\ref{Cx labels cov}.
We use the notation $\W\mapsto P_\W$ for the inverse of the isomorphism in Theorem~\ref{thm:exceptional_binary_chain}.\ref{Pc to fwide}.

\begin{lemma}\label{lem:interval_labels_excep}
Suppose $\W\subseteq\V$ in $\ewide\Lambda$.
Then $\Cx|_{[P_\W,P_\V]}$ is the set of exceptional sequences $(X_1,\ldots,X_k)$ such that $\sW(X_1,\ldots,X_k) = {}^\perp\W\cap\V$.
\end{lemma}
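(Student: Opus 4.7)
The plan is to prove the two set inclusions separately, in both cases extending partial chains to maximal chains of $\P(\Cx)$ and leveraging Proposition~\ref{prop:maximal_exceptional}. For the forward inclusion, suppose $(X_1,\ldots,X_k)$ is the label sequence on some saturated chain from $P_\W$ to $P_\V$. I would extend this chain at both ends to a maximal chain of $\P(\Cx)$, whose labels form a complete exceptional sequence of the form $(Y_1,\ldots,Y_\ell,X_1,\ldots,X_k,Z_1,\ldots,Z_m)$ by Proposition~\ref{chain sys poset}.\ref{chains}. By Theorem~\ref{thm:exceptional_binary_chain}.\ref{Pc to fwide}, the prefixes $(Y_1,\ldots,Y_\ell)$ and $(Y_1,\ldots,Y_\ell,X_1,\ldots,X_k)$ then have $\sW$ equal to $\W$ and $\V$ respectively. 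Applying Proposition~\ref{prop:maximal_exceptional} to the complete sequence identifies $\sW(X_1,\ldots,X_k)$ with ${}^\perp(Y_1,\ldots,Y_\ell)\cap(Z_1,\ldots,Z_m)^\perp$, which by Lemma~\ref{lem:ortho_wide} and another application of Proposition~\ref{prop:maximal_exceptional} equals ${}^\perp\W\cap\V$.

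For the reverse inclusion, given an exceptional sequence $(X_1,\ldots,X_k)$ with $\sW(X_1,\ldots,X_k)={}^\perp\W\cap\V$, I would construct a saturated chain from $P_\W$ to $P_\V$ realizing it. Pick an exceptional sequence $(Y_1,\ldots,Y_\ell)$ representing $P_\W$ (available by Proposition~\ref{prop:wide_fg}); the concatenation $(Y_1,\ldots,Y_\ell,X_1,\ldots,X_k)$ is itself exceptional, since each $X_j$ lies in ${}^\perp\W$ while each $Y_i$ lies in $\W$. Extending to a complete exceptional sequence via Lemma~\ref{lem:exceptional_index_arbitrary} produces a maximal chain of $\P(\Cx)$ whose middle $k$ cover relations begin at $P_\W$ and carry labels $(X_1,\ldots,X_k)$.

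The main obstacle is to verify that these cover relations \emph{terminate at $P_\V$}, i.e.\ that $\sW(Y_1,\ldots,Y_\ell,X_1,\ldots,X_k)=\V$ rather than some proper subcategory. To handle this I would pass to $\V\simeq\mods\Lambda'$ via Proposition~\ref{prop:wide_fg} and extend $(Y_1,\ldots,Y_\ell)$ to a sequence $(Y_1,\ldots,Y_\ell,W_1,\ldots,W_p)$ that is complete in $\V$, so that $\sW(Y_1,\ldots,Y_\ell,W_1,\ldots,W_p)=\V$ by Proposition~\ref{prop:complete_ex_maximal} applied inside $\mods\Lambda'$. Proposition~\ref{prop:maximal_exceptional} applied within $\V$, together with Remark~\ref{rem:exact} to identify the left-perp taken within $\V$ with ${}^\perp(-)\cap\V$, yields $\sW(W_1,\ldots,W_p)={}^\perp\W\cap\V=\sW(X_1,\ldots,X_k)$. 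Finally, because $\sW(A\cup B)=\sW(\sW(A)\cup\sW(B))$ for any subcategories $A,B$, this last equality allows swapping $(W_1,\ldots,W_p)$ for $(X_1,\ldots,X_k)$ without changing the wide subcategory they jointly generate with $(Y_1,\ldots,Y_\ell)$, giving $\sW(Y_1,\ldots,Y_\ell,X_1,\ldots,X_k)=\V$.
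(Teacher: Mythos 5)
Your proof is correct. The forward inclusion is the paper's argument essentially verbatim. For the reverse inclusion, both you and the paper sandwich $(X_1,\ldots,X_k)$ between a representative of $P_\W$ and a right-hand extension and then apply Proposition~\ref{prop:maximal_exceptional}, but the paper makes a choice of extension that eliminates your ``main obstacle'' entirely: it takes $(Z_1,\ldots,Z_j)$ with $\V=(Z_1,\ldots,Z_j)^\perp$ (available by Proposition~\ref{prop:wide_fg}), so that the hypothesis $\sW(X_1,\ldots,X_k)={}^\perp\W\cap\V={}^\perp(Y_1,\ldots,Y_i)\cap(Z_1,\ldots,Z_j)^\perp$ is literally condition~(ii) of Proposition~\ref{prop:maximal_exceptional}; maximality of the concatenation and the identity $\sW(Y_1,\ldots,Y_i,X_1,\ldots,X_k)=(Z_1,\ldots,Z_j)^\perp=\V$ then both drop out of that proposition at once. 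Your version instead extends to a complete exceptional sequence arbitrarily and then recovers the top endpoint by passing to $\V\simeq\mods\Lambda'$, building an auxiliary sequence $(Y_1,\ldots,Y_\ell,W_1,\ldots,W_p)$ complete in $\V$, and swapping $(W_1,\ldots,W_p)$ for $(X_1,\ldots,X_k)$ via $\sW(\sW(A)\cup\sW(B))=\sW(A\cup B)$. Every step of that detour is valid --- in particular the identification of the left-perpendicular taken inside $\V$ with ${}^\perp(-)\cap\V$ is justified by Remark~\ref{rem:exact} and Lemma~\ref{lem:ortho_wide} --- but it buys nothing over the paper's route; the lesson is that presenting $\V$ as a right-perpendicular category rather than as a generated subcategory is the choice that makes the endpoint verification automatic.
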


\begin{proof}
Suppose first that $(X_1,\ldots,X_k) \in \Cx|_{[P_\W,P_\V]}$. 
By Theorem~\ref{thm:exceptional_binary_chain}, there exists a maximal exceptional sequence $(Y_1,\ldots,Y_i,X_1,\ldots,X_k,Z_1,\ldots,Z_j)$ such that $\sW(Y_1,\ldots,Y_i) = \W$ and $\sW(Y_1,\ldots,Y_i,X_1,\ldots,X_k) = \V$. 
Arguing as in the proof of Theorem~\ref{thm:exceptional_binary_chain}.\ref{Cx labels cov}, we have
$\sW(X_1,\ldots,X_k) = \V \cap {}^\perp\W$, as desired.

Conversely, suppose that $(X_1,\ldots,X_k)$ is an exceptional sequence such that $\sW(X_1,\ldots,X_k) = {}^\perp\W \cap \V$. Since $\W,\V\in\ewide\Lambda$, there is an exceptional sequence $(Y_1,\ldots,Y_i)$ with $\W = \sW(Y_1,\ldots,Y_i)$ and Proposition~\ref{prop:wide_fg} says there is an exceptional sequence $(Z_1,\ldots,Z_j)$ with $\V = (Z_1,\ldots,Z_j)^\perp$. 
Lemma~\ref{lem:ortho_wide} and Proposition~\ref{prop:maximal_exceptional} imply that $(Y_1,\ldots,Y_i,X_1,\ldots,X_k,Z_1,\ldots,Z_j)$ is a maximal exceptional sequence with $\sW(Y_1,\ldots,Y_i,X_1,\ldots,X_k) = \V$. 
Thus $(X_1,\ldots,X_k)\in\Cx|_{[P_\W,P_\V]}$.
\end{proof}

\begin{corollary}\label{cor:garside_if_lattice}
Suppose that $\ewide \Lambda$ is a lattice. Then $\P(\Cx)$ is a combinatorial Garside structure.
\end{corollary}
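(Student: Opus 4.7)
The plan is to verify each of the seven conditions in the definition of a combinatorial Garside structure given in Section~\ref{Garside sec}, most of which follow easily from results already established. I will use the labeled-poset isomorphism between $\P(\Cx)$ and $\ewide\Lambda$ from Theorem~\ref{thm:exceptional_binary_chain}.\ref{Pc to fwide} to transport the lattice hypothesis, and I will use Lemma~\ref{lem:interval_labels_excep} as the main technical tool for the balanced and group-like conditions.

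First I would dispatch the easy conditions. The lattice property transfers from $\ewide\Lambda$ to $\P(\Cx)$ directly. The unique minimal element of $\P(\Cx)$ is the equivalence class of the empty word (corresponding to $0\in\ewide\Lambda$), and the unique maximal element is $\Cx$ itself (corresponding to $\mods\Lambda$); finite height is built into the definition of a chain system. For weighted-gradedness, Proposition~\ref{prop:complete_ex_maximal} shows that every maximal chain of $\P(\Cx)$ has length exactly $n$, so assigning weight $1$ to every label trivially works. For the balanced condition, I would observe that Lemma~\ref{lem:exceptional_index_arbitrary} shows every exceptional sequence appears both as a prefix \emph{and} as a postfix of some complete exceptional sequence, so $\Pre(\Cx)=\Post(\Cx)$ coincides with the set of all exceptional sequences.

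The group-like condition is the one that requires a genuine argument, and I would handle it via Proposition~\ref{res prop} by establishing the restriction property for $\Cx$. Let $\W\subseteq\V$ and $\W'\subseteq\V'$ be exceptional subcategories with corresponding prefix classes $P_\W\le P_\V$ and $P_{\W'}\le P_{\V'}$ in $\P(\Cx)$, and suppose $\Cx|_{[P_\W,P_\V]}\cap\Cx|_{[P_{\W'},P_{\V'}]}$ is nonempty. Picking an exceptional sequence $(X_1,\ldots,X_k)$ in the intersection, Lemma~\ref{lem:interval_labels_excep} yields
\[\sW(X_1,\ldots,X_k)={}^\perp\W\cap\V={}^\perp\W'\cap\V'.\]
Applying Lemma~\ref{lem:interval_labels_excep} a second time in the reverse direction, the two restrictions are both equal to the set of exceptional sequences generating this common wide subcategory, so $\Cx|_{[P_\W,P_\V]}=\Cx|_{[P_{\W'},P_{\V'}]}$. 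The restriction property then delivers the group-like condition via Proposition~\ref{res prop}.

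I do not expect any serious obstacle here: the real content has already been packaged into Theorem~\ref{thm:exceptional_binary_chain}, Proposition~\ref{prop:complete_ex_maximal}, Lemma~\ref{lem:exceptional_index_arbitrary}, Lemma~\ref{lem:interval_labels_excep}, and Proposition~\ref{res prop}. The only step that requires a nontrivial observation (rather than a citation) is recognizing that Lemma~\ref{lem:interval_labels_excep} characterizes $\Cx|_{[P_\W,P_\V]}$ purely in terms of the wide subcategory ${}^\perp\W\cap\V$, so that any shared element of two such restrictions forces them to coincide.
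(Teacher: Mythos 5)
Your proposal is correct and follows essentially the same route as the paper: gradedness from Proposition~\ref{prop:complete_ex_maximal}, the balanced condition from Lemma~\ref{lem:exceptional_index_arbitrary}, the group-like condition via the restriction property using Lemma~\ref{lem:interval_labels_excep} twice together with Proposition~\ref{res prop}, and the lattice property transported through the isomorphism of Theorem~\ref{thm:exceptional_binary_chain}.\ref{Pc to fwide}. No gaps.
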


\begin{proof}
Proposition~\ref{prop:complete_ex_maximal} says that all maximal exceptional sequences have the same length. It follows that $\P(\Cx)$ is (weighted-)graded.
By Proposition~\ref{chain sys poset} and Theorem~\ref{thm:exceptional_binary_chain}.\ref{Cx bcs}, $\P(\Cx)$ has finite height, a unique minimal element, and a unique maximal element. 
Lemma~\ref{lem:exceptional_index_arbitrary} implies that $\P(\Cx)$ is balanced.
Suppose $P_\W\le P_\V$ and $P_{\W'}\le P_{\V'}$ are order relations in $\P(\Cx)$ such that ${\Cpx|_{[P_\W,P_\V]} \cap \Cpx|_{[P_{\W'},P_{\V'}]} \neq \emptyset}$.
Lemma~\ref{lem:interval_labels_excep} implies that $\V \cap {}^\perp\W = \V' \cap {}^\perp\W'$, but then applying Lemma~\ref{lem:interval_labels_excep} again, we see that $\Cx|_{[P_\W,P_\V]} = \Cx|_{[P_{\W'},P_{\V'}]}$.
We see that $\Cx$ has the restriction property, so $\P(\Cx)$ is group-like by Proposition~\ref{res prop}.
Finally $\P(\Cx) \cong \ewide \Lambda$ by Theorem~\ref{thm:exceptional_binary_chain}.\ref{Pc to fwide}, and $\ewide \Lambda$ is a lattice by hypothesis.
\end{proof}

\section{The chain system of the noncrossing partition poset}\label{nc chain sec}
We now describe the realization of the noncrossing partition poset as a labeled poset associated to a binary chain system.  
We could obtain this characterization quickly from Theorems~\ref{thm:IS} and~\ref{thm:exceptional_binary_chain}, but instead describe the chain system directly in terms of Coxeter groups and roots systems, in order to highlight the one piece that is missing from a proof entirely in terms of Coxeter groups and roots systems.
Much of the material here is already known, and the primary novelty in this section is using the formalism of chain systems.
This formulation in terms of chain system is in preparation for proving the new affine results in the language of chain systems.  

Recall from Section~\ref{nc sec} that, given a word whose letters are the elements of $S$, each appearing exactly once, the product of the word is called a \newword{Coxeter element}.
The given word is the \newword{defining word} for the Coxeter element.
We fix a Coxeter element $c$, and we fix a defining word for $c$.
Let $\C_c$ be the set of all reduced $T$-words for~$c$.
Recall from Section~\ref{nc sec} that every reduced $T$-word for $c$ has the same finite length $|S|=n$.
Given a $T$-word $x$, we write $\Pi x$ for the product of the sequence.  

\begin{lemma}\label{nc pre eq}
Let $W$ be a Coxeter group and let $c$ be a Coxeter element of~$W$.
The equivalence relation on prefixes of $\C_c$ is $p_1\equiv p_2$ if and only if $\Pi p_1=\Pi p_2$.
Equivalent prefixes have the same length.
\end{lemma}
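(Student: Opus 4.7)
The plan is to reduce everything to the standard fact that reflection length is subadditive under multiplication in $W$ and becomes additive along reduced $T$-words, which forces every prefix of a reduced $T$-word for $c$ to itself be a reduced $T$-word for its product.

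First I would establish the following preliminary claim: if $pq \in \C_c$, then $p$ is a reduced $T$-word for $\Pi p$ and $q$ is a reduced $T$-word for $\Pi q$. The proof is a standard squeeze: since reflection length is subadditive under multiplication,
\[\ell_T(c) \le \ell_T(\Pi p) + \ell_T(\Pi q) \le |p| + |q| = |pq| = \ell_T(c),\]
so equality holds throughout and the lengths of $p$ and $q$ realize $\ell_T(\Pi p)$ and $\ell_T(\Pi q)$ respectively. In particular, any two prefixes $p_1, p_2$ with $\Pi p_1 = \Pi p_2$ automatically satisfy $|p_1| = |p_2| = \ell_T(\Pi p_1)$, which is the ``equivalent prefixes have the same length'' assertion.

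For the forward direction, suppose $p_1 \equiv p_2$, so there is a postfix $x$ with $p_1 x, p_2 x \in \C_c$. Then $\Pi p_1 \cdot \Pi x = c = \Pi p_2 \cdot \Pi x$, and right-cancellation in the group $W$ gives $\Pi p_1 = \Pi p_2$.

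For the reverse direction, suppose $\Pi p_1 = \Pi p_2 = w$. Each $p_i$ is, by definition of a prefix, a prefix of some reduced $T$-word $p_i x_i \in \C_c$. The preliminary claim says $|p_i| = \ell_T(w)$ and that $x_i$ is a reduced $T$-word for $w^{-1}c$. I want to show $p_1 x_2 \in \C_c$ (and symmetrically $p_2 x_1 \in \C_c$). The product is $\Pi p_1 \cdot \Pi x_2 = w \cdot (w^{-1} c) = c$, and the length is $|p_1| + |x_2| = \ell_T(w) + \ell_T(w^{-1}c) = |p_2| + |x_2| = n = \ell_T(c)$, so $p_1 x_2$ is a $T$-word of minimal length for $c$, i.e., an element of $\C_c$. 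Witnessing this for the postfix $x_2$ (or $x_1$) shows $p_1 \equiv p_2$.

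No step is especially delicate; the only point worth singling out is the preliminary additivity observation, since the rest is just group cancellation and length bookkeeping. I would present it inline rather than as a separate lemma.
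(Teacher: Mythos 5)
Your proposal is correct and follows essentially the same route as the paper's proof: the forward direction is group cancellation, and the reverse direction swaps one prefix for the other in an existing word of $\C_c$ after checking that the lengths match. The only cosmetic difference is that you justify the equal-length claim via subadditivity of $\ell_T$ (every prefix of a reduced $T$-word is reduced for its product), whereas the paper argues directly that a shorter equivalent prefix would yield a non-reduced word for $c$; these are the same length-bookkeeping observation.
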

\begin{proof}
If $p_1$ and $p_2$ are prefixes of $\C_c$ then by definition, they are equivalent if and only if there exists a postfix $x$ of $\C_c$ such that both $p_1x$ and $p_2x$ are in $\C_c$.
If there exists such an $x$, then $\Pi p_1\cdot\Pi x=c$ and $\Pi p_2\cdot\Pi x=c$, so $\Pi p_1=\Pi p_2$.
Conversely, if $\Pi p_1=\Pi p_2$, then since $p_1$ and $p_2$ are both prefixes, they have the same length, because if, say, $p_2$ is shorter, then we could substitute $p_2$ for $p_1$ in a word in $\C_c$, contradicting the fact that the word is reduced.
Furthermore, there exists a postfix~$x$ such that $p_1x\in C_c$, and since $\Pi p_1=\Pi p_2$, also $p_2x$ is a $T$-word for $c$, also reduced because  $p_1$ and $p_2$ have the same length.
That is $p_2x\in\C_c$.
\end{proof}

The proof of the following lemma is left-right dual to the proof of Lemma~\ref{nc pre eq}.
\begin{lemma}\label{nc post eq}
Let $W$ be a Coxeter group and let $c$ be a Coxeter element of~$W$.
The equivalence relation on postfixes of $\C_c$ is $x_1\equiv x_2$ if and only if $\Pi x_1=\Pi x_2$.
Equivalent postfixes have the same length.
\end{lemma}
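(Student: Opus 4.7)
The plan is to carry out the left-right dual of the proof of Lemma~\ref{nc pre eq}, exchanging the roles of prefix and postfix throughout while exploiting the symmetric way in which the chain system $\C_c$ is built from reduced $T$-words for $c$. The key facts to recycle are: (a) equivalence of postfixes is defined via the existence of a common prefix $p$ with $px_1, px_2 \in \C_c$; (b) every element of $\C_c$ has length $n$; and (c) the product of any word in $\C_c$ is $c$.

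First I would dispatch the direction $x_1 \equiv x_2 \implies \Pi x_1 = \Pi x_2$. By definition of equivalence of postfixes, there is some $p \in \Pre(\C_c)$ with $px_1, px_2 \in \C_c$, and then
\[ \Pi p \cdot \Pi x_1 \;=\; c \;=\; \Pi p \cdot \Pi x_2, \]
so cancelling $\Pi p$ in $W$ gives $\Pi x_1 = \Pi x_2$. Incidentally this already forces $|x_1| = |x_2|$, because each $x_i$ together with $p$ makes up a reduced $T$-word for $c$ of length $n$, so $|x_i| = n - |p|$.

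For the converse $\Pi x_1 = \Pi x_2 \implies x_1 \equiv x_2$, the key step is to first prove $|x_1| = |x_2|$ directly. Suppose for contradiction that $|x_2| < |x_1|$, and pick any $p \in \Pre(\C_c)$ with $px_1 \in \C_c$; such a $p$ exists because $x_1 \in \Post(\C_c)$. Then $px_2$ is a $T$-word for $c$ (since $\Pi p \cdot \Pi x_2 = \Pi p \cdot \Pi x_1 = c$) of length strictly less than $n$, contradicting that every reduced $T$-word for $c$ has length exactly $n$. With equal lengths in hand, choose $p$ with $px_1 \in \C_c$; then $px_2$ is a $T$-word for $c$ of length $n$, hence reduced, hence in $\C_c$. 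This exhibits the common prefix witnessing $x_1 \equiv x_2$.

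I do not anticipate a genuine obstacle here: the argument is literally the left-right dual of Lemma~\ref{nc pre eq}, and both the substitution-contradiction for equal length and the completion-of-a-chain step transpose cleanly. The only care required is to make sure the asymmetry in the definitions of $\Pre(\C_c)$ and $\Post(\C_c)$ is handled with the correct orientation, and to cite the fact (recalled in Section~\ref{nc sec}) that all reduced $T$-words for $c$ share the common length $n$, which is what makes the substitution-based length argument work.
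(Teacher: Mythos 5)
Your proposal is correct and is exactly what the paper intends: the paper's entire proof of this lemma is the sentence ``left-right dual to the proof of Lemma~\ref{nc pre eq},'' and your argument is precisely that dualization carried out explicitly (with the same substitution argument for equal lengths and the same completion step for the converse). No issues.
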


\begin{proposition}\label{Twords is chain sys}
Let $W$ be a Coxeter group and let $c$ be a Coxeter element of~$W$.
Then set $\C_c$ is a chain system on the alphabet $T$.
\end{proposition}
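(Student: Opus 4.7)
The plan is to verify the three conditions of Definition~\ref{chain sys def} in turn, drawing on Lemmas~\ref{nc pre eq} and~\ref{nc post eq} and the Dyer length result already cited in Section~\ref{nc sec}.

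For Condition~\ref{bound}, I would simply invoke the fact quoted in Section~\ref{nc sec}: every reduced $T$-word for $c$ has length exactly $n=|S|$, so $n$ is the required finite upper bound on word lengths in $\C_c$.

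For Condition~\ref{conv}, the implication \ref{conv all}$\Rightarrow$\ref{conv exists} is automatic once one knows $P$ and $X$ are nonempty, which they are by definition of $\Preeq(\C_c)$ and $\Posteq(\C_c)$. For the converse, suppose $p_1x_1 \in \C_c$ for some $p_1 \in P$, $x_1 \in X$, and let $p_2 \in P$, $x_2 \in X$ be arbitrary. By Lemma~\ref{nc pre eq}, $\Pi p_2 = \Pi p_1$ and $|p_2|=|p_1|$; by Lemma~\ref{nc post eq}, $\Pi x_2 = \Pi x_1$ and $|x_2|=|x_1|$. Therefore $\Pi(p_2 x_2) = \Pi p_1 \cdot \Pi x_1 = c$, and $p_2 x_2$ is a $T$-word for $c$ of length $|p_1|+|x_1|=n$; being of minimal length, it is reduced, hence lies in $\C_c$.

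For Condition~\ref{non subst}, suppose $pax, pwx \in \C_c$. Since both words have length $n$ we get $|p|+1+|x|=|p|+|w|+|x|$, so $w$ has length $1$; write $w=b$ for some $b \in T$. From $\Pi(pax)=c=\Pi(pwx)$ we read $\Pi p \cdot a \cdot \Pi x = \Pi p \cdot b \cdot \Pi x$, and group cancellation in $W$ gives $a=b$, i.e.\ $w=a$.

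None of the three steps poses a real obstacle: the potentially subtle part — that equivalent prefixes and postfixes have matching products \emph{and} matching lengths — has already been packaged into Lemmas~\ref{nc pre eq} and~\ref{nc post eq}, and the rest is a short calculation together with the constant-length property of reduced $T$-words for $c$. The only point requiring any care is remembering, in step~\ref{conv}, to check both the product and the length before concluding that $p_2 x_2$ is reduced.
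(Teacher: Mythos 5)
Your proof is correct and follows essentially the same route as the paper's: verify the three conditions of Definition~\ref{chain sys def} directly, using the constant length $n$ of reduced $T$-words for $c$ together with Lemmas~\ref{nc pre eq} and~\ref{nc post eq}. Your treatment of Condition~(iii) is slightly more explicit than the paper's (you spell out the length count forcing $|w|=1$ before cancelling in the group), but the argument is the same.
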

\begin{proof}
Every word in $\C_c$ has length~$n$.
Suppose $P\in\Preeq(\C_c)$ and $X\in\Posteq(\C_c)$ and suppose there exists $p\in P$ and $x\in X$ with $px\in\C_c$, so that $px$ is a reduced $T$-word for $c$. 
Lemmas~\ref{nc pre eq} and~\ref{nc post eq} imply that $p'x'$ is a reduced $T$-word for $c$ for any $p'\in P$ and $x'\in X$.
That is, $p'x'\in\C_c$.
If $p\in\Pre(\C_c)$, $a\in T$, $w\in T^*$, and $x\in\Post(\C_c)$ satisfy $pax,pwx\in\C_c$, then $\Pi p\cdot\Pi a\cdot\Pi x=c=\Pi p\cdot\Pi w\cdot\Pi x$, and therefore $w=a$.
\end{proof}

Combining Propositions~\ref{chain sys poset} and~\ref{Twords is chain sys}, we obtain an edge-labeled poset $\P(\C_c)$, defined on $\Preeq(\C_c)$ or on $\Posteq(\C_c)$, with labels in~$T$.
Lemma~\ref{nc pre eq} implies the following proposition.
Thus, moving forward, we will refer to $[1,c]_T$ rather than~$\P(\C_c)$.

\begin{proposition}\label{PCc nc}
Let $W$ be a Coxeter group and let $c$ be a Coxeter element of~$W$.
The map $p\mapsto\Pi p$ induces an isomorphism from $\P(\C_c)$, considered as a poset on $\Preeq(\C_c)$, to the noncrossing partition poset $[1,c]_T$.
The edge $P\covered Q$ is labeled by the unique element $t\in T$ such that $\Pi(pt)\in Q$ for any $p\in P$.
\end{proposition}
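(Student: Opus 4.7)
The plan is to verify in sequence: well-definedness and injectivity of the induced map $\Preeq(\C_c)\to W$, surjectivity onto $[1,c]_T$, compatibility of orders in both directions, and compatibility of edge labels. Lemmas~\ref{nc pre eq} and~\ref{nc post eq} together with Proposition~\ref{chain sys poset} do most of the work; only one standard fact about the absolute order is needed beyond them.

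First, Lemma~\ref{nc pre eq} states precisely that $p_1\equiv p_2$ iff $\Pi p_1=\Pi p_2$, giving both well-definedness and injectivity of the map $[p]\mapsto\Pi p$. For surjectivity onto $[1,c]_T$, observe that any $p\in\Pre(\C_c)$ lies in some reduced $T$-word $px\in\C_c$ of length $n=\ell_T(c)$, so both $p$ and $x$ are themselves reduced, whence $\Pi p\le_T c$ by definition. Conversely, $w\le_T c$ directly provides a reduced $T$-word for $c$ with a reduced prefix multiplying to $w$, i.e., an element of $\Pre(\C_c)$ with product $w$. The same reasoning handles the easy direction of order-compatibility: if $P\le_{\pre}Q$ with representatives $p\in P$ and $q=px\in Q$, then $p$ and $q$ are reduced $T$-words for $\Pi p$ and $\Pi q$ respectively, and the prefix structure exhibits $\Pi p\le_T\Pi q$.

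The reverse direction of order-compatibility requires the standard subword property for the absolute order: if $u\le_T w\le_T c$ and $u_1\cdots u_k$ is any reduced $T$-word for $u$, then it extends on the right to a reduced $T$-word first for $w$ and then for $c$. Applied with $u=\Pi p$ and $w=\Pi q$, this produces a word $p'q'r\in\C_c$ with $\Pi p'=\Pi p$ and $\Pi(p'q')=\Pi q$, so Lemma~\ref{nc pre eq} gives $p\equiv p'$ and $q\equiv p'q'$, whence $P\le_{\pre}Q$. Finally, the edge-label statement is a direct translation of Proposition~\ref{chain sys poset}.\ref{edges pre}: that assertion supplies, on a cover $P\covered Q$, a unique $t\in T$ with $pt\in Q$ for every $p\in P$; under the bijection just established this is exactly the unique $t$ with $\Pi p\cdot t=\Pi(pt)=\Pi q$, as advertised.

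The only nonroutine ingredient is the subword property for absolute order (equivalently, the additivity $\ell_T(u)+\ell_T(u^{-1}w)=\ell_T(w)$ whenever $u\le_T w$), which is not recorded as a lemma in the excerpt but is standard in the literature on noncrossing partitions. Everything else is direct bookkeeping on top of the preparatory lemmas of this section and Proposition~\ref{chain sys poset}.
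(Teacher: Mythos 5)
Your proof is correct and matches the paper's intent: the paper gives no explicit argument, asserting only that Lemma~\ref{nc pre eq} implies the proposition, and your write-up supplies exactly the routine verification that assertion presupposes. The one external fact you import (the subword property of absolute order) is true but not strictly needed here: since $q$ is itself a reduced $T$-word for $\Pi q$ and all reduced $T$-words for $c$ have length $n$, taking any reduced $T$-word for $\Pi q$ with a prefix multiplying to $\Pi p$ (which the paper's definition of $\le_T$ provides directly) and appending the old postfix $x$ with $qx\in\C_c$ already yields a length-$n$, hence reduced, word in $\C_c$ --- which is precisely the instance of the subword property you invoke.
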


We will describe $\C_c$ in more detail.
One important tool is a natural action of the braid group on $n$ strands on $T$-words of length~$n$.
Taking generators $\sigma_1,\ldots,\sigma_n$ such that $\sigma_i$ braids strands $i$ and $i+1$, the action of $\sigma_i$ on a $T$-word $t_1\cdots t_n$ is to replace the $2$-letter subsequence $t_it_{i+1}$ with the $2$-letter subsequence $(t_it_{i+1}t_i)t_i$.

Another important tool is a bilinear form $E_c$ defined on $V$ using the Cartan matrix and the defining word for the Coxeter element~$c$.
\[
E_c(\alpha\ck_i,\alpha_j)=\begin{cases}
a_{ij}&\text{if }s_i\text{ follows }s_j\text{ in }c,\\
1&\text{if }i=j,\text{ or}\\
0&\text{if }s_i\text{ precedes }s_j\text{ in }c.
\end{cases}
\]
The form $K$ given by the Cartan matrix is the symmetrization of $E_c$, i.e.\ $K(x,y)=E_c(x,y)+E_c(y,x)$ for all $x,y\in V$.
As a consequence, it is easy to check that $E_c(\beta\ck,\beta)=E_c(\beta,\beta\ck)=1$ for every real root $\beta$. 
(See \cite[Lemma~2.4]{affdenom}.)

The form $E_c$ is the same as the form $\br{\,\cdot\,,\,\cdot\,}$ that appears in \cite{HuberyKrause}, except that the left argument of $E_c$ is the right argument of $\br{\,\cdot\,,\,\cdot\,}$.
(This can be seen in the proof of \cite[Lemma~3.1]{HuberyKrause}, where the form is defined in terms of the entries of the Cartan matrix.)
In this paper, we reserve the notation $\br{\,\cdot\,,\,\cdot\,}$ for the pairing between a vector space and its dual.
The form $E_{c^{-1}}$ on $\integers\Phi$ coincides, via the map $X\mapsto \undim X$, with the Euler form on the Grothendieck group: 
\[E_{c^{-1}}(\undim X, \undim Y) = \dim_\field\Hom(X,Y) - \dim_\field \Ext^1(X,Y).\]

The following theorem gathers results of Igusa and Schiffler~\cite{IgusaSchiffler}, Hubery and Krause~\cite{HuberyKrause}, and others, tied together with simple arguments.
It amounts to four equivalent descriptions of $\C_c$ (one of which is the definition).

\begin{theorem}\label{Ec nc} 
Let $W$ be a crystallographic Coxeter group and let $c$ be a Coxeter element of~$W$.
Suppose $t_1\cdots t_k$ is a $T$-word and $\gamma_1,\ldots,\gamma_k$ are the corresponding positive real roots.  
Then the following are equivalent.
\begin{enumerate}[label=\rm(\roman*), ref=(\roman*)]
\item \label{red Tword}
$t_1\cdots t_k$ is a reduced $T$-word for $c$.
\item \label{max pairwise le}   
Both of the following conditions hold:
Every letter $t_i$ has $t_i\le_Tc$; and $t_1\cdots t_k$ is maximal with respect to the property that $1<t_it_j\le_Tc$ for all  $i$ and $j$ with $1\le i<j\le k$. 
(That is, the sequence $t_1\cdots t_k$ has the property and for all $u\in T$ and all $i\in\set{0,1,\ldots,k}$, the sequence $t_1\cdots t_iut_{i+1}\cdots t_k$ does not have the property.)
\item \label{complete backwards}
$E_{c^{-1}}(\gamma_i,\gamma_j)=0$ for all $i$ and $j$ with $1\le j<i\le k$ and $\gamma_1,\ldots,\gamma_k$ is a $\integers$-basis for the root lattice $\integers\Phi$.
\item \label{braid}
$t_1\cdots t_k$ is obtained from the defining word for $c$, as a product of the elements of~$S$, by the action of the braid group.
\end{enumerate}
\end{theorem}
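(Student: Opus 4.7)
The plan is to treat Theorem~\ref{thm:IS} as a hub, routing each of the remaining three conditions through the bijection $(t_1,\ldots,t_n)\leftrightarrow(X_{\gamma_1},\ldots,X_{\gamma_n})$ between reduced $T$-words for $c$ and complete exceptional sequences, together with its natural extension (via Theorem~\ref{thm:NC} and Proposition~\ref{prop:wide_fg}) to reduced $T$-words for arbitrary $w\le_Tc$ inside the corresponding exceptional subcategory $\phi^{-1}(w)$.

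For (i) $\iff$ (ii): assuming each $t_i\le_T c$, set $X_i=X_{\gamma_i}$. The condition $1<t_it_j\le_T c$ says $t_it_j$ has reflection length $2$ in $[1,c]_T$, i.e.\ that $(t_i,t_j)$ is a reduced $T$-word for $t_it_j$. Under the extended form of Theorem~\ref{thm:IS}, this translates to $(X_i,X_j)$ being a two-term complete exceptional sequence in $\phi^{-1}(t_it_j)$, equivalently an exceptional pair, i.e.\ $(X_i,X_j)\in\Bx$. The maximality conditions match as well. Hence (ii) holds iff $(X_1,\ldots,X_k)\in\Cx$. By Proposition~\ref{prop:complete_ex_maximal}, $\Cx$ coincides with the set of complete exceptional sequences, and by Theorem~\ref{thm:IS} this corresponds bijectively to reduced $T$-words for $c$.

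For (i) $\iff$ (iii): if $(t_1,\ldots,t_n)$ is reduced for $c$, then $(X_1,\ldots,X_n)$ is a complete exceptional sequence, so $\Hom(X_i,X_j)=\Ext^1(X_i,X_j)=0$ for $j<i$, which via the Euler-form identity $E_{c^{-1}}(\undim X,\undim Y)=\dim_\field\Hom(X,Y)-\dim_\field\Ext^1(X,Y)$ recorded in Section~\ref{nc sec} is exactly $E_{c^{-1}}(\gamma_i,\gamma_j)=0$. The $\integers$-basis property for $\gamma_1,\ldots,\gamma_n$ follows because the dimension vectors of any complete exceptional sequence form a $\integers$-basis of $K_0(\mods\Lambda)=\integers\Phi$ (Proposition~\ref{prop:wide_fg}\ref{wfg4} allows an inductive argument via simples in the ambient and perpendicular categories). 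Conversely, the $\integers$-basis condition forces $k=n$; using the bijection between positive real roots $\gamma$ with $t_\gamma\le_T c$ and exceptional modules $X_\gamma$, one recovers candidate bricks $X_i$ with $\undim X_i=\gamma_i$, and the vanishing of $E_{c^{-1}}(\gamma_i,\gamma_j)$ together with $E_{c^{-1}}(\beta\ck,\beta)=1$ pins down the Hom/Ext vanishing needed for exceptionality; Theorem~\ref{thm:IS} then delivers (i).

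For (i) $\iff$ (iv): (iv) $\implies$ (i) is immediate, since the defining word for $c$ is reduced for $c$ and the braid action preserves both the product and the length of the $T$-word. For (i) $\implies$ (iv), transitivity of the Hurwitz action on reduced $T$-words for $c$ follows by translating under Theorem~\ref{thm:IS} to transitivity of the braid action on complete exceptional sequences established in \cite{CB,RingelBraid}, once one verifies compatibility of the two braid actions under $X\mapsto t_{\undim X}$. The main technical obstacle is this reconstruction step in the converse of (iii) (building the bricks $X_i$ purely from root-theoretic data) together with the compatibility of the two braid actions in (iv); both rely crucially on the identity $E_{c^{-1}}(\beta\ck,\beta)=1$ for real roots and on the strong rigidity provided by the representation theory of exceptional subcategories.
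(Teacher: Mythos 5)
Your handling of (i)$\iff$(ii) is essentially the paper's own route: the paper also knows the equivalence of (ii) with the other conditions only through representation theory, quoting \cite[Lemma~6.3]{HuberyKrause}, which is proved precisely by translating the relation $1<_Ttt'\le_Tc$ into the exceptional-pair relation via Theorem~\ref{thm:IS} and invoking Proposition~\ref{prop:complete_ex_maximal}. For (i)$\implies$(iv) you take a representation-theoretic detour (transitivity of the braid action on complete exceptional sequences from \cite{CB,RingelBraid}, plus compatibility of the two actions under $X\mapsto t_{\undim X}$); the paper instead cites the purely combinatorial proofs of \cite[Theorem~1.4]{IgusaSchiffler} and \cite{BDSW}, but your route is workable given Theorem~\ref{thm:IS}. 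The direction (i)$\implies$(iii) is also fine.

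The genuine gap is in (iii)$\implies$(i). You propose to ``recover candidate bricks $X_i$ with $\undim X_i=\gamma_i$'' from ``the bijection between positive real roots $\gamma$ with $t_\gamma\le_Tc$ and exceptional modules $X_\gamma$.'' That bijection only applies to roots already known to satisfy $t_\gamma\le_Tc$, and condition (iii) gives no such information about the $\gamma_i$ --- establishing $t_{\gamma_i}\le_Tc$ is essentially what is to be proved. For a general positive real root $\gamma$ the unique indecomposable of dimension vector $\gamma$ need not be a brick (real non-Schur roots occur already among the modules $R_{\beta,k}$ with $r_\beta<k<2r_\beta$ in a non-homogeneous tube), so there is no ready supply of candidate bricks. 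Moreover, even granting indecomposables $X_i$ with the right dimension vectors, the vanishing $E_{c^{-1}}(\gamma_i,\gamma_j)=\dim_\field\Hom(X_i,X_j)-\dim_\field\Ext^1(X_i,X_j)=0$ only says the two dimensions agree; it does not force the separate vanishing of $\Hom$ and $\Ext^1$ needed for an exceptional sequence, and the identity $E_{c^{-1}}(\beta\ck,\beta)=1$ does not repair this. The paper avoids any reconstruction of modules: it quotes \cite[Proposition~2.4]{HuberyKrause}, a purely combinatorial induction on generalized Cartan lattices showing that all sequences satisfying (iii) have the same product $t_{\gamma_1}\cdots t_{\gamma_k}$; since the defining word for $c$ satisfies (iii) (by the definition of $E_c$ and the fact that the simple roots are a $\integers$-basis of $\integers\Phi$), every sequence satisfying (iii) is a reduced $T$-word for $c$. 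You would need some substitute for this uniqueness-of-product statement; the Euler-form data alone is not enough.
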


\begin{proof}[Proof and citations]
Conditions~\ref{complete backwards} and~\ref{braid} immediately imply that $k=n$.
The fact that $k=n$ in~\ref{red Tword}, as already mentioned, is Dyer~\cite[Theorem~1.1]{DyerLength}.
A priori, sequences satisfying~\ref{max pairwise le} might be longer or shorter than~$n$, and the only proof we know that they have length $n$ is the concatenation of results described here.

The definition of the braid group action implies that \ref{braid}$\implies$\ref{red Tword}.
The implication \ref{red Tword}$\implies$\ref{braid} is \cite[Theorem~1.4]{IgusaSchiffler}, which is proved using the combinatorics of Coxeter groups.
A simpler combinatorial proof was given later in~\cite{BDSW}.
The implication \ref{braid} $\implies$ \ref{complete backwards} is the concatenation of \cite[Proposition~2.6]{HuberyKrause} and \cite[Lemma~2.7(1)]{HuberyKrause}.

Condition~\ref{complete backwards} coincides with the definition, in \cite[Sections~2--3]{HuberyKrause} of a complete real exceptional sequence $\gamma_1,\ldots,\gamma_k$.
(In \cite{HuberyKrause}, exceptional sequences may contain negative roots, but this difference with \ref{complete backwards} is inconsequential.)
By the definition of $E_c$, we have $E_{c^{-1}}(\gamma_i,\gamma_j)=0$ for all $j<i$ when $t_1\cdots t_n$ is the defining word for~$c$.  
The simple roots are a basis for $\integers\Phi$, so the defining word for~$c$ satisfies \ref{complete backwards}.
By \cite[Proposition~2.4]{HuberyKrause} (in light of \cite[Lemma~3.1]{HuberyKrause}), all sequences $t_1\cdots t_k$ satisfying \ref{complete backwards} have the same product, so all sequences satisfying \ref{complete backwards} are reduced $T$-words for~$c$.
In other words, \ref{complete backwards} $\implies$ \ref{red Tword}.

These quoted results show the equivalence of \ref{red Tword}, \ref{complete backwards}, and \ref{braid} using only the combinatorics of Coxeter groups and root systems.
(In \cite{HuberyKrause}, this combinatorics appears in the guise of generalized Cartan lattices \cite[Section~3]{HuberyKrause}.)
However, the equivalence of \ref{max pairwise le} with these other conditions is currently only known through representation-theory, using Theorem~\ref{thm:IS}.
Specifically, the equivalence of~\ref{red Tword} and~\ref{max pairwise le} is \cite[Lemma~6.3]{HuberyKrause}.
\end{proof}

\begin{remark}\label{would be nice}
The proof quoted here of Theorem~\ref{Ec nc} suggests the problem of giving a completely combinatorial proof.  
Presumably, such a proof would hold without the crystallographic condition, either leaving out condition~\ref{complete backwards} or finding a stand-in for requiring a $\integers$-basis for the root lattice.
(A possible stand-in is suggested by~\cite{BaumeisterWegener}, which shows that in the case where $\Phi$ is finite, a set of roots spanning the root lattice lattice is equivalent to the corresponding reflections generating the group.)
\end{remark}

In light of Proposition~\ref{Twords is chain sys}, the equivalence of \ref{red Tword} and \ref{max pairwise le} in Theorem~\ref{Ec nc} can be restated as the following corollary.

\begin{corollary}\label{chain sys nc} 
Let $W$ be a crystallographic Coxeter group and let $c$ be a Coxeter element of~$W$.
Then $\C_c$ is a binary chain system on the alphabet $\set{t\in T:t\le_T c}$, and its associated binary compatibility relation is $\set{tt':1<_Ttt'\le_Tc}$.
\end{corollary}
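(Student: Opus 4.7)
The plan is to combine Proposition~\ref{Twords is chain sys}, which already establishes that $\C_c$ is a chain system on $T$, with the equivalence of conditions \ref{red Tword} and \ref{max pairwise le} in Theorem~\ref{Ec nc}. The corollary is essentially a translation of that equivalence into the language of binary chain systems, as indicated in the paragraph preceding the statement.

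First I would pin down the alphabet $\A_{\C_c}$ of letters appearing in some word of $\C_c$. Any reflection $t$ with $t \le_T c$ occurs as the first letter of some reduced $T$-word for $c$ (by definition of the absolute order: $t \le_T c$ means $t$ is a prefix of some reduced $T$-word for $c$), so $t \in \A_{\C_c}$. Conversely, any letter of any reduced $T$-word for $c$ is $\le_T c$. Hence $\A_{\C_c} = \set{t \in T : t \le_T c}$.

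Next, I would set $B = \set{tt' \in \A_{\C_c}^2 : 1 <_T tt' \le_T c}$ and verify that $\C_c = \C_B$. A word $t_1 \cdots t_k$ in $\A_{\C_c}^*$ is a $B$-sequence precisely when each pair $t_i t_j$ with $i < j$ satisfies $1 <_T t_i t_j \le_T c$ (the conditions $t_i \le_T c$ being automatic from membership in the alphabet). Thus the maximal $B$-sequences are exactly the words satisfying condition~\ref{max pairwise le} of Theorem~\ref{Ec nc}. By the equivalence \ref{red Tword}$\iff$\ref{max pairwise le}, these are exactly the reduced $T$-words for $c$, i.e.\ $\C_c = \C_B$. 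By Definition~\ref{bin chain sys def}, this exhibits $\C_c$ as a binary chain system with binary compatibility relation $B$.

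There is no real obstacle, since all the nontrivial content has already been done: the chain-system conditions for $\C_c$ are handled by Proposition~\ref{Twords is chain sys}, and the identification of reduced $T$-words for $c$ with maximal $B$-sequences is exactly \ref{red Tword}$\iff$\ref{max pairwise le}. It is worth flagging, as the discussion after Theorem~\ref{Ec nc} and Remark~\ref{would be nice} emphasize, that this last equivalence is the one ingredient not currently known by purely Coxeter-combinatorial means, so the corollary inherits that dependence on representation theory (via \cite[Lemma~6.3]{HuberyKrause}).
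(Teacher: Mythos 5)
Your proposal is correct and matches the paper exactly: the paper offers no separate proof, stating only that the corollary is a restatement of the equivalence \ref{red Tword}$\iff$\ref{max pairwise le} of Theorem~\ref{Ec nc} in light of Proposition~\ref{Twords is chain sys}, which is precisely the combination you carry out (with the alphabet identification and the observation about the representation-theoretic dependence being accurate, if left implicit in the paper).
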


The following fact is immediate from Theorems~\ref{thm:NC} and ~\ref{thm:exceptional_binary_chain} and Corollary~\ref{chain sys nc}. 

\begin{corollary}\label{cor:bijection_on_chain_systems_exceptional}
Let $\Lambda$ be a finite-dimensional hereditary algebra with Coxeter element $c$. 
Then the map $X \mapsto t_{\undim X}$ is an isomorphism of binary chain systems. 
In particular, $(X,Y)$ is an exceptional sequence if and only if $1 <_T t_{\undim X}t_{\undim Y} \leq_T c$.
\end{corollary}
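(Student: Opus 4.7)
The plan is to combine the three cited results through Proposition~\ref{bin isom}, treating the corollary as a transfer-of-structure argument along the bijection provided by Theorem~\ref{thm:IS}.

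First, I would observe that by Theorem~\ref{thm:IS} (together with the paragraph immediately following it), the assignment $X \mapsto t_{\undim X}$ is a bijection from the alphabet $\excep$ underlying $\Cx$ to the alphabet $\{t \in T \mid t \leq_T c\}$ underlying $\C_c$, and, crucially, its letterwise extension to words restricts to a bijection from $\Cx$ (the set of maximal exceptional sequences, which by Proposition~\ref{prop:complete_ex_maximal} coincides with the set of complete ones) to $\C_c$ (the set of reduced $T$-words for $c$). By Definition~\ref{chain sys def}, this is precisely the data of an isomorphism of chain systems.

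Second, I would invoke Theorem~\ref{thm:exceptional_binary_chain}.\ref{Cx bcs} to conclude that $\Cx$ is a binary chain system and Corollary~\ref{chain sys nc} to conclude that $\C_c$ is a binary chain system whose underlying binary compatibility relation is $\{tt' \mid 1 <_T tt' \leq_T c\}$. With both chain systems now known to be binary, the hypotheses of Proposition~\ref{bin isom}.\ref{already} are met, so the isomorphism $X \mapsto t_{\undim X}$ of chain systems is equivalent to being an isomorphism of the underlying binary compatibility relations. This is the first assertion of the corollary.

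Finally, to extract the ``in particular'' statement, I would unwind the definition of $\Bx$: the binary compatibility relation underlying $\Cx$ consists exactly of the two-letter exceptional sequences $(X,Y)$. Transporting this relation across the letterwise bijection and matching it with the binary compatibility relation for $\C_c$ obtained from Corollary~\ref{chain sys nc} gives that $(X,Y)$ is an exceptional sequence if and only if $1 <_T t_{\undim X}t_{\undim Y} \leq_T c$. There is essentially no obstacle here; the substantive content lives in Theorem~\ref{thm:exceptional_binary_chain} and Corollary~\ref{chain sys nc}, and the role of this corollary is merely to package the Igusa--Schiffler/Hubery--Krause bijection as a morphism in the category of binary chain systems.
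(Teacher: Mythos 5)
Your proof is correct and matches the paper's argument, which simply declares the corollary ``immediate'' from the cited results: both routes amount to transporting the binary compatibility relation along the letterwise bijection on maximal chains using Proposition~\ref{bin isom}.\ref{already}, with Theorem~\ref{thm:exceptional_binary_chain}.\ref{Cx bcs} and Corollary~\ref{chain sys nc} supplying the hypothesis that both sides are binary chain systems. The only cosmetic difference is that the paper cites Theorem~\ref{thm:NC} where you cite Theorem~\ref{thm:IS} directly; since the corollary is a statement about maximal chains rather than about the posets, your citation is if anything the more direct one, and the bijection on alphabets you need is exactly the one recorded in the paragraph following Theorem~\ref{thm:IS}.
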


\section{The McCammond-Sulway lattice and its chain system}\label{McSul chain sec}
The Cartan matrix $A$ is of \newword{finite type} if and only if it is positive definite.
This is if and only if the corresponding Coxeter group $W$ is finite, and again equivalently, the root system $\Phi$ is finite.
When $W$ is finite, the noncrossing partition poset $[1,c]_T$ is a lattice~\cite{BWlattice}.
(See also Remark~\ref{fin lat remark}.)
The lattice property is crucial to a ``dual presentation'' of the associated Artin group of spherical type~\cite{Bessis,Bra-Wa}, which is the key to proving important properties of the Artin group.

The Cartan matrix $A$ is of \newword{affine type} if it is not of finite type, is positive semidefinite, and has the property that for every $i\in\set{1,\ldots,n}$, the submatrix of $A$ obtained by deleting row $i$ and column $i$ is of finite type.
In this case, $W$ is called an \newword{affine Coxeter group}.
When $A$ is of affine type, it has a $1$-dimensional kernel.
The kernel is $\integers\delta$ for a vector $\delta$ with nonnegative integer coordinates.
The imaginary roots of $\Phi$ are the nonzero integer multiples of $\delta$, and when there can be no confusion, we call $\delta$ simply the \newword{imaginary root}.
There is a finite root system $\Phi_\fin$ such that every root in $\Phi$ is a positive scaling of a vector of the form $\beta+m\delta$ for $\beta\in\Phi_\fin$ and $m\in\integers$.  

When $W$ is of affine type, $[1,c]_T$ is not always a lattice~\cite{Digne1,McFailure}.
In this case, work of McCammond and Sulway~\cite{McSul} constructs a lattice containing $[1,c]_T$ as an induced subposet, in order to prove of important properties of the associated Artin group of Euclidean type~\cite{McSul,PaoSal}.
The key idea is to enlarge the generating set $T$ to include some elements not in $W$ and to construct the poset analogous to $[1,c]_T$ in the supergroup of $W$ generated by the larger generating set.
The new elements are ``factored translations'', obtained by factoring the translations that appear in $[1,c]_T$.
We now give some details, using a modified construction from~\cite{affncA} that allows additional degrees of freedom in the definition but produces isomorphic results.
Proofs and additional details are found in \cite{McSul} and~\cite{affncA}.

In Section~\ref{nc chain sec}, we defined $W$ as a group of transformations of a vector space~$V$.
Dually, $W$ is a group of linear transformations of the dual vector space $V^*$.
When~$W$ is of affine type, writing $\br{\,\cdot\,,\,\cdot\,}:V^*\times V\to\reals$ for the natural pairing, the group fixes the affine hyperplane $E=\set{x\in V^*:\br{x,\delta}=1}$ in $V^*$ as a set.
The restriction of~$W$ to~$E$ is a group of Euclidean rigid motions of~$E$, generated by affine reflections.
We also consider the linear subspace $E_0=\set{x\in V^*:\br{x,\delta}=0}$ parallel to $E$, because elements of $W$ that act as translations on $E$ are described by vectors in~$E_0$.

The Coxeter element $c$ acts on $E$ as a translation along a line $U_0$, called the \newword{Coxeter axis}, composed with a rigid motion that fixes the Coxeter axis pointwise but fixes no other points.
A \newword{horiontal root} is a root that is orthogonal to the Coxeter axis, and a \newword{horizontal reflection} a reflection associated to a horizontal root. 
A \newword{vertical root} is a real root that is not horizontal and a  \newword{vertical reflection} a reflection associated to a vertical root.

The set $\Upsilon^c$ of horizontal roots is a root system in a broader sense than usual.
It is closed under the reflections defined by its roots and has a subset $\Xi^c$ analogous to the simple roots of a root system.
See \cite{Deodhar,DyerReflection} or \cite[Section~2.4]{typefree} for details. 

The root system $\Upsilon^c$ has, in essence, $0$, $1$, $2$, or $3$ irreducible components, each of which is of affine type $\widetilde{A}$.
(The case where $\Upsilon^c$ has $0$ components arises precisely when $W$ is affine of rank $2$.)
These are not orthogonal components in the usual sense because they all share the same imaginary roots, the integer multiples of~$\delta$.
However, if real roots $\beta,\gamma\in\Upsilon^c$ are in different components, then $K(\beta,\gamma)=0$.
Write $\Upsilon_1,\ldots,\Upsilon_m$ for these components, for $m\in\set{0,1,2,3}$.
It is only meaningful to factor translations when $m\ge2$ because these are precisely the cases where $[1,c]_T$ fails to be a lattice~\cite{Digne1,Digne2,McFailure}.
Write~$U_i$ for the linear span of $\set{K(\,\cdot\,,\beta):\beta\in\Upsilon_i}\subseteq V^*$.
Then $E_0$ is an orthogonal direct sum ${U_0\oplus\cdots\oplus U_m}$.
Thus if $w\in W$ acts on $E$ as a translation with translation vector $\lambda\in E_0$, we can write $\lambda=\lambda_0+\lambda_1+\cdots+\lambda_m$ with $\lambda_i\in U_i$ for $i=0,\ldots,m$.

We factor only those translations that are in the interval $[1,c]_T$.
We fix real numbers $q_1,\ldots,q_m$ summing to~$1$.
If $w\in[1,c]_T$ acts as a translation by a vector $\lambda\in E_0$, we define translations $f_1,\ldots,f_m$ with translation vectors $\lambda_i+q_i\lambda_0$ for $i=1,\ldots,m$.
Thus $w$ is the composition (in any order) of $f_1,\ldots,f_m$.
The set $F$ of \newword{factored translations} consists of all the factors $f_i$ associated to all translations~$w\in[1,c]_T$.
Each translation~$w$ in $[1,c]_T$ is a product of two reflections, and thus has length $\ell_T(w)=2$.
Accordingly, we assign each factored translation the length $\frac2m$, where~$m$ is the number of components of $\Upsilon^c$.

Each factored translation acting on $E$ extends uniquely to a linear transformation on $V^*$, so there is a well defined supergroup of $W$ generated by these transformations and the group $W$ of transformations of $V^*$.
Every element of the supergroup can be written as a word in the generators $T\cup F$, and thus we have analogous notions of reduced (i.e.\ minimal length) words, the prefix order, and the interval $[1,c]_{T\cup F}$.
This is a labeled poset, with each edge $v\covered w$ labeled by $v^{-1}w\in T\cup F$.
(Since the elements of $F$ are not involutions, conceivably $v^{-1}w$ might be the inverse of an element of $F$, but Proposition~\ref{if a fact}.\ref{all factors}, below, rules out that possibility.)
Different choices of the $q_i$ give isomorphic labeled posets \cite[Theorem~5.1]{affncA}, and this poset is a lattice \cite[Theorem~8.9]{McSul}.
The set of reduced words for $c$ in the alphabet $T\cup F$ is the set of label sequences on maximal chains of $[1,c]_{T\cup F}$, reading from the bottom.
Write $\C_c^+$ for this set of words.
We will see soon, as a consequence of Proposition~\ref{if a fact}, that $\C_c\subset\C_c^+$, for $\C_c$ as in Section~\ref{nc chain sec}.

We will show that $\C_c^+$ is a binary chain system.
As a first step, we describe the alphabet of the chain system $\C_c$ explicitly in the case where $W$ is of affine type.
Let~$T_V$ be the set of all vertical reflections.
Let $T_H$ be the set of all horizontal reflections whose corresponding root has non-full support in the basis of simple roots of~$\Phi$.
The following proposition is established in \cite[Section~5]{McSul}.
(The characterization of $T_H$ given here follows from the explanation at the end of \cite[Definition~5.5]{McSul}, which shows that for each horizontal root $\beta$, there are exactly two roots of the form $\beta+k\delta$ for $k\in\integers$ whose corresponding reflection is in $[1,c]_T$.
But there are exactly two choices of $k$ such that $\beta+k\delta$ has non-full support, and it is easy to see that the corresponding reflections are in $[1,c]_T$.)

\begin{proposition}\label{Cc alph}
Let $W$ be a Coxeter group of affine type and let $c$ be a Coxeter element of~$W$.
Then $\set{t\in T:t\le_Tc}=T_V\cup T_H$.
\end{proposition}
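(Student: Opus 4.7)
My plan is to derive Proposition~\ref{Cc alph} from the analysis of $[1,c]_T$ in \cite[Section~5]{McSul}. The two key ingredients from their work are, first, that every vertical reflection satisfies $t \le_T c$, yielding the inclusion $T_V \subseteq \set{t \in T : t \le_T c}$; and second, that for each horizontal real root $\beta$, among the reflections $t_{\beta + k\delta}$ with $k \in \integers$, exactly two belong to $[1,c]_T$. Given these, the proof reduces to identifying the distinguished pair of horizontal reflections per root direction with those whose corresponding roots have non-full support.

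For the horizontal case, the key observation is that in affine type the imaginary root $\delta$ has strictly positive coefficient on every simple root. Hence for a fixed horizontal root $\beta$, each simple-root coordinate of $\beta + k\delta$ is a strictly increasing linear function of $k$. As $k$ ranges over $\integers$, the line $\set{\beta + k\delta : k \in \integers}$ contains two rays of positive real roots, one starting at some minimal $\beta + k_0\delta$ (in the direction of $\beta$) and the other at some minimal $-\beta + k_1\delta$ (in the direction of $-\beta$). Each of these two extremal positive roots must have at least one simple-root coordinate equal to zero, since otherwise the next smaller root in the ray would still be positive, contradicting minimality. All other positive roots on the line have strictly positive coordinates, hence full support. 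This produces exactly two roots of non-full support per horizontal direction.

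It then remains to match these two boundary roots with McCammond and Sulway's two distinguished reflections. This is the essential content of the explanation following \cite[Definition~5.5]{McSul}: the two reflections per horizontal direction that lie in $[1,c]_T$ are precisely those associated to the minimal positive roots in each ray, that is, the non-full support reflections. Combined with the vertical case, this yields $\set{t \in T : t \le_T c} = T_V \cup T_H$.

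I expect the main obstacle to be this matching step. Although the count (``exactly two per direction'' on each side) provides strong evidence, a rigorous identification requires unpacking McSul's construction of the translation/reflection decomposition of $c$. An alternative, if one wanted to avoid deep reliance on the specifics of McSul's construction, would be to prove directly that each non-full support horizontal reflection is $\le_T c$ by exhibiting an explicit extension to a reduced $T$-word for $c$, and then combine with the count from \cite[Section~5]{McSul} to conclude equality.
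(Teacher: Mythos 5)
Your overall architecture matches the paper's: the proposition is essentially quoted from \cite[Section~5]{McSul}, and the paper's only added content is exactly the matching argument you describe --- exactly two reflections per horizontal direction lie in $[1,c]_T$ (from \cite{McSul}), exactly two roots per horizontal direction have non-full support, and the non-full-support ones are easily seen to lie in $[1,c]_T$, so the two pairs must coincide. In particular, the ``alternative'' you propose at the end (directly verifying that each non-full-support horizontal reflection is below $c$ and then invoking the two-versus-two count) is precisely the route the paper takes, not a fallback.

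The genuine gap is in the one step you argue in detail: the claim that on each $\delta$-line of positive real roots, the minimal root of each of the two rays has a zero coordinate, and hence that there are exactly two non-full-support roots per direction. Your justification --- that if the minimal positive root $\gamma=\beta+k_0\delta$ had all coordinates nonzero then $\gamma-\delta$ would still be positive --- requires every coordinate of $\gamma$ to be at least the corresponding coordinate of $\delta$, which does not follow from mere positivity once $\delta$ has a coefficient greater than $1$. Worse, the statement itself is false for general roots: if $\alpha_i$ is a simple root whose coefficient in $\delta$ is at least $2$ (which happens in every affine type other than $\widetilde{A}$), then $\delta-\alpha_i$ is the minimal positive root of its ray and has \emph{full} support. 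Since your argument nowhere uses that $\beta$ is horizontal, it cannot be correct as written; the count of non-full-support roots per direction genuinely needs the horizontality hypothesis (for instance via the fact that the horizontal root system is a union of subsystems of type $\widetilde{A}$, where the relevant coefficients do behave as in your argument, or via the explicit description of the roots $\beta_{(k)}$ in~\eqref{eqn:beta_k}). Repairing this step, together with carrying out the verification that the two non-full-support reflections actually lie in $[1,c]_T$, is what remains to turn your sketch into a proof.
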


The action of $c$ permutes the simple roots $\Xi^c$ of $\Upsilon^c$, and this permutation of~$\Xi^c$ has exactly one cycle in each component of $\Upsilon^c$. We denote by $r_\beta$ the length of the cycle containing $\beta$; that is, $r_\beta$ is the smallest positive integer such that $c^{r_\beta}(\beta) = \beta$.
The positive roots orthogonal to reflections in $T_H$ are the roots of the form
\begin{equation}\label{eqn:beta_k}\beta_{(k)}=\beta+c(\beta)+\cdots+c^{k-1}(\beta)\end{equation}
for some $\beta\in\Xi^c$ and some positive integer $k<r_\beta$.
We write $t_{\beta,k}$ for $t_{\beta_{(k)}}$.

To state the main theorem, we define an equivalence relation $\equiv_\Upsilon$ on the simple roots $\Xi^c$ of $\Upsilon^c$ with $\beta\equiv_\Upsilon\gamma$ if and only if $\beta$ and $\gamma$ are in the same component of~$\Upsilon^c$.

\begin{theorem}\label{Ccplus chain sys}
Let $W$ be a Coxeter group of affine type and let $c$ be a Coxeter element of~$W$ such that $\Upsilon^c$ has more than one component.
The set $\C_c^+$ is a binary chain system with alphabet $T_V\cup T_H\cup F$.
There is a bijection $\beta\mapsto f_\beta$ from $\Xi^c$ to $F$ such that the binary compatibility relation defining $\C_c^+$ is the union of the following sets of two-letter words:   
\begin{align*}
&\bigcup_{\substack{\beta,\gamma\in\Xi^c,\,\beta\not\equiv_\Upsilon\gamma\\1\le k<r_\beta}}\bigl\{f_\beta 
f_\gamma,\,f_\gamma t_{\beta,k},\,t_{\beta,k}f_\gamma\bigr\},\\
&\bigl\{f_\gamma t_{\beta,k}:\beta,\gamma\in\Xi^c,\,\beta\equiv_\Upsilon\gamma\text{ and }\gamma\not\in\{c(\beta),c^2(\beta),\ldots,c^k(\beta)\}\bigr\},\\
&\bigl\{t_{\beta,k}f_\gamma:\beta,\gamma\in\Xi^c,\,\beta\equiv_\Upsilon\gamma\text{ and }\gamma\not\in \{\beta,c(\beta),\ldots,c^{k-1}(\beta)\}\bigr\},\\
&\bigl\{tt':t,t'\in(T_V\cup T_H),1<_Ttt'\le_Tc\bigr\}.
\end{align*}
\end{theorem}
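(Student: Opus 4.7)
The plan is first to reduce the chain-system property to facts already established about the McCammond-Sulway lattice $[1,c]_{T \cup F}$, then to identify the factored translations with simple horizontal roots, and finally to extract the binary compatibility relation case by case.

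For the chain-system property, I would invoke Proposition \ref{poset to chain sys} applied to the poset $[1,c]_{T \cup F}$ with its natural edge-labeling $v \covered w \mapsto v^{-1}w$. Conditions \ref{bottom unique} and \ref{top unique} of Proposition \ref{poset to chain sys} follow from the elementary observation that, in any group-theoretic interval with this labeling, the label of a cover uniquely determines the top endpoint from the bottom endpoint (and vice versa). McCammond and Sulway's theorem \cite[Theorem 8.9]{McSul} guarantees that $[1,c]_{T \cup F}$ is a finite-height lattice with unique minimum $1$ and unique maximum $c$. Since $\C_c^+$ is defined as the set of bottom-to-top label sequences on maximal chains, Proposition \ref{poset to chain sys} yields that $\C_c^+$ is a chain system whose associated labeled poset is $[1,c]_{T \cup F}$ itself.

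For the bijection $\beta \mapsto f_\beta$, I would argue that each translation in $[1,c]_T$ factors into exactly $m$ factored translations, one per component of $\Upsilon^c$, whose translation vectors decompose according to $E_0 = U_0 \oplus U_1 \oplus \cdots \oplus U_m$. Within a component $\Upsilon_i$ of affine type $\widetilde{A}$, the $c$-orbits in $\Xi^c \cap \Upsilon_i$ parameterize the distinct factors arising from translations in $[1,c]_T$: each simple horizontal root $\beta \in \Xi^c \cap \Upsilon_i$ corresponds naturally to the factor associated with a distinguished translation determined by the cyclic $c$-action. Counting $|F|$ and comparing to $|\Xi^c|$ using the description from \cite[Section 5]{McSul} of which translations appear in $[1,c]_T$ establishes that this assignment is a bijection.

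For the binary compatibility relation, the reflection-reflection case reduces immediately to Corollary \ref{chain sys nc}, since for $t, t' \in T_V \cup T_H$ the product $tt'$ already lies in $W$ and appearing as consecutive labels in a word in $\C_c^+$ is equivalent to appearing as consecutive labels in a word in $\C_c$. For the remaining cases I would exploit that factored translations from different components commute and that each factor of a given translation appears at most once in any reduced word. The asymmetry in the conditions $\gamma \notin \{c(\beta), \ldots, c^k(\beta)\}$ versus $\gamma \notin \{\beta, c(\beta), \ldots, c^{k-1}(\beta)\}$ reflects the non-commutativity between horizontal reflections and the translation component of~$c$: a reflection $t_{\beta,k}$ may be absorbed into a factored translation $f_\gamma$ on one side but not the other, depending on whether the root $\beta_{(k)}$ in \eqref{eqn:beta_k} involves the $c$-translate $\gamma$.

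The main obstacle is establishing these mixed compatibility conditions rigorously when $\beta \equiv_\Upsilon \gamma$, because here the detailed geometry of how $c$ permutes $\Xi^c$ and how the intermediate roots $\beta_{(k)}$ split into telescoping sums must be tracked explicitly. I expect that a uniform proof is not tractable; instead, as signaled by Section~\ref{type sec}, the verification is carried out type by type, using the combinatorial models of \cite{affncA, affncD} in which both factored translations and horizontal reflections admit pictorial descriptions as arcs (or symmetric arc pairs) on surfaces, making the relevant incidence and subword conditions directly visible.
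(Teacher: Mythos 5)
Your overall architecture matches the paper's: realize $\C_c^+$ as the label sequences of maximal chains of $[1,c]_{T\cup F}$ via Proposition~\ref{poset to chain sys}, handle the reflection--reflection pairs through $\C_c$ and Corollary~\ref{chain sys nc}, and verify the bijection $\beta\mapsto f_\beta$ and the mixed compatibility conditions type by type with the surface models. However, there is a genuine gap in your treatment of the \emph{binary} chain system claim. Proposition~\ref{poset to chain sys} only delivers that $\C_c^+$ is a chain system; it does not show that $\C_c^+$ equals the set of \emph{maximal} $B$-sequences for the stated binary compatibility relation $B$, which is strictly more. Knowing the set of two-letter subwords of words in $\C_c^+$ does not by itself rule out that some maximal $B$-sequence fails to lie in $\C_c^+$, or that some word of $\C_c^+$ can be extended while keeping all two-letter subwords in $B$. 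The paper's Proposition~\ref{Ccplus bin} closes this by combining: (i) no word of $\C_c^+$ mixes letters of $F$ with letters of $T_V$ (Proposition~\ref{if a fact}.\ref{fact horiz}), so every word lives entirely in the alphabet of $\C_c$ or entirely in that of $\C_c^F$; (ii) a word with no letter in $F$ must contain a vertical reflection, since horizontal reflections fix the Coxeter axis pointwise while $c$ translates along it; and (iii) $\C_c$ and $\C_c^F$ are separately binary chain systems. Your remarks about commutativity of factors from different components and each factor appearing once point in this direction but do not assemble into the needed dichotomy-plus-maximality argument. (There is also the small point, used implicitly, that because $m>1$ no element of $T\cup F$ is a product of other generators, so reduced words really do biject with maximal-chain label sequences.)

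A second, smaller gap: you assert that for $t,t'\in T_V\cup T_H$, appearing consecutively in a word of $\C_c^+$ is \emph{equivalent} to appearing consecutively in a word of $\C_c$, and call this immediate. The containment you actually need — that a two-reflection prefix of a word in $\C_c^F$ is also a prefix of a word in $\C_c$ — is Proposition~\ref{ttprime} in the paper and requires an argument (use the braid action to gather the $m$ factored translations in the postfix into adjacent positions and replace them by a reduced $T$-word for the corresponding translation). The converse is true but is not needed here and is only obtained in the paper later, via para-exceptional sequences (Corollary~\ref{ttprime iff}). Finally, your proposed counting argument for the bijection $\Xi^c\to F$ would at best show the two sets are equinumerous; the theorem needs a bijection satisfying the specific side conditions of Proposition~\ref{good bij}, and the paper constructs it explicitly in each type (e.g.\ $t_\beta=(a\;\,c(a))_n\mapsto\ell_a^{\pm1}$ in type $\widetilde A$) — which you correctly anticipate must be done type by type, so that portion of your plan is sound.
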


\begin{remark}\label{it don't work}
When $\Upsilon^c$ has only one component, $F$ is the set of all translations in $[1,c]_T$.
In this case, $[1,c]_T$ is already a lattice~\cite{McFailure}, the factored translation construction is not needed, and the conclusion of Theorem~\ref{Ccplus chain sys} is false.
The issue is that each translation in $[1,c]_T$ is a product of two reflections, so for any word in $\C_c^+$ containing a letter in $F$, that letter can be replaced by two letters, violating the definition of a chain system.
Nevertheless, we have defined $\C_c^+$ in all cases, and the results proved below without hypotheses on $\Upsilon^c$ are true in all cases.
\end{remark}

We will prove Theorem~\ref{Ccplus chain sys} below.  
One important tool is the following proposition, essentially due to \cite{McSul} but gathered together as \cite[Proposition~5.4]{affncA}.

\begin{proposition}\label{if a fact}
For an arbitrary choice of real numbers $q_1,\ldots,q_m$ summing to~$1$, suppose a translation appears as one of the letters in a reduced word for $c$ in the alphabet $T\cup F\cup F^{-1}$.
\begin{enumerate}[label=\bf\arabic*., ref=\arabic*]
\item \label{fact horiz}
Each reflection in the word is horizontal.
\item \label{ref in int}
Each reflection in the word is contained in $[1,c]_T$.
\item \label{n-2 ref}
There are exactly $n-2$ reflections in the word.
\item \label{k trans} 
There are exactly $m$ translations in the word.
\item \label{all factors}
The translations in the word are the $m$ factors of some translation in $[1,c]_T$.
(In particular, they are elements of $F$, whereas \textit{a priori} they might only be inverse to elements of $F$.)
\item \label{reorder}
The reduced word can be reordered, by swapping letters that commute in the group, so that, for each~$i$, all reflections associated to $\Psi_i$ and all factored translations associated to $U_i$ are adjacent to each other.
\item \label{fact c}
For each $i$, let $c_i$ be the product of the subword consisting only of reflections associated to $\Psi_i$ and all factored translations associated to $U_i$.
Then $c_i$ depends only on $c$ and the constants $q_i$, not on the choice of reduced word. 
\end{enumerate}
\end{proposition}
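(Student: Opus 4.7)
The plan is to follow the approach of McCammond and Sulway, centered on the \emph{Move function} on affine isometries of $E$. For an affine isometry $w$, define $\text{Move}(w)=(w-I)(E_0)\subseteq E_0$, where $E_0$ is the linear subspace parallel to $E$. The dimension $\dim\text{Move}$ is subadditive under composition, and a direct computation gives $\dim\text{Move}(c)=n-1=\dim E_0$, since $c$ acts as a translation along the Coxeter axis composed with a rotation fixing no additional direction. Each reflection has $1$-dimensional Move (spanned by its root), and each factored translation $f$ or $f^{-1}$ has $1$-dimensional Move (the span of its translation vector). Hence any reduced word for $c$ in $T\cup F\cup F^{-1}$ must contain at least $n-1$ letters, with Moves jointly spanning $E_0$, and no letter may be redundant.

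To sharpen this into the precise counts, I would project the Move equation onto the orthogonal decomposition $E_0=U_0\oplus U_1\oplus\cdots\oplus U_m$. On $U_0$, the element $c$ contributes a one-dimensional translation, while on each $U_i$ (for $i\ge 1$) it contributes the full rotation coming from $\Upsilon_i$. Vertical reflections have Moves with nonzero $U_0$-component; horizontal reflections associated to $\Upsilon_i$ have Moves entirely within $U_i$; and factored translations associated to $U_i$ have Moves split between $U_i$ and $U_0$ in proportions $1$ and $q_i$. Careful bookkeeping of these projections yields \ref{fact horiz}, \ref{n-2 ref}, and \ref{k trans}: vertical reflections are excluded because the required $U_0$-displacement is already provided by the $m$ factored translations, and the $U_i$-projection forces exactly one factored translation per component together with the right number of horizontal reflections to span each $\Upsilon_i$, totaling $n-2$.

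Assertions \ref{reorder} and \ref{fact c} then follow because reflections and factored translations associated to distinct components commute: the corresponding roots and translation vectors lie in orthogonal subspaces, so the associated linear actions commute. This allows the reduced word to be sorted into $m$ blocks, one per component, and the block product $c_i$ is determined as the piece of $c$ acting on $U_i\oplus q_iU_0$, depending only on $c$ and $q_i$. Assertion \ref{ref in int} follows by the standard exchange-style argument: a reflection~$t$ in a reduced word $w_1tw_2$ for $c$ yields a subword expressing $t\le_Tc$.

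The main obstacle is \ref{all factors}: ruling out the appearance of some $f^{-1}\in F^{-1}$ in place of genuine factored translations. Intuitively, if $f^{-1}$ occurred, the $U_0$-component of its translation vector would have the wrong sign relative to $q_i\lambda_0$, forcing a compensating contribution elsewhere that inflates the word beyond the length dictated by the Move bound. Making this precise requires comparing the signs of the $U_0$-projections of the factored translations in the word against the required $U_0$-translation component of $c$, and using the Move-dimension budget to exclude the corrections that $f^{-1}$ would necessitate. Once all translations in the word are confirmed to lie in $F$, the fact that together they compose to a single translation in $[1,c]_T$ follows from the block decomposition of \ref{reorder} and the structure of $c$.
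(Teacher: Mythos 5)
First, a point of context: the paper does not prove this proposition at all. It is imported from the literature (``essentially due to [McSul] but gathered together as [affncA, Proposition~5.4]''), so there is no in-paper argument to compare yours against; the expected treatment here is a citation. Your outline does follow the general strategy of those sources (moved-space dimension counts combined with the orthogonal decomposition $E_0=U_0\oplus U_1\oplus\cdots\oplus U_m$), but as written it has real gaps. The definition $\operatorname{Mov}(w)=(w-I)(E_0)$ is not the one you then use: applied to a translation it gives $0$, not the span of the translation vector. You need the affine move-set $\{w(x)-x: x\in E\}$ together with the elliptic/hyperbolic dichotomy ($\ell$ equals the dimension of the move-set, or that dimension plus two, respectively); under that convention $\dim\operatorname{Mov}(c)=n-2$, not $n-1$, and without this setup the weighted length budget $r+2k/m=n$ that underlies assertions 3 and 4 is not in place. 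Moreover, the ``careful bookkeeping'' that is supposed to deliver assertions 1, 3, and 4 is exactly where the content lies and is not carried out: the exclusion of vertical reflections is justified by saying the $U_0$-displacement ``is already provided by the $m$ factored translations,'' which presupposes assertion 4 — the very thing being proved.

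The more serious structural issue is the dependency between assertions 2 and 5. You dispatch assertion 2 with a ``standard exchange-style argument,'' but a letter $t$ of a reduced word in the alphabet $T\cup F\cup F^{-1}$ only gives $t$ below $c$ in the weighted-length order of the supergroup; to conclude $t\le_T c$ in the absolute order on $W$ you must convert the remaining letters into a reduced $T$-word of length $n-1$, which requires first grouping the factored translations into genuine translations of $W$ — that is, it requires assertion 5, which you defer to the end and only sketch. Assertion 5 itself needs more than excluding $F^{-1}$: you must also show that the $m$ factored translations occurring are the factors of a \emph{single} translation $w\in[1,c]_T$ (so their $U_0$-components are $q_i\lambda_0$ for one common $\lambda_0$, and the resulting product is genuinely below $c$); ``follows from the block decomposition'' does not establish this. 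The commutation argument for assertions 6 and 7 is fine. In the context of this paper, the right move is to cite [McSul, Section~7] or [affncA, Proposition~5.4]; a self-contained proof would require making the sign and length-budget argument for assertion 5 precise and placing it logically before assertions 2--4.
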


Proposition~\ref{if a fact} implies in particular that reduced words for $c$ in the alphabet $T\cup F$ have length~$n$ (assigning, as described above, the length $\frac2m$ to each element of~$F$).
Thus $\C_c\subset\C_c^+$.

\begin{proposition}\label{Ccplus alph}
Let $W$ be a Coxeter group of affine type and let $c$ be a Coxeter element of~$W$ such that $\Upsilon^c$ has more than one component.
The set of letters appearing in words in $\C_c^+$ is $T_V\cup T_H\cup F$.
\end{proposition}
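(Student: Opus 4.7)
The plan is to prove the two inclusions $\{\text{letters of words in }\C_c^+\}\subseteq T_V\cup T_H\cup F$ and $T_V\cup T_H\cup F\subseteq\{\text{letters of words in }\C_c^+\}$ separately, using Propositions~\ref{Cc alph} and~\ref{if a fact} as the main tools. Throughout, the weighted-length identity $(n-2)+m\cdot\tfrac{2}{m}=n$ flagged in the paragraph following Proposition~\ref{if a fact} does most of the bookkeeping.

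For the inclusion that every letter of a word in $\C_c^+$ lies in $T_V\cup T_H\cup F$, I will take $w\in\C_c^+$ and split into cases. If $w$ contains no factored translation, then $w\in\C_c$, its letters are reflections $t\le_T c$, and such $t$ lie in $T_V\cup T_H$ by Proposition~\ref{Cc alph}. Otherwise $w$ contains at least one factored translation, and I apply Proposition~\ref{if a fact} to $w$: parts~\ref{ref in int} and \ref{all factors} give that every reflection letter of $w$ lies in $[1,c]_T\cap T=T_V\cup T_H$ and that every translation letter is an actual element of $F$ rather than merely an inverse of one.

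For the reverse inclusion I need to exhibit, for each $a\in T_V\cup T_H\cup F$, some word in $\C_c^+$ in which $a$ appears. If $a\in T_V\cup T_H$, then $a\le_T c$ by Proposition~\ref{Cc alph}, so a reduced $T$-word for $c$ with first letter $a$ exists and is already in $\C_c\subseteq\C_c^+$. If $a\in F$, the definition of $F$ writes $a$ as one of the $m$ factors $f_1,\ldots,f_m$ of some translation $w\in[1,c]_T$. Since $w\le_T c$ and $\ell_T(w)=2$, I can choose a reduced $T$-word $(t_1,t_2,t_3,\ldots,t_n)$ for $c$ with $t_1 t_2=w$. Because the $f_i$ are translations in pairwise orthogonal directions of $E_0$, they commute and compose to $w$, so $(f_1,\ldots,f_m,t_3,\ldots,t_n)$ is a $(T\cup F)$-word for $c$ of weighted length $m\cdot\tfrac{2}{m}+(n-2)=n$. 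It is therefore reduced, lies in $\C_c^+$, and contains $a$.

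The one step that requires care is the invocation of Proposition~\ref{if a fact}, which is stated for reduced words in the larger alphabet $T\cup F\cup F^{-1}$. This reduces to checking that a word that is reduced in $T\cup F$ remains reduced in the larger alphabet, and it is the point I expect to need the most care. This follows from the weighted-length count: no word for $c$ in $T\cup F\cup F^{-1}$ has weighted length below $n$ (by Dyer's theorem for words purely in $T$, and by Proposition~\ref{if a fact} parts~\ref{n-2 ref} and~\ref{k trans} for any word containing a translation), while the words of $\C_c^+$ already realize $n$.
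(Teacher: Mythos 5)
Your proposal is correct and follows essentially the same route as the paper: both inclusions are handled with Proposition~\ref{Cc alph}, the explicit word $f_1\cdots f_m t_3\cdots t_n$ obtained by factoring a length-two translation, and Proposition~\ref{if a fact} (parts \ref{ref in int} and \ref{all factors}) for the forward direction. Your extra care about why the constructed word of weighted length $n$ is reduced is the same point the paper disposes of in the paragraph immediately following Proposition~\ref{if a fact}.
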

\begin{proof}
Since $\C_c^+$ contains $\C_c$, Proposition~\ref{Cc alph} implies that every letter in $T_V\cup T_H$ appears in $\C_c^+$.
The factored translations are obtained by factoring translations that are in $[1,c]_T$.
Given a translation $w\in[1,c]_T$ factored as $f_1\cdots f_m$, since $\ell_T(w)=2$, there is a $T$-word $t_1\cdots t_{n-2}$ such that $wt_1\cdots t_{n-2}=c$.
Thus $f_1\cdots f_mt_1\cdots t_{n-2}=c$, and we conclude that every letter in $F$ appears in $\C_c^+$.
Now Propositions~\ref{Cc alph}, \ref{if a fact}.\ref{ref in int}, and~\ref{if a fact}.\ref{all factors} imply that every letter appearing in $\C_c^+$ is in $T_V\cup T_H\cup F$.
\end{proof}

The following proposition is a restatement of Proposition~\ref{if a fact}.\ref{fact horiz}.
\begin{proposition}\label{Ccplus disj}
Let $W$ be a Coxeter group of affine type and let $c$ be a Coxeter element of~$W$ such that $\Upsilon^c$ has more than one component.
No word in $\C_c^+$ contains both a letter in $F$ and a letter in $T_V$.
\end{proposition}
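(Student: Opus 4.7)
The plan is to derive Proposition~\ref{Ccplus disj} directly from Proposition~\ref{if a fact}.\ref{fact horiz}, which has been cited as a consequence of the McCammond-Sulway analysis. The key observation is that $\C_c^+$ is by definition a subset of the set of reduced words for $c$ in the larger alphabet $T \cup F \cup F^{-1}$, so any word in $\C_c^+$ satisfies the hypotheses of Proposition~\ref{if a fact} as soon as it contains a letter from $F$.

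Concretely, I would argue as follows. Suppose $w \in \C_c^+$ contains some letter $f \in F$. By construction, $f$ is a factored translation, hence acts on $E$ as a translation; in particular, $f$ is a translation in the sense used in Proposition~\ref{if a fact}. Therefore the hypothesis of Proposition~\ref{if a fact} is met: the reduced word $w$ for $c$ contains at least one translation. Applying Proposition~\ref{if a fact}.\ref{fact horiz}, every reflection appearing in $w$ is a horizontal reflection. Since $T_V$ is defined as the set of vertical reflections, no letter of $w$ can lie in $T_V$, which is the desired conclusion.

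There is no real obstacle here: the work is already packaged inside Proposition~\ref{if a fact}. The only thing to verify is the trivial compatibility that a letter of $F$, appearing inside a word in the alphabet $T \cup F \subseteq T \cup F \cup F^{-1}$, qualifies as a translation for the purposes of invoking Proposition~\ref{if a fact}, and that the proposition's conclusion about ``reflections'' covers precisely the letters that would otherwise lie in $T_V$ (versus those lying in $T_H$, which are permitted). Both of these are immediate from the definitions given just before the statement, so the proof amounts to a single citation.
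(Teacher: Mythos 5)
Your proof is correct and takes exactly the same route as the paper, which simply notes that the proposition is a restatement of Proposition~\ref{if a fact}.\ref{fact horiz}; your version just spells out the (immediate) verification that a letter of $F$ is a translation satisfying the hypothesis of Proposition~\ref{if a fact} and that excluding non-horizontal reflections is the same as excluding letters of $T_V$.
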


The lattice $[1,c]_{T\cup F}$ is obtained from $[1,c]_T$ by adjoining finitely many elements, and we will describe them more specifically.

The \newword{factorable group} is the group generated by $T_H\cup F$.
We define the interval $[1,c]_{T_H\cup F}$ in the factorable group with the same notion of length as above.
Let $\C_c^F$ be the set of reduced words for $c$ in the alphabet $T_H\cup F$.
Thus $\C_c^F$ is the set of label sequences on maximal chains in $[1,c]_{T_H\cup F}$.
The interval $[1,c]_{T_H\cup F}$ is an $m$-fold direct product of factors $[1,c_i]_{T_H\cup F}$ for $c_i$ as in Proposition~\ref{if a fact}.\ref{fact c}.
Each factor is isomorphic, as a labeled poset, to the noncrossing partition lattice associated to a Coxeter group of finite type B of rank equal to the number of simple roots in $\Xi^c$ in the $i\th$ component of $\Upsilon^c$ \cite[Proposition~7.6]{McSul}.
(For a representation-theoretic proof, see Corollary~\ref{CcF cor}.)
As a consequence, Corollary~\ref{chain sys nc} implies the following.

\begin{proposition}\label{factorable bin}
Let $W$ be a Coxeter group of affine type and let $c$ be a Coxeter element of~$W$ such that $\Upsilon^c$ has more than one component.
Then $\C_c^F$ is a binary chain system with $\P(\C_c^F)\cong[1,c]_{T_H\cup F}$.
Its binary compatibility relation consists of the two-letter prefixes of label sequences of maximal chains in $[1,c]_{T_H\cup F}$.
\end{proposition}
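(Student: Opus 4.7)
The plan is to reduce to the case of noncrossing partitions in a finite Coxeter group (where Corollary~\ref{chain sys nc} applies) via the product decomposition of $[1,c]_{T_H\cup F}$ and the shuffle product of chain systems (Proposition~\ref{shuf prod}).

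First, I would invoke the known fact (cited from \cite[Proposition~7.6]{McSul}) that $[1,c]_{T_H\cup F}$ decomposes as the direct product of the labeled subposets $[1,c_i]_{T_H\cup F}$ for $i=1,\ldots,m$, where each $c_i$ is as in Proposition~\ref{if a fact}.\ref{fact c}. Each factor is isomorphic as a labeled poset to the noncrossing partition lattice $[1,c'_i]_{T'_i}$ inside a finite Coxeter group $W'_i$ of type B, whose rank equals the number of simple roots of $\Xi^c$ lying in the $i\th$ component of $\Upsilon^c$. Since the generating sets $T_H\cup F$ used for distinct factors involve disjoint subsets of reflections and disjoint subsets of factored translations (each $f\in F$ acts nontrivially only along a single $U_i$, and each horizontal reflection in $T_H$ is associated to a root in a single component $\Upsilon_i$), the alphabets of the induced labelings are pairwise disjoint.

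Next, since type B Coxeter groups are crystallographic, Corollary~\ref{chain sys nc} applied inside each $W'_i$ gives that $\C_{c'_i}$ is a binary chain system, whose associated labeled poset is $[1,c'_i]_{T'_i}\cong[1,c_i]_{T_H\cup F}$, and whose binary compatibility relation consists of those two-letter $T'_i$-words $tt'$ with $1<_Ttt'\le_Tc'_i$, equivalently the two-letter prefixes of reduced words for $c'_i$. Transporting this along the labeled-poset isomorphism produces a binary chain system $\C^F_{c_i}$ on an alphabet contained in $T_H\cup F$, with $\P(\C^F_{c_i})\cong[1,c_i]_{T_H\cup F}$.

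Now I would identify $\C_c^F$ with the iterated shuffle product $\C^F_{c_1}\shuffle\cdots\shuffle\C^F_{c_m}$. In one direction, given a reduced word for $c$ in $T_H\cup F$, Proposition~\ref{if a fact}.\ref{reorder} and~\ref{if a fact}.\ref{fact c} let us reorder it into blocks by component so that the $i\th$ block is a reduced word for $c_i$; extracting the blocks shows the original word was a shuffle of such reduced words. Conversely, any shuffle of reduced words for the $c_i$ (letters from disjoint alphabets, associated to commuting reflections across components) multiplies to $c$ and has the correct length, hence lies in $\C_c^F$. Applying Proposition~\ref{shuf prod}.\ref{shuf bin} inductively, the shuffle product is a binary chain system, with binary compatibility relation the union of the within-component relations and all two-letter words mixing letters from different components. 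Finally, Proposition~\ref{shuf prod}.\ref{shuf chain sys} combined with the product decomposition of $[1,c]_{T_H\cup F}$ gives $\P(\C_c^F)\cong\prod_i\P(\C^F_{c_i})\cong[1,c]_{T_H\cup F}$ as labeled posets. The asserted description of the binary compatibility relation as the set of two-letter prefixes of elements of $\C_c^F$ is then immediate from Definition~\ref{bin chain sys def} together with Proposition~\ref{chain sys poset}.\ref{chains}.

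The main obstacle, modest but worth care, is verifying that the shuffle identification $\C_c^F=\C^F_{c_1}\shuffle\cdots\shuffle\C^F_{c_m}$ is faithful at the level of labels: we need both that the factored translations $F$ partition cleanly by component (so the alphabets really are disjoint) and that letters from different components commute in the factorable group (so every shuffle of reduced words stays reduced and multiplies to $c$). Both of these facts are already packaged in Proposition~\ref{if a fact}, so the argument is routine once that proposition is in hand.
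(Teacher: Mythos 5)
Your proposal is correct and follows essentially the same route as the paper, which derives the proposition from the product decomposition $[1,c]_{T_H\cup F}\cong\prod_i[1,c_i]_{T_H\cup F}$ into type~B noncrossing partition lattices together with Corollary~\ref{chain sys nc}. You have simply made explicit the shuffle-product step (via Proposition~\ref{shuf prod} and Proposition~\ref{if a fact}) that the paper leaves implicit in its one-line deduction.
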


Proposition~\ref{factorable bin} and Proposition~\ref{if a fact}.\ref{all factors}--\ref{fact c} imply the following proposition.

\begin{proposition}\label{factorable facts}
Let $W$ be a Coxeter group of affine type and let $c$ be a Coxeter element of~$W$ such that $\Upsilon^c$ has more than one component.
\begin{enumerate}[label=\bf\arabic*., ref=\arabic*]
\item 
If~$a,b\in T_H\cup F$ are associated to different components of $\Upsilon^c$, then $ab$ is in the binary compatibility relation for~$\C_c^F$.
\item
If $a,b\in T_H\cup F$ are associated to the same component $\Upsilon_i$ of~$\Upsilon^c$, then $ab$ is in the binary compatibility relation for~$\C_c^F$ if and only if $1\neq ab\in[1,c_i]_{T_H\cup F}$.
\item
If $a,b\in F$ and are associated to the same component of~$\Upsilon^c$, then $ab$ is not in the binary compatibility relation for~$\C_c^F$.
\end{enumerate}
\end{proposition}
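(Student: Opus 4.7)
The plan is to leverage the direct product decomposition
\[[1,c]_{T_H\cup F} \;\cong\; \prod_{i=1}^m [1,c_i]_{T_H\cup F}\]
announced between Propositions~\ref{factorable bin} and~\ref{factorable facts}: by Proposition~\ref{if a fact}.\ref{reorder}--\ref{fact c}, any reduced word for $c$ in $T_H\cup F$ can be rearranged (via commuting swaps) into $m$ blocks, with the $i$-th block multiplying to the fixed element $c_i$. Each alphabet letter $a\in T_H\cup F$ is intrinsically associated with a unique component via its underlying horizontal root (for $a\in T_H$) or factored translation direction (for $a\in F$). By Proposition~\ref{factorable bin}, the binary compatibility relation of $\C_c^F$ is precisely the set of two-letter initial segments of reduced words for $c$ in $T_H\cup F$.

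For Part 1, when $a$ and $b$ lie in distinct components I would concatenate a reduced word for $c_i$ starting with $a$, a reduced word for $c_j$ starting with $b$, and arbitrary reduced words for the remaining $c_k$; the concatenation is a reduced word for $c$ starting with $ab$. For Part 2, both letters live in the $i$-th factor, so after reshuffling, any reduced word for $c$ beginning with $ab$ must start with a reduced word for $c_i$ beginning with $ab$, and conversely any such reduced word for $c_i$ extends by concatenation to one for $c$. Hence $ab$ is in the binary compatibility relation if and only if it is a two-letter prefix of a reduced word for $c_i$ in $[1,c_i]_{T_H\cup F}$. The forward implication immediately gives $1\ne ab\in[1,c_i]_{T_H\cup F}$. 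For the converse, I would invoke the identification of $[1,c_i]_{T_H\cup F}$ with a noncrossing partition lattice of finite type~B (cited from \cite[Proposition~7.6]{McSul} just before the proposition): in such a lattice, every element lies on a maximal chain from $1$ to $c_i$, so $1\ne ab\in[1,c_i]_{T_H\cup F}$ together with $a$ being an atom forces the cover $a\covered ab$ and hence yields $ab$ as a two-letter prefix.

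For Part 3, I would apply Proposition~\ref{if a fact}.\ref{all factors}: the $F$-letters occurring in any reduced word for $c$ in $T_H\cup F$ are precisely the $m$ factors of a single translation in $[1,c]_T$, one associated to each component. Consequently, no reduced word for $c$ in $T_H\cup F$ contains two $F$-letters associated to the same component, so $ab$ cannot be a two-letter prefix in that case. The principal obstacle is the converse direction of Part 2, where the noncrossing-partition-lattice-of-type-B structure of $[1,c_i]_{T_H\cup F}$ must be imported to promote membership in the interval to an actual cover relation; everything else reduces transparently to Proposition~\ref{if a fact} together with the direct product decomposition.
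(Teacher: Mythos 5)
Your proposal is correct and follows essentially the same route as the paper, which derives this proposition directly from Proposition~\ref{factorable bin} (the direct product decomposition of $[1,c]_{T_H\cup F}$ into type-B noncrossing partition lattices) together with Proposition~\ref{if a fact}.\ref{all factors}--\ref{fact c}; your three arguments unpack exactly those citations. The only point worth tightening is the converse in Part~2: rather than arguing via atoms and covers (where ``$a$ an atom below $ab$ forces $a\covered ab$'' tacitly uses that $ab$ has reflection length~$2$ in the type-B factor), one can simply transport Corollary~\ref{chain sys nc} through the labeled-poset isomorphism with the type-B noncrossing partition lattice, which identifies the two-letter prefixes with $\set{ab: 1\neq ab\in[1,c_i]_{T_H\cup F}}$ in one step.
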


The lattice $[1,c]_{T\cup F}$ is the union of $[1,c]_T$ and $[1,c]_{T_H\cup F}$ by \cite[Lemma~7.2]{McSul}.
This union is not disjoint:
An element $w$ of $[1,c]_{T_H\cup F}$ is also in $[1,c]_T$ if and only if a reduced word for $w$ in the alphabet $T_H\cup F$ either has no factored translations or $m$ factored translations.
When $m=1$ (i.e.\ when $\Upsilon^c$ has only one component), $[1,c]_{T\cup F}=[1,c]_T$ and there are no cover relations in $[1,c]_{T\cup F}$ labeled by elements of $F$.
In any case, the set $\C_c^+$ is the disjoint union of $\C_c$ and $\C_c^F$, but when ${m=1}$, the maximal chains of $[1,c]_{T\cup F}$ are in bijection with $\C_c$ (as a consequence of Corollary~\ref{chain sys nc} and Proposition~\ref{chain sys poset}.\ref{chains}), not with the larger set $\C_c^+$.

\begin{proposition}\label{Ccplus bin}
Let $W$ be a Coxeter group of affine type and let $c$ be a Coxeter element of~$W$ such that $\Upsilon^c$ has more than one component.
The set $\C_c^+$ is a binary chain system.
The associated binary compatibility relation is the union of the binary compatibility relations for~$\C_c$ and~$\C_c^F$.
If is also the set of two-letter words in the alphabet $T_V\cup T_H\cup F$ whose products are nontrivial elements of $[1,c]_{T\cup F}$.
\end{proposition}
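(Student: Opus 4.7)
The plan is as follows. The set $\C_c^+$ is, by definition, the set of label sequences of maximal chains in $[1,c]_{T \cup F}$ read from bottom to top, with each cover $v \covered w$ labeled by $v^{-1}w \in T \cup F$. Under this labeling, the $i$-th vertex of a maximal chain is the product $a_1 \cdots a_i$ of the first $i$ labels, and dually the $(k-i)$-th vertex is $c(a_{k-i+1} \cdots a_k)^{-1}$. Both conditions of Proposition~\ref{poset to chain sys} are therefore automatic, and $\C_c^+$ is a chain system.

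Let $B_0 = \{tt' : 1 <_T tt' \leq_T c\}$ be the binary compatibility relation for $\C_c$ from Corollary~\ref{chain sys nc}, let $B_F$ be the binary compatibility relation for $\C_c^F$ from Proposition~\ref{factorable bin}, and set $B = B_0 \cup B_F$. By \cite[Lemma~7.2]{McSul} combined with Proposition~\ref{if a fact}.\ref{k trans}, every reduced word in $T \cup F$ for $c$ either contains no factored translations (and so lies in $\C_c$) or contains exactly $m$ factored translations (and so lies in $\C_c^F$), giving the disjoint union $\C_c^+ = \C_c \sqcup \C_c^F$. In particular, every word in $\C_c^+$ is already a $B$-sequence. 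To identify $\C_c^+$ with the set of maximal $B$-sequences, I would first establish the third characterization
\[
B = \{aa' : a, a' \in T_V \cup T_H \cup F,\ 1 \neq aa' \in [1,c]_{T \cup F}\}.
\]
One inclusion is immediate from $[1,c]_T, [1,c]_{T_H \cup F} \subseteq [1,c]_{T \cup F}$. For the reverse, given a pair $aa'$ with $1 \neq aa' \in [1,c]_{T \cup F}$, \cite[Lemma~7.2]{McSul} places the product in $[1,c]_T$ or in $[1,c]_{T_H \cup F}$; in the first case $(a,a') \in B_0$, while in the second case Proposition~\ref{if a fact}.\ref{fact horiz} forces any reflection among $a, a'$ to be horizontal, and Proposition~\ref{factorable facts} then places $(a,a')$ in $B_F$.

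The third characterization makes clear that being a $B$-sequence corresponds to the partial products $1, a_1, a_1 a_2, \ldots$ lying in $[1,c]_{T \cup F}$ with consecutive pairs being cover relations. I would then deduce $\C_c^+ = \C_B$ by case analysis: if a maximal $B$-sequence contains no letter from $F$, then Proposition~\ref{Ccplus disj} plays no role and the statement reduces to the binary chain system property of $\C_c$ from Corollary~\ref{chain sys nc}; if at least one letter lies in $F$, then Proposition~\ref{Ccplus disj} forces all other letters into $T_H \cup F$, reducing the statement to the binary chain system property of $\C_c^F$ from Proposition~\ref{factorable bin}. The main obstacle will be ensuring that partial products actually stay inside $[1,c]_{T \cup F}$ at every intermediate stage: the pairwise condition on a $B$-sequence only guarantees two-element products lie in the interval, and upgrading this to the global chain condition is precisely the analogue for $[1,c]_{T \cup F}$ of the Coxeter-group statement built into Corollary~\ref{chain sys nc}. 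This upgrade is facilitated by the disjoint decomposition $\C_c^+ = \C_c \sqcup \C_c^F$, which localizes the problem to the two pieces where the binary chain system property is already known.
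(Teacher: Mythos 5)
Your overall strategy is the same as the paper's: establish the chain-system property via Proposition~\ref{poset to chain sys} using the fact that partial products of labels recover the elements of a maximal chain, set $B=B_0\cup B_F$, identify $B$ with the set of two-letter words whose products are nontrivial elements of $[1,c]_{T\cup F}$, and then prove $\C_c^+=\C_B$ by splitting on whether a word contains a letter of $F$ and falling back on the known binary chain systems $\C_c$ and $\C_c^F$. Your closing paragraph correctly identifies that the pairwise-to-global upgrade is localized to the two pieces by the decomposition $\C_c^+=\C_c\sqcup\C_c^F$.

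There is, however, a genuine gap in the case split, and it sits exactly where you assert that ``Proposition~\ref{Ccplus disj} plays no role.'' To show that a word $t_1\cdots t_k\in\C_c$ (no letters in $F$) is a \emph{maximal} $B$-sequence, maximality of $t_1\cdots t_k$ as a $B_0$-sequence only rules out inserting reflections; you must separately rule out inserting a letter $f\in F$. Since $B_F$ contains many pairs of the form $ft$ and $tf$ with $t\in T_H$, a reduced $T$-word for $c$ consisting entirely of horizontal reflections could a priori be extended by a factored translation while keeping every two-letter subword in $B$. The paper closes this by arguing that every element of $T_H$ fixes the Coxeter axis pointwise while $c$ translates along it, so every word in $\C_c$ contains at least one letter of $T_V$; only then does the absence of mixed $T_V$--$F$ pairs in $B$ (this is precisely the content of Proposition~\ref{Ccplus disj} at the level of $B$) forbid inserting a letter of $F$. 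Without the Coxeter-axis observation your reduction to ``the binary chain system property of $\C_c$'' does not go through. A second, smaller issue: in your proof of the third characterization of $B$ you case-split on where the \emph{product} $aa'$ lands ($[1,c]_T$ versus $[1,c]_{T_H\cup F}$) and conclude $(a,a')\in B_0$ in the first case; but the product of two factored translations belonging to the same translation can land in $[1,c]_T$ while the pair is in $B_F$, not $B_0$. The case split should be on the types of the letters $a,a'$ (using Proposition~\ref{Ccplus disj} to exclude mixed $T_V$--$F$ pairs), not on the location of the product.
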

\begin{proof}
For any maximal chain $x_0\covered\cdots\covered x_k$ in $[1,c]_{T\cup F}$ and any $i$ from $1$ to $k$, the element $x_i$ is the product of the labels on the edges $x_0\covered x_1$, $x_1\covered x_2$, \ldots, $x_{i-1}\covered x_i$.
Thus $[1,c]_{T\cup F}$ satisfies property \ref{bottom unique} in Proposition~\ref{poset to chain sys}.
Similarly, $x_i^{-1}c$ is the product of the labels on the edges $x_i\covered x_{i+1}$, $x_{i+1}\covered x_{i+2}$, \ldots, $x_{k-1}\covered x_k$, so that $[1,c]_{T\cup F}$ satisfies property \ref{top unique} as well.
Thus the set $\C([1,c]_{T\cup F})$ of words obtained by reading edge-labels of maximal chains from bottom to top is a chain system by Proposition~\ref{poset to chain sys}.
Because $m>1$, no element of $T\cup F$ is a product of other elements of $T\cup F$, and thus reduced words in the alphabet $T\cup F$ are in bijection with $\C([1,c]_{T\cup F})$.
In other words, $\C([1,c]_{T\cup F})$ coincides with~$\C_c^+$.

Corollary~\ref{chain sys nc} and Proposition~\ref{Cc alph} combine to say that $\C_c$ is a binary chain system with binary compatibility relation consisting of the two-letter words in the alphabet $T_V\cup T_H$ whose products are nontrivial elements of $[1,c]_T$.
Proposition~\ref{factorable bin} says that $\C_c^F$ is a binary chain system with binary compatibility relation consisting of the two-letter words in the alphabet $T_H\cup F$ whose products are nontrivial elements of $[1,c]_{T_H\cup F}$.
Let $B$ be the union of these two binary compatibility relations.
By Proposition~\ref{Ccplus disj}, $B$ is the set of two-letter words in the alphabet $T_V\cup T_H\cup F$ whose products are nontrivial elements of $[1,c]_{T\cup F}$.
We will show that $\C_c^+$ is the set of words $t_1,\ldots t_k$ in the alphabet $T_V\cup T_H\cup F$ that are maximal with respect to the property that $t_it_j\in B$ for all $i$ and $j$ with $1\le i<j\le k$.
We already know, by Proposition~\ref{Ccplus alph}, that the alphabet for $\C_c^+$ is $T_V\cup T_H\cup F$.

Suppose $t_1\cdots t_k\in\C_c^+$.
If $t_1,\ldots t_k$ has a letter in $F$, then Proposition~\ref{Ccplus disj} says that all letters of $t_1,\ldots t_k$ are in $T_H\cup F$.
Then $t_1\cdots t_k\in\C_c^F$, and Proposition~\ref{Ccplus disj} implies that if any letter is inserted into $t_1,\ldots t_k$ and the result has the property that all two-letter subwords are in $B$, that letter is in $T_H\cup F$.
Since $\C_c^F$ is a binary chain system, we conclude that $t_1\cdots t_k$ is maximal with respect to the property that all of its two-letter subwords are in $B$.
On the other hand, suppose $t_1,\ldots t_k$ has no letter in $F$.
Every element of $T_H$ is a reflection orthogonal to the direction of the Coxeter axis.
Thus if all of the letters $t_1,\ldots t_k$ are in $T_H$, then $t_1,\ldots t_k$ fixes the Coxeter axis pointwise and thus can't equal $c$, because $c$ acts on the Coxeter axis as a translation.
We conclude that $t_1,\ldots t_k$ has at least one letter in $T_V$.
Thus by Proposition~\ref{Ccplus disj}, if any letter is inserted into $t_1,\ldots t_k$ and the result has the property that all two-letter subwords are in $B$, that letter is in $T_V\cup T_H$.
Since~$\C_c$ is a binary chain system, we again see that $t_1\cdots t_k$ is maximal with respect to the property that all of its two-letter subwords are in $B$.

Conversely, suppose $t_1\cdots t_k$ is maximal with respect to the property that all of its two-letter subwords are in $B$.
If $t_1,\ldots t_k$ has no letter in $F$, then the letters of $t_1\cdots t_k$ are in $T_V\cup T_H$ and it is maximal with respect to the property that all of its two-letter subwords are in $\set{tt':1<_Ttt'\le_Tc}$.
Thus $t_1,\ldots t_k$ is in $\C_c$ and therefore in $\C_c^+$.
If $t_1,\ldots t_k$ has a letter in $F$, then Proposition~\ref{Ccplus disj} says that the letters of $t_1,\ldots t_k$ are all in $T_H\cup F$.
Furthermore, $t_1,\ldots t_k$ is maximal with respect to the property that all of its two-letter subwords are are nontrivial elements of $[1,c]_{T_H\cup F}$.
Thus $t_1,\ldots t_k$ is in $\C_c^F$ and therefore in $\C_c^+$.
\end{proof}

\begin{proposition}\label{ttprime}
Let $W$ be a Coxeter group of affine type and let $c$ be a Coxeter element of~$W$ such that $\Upsilon^c$ has more than one component.
If $t,t'\in T_H$ and $tt'$ is a prefix of a word in $\C_c^F$, then $tt'$ is a prefix of a word in $\C_c$.
\end{proposition}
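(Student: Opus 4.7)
The plan is to prove $tt'\le_T c$ in the absolute order on $W$, and then extend $(t,t')$ to a full reduced $T$-word for $c$. Since $tt'$ is a two-letter prefix of a reduced word in $\C_c^F$, it is itself a reduced word in the alphabet $T_H\cup F$, so $tt'\in[1,c]_{T_H\cup F}$. Moreover, both letters lie in $T_H$, so this reduced word contains zero factored translations. The characterization recalled just before the statement (an element of $[1,c]_{T_H\cup F}$ lies in $[1,c]_T$ if and only if it admits a reduced word in $T_H\cup F$ with either no factored translations or all $m$ of them) then gives $tt'\in[1,c]_T$, i.e., $tt'\le_T c$.

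To conclude, I observe that $t\neq t'$, since otherwise $tt'=1$ and the word $(t,t')$ would fail to be reduced. Hence $\ell_T(tt')=2$. Because $tt'\le_T c$ and $\ell_T(c)=n$, the length-additivity of the absolute order (i.e.\ $\ell_T(v)+\ell_T(v^{-1}w)=\ell_T(w)$ whenever $v\le_T w$) yields $\ell_T((tt')^{-1}c)=n-2$, so there is a reduced $T$-word $(u_1,\ldots,u_{n-2})$ for $(tt')^{-1}c$. Then $(t,t',u_1,\ldots,u_{n-2})$ is a $T$-word for $c$ of length $n=\ell_T(c)$ and therefore a reduced $T$-word for $c$. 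This exhibits $(t,t')$ as a prefix of a word in $\C_c$.

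The only real obstacle is the transfer of ``reduced'' between the two alphabets $T$ and $T_H\cup F$, since their length functions are different and a word can be reduced in one and not the other. This is handled entirely by the McCammond--Sulway description of $[1,c]_T\cap[1,c]_{T_H\cup F}$ invoked in the first paragraph, together with the elementary fact that a prefix of a reduced word is reduced; once $tt'\le_T c$ is established, the extension step is formal.
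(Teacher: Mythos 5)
Your proof is essentially correct, but it takes a genuinely different route from the paper's. The paper works directly with a word in $\C_c^F$ beginning with $tt'$: by Proposition~\ref{if a fact}.\ref{k trans} and~\ref{if a fact}.\ref{all factors}, the remaining letters contain the $m$ factors of a single translation $w\in[1,c]_T$; the braid action brings those factors together (leaving $tt'$ as a prefix), and replacing them by a length-two reduced $T$-word for $w$ produces a reduced $T$-word for $c$ with prefix $tt'$ in one stroke. You instead split the argument into (a) showing $tt'\le_T c$ and (b) extending $(t,t')$ to a reduced $T$-word for $c$ by length-additivity of the absolute order. Step (b) is clean and correct, and is arguably more transparent than the braid manipulation. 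The cost is in step (a): you outsource the entire nontrivial content to the sentence recalled just before Proposition~\ref{Ccplus bin} characterizing $[1,c]_T\cap[1,c]_{T_H\cup F}$ by the number of factored translations in a reduced word. That sentence is stated in the paper without proof or a pinpoint citation, and the direction you need (a reflections-only reduced word in $T_H\cup F$ forces membership in $[1,c]_T$) is very close to the assertion being proved; the paper's proof deliberately rests instead on the carefully sourced Proposition~\ref{if a fact}. Your argument is therefore valid as a citation of McCammond--Sulway, but it proves less from first principles than the paper's does.

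Two small points to tighten. First, ``$t\neq t'$, hence $\ell_T(tt')=2$'' skips a step: you must also rule out $\ell_T(tt')=1$, i.e., that a product of two reflections is itself a reflection. This follows instantly from the sign (determinant) character, since $tt'$ has even reflection length while a reflection has odd reflection length, but it should be said. Second, when invoking the recalled characterization, note that you are exhibiting \emph{one} reduced word for $tt'$ with no factored translations; the characterization as written is ambiguous between ``some reduced word'' and ``every reduced word,'' and only the existential reading makes your inference immediate.
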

\begin{proof}
Suppose $tt'$ is a prefix of a word $t_1\cdots t_k$ in $\C_c^F$.
Proposition~\ref{if a fact}.\ref{k trans} and Proposition~\ref{if a fact}.\ref{all factors} imply that the postfix $t_3\cdots t_k$ contains a factorization of a translation $w\in[1,c]_T$.
Using the braid action on $t_1\cdots t_k$, we make a word that has $tt'$ as a prefix and has the factors of $w$ adjacent.
Replacing the factors of $w$ with a reduced $T$-word for $w$, we have constructed a reduced $T$-word for $c$ having $tt'$ as a prefix.
\end{proof}

\begin{remark}\label{ttprime remark}  
The converse of Proposition~\ref{ttprime} is not necessary to our proof of Theorem~\ref{Ccplus chain sys}.
However, the converse is true and is proved as Corollary~\ref{ttprime iff} using the connection to para-exceptional sequences.
\end{remark}

Propositions~\ref{Ccplus alph}--\ref{ttprime} accomplish much of the proof of Theorem~\ref{Ccplus chain sys}.
Specifically, Proposition~\ref{Ccplus alph} determines the alphabet of $\C_c^+$ and Proposition~\ref{Ccplus bin} says that $\C_c^+$ is a binary chain system whose associated binary compatibility relation is the set of two-letter prefixes of words in $\C_c^+$.
Corollary~\ref{chain sys nc} and Proposition~\ref{Cc alph} say that the two-letter prefixes of words in $\C_c$ are $\set{tt':t,t'\in(T_V\cup T_H),1<_Ttt'\le_Tc}$.
Proposition~\ref{ttprime} says that the two-letter prefixes, consisting of two reflections, of words in $\C_c^F$ are also contained in $\set{tt':t,t'\in(T_V\cup T_H),1<_Ttt'\le_Tc}$.
Thus, in light of Proposition~\ref{Ccplus disj}, it remains to show that there is a bijection $\beta\mapsto f_\beta$ from~$\Xi^c$ to~$F$ such that the two-letter prefixes of words in~$\C_c^+$ that contain a letter in $F$ and a letter in $T_H$ or $2$ letters in $F$ are as described in Theorem~\ref{Ccplus chain sys}.
Proposition~\ref{factorable facts} describes the case where the two letters are associated to different components of~$\Upsilon^c$ or when two factored translations are associated to the same component.
Thus we can complete the proof of Theorem~\ref{Ccplus chain sys} by proving the following proposition.

\begin{proposition}\label{good bij}
Suppose $\Upsilon^c$ has more than one component.
There exists a bijection $\beta\mapsto f_\beta$ from $\Xi^c$ to $F$ such that
\begin{enumerate}[label=\rm(\roman*), ref=(\roman*)]
\item \label{right component}
For all $\beta\in\Xi^c$, the factored translation $f_\beta$ is associated (in the sense of Proposition~\ref{if a fact}.\ref{reorder}) to the component of $\Upsilon^c$ that contains $\beta$, and 
\item \label{ft tf}
For all $\beta,\gamma\in\Xi^c$ in the same component of $\Upsilon^c$ and all positive integers $k$ such that $\beta\not\in\set{c(\beta),c^2(\beta),\ldots,c^k(\beta)}$,
\begin{enumerate}[label=\rm(\alph*), ref=(\alph*)]
\item \label{ft}
$f_\gamma t_{\beta,k}$ is a prefix of $\C_c^+$ if and only if $\gamma\not\in\{c(\beta),c^2(\beta),\ldots,c^k(\beta)\}$, and 
\item \label{tf}
$t_{\beta,k}f_\gamma $ is a prefix of $\C_c^+$ if and only if $\gamma\not\in \{\beta,c(\beta),\ldots,c^{k-1}(\beta)\}$.
\end{enumerate}
\end{enumerate}
\end{proposition}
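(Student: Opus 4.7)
My plan is to prove Proposition~\ref{good bij} by a type-by-type analysis in the classical affine types, using the combinatorial models of (symmetric) noncrossing partitions of surfaces developed in \cite{affncA, affncD}. The exceptional affine types have $\Upsilon^c$ irreducible, so they lie outside the hypothesis, and it suffices to treat the classical types. By Proposition~\ref{factorable facts}.1, within $\C_c^F$ the binary compatibility relation imposes no constraint on two-letter words whose letters come from different components of $\Upsilon^c$. Consequently both the component condition (i) and the prefix conditions in (ii) are nontrivial only within a single component $\Upsilon_i$, so I can construct the bijection one component at a time.

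First, I would check the count. Each component $\Upsilon_i$ is of affine type $\widetilde{A}$, and for any $\beta\in\Upsilon_i\cap\Xi^c$ the value $r_\beta$ equals the rank of that component. Combining the identification of $[1,c_i]_{T_H\cup F}$ with a type-$\mathrm{B}_{r_\beta}$ noncrossing partition lattice (\cite[Proposition~7.6]{McSul}) with an enumeration of the atoms of this lattice that are translations shows that the set of factored translations in $F$ associated to $U_i$ has exactly $r_\beta$ elements, matching the cardinality of $\Upsilon_i\cap\Xi^c$.

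Next, I would define the bijection using the relevant combinatorial model. In affine type $\widetilde{A}_{n-1}$, the simple roots in $\Upsilon_i\cap\Xi^c$ are cyclically ordered by the action of $c$, and the model of \cite{affncA} realizes each factored translation in $U_i$ as a unit cyclic ``jump'' between two adjacent positions of the associated circular diagram. I define $f_\beta$ to be the jump whose initial position is $\beta$. In affine type $\widetilde{D}_n$ I would use the symmetric noncrossing partition model of \cite{affncD}, making the analogous definition on the marked disk with double points and taking care of up to three components in the worst case. The folded types $\widetilde{B}$ and $\widetilde{C}$ are handled by the same models via their unfoldings. Condition (i) then holds by construction.

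Finally, I would verify condition (ii). By Proposition~\ref{factorable bin}, $t_{\beta,k}f_\gamma$ and $f_\gamma t_{\beta,k}$ are prefixes of words in $\C_c^+$ if and only if they are prefixes of label sequences of maximal chains in $[1,c_i]_{T_H\cup F}$; in the combinatorial model this becomes a non-crossing/compatibility condition between the chord associated to $\beta_{(k)}$ (which spans the $k$ consecutive positions starting at $\beta$) and the jump associated to $f_\gamma$. A direct inspection shows that condition (ii.b) excludes $\gamma$ exactly from the support $\{\beta,c(\beta),\ldots,c^{k-1}(\beta)\}$ of $\beta_{(k)}$, whereas condition (ii.a) excludes $\gamma$ from the support of $c(\beta_{(k)})=\{c(\beta),\ldots,c^k(\beta)\}$; the one-step shift under $c$ is precisely what the order of composition predicts. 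The main obstacle will be the bookkeeping for $\widetilde{D}_n$: the middle component of $\Upsilon^c$ carries a $\integers/2$ symmetry coming from the two long-arm endpoints, so the model is intrinsically more intricate than the circular model for $\widetilde{A}$, and pinning down the orientation of each $f_\beta$ so that both parts of (ii) emerge with the correctly shifted supports requires careful tracking of the cyclic action of $c$ on the symmetric model.
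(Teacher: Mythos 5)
Your overall strategy for the classical types---working component by component via the noncrossing-partition models of \cite{affncA,affncD}, using Proposition~\ref{factorable facts} to reduce to a single component, and checking the prefix conditions by a crossing/compatibility analysis of the relevant embedded blocks---is essentially the approach the paper takes, and your identification of the one-step shift between the supports in (ii.a) and (ii.b) is the right mechanism.

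However, there is a genuine gap at the very first step: you assert that the exceptional affine types have $\Upsilon^c$ irreducible and therefore lie outside the hypothesis. This is false. The only affine types in which $\Upsilon^c$ is always irreducible are $\widetilde{C}$ and $\widetilde{G}_2$ (together with certain Coxeter elements in type $\widetilde{A}$). In types $\widetilde{F}_4$, $\widetilde{E}_6$, $\widetilde{E}_7$, and $\widetilde{E}_8$ the horizontal root system has two or three components (these correspond to the non-homogeneous tubes, of tubular types such as $(2,3,3)$, $(2,3,4)$, $(2,3,5)$), so these types satisfy the hypothesis and the proposition must be verified for them. Since there is no surface model available in the exceptional types, the paper verifies Properties~\ref{right component} and~\ref{ft tf} there by an explicit computation: finding the horizontal reflections and the translations in $[1,c]_T$, factoring the translations, and enumerating the reduced words for $f^{-1}c_i$ and $c_if^{-1}$ in the alphabet of horizontal reflections of each component. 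Your proof as written simply omits these cases. A secondary point: type $\widetilde{B}$ is not handled in the paper by unfolding but directly, via symmetric noncrossing partitions of an annulus with one double point; one component there reduces to the type-$\widetilde{A}$ computation, but the other has only two simple roots and two factored translations and requires a separate (short) verification of the identities $f_\beta t_\beta=t_\beta f_\gamma=f_\gamma t_\gamma=t_\gamma f_\beta=c_2$, which your sketch does not address.
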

The proof will occupy the next section.

\section{Type-by-type arguments using the combinatorial models}\label{type sec}
This section is the proof of Proposition~\ref{good bij}.
By rescaling roots, we can assume that $\Phi$ is a standard affine root system.
That is, $\Phi$ is of type $A^{(1)}$ through $G^{(1)}$ in Kac's notation \cite[Table Aff~1]{Kac}, but referred to here as $\widetilde A$ through $\widetilde G$.
We treat the classical types using the combinatorial models and then describe the computations that settle the exceptional types.
We give only the details we need from each combinatorial model, referring the reader to \cite{affncA,affncD} for details.
In particular, each combinatorial model suggests a particular choice of $q_1,q_2$ or $q_1,q_2,q_3$, and we assume that choice without specifying it.
(Recall that the labeled poset $[1,c]_{T\cup F}$ does not depend on the choice of the $q_i$.)

Throughout the proof, we assume that $\Upsilon^c$ has more than one component.
That means that we can ignore types $\widetilde{C}$ and $\widetilde{G}$ and rule out some choices of $c$ in type $\widetilde{A}$.

The horizontal reflections and factored translations for one component of $\Upsilon^c$ commute with the horizontal reflections and factored translation for a different component.
Thus, we will verify Property~\ref{right component} of Proposition~\ref{good bij} in every case by checking that $f_\beta$ fails to commute with $t_\beta$.
For Property~\ref{ft tf}, in each case we will determine whether the relevant two-letter word is a prefix in $\C_c^+$ by determining whether the product of the two letters is in $[1,c]_{T_H\cup F}$.

\smallskip
\noindent
\textbf{Type $\widetilde A_{n-1}$.}
The group generated by $T\cup F$ is the group of permutations $\pi$ of $\integers$ with the property that $\pi(i+n)=\pi(i)+n$ for all~$i$.
We represent elements of the group in cycle notation.
Finite cycles come as part of infinite products.
We write $(a_1\,\,\,a_2\,\cdots\,a_i)_n$ for the product of all cycles of the form $(a_1+kn\,\,\,a_2+kn\,\cdots\,a_i+kn)$ for integers~$k$.
Because $\pi(i+n)=\pi(i)+n$, an infinite cycle can be described completely by listing finitely many of its elements.
The choice of a Coxeter element is equivalent to designating each integer from $1$ to $n$ as an inner point or an outer point, with at least one outer and one inner.
In fact, the assumption that $\Upsilon^c$ has more than one component is means that there are at least two inner points and at least two outer points.  
The numbers $1$ to $n$ are then placed on the boundary of an annulus, with inner points on the inner circle and outer points on the outer circle.

A \newword{noncrossing partition of the annulus} is a set partition of $\set{1,\ldots,n}$ together with the choice, for each block of the set partition, of an \newword{embedded block}:
a disk or annulus containing the numbered points in the block and no other numbered points.
The embedded blocks must be disjoint and there are some technical conditions.  
See \cite[Section~3.4]{affncA}.
Figure~\ref{nc ann ex} shows some examples with $n=8$.
\begin{figure}
\scalebox{0.85}{\begin{tabular}{ccccc}
\includegraphics{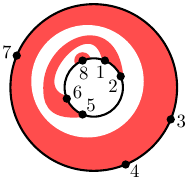}
&&\quad\includegraphics{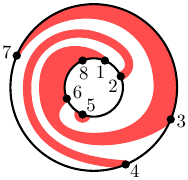}
&&\quad\includegraphics{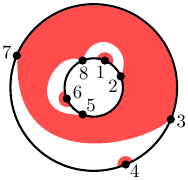}
\end{tabular}}
\caption{Some noncrossing partitions of an annulus}
\label{nc ann ex}
\end{figure}

The interval $[1,c]_{T\cup F}$ is isomorphic to the natural partial order on noncrossing partitions of the annulus \cite[Theorem~3.25]{affncA}.
The isomorphism involves reading the cycle notation of an affine permutation from the boundaries of blocks in the noncrossing partition, adding or subtracting $n$ when the boundary crosses the ``date line'', the radial line segment between $n$ and $1$. For example, numbering the noncrossing partitions in Figure~\ref{nc ann ex} as $\P_1$, $\P_2$, $\P_3$ from left to right, 
\begin{align*}
\perm(\P_1)&=(1\,\,\,-2\,\,\,-3\,\,\,2)_8(\cdots3\,\,\,4\,\,\,7\,\,\,11\cdots)(8)_8\\
\perm(\P_2)&=(1\,\,\,0\,\,\,-2\,\,\,-9\,\,\,-5)_8(2\,\,\,-4)_8(5)_8\\
\perm(\P_3)&=(1)_8(\cdots2\,\,\,0\,\,\,-3\,\,\,-6\cdots)(\cdots3\,\,\,7\,\,\,11\cdots)(4)_8(6)_8
\end{align*}
For our purposes, the following are the key facts about the combinatorial model:

\smallskip
\noindent
\textbf{Fact.}
The Coxeter element, represented by the noncrossing partition with only one embedded block, namely an annulus, is the product of two infinite cycles 
\[(\cdots\,a_1\,\,\,a_2\,\cdots\,a_k\,\,\,a_1+n\,\cdots)(\cdots\,b_1\,\,\,b_2\,\cdots\,b_{n-k}\,\,\,b_1-n\,\cdots)\]
where $a_1,\ldots, a_k$ are the outer points in increasing order and $b_1,\ldots,b_{n-k}$ are the inner points in decreasing order \cite[Lemma~3.6]{affncA}.
This description of $c$ implies a description of $c$ as a linear transformation on $V$, as explained in \cite[Section~3.1]{affncA}.
Thus, in what follows, $c$ is described as acting on integers and as acting on roots.

\smallskip
\noindent
\textbf{Fact.}
The reflections for roots in $\Xi^c$ are $(a\,\,\,c(a))_n$ for $a=1,\ldots,n$.
If $a$ is outer, then $c(a)$ is the next larger outer point or, if $a$ is the largest outer point, $c(a)$ is the smallest outer point plus $n$.
If $a$ is inner, then $c(a)$ is the next smaller inner point or, if $a$ is the smallest inner point, $c(a)$ is the largest inner point minus $n$.

\smallskip
\noindent
\textbf{Fact.}
The horizontal reflections in $[1,c]_{T\cup F}$ are $(a\,\,\, b)_n$ such that $a$ and $b$ are both congruent mod $n$ to inner points or both congruent mod $n$ to outer points and $|a-b|<n$ \cite[Proposition~5.11.3]{affncA}.
The two possibilities, outer or inner, determine the two components of $\Upsilon^c$.

\smallskip
\noindent
\textbf{Fact.}
A \newword{loop} $\ell_a$ is the infinite cycle $(\cdots\,a\,\,\,a+n\,\cdots)$.
The factored translations are the loops $\ell_a$ for $a$ outer and the inverses $\ell_a^{-1}$ for $a$ inner \mbox{\cite[Theorem~5.8.1]{affncA}}.
A factored translation and a horizontal reflection are associated to the same component of $\Upsilon^c$ if they both involve inner points or both involve outer points.
\smallskip

We define the bijection $\beta\mapsto f_\beta$ from $\Xi^c$ to $F$.
Given $\beta\in\Xi^c$ with $t_\beta=(a\,\,\,c(a))_n$, let $f_\beta$ be $\ell_a$ if $a$ is outer or $\ell_a^{-1}$ if $a$ is inner.
Property~\ref{right component} holds because $f_\beta t_\beta\neq t_\beta f_\beta$.

Suppose $\beta,\gamma\in\Xi^c$ are in the same component of $\Upsilon^c$ and choose $k<r_\beta$.
If $t_\beta=(a\,\,\,c(a))_n$, then $t_{\beta,k}=(a\,\,\,\,c^k(a))_n$ and $f_\beta=(\cdots a\,\,\,\,a+\ep n\cdots)$ and $f_{c^k(\beta)}=(\cdots c^k(a)\,\,\,\,c^k(a)+\ep n\cdots)$, where $\ep$ is $+1$ if $a$ is outer or $-1$ if $a$ is inner.
Then $f_\beta t_{\beta,k}$ and $t_{\beta,k}f_{c^k(\beta)}$ both equal the infinite cycle $(\cdots a\,\,\,\,c^k(a)\,\,\,\,a+\ep n\cdots)$.
This is an increasing sequence integers congruent mod $n$ to outer points if $a$ is outer or a decreasing sequence of integers congruent to inner points if $a$ is inner.
There is a corresponding noncrossing partition with one nontrivial block, an annular block containing only the numbered points $a$ and $c(a)$ (both outer or both inner).  
See the left picture of Figure~\ref{proof ncs}.
\begin{figure}
\scalebox{0.85}{\begin{tabular}{ccccc}
\includegraphics{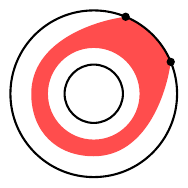}
\begin{picture}(0,0)(45,-45)
\put(18,37){$a$}
\put(35,18){$c(a)$}
\end{picture}
&&\quad\includegraphics{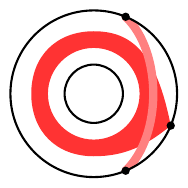}
\begin{picture}(0,0)(45,-45)
\put(18,37){$a$}
\put(34,-23){$c^i(a)$}
\put(18,-41){$c^k(a)$}
\end{picture}
&&\qquad\includegraphics{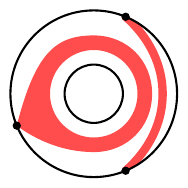}
\begin{picture}(0,0)(45,-45)
\put(18,37){$a$}
\put(-64,-13){$c^i(a)$}
\put(18,-41){$c^k(a)$}
\end{picture}
\end{tabular}}
\caption{Illustrations of the proof of Proposition~\ref{good bij}}
\label{proof ncs}
\end{figure}
(This picture and later pictures illustrate the case where $a$ is outer.
The pictures omit many numbered points, each of which is a trivial block in the noncrossing partition.)
Thus $f_\beta t_{\beta,k}=t_{\beta,k}f_{c^k(\beta)}\in[1,c]_{T\cup F}$.

Observe that $t_{\beta,k}f_\beta=f_{c^k(\beta)}t_{\beta,k}=(\cdots a\,\,\,\,c^k(a)+\ep n\,\,\,\,a+\ep n\cdots)$.
If $a$ is outer, then $a<c^k(a)<a+n$, so $a<c^k(a)+n>a+n$.
If $a$ is inner, then ${a>c^k(a)>a-n}$, so $a>c^k(a)-n<a+n$.
Any infinite cycle that is read from a noncrossing partition of the annulus is monotone, so $t_{\beta,k}f_\beta=f_{c^k(\beta)}t_{\beta,k}\not\in[1,c]_{T\cup F}$.
We have proved Property~\ref{ft tf} in the case where $\gamma=\beta$ or $\gamma=c^k(\beta)$.

Now suppose $\gamma=c^i(\beta)$ for some $i=1,\ldots,k-1$.
As before, $t_{\beta,k}=(a\,\,\,\,c^k(a))_n$, and now $f_\gamma=(\cdots c^i(a)\,\,\,\,c^i(a)+\ep n\cdots)$ for $\ep$ as before.
The cycle $f_\gamma$ is read from an annular block containing only $c^i(a)$, while $t_{\beta,k}$ is read from a disk containing only $a$ and $c^k(a)$.
Since $a$, $c^i(a)$ and $c^k(a)$ are all outer or all inner, these two blocks intersect.
See the center picture of Figure~\ref{proof ncs}.
Thus there is no noncrossing partition containing both, and we see that $f_\gamma t_{\beta,k}=t_{\beta,k}f_\gamma\not\in[1,c]_{T\cup F}$.
We have proved Property~\ref{ft tf} in the case where $\gamma=c^i(\beta)$ for some $i=1,\ldots,k-1$.

Suppose there exists $i$ such that $\gamma=c^i(\beta)$, but no such $i$ in $\set{0,1,\ldots,k}$.
The embedded blocks for $t_{\beta,k}$ and $f_\gamma$ are as in the previous paragraph, but in this case, the two blocks do not intersect.
See the right picture of Figure~\ref{proof ncs}.
Thus there is a noncrossing partition corresponding to $f_\gamma t_{\beta,k}=t_{\beta,k}f_\gamma$, so $f_\gamma t_{\beta,k}=t_{\beta,k}f_\gamma\in[1,c]_{T\cup F}$.
We have proved Property~\ref{ft tf}.

\smallskip
\noindent
\textbf{Type $\widetilde C_{n-1}$.}
This case is ruled out by the assumption that $\Upsilon^c$ has more than one component.
However, it is useful to describe the Coxeter group of type $\widetilde C_n$ combinatorially.
An \newword{affine signed permutation} is a permutation $\pi$ of $\integers$ with $\pi(i+2n)=\pi(i)+2n$ and $\pi(-i)=-\pi(i)$ for all $i\in\integers$.
An affine signed permutation fixes each multiple of $n$, and we ignore these singleton cycles.
We use similar notation to type $\widetilde A$, with $2n$ replacing $n$, so that $(a_1\,\,\,a_2\,\cdots\,a_k)_{2n}$ is an infinite product of cycles.
Because $\pi(-i)=-\pi(i)$, we write $(\!(a_1\,\cdots\,a_k)\!)_{2n}$ for $(a_1\,\cdots\,a_k)_{2n}\cdot(-a_1\,\cdots\,-a_k)_{2n}$ when $(a_1\,\cdots\,a_k)_{2n}\neq(-a_1\,\cdots\,-a_k)_{2n}$.
Similarly, $(\!(\cdots\,a_1\,\,\,a_2\,\cdots\,a_\ell\,\,\,a_1+2n\,\cdots)\!)$ means $(\cdots\,a_1\,\,\,a_2\,\cdots\,a_\ell\,\,\,a_1+2n\,\cdots)(\cdots\,-\!a_1\,\,-\!a_2\,\cdots\,-a_\ell\,\,-\!a_1-2n\,\cdots)$.

\smallskip
\noindent
\textbf{Type $\widetilde B_{n-1}$.}
The Coxeter group of type $\widetilde B_{n-1}$ is the group of affine singly even-signed permutations (see \cite[Section~7.1]{affncD}, but we will not need that definition here.
More important for our purposes, the group generated by $T\cup F$ is the group of affine signed permutations defined above (the Coxeter group of type $\widetilde C_{n-1}$).

All Coxeter elements of $\widetilde B_{n-1}$ are conjugate to each other.
(Coxeter groups of type $\widetilde A$ are unique among affine Coxeter groups in not having this property.)
Conjugation of $c$ by some element $w$ induces an isomorphism of labeled posets from $[1,c]_T$ to $[1,wcw^{-1}]_T$ and corresponds to acting on the Coxeter axis and translation vectors by the action of $w$ on the dual space $V^*$.
Conjugation also corresponds to acting on roots by the action of $w$ on $V$, sending $\Upsilon^c$ to $\Upsilon^{wcw^{-1}}$, and inducing an isomorphism of labeled posets from $[1,c]_{T\cup F}$ to $[1,wcw^{-1}]_{T\cup F}$, preserving which labels that are reflections and which are factored translations.
Therefore, it is enough to prove Proposition~\ref{good bij} for any one choice of Coxeter element, and we introduce the combinatorial model for a specific choice of~$c$.

We model $[1,c]_{T\cup F}$ in type $\widetilde B_{n-1}$ by symmetric noncrossing partitions of an annulus with one double point. See \cite[Section~7]{affncD}.
We need very few details of this model, not even a definition of double points.
The symmetry exchanges the outer and inner boundary of the annulus and has exactly two fixed points, one of which is the double point.
We will see that the two components of $[1,c]_{T_H\cup F}$ are a component that coincides with one of the components in type $\widetilde A$ and a component that has only two factored translations and only two horizontal reflections.

Our choice of a Coxeter element means that the outer points are $1,2,\ldots,n-2$ and the inner points are $-1,-2,\ldots,{-n+2}$.
There is also a double point $\pm(n-1)$.
The horizontal reflections and factored translations for this choice of $c$ are given in \mbox{\cite[Section~7.4]{affncD}}.
The factorization $c=c_1c_2$ as in Proposition~\ref{if a fact}.\ref{fact c} is 
\[c_1=(\!(\cdots\,1\,\,\,2\,\cdots\,{n-2}\,\,\,\,\,{1+2n}\,\cdots)\!),\qquad c_2=({n-1}\,\,\,{n+1})_{2n}.\]

For one component of $\Upsilon^c$, the horizontal reflections are $(\!(i\,\,\,\,j)\!)_{2n}$ and $(\!(i\,\,\,\,{j-2n})\!)_{2n}$ for $1\le i<j\le n-2$ and the factored translations are ${(\!(\cdots\,a\,\,\,a+2n\,\cdots)\!)}$ for $a\in\set{1,\ldots,n-2}$.
The corresponding component $[1,c_1]_{T_H\cup F}$ of $[1,c]_{T_H\cup F}$ is modeled by symmetric noncrossing partitions of the annulus whose only nontrivial blocks are a collection of embedded blocks that involve only outer points and their images under the symmetry (a collection of embedded blocks that involve only inner points).
These noncrossing partitions have the same combinatorics as the noncrossing partitions in the $\widetilde A$ case that only involve outer points, taking $2n$ in place of $n$ and taking $1,\ldots,n-2$ to be the outer points.
The simple roots $\beta$ in this component correspond to reflections $t_\beta=(\!(a\,\,\,c(a))\!)_{2n}$ for $a\in\set{1,2,\ldots,n-2}$, with $c(n+2)=2n+1$ and otherwise $c(a)=a+1$.
Taking $f_\beta$ to be ${(\!(\cdots\,a\,\,\,a+2n\,\cdots)\!)}$, again $f_\beta$ and $t_\beta$ do not commute, so Property~\ref{right component} holds.
The proof of Property~\ref{ft tf} also proceeds exactly as in the $\widetilde A$ case.

In the other component, the simple roots can be labeled $\beta$ and $\gamma$ and the factored translations can be labeled $f_\beta$ and $f_\gamma$ so that
\[\begin{array}{ll}
t_\beta=(\!({-n+1}\,\,\,\,{n-1})\!)_{2n}&\qquad f_\beta=(\!(\cdots\,n-1\,\,\,\,\,-n-1\,\cdots)\!)\\
t_\gamma=(\!({-n-1}\,\,\,\,{n+1})\!)_{2n}&\qquad f_\gamma=(\!(\cdots\,-n-1\,\,\,\,\,\,n-1\,\cdots)\!).
\end{array}\]
We see, again, that Property~\ref{right component} holds.
The reflections in $T_H$ associated to positive roots in this component of $\Upsilon^c$ are $t_{\beta,1}=t_\beta$ and ${t_{\gamma,1}=t_\gamma}$.
One can check that $f_\beta t_\beta=t_\beta f_\gamma=f_\gamma t_\gamma=t_\gamma f_\beta=c_2$. 
It follows that none of the products $t_\beta f_\beta$, $f_\gamma t_\beta$, $t_\gamma f_\gamma$, or $f_\beta t_\gamma$  equal $c_2$.
We have verified Property~\ref{ft tf} in this component as well.

\smallskip
\noindent
\textbf{Type $\widetilde D_{n-1}$.}
The Coxeter group of type $\widetilde D_{n-1}$ is the group of affine doubly even-signed permutations (see \cite[Section~2.3]{affncD}), but again, we will not need that definition.
The larger group generated by $T\cup F$ is the group of affine barred even-signed permutations.
We will also not need the full details of that definition, which are found in \cite[Section~2.4]{affncD} and \cite[Section~6.3]{affncD}.
Instead, we can deal with a larger group of permutations that we will call affine barred signed permutations.
These are permutations of a set $\integers\cup\set{\bar\imath:i\in\integers}$ consisting of the integers together with a disjoint copy of the integers marked with bars.
We define an upper-barring operator on this set:
It sends an integer $i$ to $\bar\imath$ and sends $\bar\imath$ to $i+2n$.
There are natural notions of negation of barred integers and adding $2n$ to barred indices.
Specifically, $-\bar\imath=\overline{-i-2n}$ and $\bar\imath+2n=\overline{i+2n}$.
(These notions are natural in a linear-algebraic context as explained in \cite[Section~6.3]{affncD}.)
The group of \newword{affine barred signed permutations} is the group of permutations $\pi$ of $\integers\cup\set{\bar\imath:i\in\integers}$ with $\pi(i+2n)=\pi(i)+2n$, $\pi(-i)=-\pi(i)$, and $\pi(\bar\imath)=\overline{\pi(i)}$ for all $i\in\integers$.
Double parentheses $(\!(\cdots)\!)$ in cycle notation have an expanded meaning in this group.
They stand for a cycle and its negative, as well as the cycles obtained from the cycle or its negative by applying the upper-barring operator to all entries.

The relevant combinatorial model for $[1,c]_{T\cup F}$ in type $\widetilde D_{n-1}$ is the lattice of symmetric noncrossing partitions of an annulus with two double points.
(In fact, there are finitely many elements of $[1,c]_{T\cup F}$ that are not part of the model, but the model is still sufficient for our purposes.
See \cite[Sections~5--6]{affncD} for details.)
The symmetry is the same as in type $\widetilde B_{n-1}$, but both fixed points are double points.
Again, we make a special choice of $c$, which corresponds to declaring the numbers $2,\ldots,n-2$ to be outer points and $-2,\ldots,{-n+2}$ to be inner points.
The points $\pm1$ and $\pm(n-1)$ are double points.
There are three components of $[1,c]_{T_H\cup F}$.
The factorization ${c=c_1c_2c_3}$ is
\[c_1=(\!(\cdots\,2\,\,\,3\,\cdots\,n-2\,\,\,2n+2\,\cdots)\!),\quad c_2=(\!(1\,\,\,\overline{-n-1})\!)_{2n},\quad c_3=(\!(1\,\,\,\overline{-n+1})\!)_{2n}.\]

For one component of $\Upsilon^c$, the horizontal reflections are $(\!(i\,\,\,\,j)\!)_{2n}$ and $(\!(i\,\,\,\,{j-2n})\!)_{2n}$ for $2\le i<j\le n-2$ and the factored translations are ${(\!(\cdots\,a\,\,\,a+2n\,\cdots)\!)}$ for $a\in\set{2,\ldots,n-2}$.
The corresponding component $[1,c_1]_{T_H\cup F}$ of $[1,c]_{T_H\cup F}$ is modeled by symmetric noncrossing partitions of the annulus whose only nontrivial blocks involve only outer points, or symmetrically only inner points.
These have the same combinatorics as the analogous noncrossing partitions in types $\widetilde A$ and $\widetilde B$.
Properties~\ref{right component} and~\ref{ft tf} are proved for this component exactly as in those cases.

{\allowdisplaybreaks
In the second component, the simple roots can be labeled $\beta$ and $\gamma$ and the factored translations can be labeled $f_\beta$ and $f_\gamma$ so that
\begin{align*}
t_\beta&=(\!(1\,\,\,\,{n-1})\!)_{2n}\\
t_\gamma&=(\!(1\,\,\,\,{-n-1})\!)_{2n}\\
f_\beta&=(\!(\cdots\,1\,\,\,\,\bar{1}\,\,\,\,1+2n\,\cdots)\!)(\!(\cdots\,n-1\,\,\,\,\overline{-n-1}\,\,\,\,-n-1\,\cdots)\!)\\
 f_\gamma&=(\!(\cdots\,1\,\,\,\,\overline{1-2n}\,\,\,\,1-2n\,\cdots)\!)(\!(\cdots\,n-1\,\,\,\,\overline{n-1}\,\,\,\,3n-1\,\cdots)\!).
\end{align*}
As in type $\widetilde B$, Property~\ref{right component} holds, and again the only reflections in $T_H$ associated to positive roots in this component are $t_{\beta,1}=t_\beta$ and ${t_{\gamma,1}=t_\gamma}$.
We again check that $f_\beta t_\beta=t_\beta f_\gamma=f_\gamma t_\gamma=t_\gamma f_\beta=c_2$.
Therefore, none of $f_\gamma t_\beta$, $t_\beta f_\beta$, $f_\beta t_\gamma$, or $t_\gamma f_\gamma$ equal~$c_2$.
We have verified Property~\ref{ft tf} in this component.
}

{\allowdisplaybreaks
Properties~\ref{right component} and~\ref{ft tf} in the third component are verified in exactly the same way, but with $c_3$ replacing $c_2$ and with 
\begin{align*}
t_{\beta'}&=(\!(1\,\,\,\,{n+1})\!)_{2n}\\
t_{\gamma'}&=(\!(1\,\,\,\,{-n+1})\!)_{2n}\\
f_{\beta'}&=(\!(\cdots\,1\,\,\,\,\bar{1}\,\,\,\,1+2n\,\cdots)\!)(\!(\cdots\,n+1\,\,\,\,\overline{-n+1}\,\,\,\,-n+1\,\cdots)\!)\\
 f_{\gamma'}&=(\!(\cdots\,1\,\,\,\,\overline{1-2n}\,\,\,\,1-2n\,\cdots)\!)(\!(\cdots\,-n+1\,\,\,\,\overline{-n+1}\,\,\,\,n+1\,\cdots)\!).
\end{align*}
}

\smallskip
\noindent
\textbf{The exceptional affine types.}
In the exceptional types $\widetilde E_6$, $\widetilde E_7$, $\widetilde E_8$, and $\widetilde F_4$, Properties~\ref{right component} and~\ref{ft tf} are verified computationally, as we now explain.
(Type $\widetilde G_2$ is ruled out by the assumption that $\Upsilon^c$ has more than one component.)

First, we find the set $T_H$ of horizontal reflections in $[1,c]_T$.
The direction of the Coxeter axis is found as explained in \cite[Section~5.1]{affncA}, allowing us to determine which roots are horizontal (orthogonal to the Coxeter axis).
Then $T_H$ is the set of reflections $\beta$ and $\delta-\beta$ for each horizontal root $\beta$ that is in $\Phi_\fin$.

Next, we find the translations in $[1,c]_T$.
Recall that a translation $w\in[1,c]_T$ has $\ell_T(w)=2$.
Starting with a reduced $T$-word with a prefix $tt'=w$, we replace $tt'$ by the factors of $w$ to make a word satisfying the hypotheses of Proposition~\ref{if a fact}.
Therefore, the other letters of the word are horizontal reflections and constitute a reduced $T$-word for $w^{-1}c$.
We factor this word into a factor for each component of~$\Upsilon^c$.
Each factor is a Coxeter element for a Coxeter group of finite type A, with respect to some simple system.
Thus, possibly changing the simple system and applying the braid action, we can find a reduced $T$-word for $w^{-1}c$ such that each consecutive pair of entries in each component is non-orthogonal (in the sense of $K$) but every other pair is orthogonal.  
To find the translations in $[1,c]_T$, we find all sequences of reflections in $T_H$ that have that property.
Computing the product of the sequence to be $v$, we have $w=cv^{-1}$.
We can test whether $w$ is a translation of length $\ell_T(w)=2$ because each such translation is a product of parallel reflections and thus its translation vector is a multiple of a co-root in $\Phi_\fin\ck$.

Once we have all translations in $[1,c]_T$, we factor them as described in Section~\ref{McSul chain sec}, keeping track of which component of $\Upsilon^c$ each factored translation is associated with.
Furthermore, from any one translation $w$ in $[1,c]_T$ and reduced expression for $w^{-1}c$ as a product of horizontal reflections, we factor $w$ to obtain a reduced word for $c$ in the alphabet $T_H\cup F$ and commute its letters to find $c_1$, $c_2$, and (possibly)~$c_3$.

For each component and each factored translation $f$ associated to the component, we find all words for $f^{-1}c_i$ of the appropriate length, in the alphabet of horizontal reflections for that component.
If $t$ is a horizontal reflection associated to that component, then $ft$ is a prefix of $\C_c^+$ if and only if $t$ appears in one of those words.
We also find all words for $c_if^{-1}$ and thus determine which words $tf$ are prefixes of~$\C_c^+$.
Thus in each exceptional type, we verify the proposition.


\section{Para-exceptional sequences}\label{sec:brick_sequences}  
We now turn our attention to representation-theoretic interpretations of the chain system $\C_c^+$ introduced in Section~\ref{McSul chain sec}.
These interpretations eventually lead, in Section~\ref{sec:rep_theory_model}, to a representation-theoretic proof of the fact that $\C_c^+$ is a binary chain system.

A \newword{tame hereditary algebra} is a finite-dimensional hereditary algebra whose Cartan matrix is positive semi-definite, but not positive definite.
(This is not the original definition of tameness, which will not be necessary for this paper.) 
Thus~$\Lambda$ is a connected tame hereditary algebra if and only if the Cartan matrix~$A$ is affine. 
From now on, $\Lambda$ is a connected tame hereditary algebra.  
We now recall additional information about the category $\mods\Lambda$ in the connected tame case.
We refer to \cite{DlabRingel,RingelInfinite}  as general references (warning, however, that the term ``homogeneous'', as used in Lemma~\ref{lem:homogeneous} below, has a different meaning in \cite{RingelInfinite}). 

\begin{remark} 
We now explain a subtlety that arises in applying the results of \cite{DlabRingel}, which describes the representation theory of Euclidean $\field$-species, to our study of tame hereditary algebras.
Tame hereditary algebras are classified up to Morita equivalence in \cite[Theorem~1]{DlabRingelTame}, and there are two possibilities in the connected case.
If $\Lambda$ is Morita equivalent to the tensor algebra of a Euclidean $\field$-species, then we can freely apply the description from \cite{DlabRingel} to $\Lambda$. 
(If the field $\field$ is perfect then we are automatically in this case.)
Otherwise, the $\field$-species determined by $\Lambda$ is a quiver~$Q$ of type $\widetilde{A}_{n-1}$, and although the categories $\mods \Lambda$ and $\mods \field Q$ differ in the classification of the homogeneous tubes (defined below), the exceptional modules and non-homogeneous tubes of $\mods \Lambda$ and $\mods \field Q$ can be identified \cite[Theorem~2]{DlabRingelTame}. 
Thus even in this case, the description of the exceptional modules and of the non-homogeneous tubes from \cite{DlabRingel} applies to $\Lambda$.
\end{remark}

Recall from Section~\ref{nc chain sec} that the bilinear form $E_{c^{-1}}$ coincides with the \newword{Euler form} on the Grothendieck group $K_0(\mods \Lambda)$.
That is, for modules $X$ and $Y$, 
\[E_{c^{-1}}(\undim X,\undim Y) = \dim_\field(\Hom(X,Y)) - \dim_\field(\Ext^1(X,Y)).\]
An indecomposable module $X$ is \newword{preprojective} if $E_{c^{-1}}(\delta,\undim X) < 0$, \newword{preinjective} if $E_{c^{-1}}(\delta,\undim X) > 0$, or \newword{regular} if $E_{c^{-1}}(\delta,\undim X) = 0$.  
Every indecomposable module that is either preprojective or preinjective is also exceptional.
An arbitrary module is called preprojective/preinjective/regular if all of its direct summands are preprojective/preinjective/regular. 
Let $\reg$ be the subcategory of regular modules. 
This is a non-exceptional wide subcategory of $\mods\Lambda$.
It is common to use the prefix quasi- when describing analogs of notions of $\mods\Lambda$ within the subcategory~$\reg$. For example, a \newword{quasi-simple} is a module which is simple in~$\reg$.

We will use the following well-known lemma, see e.g. \cite[Lemma~4.3]{IPT}. 
(The proof given in \cite{IPT} is valid for all connected tame hereditary algebras, even though they work only in the path algebra setting.)

\begin{lemma}\label{lem:hom_ext} 
Let $X,Y$ be indecomposable modules with $X$ non-exceptional. 
\begin{enumerate}[label=\bf\arabic*., ref=\arabic*]
\item If $Y$ is preprojective then $\Hom(Y,X)$ and $\Ext^1(X,Y)$ are nonzero, while $\Hom(X,Y)$ and $\Ext^1(Y,X)$ are zero. 
\item If $Y$ is preinjective then $\Hom(X,Y)$ and $\Ext^1(Y,X)$ are nonzero, while $\Hom(Y,X)$ and $\Ext^1(X,Y)$ are zero.
\end{enumerate}
\end{lemma}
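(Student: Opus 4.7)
The plan is to reduce immediately to the case where $X \in \reg$: in a tame hereditary algebra, every indecomposable preprojective or preinjective module is exceptional, so the assumption that $X$ is indecomposable and non-exceptional forces $X$ to be regular. Parts~1 and~2 of the lemma are interchanged by swapping $\Hom$ with $\Ext^1$ and preprojective with preinjective, so I will carry out part~1 in full and indicate only the necessary modifications for part~2.

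For the two vanishing assertions in part~1, I would argue as follows. The equality $\Hom(X,Y) = 0$ is a standard consequence of the trichotomy: no nonzero homomorphisms go from a regular module to a preprojective one. For $\Ext^1(Y,X) = 0$, I would split on whether $Y$ is projective: if it is, hereditariness makes the Ext vanish automatically; otherwise, Auslander--Reiten--Serre duality gives $\Ext^1(Y,X) \cong D\Hom(X, \tau Y)$, and since $\tau Y$ is again an indecomposable preprojective, the same Hom-vanishing applies.

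For the non-vanishings I would use the Euler form. Since $\Ext^1(Y,X) = 0$ we have $\dim_\field \Hom(Y,X) = E_{c^{-1}}(\undim Y, \undim X)$, so it suffices to show this integer is positive. Writing $r$ for the rank of the tube containing $X$, the non-exceptionality of $X$ forces its quasi-length to be at least $r$, so that $\undim X = q\delta + \gamma$ with $q \geq 1$ and $\gamma$ either zero or a sum of dimension vectors of fewer than $r$ consecutive quasi-simples in that tube. Because $\delta$ lies in the radical of the symmetric form $K$, we have $E_{c^{-1}}(\undim Y, \delta) = -E_{c^{-1}}(\delta, \undim Y) > 0$, and integrality upgrades this to $\geq 1$. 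For each quasi-simple $M$ contributing to $\gamma$, the same AR--Serre argument that killed $\Ext^1(Y,X)$ yields $\Ext^1(Y,M) = 0$, hence $E_{c^{-1}}(\undim Y, \undim M) = \dim_\field \Hom(Y,M) \geq 0$. Summing gives $\dim_\field \Hom(Y,X) \geq q \geq 1$. Finally, $\Ext^1(X,Y) \neq 0$ reduces to the non-vanishing just proved: AR--Serre gives $\Ext^1(X,Y) \cong D\Hom(Y, \tau X)$ (valid since $X$ is regular, hence not projective), and $\tau X$ is itself an indecomposable non-exceptional regular module (as $\tau$ permutes each tube while preserving quasi-length), so $\Hom(Y, \tau X) \neq 0$. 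Part~2 follows by the evident dualization, invoking the alternate form $\Ext^1(A,B) \cong D\Hom(\tau^{-1}B, A)$ when needed to handle the case where $Y$ is injective.

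The main obstacle I anticipate is the non-vanishing of $\Hom(Y,X)$. The Hom and Ext vanishings are essentially formal once the trichotomy and AR--Serre duality are in hand, but showing that a preprojective admits a \emph{nonzero} map to every non-exceptional regular indecomposable genuinely uses the interior structure of the tube of $X$. Routing the argument through the Euler form, as above, isolates the essential input to the inequality $q \geq 1$, which is precisely where the hypothesis ``non-exceptional'' (as opposed to merely ``regular'') enters the proof.
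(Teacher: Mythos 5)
Your proof is correct. Note first that the paper offers no argument of its own here: it defers entirely to \cite[Lemma~4.3]{IPT}, remarking only that the proof there extends from path algebras to arbitrary connected tame hereditary algebras. So there is nothing in-paper to compare against, and your self-contained argument is a genuine contribution relative to what is written. The structure is sound: the reduction to $X$ regular uses the stated fact that preprojective and preinjective indecomposables are exceptional; the two vanishings follow from the Hom-trichotomy between the preprojective, regular, and preinjective classes together with the bar-free Auslander--Reiten formula $\Ext^1(Y,X)\cong D\Hom(X,\tau Y)$ (valid for hereditary algebras because quotients of injectives are injective, so no nonzero map into $\tau Y$ factors through an injective when $\tau Y$ has no injective summands); and the non-vanishing of $\Hom(Y,X)$ correctly isolates the role of non-exceptionality via Proposition~\ref{prop:bricks_in_tube}, which forces the quasi-length of $X$ to be at least the rank $r$ of its tube, hence $\undim X = q\delta+\gamma$ with $q\ge 1$ and $\gamma$ a sum of fewer than $r$ consecutive quasi-simple classes. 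The sign computation $E_{c^{-1}}(\undim Y,\delta)=-E_{c^{-1}}(\delta,\undim Y)>0$ is legitimate since $\delta$ lies in the radical of $K$ and $K$ symmetrizes $E_{c^{-1}}$, and the termwise bound $E_{c^{-1}}(\undim Y,\undim M)\ge 0$ for the quasi-simples $M$ in $\gamma$ is justified by the same $\Ext$-vanishing. The final step $\Ext^1(X,Y)\cong D\Hom(Y,\tau X)\neq 0$ correctly uses that $\tau$ preserves quasi-length and hence non-exceptionality, and your handling of the injective case of $Y$ in part~2 via the other form of the AR formula closes the one gap that a careless dualization would leave. The only stylistic caveat is that the argument quietly uses several standard facts (the Hom-trichotomy, the simplified AR formula, the radical property of $\delta$) that the paper itself does not record, so in the paper's economy a citation may still be the shorter route; but as mathematics your proof is complete and essentially the standard defect-based argument.
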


The Auslander-Reiten translate $\tau$ permutes the quasi-simple modules. 
The extension closure of a $\tau$-orbit is called a \newword{tube}. 
Each tube is a non-exceptional wide subcategory of $\mods\Lambda$, and any indecomposable regular module lies in exactly one tube. 
The \newword{rank} of a tube is the number of quasi-simple modules it contains. 
For use later, we record the following well-known fact as a lemma. 

\begin{lemma}\label{lem:tubes_ortho} 
Let $X$ and $Y$ be indecomposable regular modules which lie in different tubes. Then $\Hom(X,Y) = 0 = \Ext^1(X,Y)$.
\end{lemma}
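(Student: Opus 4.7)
The plan is to prove the two vanishings separately, deducing $\Ext^1(X,Y) = 0$ from the Hom vanishing via the Auslander-Reiten formula.

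First I would handle the Hom vanishing. Let $\T_X$ and $\T_Y$ denote the tubes containing $X$ and $Y$ respectively. By hypothesis $\T_X \neq \T_Y$, and since each indecomposable regular lies in exactly one tube, $\T_X \cap \T_Y = 0$. Given any morphism $f\colon X \to Y$, its image $\operatorname{im} f$ lies in $\reg$ because $\reg$ is a wide subcategory of $\mods\Lambda$ (hence closed under kernels and cokernels, so under images). Now each tube is, by construction, the extension closure in $\reg$ of a $\tau$-orbit of quasi-simples, and is therefore a Serre subcategory of the abelian category $\reg$: closed under subobjects, quotients, and extensions computed in $\reg$. Hence $\operatorname{im} f$, being a quotient of $X$ in $\reg$, lies in $\T_X$, and being a subobject of $Y$ in $\reg$, lies in $\T_Y$. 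Thus $\operatorname{im} f = 0$ and $f = 0$.

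Next I would use the Auslander-Reiten formula. Since $X$ is regular it has no nonzero projective direct summand, so the standard AR formula for the hereditary algebra $\Lambda$ gives the equality $\dim_\field \Ext^1(X,Y) = \dim_\field \Hom(Y,\tau X)$. Each tube is $\tau$-stable, so $\tau X \in \T_X$, meaning $Y$ and $\tau X$ still lie in distinct tubes; applying the Hom vanishing already proved (with $\tau X$ in place of $X$) yields $\Hom(Y,\tau X) = 0$ and hence $\Ext^1(X,Y) = 0$.

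There is no serious obstacle here: beyond the AR formula, the only input is the observation that each tube is a Serre subcategory of $\reg$, which is immediate from its description as the extension closure in $\reg$ of a $\tau$-orbit of quasi-simples. This is presumably why the authors record the statement as a standard lemma without proof.
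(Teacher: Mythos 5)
Your proof is correct. Note that the paper itself offers no argument for this lemma: it is explicitly recorded as a ``well-known fact'' with no proof and no citation, so there is nothing internal to compare your argument against. What you give is the standard argument, and both halves check out. For the Hom vanishing, the key point that each tube is a Serre subcategory of $\reg$ is right: the tube is $\Filt$ of a $\tau$-orbit of quasi-simples, the quasi-simples are simple objects of the length category $\reg$, and the extension closure of a set of simples in a length category is exactly the full subcategory of objects all of whose composition factors lie in that set, which is closed under subobjects and quotients; distinct tubes come from disjoint $\tau$-orbits, so they meet only in $0$. Since $\reg$ is exactly embedded in $\mods\Lambda$, the image of $f$ computed in $\mods\Lambda$ is the image in $\reg$, and your conclusion follows. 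For the Ext vanishing, the only point deserving a word is that the bar-free form of the Auslander--Reiten formula, $\dim_\field\Ext^1(X,Y)=\dim_\field\Hom(Y,\tau X)$, uses both that $X$ has no projective summand (true, as $X$ is regular) and that $\Lambda$ is hereditary (so that no nonzero map into the non-injective indecomposable $\tau X$ factors through an injective); you invoke exactly these hypotheses, and $\tau$-stability of tubes finishes the argument.
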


Tubes of rank 1 are called \newword{homogeneous} and tubes of larger rank are \newword{non-homogeneous}. 
There are infinitely many homogeneous tubes. 
A module $X$ is \newword{homogeneous} if $\tau X'=X'$ for every indecomposable direct summand $X'$ of~$X$. Every indecomposable homogeneous module lies in a homogeneous tube.
Let $\homo$ be the subcategory of homogeneous modules and let $\nonhomo$ be the subcategory of regular modules which contain no homogeneous direct summand.
The following fact is well known.

\begin{lemma}\label{lem:homogeneous} 
Let $\T$ be a homogeneous tube.  
Then $\T$ contains a unique brick~$X_\T$. Moreover, $X_\T$ is homogeneous (and thus non-exceptional) and has $\T = \Filt(X_\T)$.
\end{lemma}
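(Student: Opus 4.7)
The plan is to identify $X_\T$ as the unique quasi-simple module in the rank-$1$ tube $\T$, and then deploy Proposition~\ref{prop:semibrick} and Corollary~\ref{cor:semibrick} to conclude uniqueness of the brick. Since a homogeneous tube has rank $1$ by definition, there is exactly one quasi-simple module in $\T$, which I call $X_\T$, and it satisfies $\tau X_\T = X_\T$. Because $X_\T$ is indecomposable and $\tau$-fixed, it is homogeneous by the definition given just above the lemma.

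Next I would establish that $X_\T$ is a brick and that $\T = \Filt(X_\T)$. For the brick property: $X_\T$ is quasi-simple, meaning simple in the wide subcategory $\reg$, so it lies in the semibrick $\simp(\reg)$ and is therefore a brick by Proposition~\ref{prop:semibrick}. For the filtration identity: by construction, a tube is the extension closure of the $\tau$-orbit of its quasi-simples, which in the rank-$1$ case consists solely of $X_\T$, so $\T = \Filt(X_\T)$. Combining these, Proposition~\ref{prop:semibrick} gives $\T = \Filt(X_\T) = \sW(X_\T)$, and Corollary~\ref{cor:semibrick} then says $X_\T$ is the unique brick in $\sW(X_\T) = \T$.

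It remains to verify that $X_\T$ is non-exceptional, i.e.\ not rigid. The cleanest route is the structure of a rank-$1$ tube: for each $k \ge 1$ it contains a unique indecomposable $X_\T[k]$ of quasi-length $k$, and for $k=2$ there is a non-split short exact sequence $0 \to X_\T \to X_\T[2] \to X_\T \to 0$, which exhibits a nonzero element of $\Ext^1(X_\T, X_\T)$. Alternatively, the Auslander-Reiten formula gives $\Ext^1(X_\T, X_\T) \cong D\overline{\Hom}(X_\T, \tau X_\T) = D\overline{\Hom}(X_\T, X_\T)$, which is nonzero because $X_\T$ is indecomposable and not injective, so its identity does not factor through an injective module. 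Either way, $X_\T$ is not rigid, hence not exceptional.

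The only potential obstacle is a bookkeeping one: making sure the appeal to ``tube structure'' (either the non-split self-extension or the AR-formula computation) does not implicitly assume $X_\T$ is non-exceptional. Both routes above avoid that circularity, and the remaining steps are immediate applications of Proposition~\ref{prop:semibrick} and Corollary~\ref{cor:semibrick}.
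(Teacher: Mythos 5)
Your proof is correct. Note that the paper itself offers no argument here: Lemma~\ref{lem:homogeneous} is introduced with ``The following fact is well known'' and left unproved, so there is no proof of record to compare against. Your argument is a clean way to fill that gap using only the paper's own definitions and Proposition~\ref{prop:semibrick} / Corollary~\ref{cor:semibrick}: the rank-$1$ tube has a single quasi-simple $X_\T$ with $\tau X_\T = X_\T$, which is a brick because $\simp(\reg)$ is a semibrick, and $\T = \Filt(X_\T)$ is immediate from the definition of a tube as the extension closure of a $\tau$-orbit, whence Corollary~\ref{cor:semibrick} gives uniqueness of the brick. For non-rigidity, your Auslander--Reiten-formula route is the more self-contained of the two you offer, since it avoids any appeal to the internal structure of the tube beyond $\tau$-invariance; the only point you leave implicit is that $X_\T$ is not injective (equivalently, that indecomposable injectives over a connected tame hereditary algebra are preinjective, never regular), which is standard and could also be extracted from Lemma~\ref{lem:hom_ext}. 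A third route internal to the paper would be: if $\Ext^1(X_\T,X_\T)=0$ then $\Filt(X_\T)=\add(X_\T)$ is representation-finite, hence exceptional by Lemma~\ref{lem:rep_finite_exceptional}, contradicting the (also quoted) fact that tubes are non-exceptional --- but your version is preferable since it does not lean on another unproved assertion.
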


Non-homogeneous tubes are in bijection with components of $\Upsilon^c$. In particular, there are at most three non-homogeneous tubes. 
We denote by  $m\in\set{0,1,2,3}$ the number of non-homogeneous tubes in $\Lambda$ and fix an indexing $\T_1,\ldots\T_m$ of the non-homogeneous tubes. We then
write $\N=\set{\T_1,\ldots\T_m}$ for the set of non-homogeneous tubes. Lemma~\ref{lem:tubes_ortho} then implies the following. 
\begin{proposition}\label{prop:tubes_ortho}
    There are orthogonal decompositions $\nonhomo = \T_1 \oplus \cdots\oplus \T_m$ and $\reg = \homo \oplus \nonhomo$. In particular, every wide subcategory $\W \subseteq \reg$ admits orthogonal decompositions $\W = (\homo \cap \W) \oplus (\nonhomo \cap \W)$ and $\W = (\homo \cap \W) \oplus (\T_1\cap \W) \oplus \cdots \oplus (\T_m \cap \W)$.
\end{proposition}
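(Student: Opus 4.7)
The plan is to build the decompositions from Lemma~\ref{lem:tubes_ortho} and the classification of indecomposable regulars, then pass the decompositions down to any wide subcategory $\W\subseteq\reg$ by intersecting.

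First, I would verify that each of $\homo$, $\nonhomo$, and $\T_i$ is a wide subcategory of $\mods\Lambda$. Each tube is wide by construction (it is the extension-closure of a $\tau$-orbit of quasi-simples, and the standard tube combinatorics make it closed under kernels and cokernels). For $\homo$ (respectively $\nonhomo$), closure under extensions, kernels, and cokernels follows from the fact that $\homo$ is the sum of the (infinitely many) homogeneous tubes, each of which is wide, together with Lemma~\ref{lem:tubes_ortho}: any short exact sequence with outer terms in different tubes splits (since the Ext between them vanishes), so staying inside $\homo$ (or $\nonhomo$) is automatic. Now by Lemma~\ref{lem:tubes_ortho}, if $i\neq j$ then $\T_i\subseteq\T_j^\perp\cap{}^\perp\T_j$, and likewise $\nonhomo\subseteq\homo^\perp\cap{}^\perp\homo$.

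Next, since every indecomposable regular module lies in exactly one tube, any regular $X$ can be decomposed uniquely (by Krull--Schmidt) as $X=X_\homo\oplus X_{\T_1}\oplus\cdots\oplus X_{\T_m}$, with each summand in the corresponding subcategory. Combined with the orthogonality above, this gives the orthogonal decompositions $\nonhomo=\T_1\oplus\cdots\oplus\T_m$ and $\reg=\homo\oplus\nonhomo$ directly from the definition of orthogonal decomposition in Section~\ref{sec:subcats} (extended iteratively to $m$ factors).

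For the second statement, let $\W\subseteq\reg$ be wide. Each of $\homo\cap\W$, $\nonhomo\cap\W$, and $\T_i\cap\W$ is wide, because $\wide\Lambda$ is a complete lattice under intersection. The orthogonality conditions $\T_i\cap\W\subseteq(\T_j\cap\W)^\perp\cap{}^\perp(\T_j\cap\W)$ and $(\nonhomo\cap\W)\subseteq(\homo\cap\W)^\perp\cap{}^\perp(\homo\cap\W)$ are inherited from the ambient orthogonality. The only remaining point is that every $X\in\W$ decomposes accordingly. For this I would use that wide subcategories are closed under direct summands: if $X=Y\oplus Z\in\W$ with projection $p\colon X\to X$ onto $Y$, then $Z=\ker(p)$ and $Y=\ker(1_X-p)$ are kernels of morphisms in $\W$ (which is a full subcategory), hence lie in $\W$. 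Applying this to the tube decomposition $X=X_\homo\oplus X_{\T_1}\oplus\cdots\oplus X_{\T_m}$ of any $X\in\W\subseteq\reg$ places each summand in the corresponding $\homo\cap\W$ or $\T_i\cap\W$, completing both decompositions.

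I expect no real obstacle here; the statement is essentially a formal consequence of Lemma~\ref{lem:tubes_ortho}, the classification of indecomposable regulars into tubes, and closure of wide subcategories under kernels (hence summands). The main subtlety to handle carefully is extending the two-term definition of orthogonal decomposition to an $m$-term decomposition and verifying that the intersection $\homo\cap\W$ is itself wide (as opposed to only additive), which is immediate from the lattice structure of $\wide\Lambda$.
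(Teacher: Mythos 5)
Your proposal is correct and follows essentially the same route as the paper, which simply asserts that the proposition is implied by Lemma~\ref{lem:tubes_ortho} (together with the classification of indecomposable regular modules into tubes). Your write-up supplies exactly the routine verification the paper leaves implicit: cross-tube orthogonality, the Krull--Schmidt splitting of a regular module across tubes, and closure of wide subcategories under direct summands via kernels of idempotents.
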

For $\beta \in \Xi^c$, let $\T_\beta$ be the tube containing $X_\beta$.
If also $\gamma\in\Xi^c$, then $\T_\gamma=\T_\beta$ if and only $\gamma \equiv_{\Upsilon^c} \beta$.
The quasi-simple modules in $\T_\beta$ are the $X_\gamma$ such that $\gamma \in \Xi^c$ with $\gamma \equiv_{\Upsilon^c} \beta$.
Thus the rank of $\T_\beta$ is $r_\beta$, the length of the $c$-cycle containing $\beta$.
For $k$ a positive integer, there exists a unique indecomposable module $R_{\beta,k} \in \T_\beta$ whose quasi-top is $X_\beta$ 
and whose quasi-length is $k$. 
If $k<r_\beta$, then $R_{\beta,k} = X_{\beta^{(k)}}$, for $\beta^{(k)}$ as in~\eqref{eqn:beta_k}. 
The set of all indecomposable modules in $\T_\beta$ is $\set{R_{\gamma,k} \mid \gamma \in \Xi^c, \gamma \equiv_{\Upsilon^c} \beta, k \in \mathbb{Z}_{\geq 0}}$.
The Auslander-Reiten translate $\tau$ acts on these indecomposables as $\tau R_{\gamma,k} = R_{c(\gamma),k}$. 
The following characterization of bricks and exceptional modules in the non-homogeneous tubes is well known. 
(See e.g. \cite[Corollary~X.2.7b]{Elements2}.)

\begin{proposition}\label{prop:bricks_in_tube} 
Let $\beta \in \Xi^c$ and let $k$ be a positive integer. 
\begin{enumerate}[label=\bf\arabic*., ref=\arabic*]
\item $R_{\beta,k}$ is a brick if and only if $k \leq r_\beta$.
\item $R_{\beta,k}$ is exceptional if and only if $k < r_\beta$.
\end{enumerate}
\end{proposition}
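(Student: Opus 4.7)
The plan is to invoke the well-understood structure of the Auslander-Reiten quiver of the non-homogeneous tube $\T_\beta$. Setting $r = r_\beta$, this quiver has the shape $\mathbb{Z}A_\infty/\langle\tau^r\rangle$: the indecomposables are the $R_{\gamma,k}$ for $\gamma$ quasi-simple in $\T_\beta$ (equivalently, $\gamma \equiv_{\Upsilon^c}\beta$) and $k\ge 1$, with irreducible morphisms $R_{\gamma,k}\to R_{\gamma,k+1}$ (the ray direction, monomorphisms) and $R_{\gamma,k}\to R_{c^{-1}(\gamma),k-1}$ (the coray direction, epimorphisms), and $\tau R_{\gamma,k}=R_{c(\gamma),k}$. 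Every nonzero morphism between indecomposables in $\T_\beta$ is a sum of compositions along such paths, and two parallel paths in $\mathbb{Z}A_\infty$ yield proportional morphisms over $\End(X_\beta)$.

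For Assertion 1, I would compute $\End(R_{\beta,k})$ by analyzing paths in the AR-quiver from $R_{\beta,k}$ back to itself. When $k\le r$, any such nonzero composition in the covering $\mathbb{Z}A_\infty$ projects to a scalar multiple of the identity; equivalently, $\dim_{\End(X_\beta)}\End(R_{\beta,k})=1$, so $\End(R_{\beta,k})=\End(X_\beta)$ is a division ring and $R_{\beta,k}$ is a brick. When $k>r$, the periodicity gives a path of length $r$ from the top $X_\beta$ of $R_{\beta,k}$ around the tube back into $R_{\beta,k}$, producing a nonzero non-invertible endomorphism; hence $R_{\beta,k}$ is not a brick.

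For Assertion 2, I would apply the Auslander-Reiten (Serre duality) formula, which is valid here because $R_{\beta,k}$ is regular and hence has no injective summand:
\[\Ext^1(R_{\beta,k},R_{\beta,k})\cong D\Hom(R_{\beta,k},\tau R_{\beta,k})=D\Hom(R_{\beta,k},R_{c(\beta),k}).\]
If $k<r$, then $\beta$ and $c(\beta)$ are distinct quasi-simples and no path in $\mathbb{Z}A_\infty$ of the required shape connects $R_{\beta,k}$ to $R_{c(\beta),k}$, so this Hom space vanishes; combined with Assertion 1 this shows $R_{\beta,k}$ is exceptional. If $k=r$, the bricks $R_{\beta,r}$ and $R_{c(\beta),r}$ have the same dimension vector (namely $\delta$ restricted to the tube, i.e.\ all quasi-simples of $\T_\beta$ with multiplicity one), and the AR-structure yields a nonzero morphism $R_{\beta,r}\to R_{c(\beta),r}$, so $R_{\beta,r}$ has a self-extension and is not exceptional.

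The main obstacle is making the computation of $\Hom$-spaces inside the tube fully rigorous without re-deriving the complete description of the AR-quiver; in practice, I would simply cite \cite[Corollary~X.2.7b]{Elements2} (or the treatment in \cite{RingelInfinite}) for the Hom and Ext formulas, since both the brick and the rigidity statements reduce directly to the two cases $k\le r$ vs.\ $k>r$ and $k<r$ vs.\ $k\ge r$ handled by those formulas.
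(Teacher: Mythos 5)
Your proposal is correct and ends up exactly where the paper does: the paper gives no proof of this proposition, stating it as well known and citing \cite[Corollary~X.2.7b]{Elements2}, which is precisely the reference you fall back on for the Hom and Ext computations. Your AR-quiver sketch (endomorphisms via quotient-then-include factorizations through $R_{\beta,j}$, and $\Ext^1(R_{\beta,k},R_{\beta,k})\cong D\Hom(R_{\beta,k},R_{c(\beta),k})$ via the Auslander–Reiten formula, noting $\tau R_{\beta,k}=R_{c(\beta),k}$) is an accurate expansion of the standard argument behind that citation.
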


We will also need parts of the well known computation of Hom and Ext for regular modules, found, for example, in \cite[Corollary~X.2.7a]{Elements2}.
The parts that we need are the following propositions, which follow from \cite[Corollary~X.2.7a]{Elements2}, the Auslander-Reiten formulas, and the fact that $\tau R_{\gamma,k} = R_{c(\gamma),k}$.
For each $\beta\in\Xi^c$, write $F_\beta$ for~$R_{\beta,r_\beta}$.

\begin{proposition}\label{prop:hom_ext_vanish}  
Let $\beta, \gamma \in \Xi^c$ and let $k\in\integers$.
\begin{enumerate}[label=\bf\arabic*., ref=\arabic*]
\item \label{rgam before}
$\Hom(F_\gamma,R_{\beta,k}) = 0=\Ext^1(F_\gamma,R_{\beta,k})$ if and only if $\gamma \notin \set{c^i\beta:0\le i<k}$.
\item \label{rgam after}
$\Hom(R_{\beta,k},F_\gamma) = 0 = \Ext^1(R_{\beta,k}, F_\gamma)$ if and only if $\gamma \notin \set{c^i\beta:0<i\le k}$.
\end{enumerate}
\end{proposition}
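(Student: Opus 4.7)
The plan is a direct computation inside a single non-homogeneous tube, combined with the Auslander-Reiten formulas; the key input is the standard description of submodules, quotients, and morphisms in a tube (as in \cite[Corollary~X.2.7a]{Elements2}).

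If $\gamma \not\equiv_{\Upsilon^c} \beta$, then $F_\gamma$ and $R_{\beta,k}$ lie in different non-homogeneous tubes, so $\Hom$ and $\Ext^1$ in both directions vanish by Proposition~\ref{prop:tubes_ortho}; simultaneously, the $c$-orbit of $\beta$ never leaves its tube, so $\gamma$ is not of the form $c^i\beta$ for any $i$, and both equivalences hold trivially. I therefore assume $\gamma \equiv_{\Upsilon^c} \beta$, so that $F_\gamma$, $F_{c\gamma}$, and $R_{\beta,k}$ all lie in a single tube of rank $r := r_\beta = r_\gamma$.

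The first step is to establish the $\Hom$ halves of both parts directly from the filtration structure in a tube. The submodules of $R_{\beta,k}$ are precisely $R_{c^{k-m}\beta,\,m}$ for $0 \leq m \leq k$, and the quotients of $F_\gamma = R_{\gamma,r}$ are precisely $R_{\gamma,m'}$ for $0 \leq m' \leq r$. A nonzero morphism $F_\gamma \to R_{\beta,k}$ has indecomposable image, so we need $R_{\gamma,m} \cong R_{c^{k-m}\beta,m}$ for some $1 \leq m \leq \min(k,r)$, equivalently $\gamma = c^{k-m}\beta$. Reindexing $i = k - m$, and using $r$-periodicity of the $c$-action when $k \geq r$, yields $\gamma \in \set{c^i\beta : 0 \leq i < k}$, which is the $\Hom$ half of the first assertion. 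The $\Hom$ half of the second assertion follows by swapping the roles of domain and codomain: the submodules of $F_\gamma$ are $R_{c^{r-m}\gamma,m}$ and the quotients of $R_{\beta,k}$ are $R_{\beta,m'}$, leading to the condition $\gamma \in \set{c^i\beta : 0 < i \leq k}$.

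Finally I would deduce the $\Ext$ statements from the $\Hom$ statements via the Auslander-Reiten formula $\Ext^1(X,Y) \cong D\Hom(Y, \tau X)$ together with the crucial identity $\tau F_\gamma = F_{c\gamma}$. For the first assertion, $\Ext^1(F_\gamma, R_{\beta,k}) \cong D\Hom(R_{\beta,k}, F_{c\gamma})$, which by the $\Hom$ half of the second assertion vanishes iff $c\gamma \notin \set{c^i\beta : 0 < i \leq k}$; this is equivalent to $\gamma \notin \set{c^i\beta : 0 \leq i < k}$, matching the $\Hom$ condition exactly. The second assertion is treated symmetrically. The main bookkeeping challenge is just to keep the $\tau$-shift aligned: the shift $\gamma \mapsto c\gamma$ produced by $\tau$ is precisely what converts between the index ranges $\set{0 \leq i < k}$ and $\set{0 < i \leq k}$ appearing in the two parts.
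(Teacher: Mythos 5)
Your proof is correct and takes essentially the same route as the paper, which establishes this proposition by citing the standard computation of morphisms between indecomposables in a tube (\cite[Corollary~X.2.7a]{Elements2}), the Auslander--Reiten formulas, and the identity $\tau R_{\gamma,k}=R_{c(\gamma),k}$; you have simply written out the details of exactly that argument. The index bookkeeping (in particular the $\tau$-shift $\gamma\mapsto c\gamma$ converting between the ranges $0\le i<k$ and $0<i\le k$) checks out.
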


\begin{definition} \label{para exc EF def}
Let $\nrig = \{F_\beta\mid \beta \in \Xi^c\}$. By Lemma~\ref{lem:homogeneous} and Proposition~\ref{prop:bricks_in_tube}, the set~$\nrig$ is precisely the set of non-homogeneous, non-exceptional bricks.
Define the set $\pexcep$ of \newword{para-exceptional} modules to be $\E\cup\nrig$. 
A \newword{para-exceptional sequence} is a $\pexcep$-brick sequence in the sense of Definition~\ref{def:exceptional_sequence}.
We write $\Bpx$ for the binary compatibility relation on the alphabet $\pexcep$ consisting of all two-term para-exceptional sequences 
and $\Cpx$ for the set $\C(\Bpx)$ of maximal para-exceptional sequences. 
\end{definition}

\begin{remark}\label{rem:prufer_adic}
The properties in Proposition~\ref{prop:hom_ext_vanish} also hold if one replaces each~$F_\gamma$ with the infinite-dimensional adic module defined by taking the inverse limit of the regular modules whose quasi-top is $R_\gamma$. By adjusting indices, one could also obtain similar statements for the Pr\"ufer modules (defined as direct limits). Thus, throughout this paper, one could use either the adic or Pr\"ufer modules from the non-homogeneous tubes in place of the set $\mathcal{F}$.
\end{remark}

\begin{remark}\label{rem:non-homogeneous}  
Note that $(\excep \cup \nrig)$ is precisely the set of non-homogeneous bricks. 
One might call para-exceptional 
sequences \newword{non-homogeneous brick sequences}.  
\end{remark}

\begin{remark}\label{rem:schur}
The bijection $X \mapsto t_{\undim X}$ allows us to encode exceptional sequences as sequences of (real) roots. 
Derksen and Weyman~\cite[Section~4]{DW_schur} generalize such sequences of roots to ``Schur sequences'', which allow imaginary (Schur) roots as terms. 
A para-exceptional sequence may contain more than one term from $\nrig$
(as many as one for each component of $\Upsilon^c$, see Corollary~\ref{cor:maximal_non_exceptional}).
But $\undim R_{\beta,r_\beta} = \delta$ for any $\beta\in\Xi^c$, so if $(X_1,\ldots,X_k)$ is a para-exceptional sequence, then $(\undim X_1,\ldots,\undim X_k)$ may repeat the term $\delta$ and thus may fail to be a Schur sequence.
\end{remark}

In light of the bijection $X \mapsto t_{\undim X}$ from $\E$ to $\set{t\in T\mid t\le_T c}$, 
by Proposition~\ref{Cc alph} and Theorem~\ref{Ccplus chain sys}, there is a bijection $\omega: \excep \cup \nrig \rightarrow T_V\cup T_H \cup F$ given by
\[\omega(X) = \begin{cases} t_{\undim X} & \text{if $X \in \excep$}\\f_\gamma & \text{if $X = F_\gamma\in \nrig$}.\end{cases}\]  

We are now prepared to state and prove the main result of this section.

\begin{theorem}\label{thm:bijection_on_chain_systems}
Let $\Lambda$ be a connected tame hereditary algebra such that $\Lambda$ has more than one non-homogeneous tube.
\begin{enumerate}[label=\bf\arabic*., ref=\arabic*]
\item \label{Cpx bcs bij}
$\Cpx$ is a binary chain system with binary compatibility relation~$\Bpx$.
\item \label{Cpx bij}
The bijection $\omega$ is an isomorphism of chain systems from $\Cpx$ to~$\C_c^+$.
\item\label{PCpx McSul}
The map $\omega$ induces an isomorphism from $\P(\Cpx)\cong\Preeq(\Cpx)$ to $[1,c]_{T\cup F}$.
\end{enumerate}
\end{theorem}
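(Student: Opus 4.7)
The plan is to reduce everything to a straightforward verification that $\omega$ carries the binary compatibility relation $\Bpx$ bijectively onto the binary compatibility relation for $\C_c^+$ given explicitly in Theorem~\ref{Ccplus chain sys}. Once this is established, all three assertions follow almost formally: Proposition~\ref{bin isom}.\ref{not yet} delivers Assertions~\ref{Cpx bcs bij} and~\ref{Cpx bij} simultaneously (using that $\C_c^+$ is already known to be a binary chain system by Theorem~\ref{Ccplus chain sys}), and then Proposition~\ref{isom isom} combined with the isomorphism $\P(\C_c^+)\cong[1,c]_{T\cup F}$ obtained in the proof of Proposition~\ref{Ccplus bin} yields Assertion~\ref{PCpx McSul}.

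The core of the argument is therefore a case analysis showing that $(X,Y)\in\Bpx$ if and only if $\omega(X)\omega(Y)$ lies in the compatibility relation described in Theorem~\ref{Ccplus chain sys}. I would break this into four cases according to which of $X,Y$ lie in $\nrig$. When both $X,Y\in\excep$, this is already Corollary~\ref{cor:bijection_on_chain_systems_exceptional} (combined with Proposition~\ref{Cc alph} to identify $\{t\le_Tc\}$ with $T_V\cup T_H$), matching the fourth set in Theorem~\ref{Ccplus chain sys}. When $X=F_\gamma$ and $Y=F_\beta$ both lie in $\nrig$, Lemma~\ref{lem:tubes_ortho} together with Proposition~\ref{prop:hom_ext_vanish} (applied with $k=r_\beta$) gives $\Hom(F_\beta,F_\gamma)=0=\Ext^1(F_\beta,F_\gamma)$ exactly when $\beta\not\equiv_\Upsilon\gamma$, matching $f_\beta f_\gamma$ in the first set.

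The mixed cases are the substantive computation. For $(F_\gamma,Y)\in\Bpx$ with $Y\in\excep$, I would first rule out $Y$ preprojective or preinjective using Lemma~\ref{lem:hom_ext}: either $\Hom(Y,F_\gamma)\ne0$ (preprojective) or $\Ext^1(Y,F_\gamma)\ne0$ (preinjective), so such $Y$ never appear in $\Bpx$-compatible pairs with an $F_\gamma$ on the left — and correspondingly $\omega(Y)\in T_V$ never pairs with any $f_\gamma$ in the compatibility relation of Theorem~\ref{Ccplus chain sys}. This leaves $Y=R_{\beta,k}$ regular exceptional with $1\le k<r_\beta$, so $\omega(Y)=t_{\beta,k}$, and Proposition~\ref{prop:hom_ext_vanish}.\ref{rgam after} says $\Hom(R_{\beta,k},F_\gamma)=0=\Ext^1(R_{\beta,k},F_\gamma)$ iff $\gamma\notin\{c(\beta),\ldots,c^k(\beta)\}$ — which matches exactly the characterizations of $f_\gamma t_{\beta,k}$ appearing in the first and second sets of Theorem~\ref{Ccplus chain sys}. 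The case $(X,F_\gamma)\in\Bpx$ is symmetric, using Proposition~\ref{prop:hom_ext_vanish}.\ref{rgam before} and matching against the first and third sets.

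None of the individual steps is hard; the main obstacle is simply keeping the bookkeeping straight across the four cases and matching the combinatorial conditions on the $c$-orbit (e.g.\ ``$\gamma\notin\{c(\beta),\ldots,c^k(\beta)\}$'' versus ``$\gamma\notin\{\beta,\ldots,c^{k-1}(\beta)\}$'') to the correct side of the Hom/Ext vanishing statement in Proposition~\ref{prop:hom_ext_vanish}. A minor subtlety worth noting explicitly is that ``$\beta\not\equiv_\Upsilon\gamma$'' automatically forces the $c$-orbit condition to hold, so the pairs listed in the first set of Theorem~\ref{Ccplus chain sys} do not need to be treated as a separate case but rather fall out of the same Hom/Ext computations once Lemma~\ref{lem:tubes_ortho} is invoked to handle cross-tube pairings.
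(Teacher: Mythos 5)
Your proposal is correct and follows essentially the same route as the paper's proof: reduce via Proposition~\ref{bin isom}.\ref{not yet} and Theorem~\ref{Ccplus chain sys} to checking that $\omega$ matches $\Bpx$ with the explicit compatibility relation, verify this case-by-case using Lemma~\ref{lem:tubes_ortho}, Proposition~\ref{prop:hom_ext_vanish}, and Theorem~\ref{thm:IS}, and then deduce Assertion~\ref{PCpx McSul} from Proposition~\ref{isom isom} together with the identification of $\Preeq(\C_c^+)$ with $[1,c]_{T\cup F}$. Your explicit use of Lemma~\ref{lem:hom_ext} to rule out pairings of $F_\gamma$ with preprojective or preinjective modules is a point the paper leaves implicit, but it is the same argument.
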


Later, we will give a completely representation-theoretic proof of Theorem~\ref{thm:bijection_on_chain_systems}.\ref{Cpx bcs bij} as part of Theorem~\ref{thm:binary_chain_full},
by realizing $\P(\Cpx)$ as the poset of para-exceptional 
subcategories defined in Definition~\ref{def:para} (proved to be a lattice in Theorem~\ref{thm:join}).
But here we use the combinatorics of $[1,c]_{T\cup F}$.

\begin{proof}
In light of Proposition~\ref{bin isom}.\ref{not yet} and Theorem~\ref{Ccplus chain sys}, to prove Assertions~\ref{Cpx bcs bij} and~\ref{Cpx bij}, we need only to check that the bijection $\omega: \excep \cup \nrig \rightarrow T \cup F$ is an isomorphism from~$\Bpx$ to the binary compatibility relation described in Theorem~\ref{Ccplus chain sys}.

Lemma~\ref{lem:tubes_ortho} shows that $\set{(F_\beta, F_\gamma), (F_\gamma,R_{\beta_k}), (R_{\beta,k},F_\gamma)}\subseteq \Bpx$ for all $\beta,\gamma\in\Xi^c$ and~$k$ such that $\beta\not\equiv_\Upsilon\gamma$ and $1\le k<r_\beta$.

Suppose $\beta,\gamma\in\Xi^c$ have $\beta\equiv_\Upsilon\gamma$ and suppose $1\le k<r_\beta$.
Proposition~\ref{prop:hom_ext_vanish} says that $F_\gamma R_{\beta,k}\in\Bpx$ if and only if $\gamma\not\in\{c(\beta),c^2(\beta),\ldots,c^k(\beta)\}$ and $(R_{\beta,k}, F_\gamma)\in\Bpx$ if and only if $\gamma\not\in\{\beta,c(\beta),\ldots,c^{k-1}(\beta)\}$.

Finally, suppose $X,Y\in\E$.
Then Theorem~\ref{thm:IS} and Proposition~\ref{prop:complete_ex_maximal} imply that $(X,Y)\in\Bpx$ if and only if $1<_T\omega(X)\omega(Y)\le_Tc$.
Thus $\omega$ is an isomorphism as desired.

In light of Assertion~\ref{Cpx bij}, Proposition~\ref{isom isom} 
implies that $\Preeq(\Cpx)$ and $\Preeq(\C_c^+)$ are isomorphic.
More specifically, the isomorphism sends the equivalence class (as a prefix of $\Cpx$) of any para-exceptional sequence $(X_1,\ldots,X_k)$ to the word $\omega(X_1)\cdots\omega(X_k)$.
Two words are equivalent as prefixes of $\C_c^+$ if and only if their products are the same, and Assertion~\ref{PCpx McSul} follows.
\end{proof}

\begin{remark}
The proof above shows that $\omega$ induces a bijection from $\Cpx$ to~$\C_c^+$, even in the case where $\Lambda$ has only one non-homogeneous tube.
In that case, $\C_c^+$ is not a chain system, because $F$ is the set of all translations in $[1,c]_T$ and each $f\in F$ is the product of two reflections in $[1,c]_T$.
\end{remark}


\section{Wide subcategories in a tube category}\label{sec:one tube}
In preparation for a representation-theoretic proof of Theorem~\ref{thm:bijection_on_chain_systems}, we need to better understand para-exceptional sequences, especially as they relate to the non-homogeneous tubes of a connected tame hereditary algebra.
As a first step, we consider the analogous sequences and categories in a single tube.

For the rest of the section, $\T$ is an \newword{(abstract) tube category}, meaning a category $\T$ that is is exact equivalent to the category of finite-dimensional \newword{nilpotent} representations of the quiver consisting of an oriented $r$-cycle, which are defined analogously to the representations of acyclic quivers (Section~\ref{sec:reps}). 
The \newword{rank} of $\T$ is this~$r$. 
If $r=1$, then $\T$ is called \newword{homogeneous} and if $r>1$, then it is \newword{non-homogeneous}. 
In this section, we apply definitions that were earlier given in terms of $\mods\Lambda$ (including, bricks, exceptional modules, $(-)^\perp$ and so forth) 
in the category $\T$ by taking the same definitions verbatim except with  $\T$ replacing $\mods\Lambda$.
Lemma~\ref{lem:ortho_wide} holds for subcategories of $\T$ by the same proof. 

We will not make explicit use of the exact equivalence of $\T$ with a category of nilpotent representations, but we do quote some results that come from that point of view. 
Rather, our main tools for dealing with $\T$ come from the following observation:
For any choice of $r$, there exists a connected tame hereditary algebra~$\Lambda$ and a tube $\T \subseteq \mods\Lambda$ admitting an exact equivalence to the category of finite-dimensional nilpotent representations of an $r$-cycle.  Thus we are free to quote results that were proved for tubes within tame hereditary algebras.
For example, we use, for $r>1$, the characterization of indecomposable modules in homogeneous tubes from Section~\ref{sec:brick_sequences}, as well as Propositions~\ref{prop:bricks_in_tube} and~\ref{prop:hom_ext_vanish}.
For $r=1$, the same purpose is fulfilled by Lemma~\ref{lem:homogeneous}.
References to quasi-simple modules in tubes within tame hereditary algebras will be replaced by references to simple modules here, and we take $\Xi^c$ to be an indexing set for the set of simples. 

Let $\Ct$ be the set of $\brick(\T)$-brick sequences. These were previously studied in~\cite{IgusaSen} under the name ``soft exceptional sequences''.
Write $\Bt$ for the binary compatibility relation on the alphabet $\brick(\T)$ consisting of all two-term $\brick(\T)$-brick sequences.
We will prove the following theorem and, along the way, gather tools that will be used to prove an analogous theorem about regular para-exceptional sequences and their wide subcategories.

\begin{theorem}\label{thm:tube_binary_chain}
Let $\T$ be a tube category.
\begin{enumerate}[label=\bf\arabic*., ref=\arabic*]
\item \label{Ct bcs}
The set $\Ct$ is a binary chain system with binary compatibility relation $\Bt$.
\item \label{Pc to wideT}
The map $\Preeq(\Ct) \rightarrow \wide(\T)$ that sends $P$ to $\sW(X_1,\ldots,X_k)$, for any representative $(X_1,\ldots,X_k) \in P$, is an isomorphism of posets.
\item \label{Ct labels cov}
If $\V,\W\in\wide(\T)$ have $\W\subseteq\V$, then $\V$ covers $\W$ in $\wide(\T)$ if and only if $\V\cap{}^\perp\W$ contains a unique brick.
\item \label{Ct labels}
If $P\covered P'$ in $\Preeq(\Ct)$ is labeled by $X$ and sent by the  isomorphism in Assertion~\ref{Pc to wideWEH} to $\W \covered \V$, then $X$ is the unique brick in $\V \cap {}^\perp \W$.
\end{enumerate}
\end{theorem}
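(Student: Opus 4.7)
The plan is to parallel the proof of Theorem~\ref{thm:exceptional_binary_chain}, with the tube $\T$ replacing $\mods\Lambda$ and $\brick(\T)$-brick sequences replacing exceptional sequences. The role played by the rank $n$ in the hereditary case is played here by the rank $r$ of the tube, with the added wrinkle that $\brick(\T)$ contains the non-exceptional bricks $F_\beta = R_{\beta,r}$.

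I would first dispose of the homogeneous case. If $\T$ has rank $1$, then Lemma~\ref{lem:homogeneous} gives $\brick(\T) = \set{X_\T}$ and $\wide(\T) = \set{0,\T}$, so $\Ct = \set{(X_\T)}$ and $\Bt = \emptyset$, making all four assertions immediate. Henceforth assume $\T$ has rank $r > 1$.

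The key preliminary step is to establish the tube-category analogs of Propositions~\ref{prop:complete_ex_maximal}, \ref{prop:wide_fg}, \ref{prop:maximal_exceptional}, and~\ref{prop:perp_prefix} for $\brick(\T)$-brick sequences. The crucial input is the extension property analogous to Lemma~\ref{lem:exceptional_index_arbitrary}: every $\brick(\T)$-brick sequence $(X_1,\ldots,X_k)$ has $k \le r$ and extends to a length-$r$ $\brick(\T)$-brick sequence by inserting $r-k$ bricks at any prescribed position. This is exactly what is furnished by the braid group action on (maximal) soft exceptional sequences developed in \cite{IgusaSen}. From this extension property, the chain of equivalences analogous to Proposition~\ref{prop:complete_ex_maximal} --- maximal iff length $r$ iff generates $\T$ iff right-perpendicular is zero iff left-perpendicular is zero --- follows by the same arguments as in Section~\ref{ex chain sec}, once one observes that the right- (and left-) perpendicular category of a $\brick(\T)$-brick sequence inside $\T$ is again a tube category of strictly smaller rank, permitting induction on $r$. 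The analogs of Propositions~\ref{prop:wide_fg}, \ref{prop:maximal_exceptional}, and~\ref{prop:perp_prefix} are then deduced by the same arguments as in Section~\ref{ex chain sec}, noting that Lemma~\ref{lem:ortho_wide} and Corollary~\ref{cor:semibrick} apply verbatim in an abstract tube category.

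With these analogs in hand, the four assertions of the theorem are proved by adapting the proof of Theorem~\ref{thm:exceptional_binary_chain} essentially line by line. Condition~\ref{bound} of Definition~\ref{chain sys def} follows from the bound $k \le r$; Condition~\ref{conv} is verified by computing perpendicular categories using the tube analog of Proposition~\ref{prop:perp_prefix}; Condition~\ref{non subst} follows from the uniqueness of the brick in $\sW(X) = \Filt(X)$ provided by Corollary~\ref{cor:semibrick}. The poset isomorphism of Assertion~\ref{Pc to wideT} and the cover-label description in Assertions~\ref{Ct labels cov} and~\ref{Ct labels} are then obtained from the tube analogs of Propositions~\ref{prop:maximal_exceptional} and~\ref{prop:perp_prefix} exactly as in the proofs of Theorem~\ref{thm:exceptional_binary_chain}.\ref{Pc to fwide}--\ref{Cx labels}.

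The main obstacle, relative to the hereditary setting, is establishing the extension lemma in the presence of the non-exceptional bricks $F_\beta$. In $\mods\Lambda$ the analogous lemma followed from the Crawley-Boevey--Ringel braid group action on complete exceptional sequences, which has no direct analog for sequences containing non-exceptional bricks; here we instead rely on the soft exceptional sequence framework of \cite{IgusaSen}, which was built precisely to allow entries like $F_\beta$. Once that lemma is in hand, the rest of the proof amounts to mechanical adaptation of Section~\ref{ex chain sec}, and no new representation-theoretic idea is required.
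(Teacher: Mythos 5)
Your overall architecture matches the paper's: reduce the extension property to \cite{IgusaSen}, establish tube analogs of Propositions~\ref{prop:complete_ex_maximal}, \ref{prop:wide_fg}, \ref{prop:maximal_exceptional}, and~\ref{prop:perp_prefix}, and then rerun the proof of Theorem~\ref{thm:exceptional_binary_chain}. (The paper gets Assertion~\ref{Ct bcs} and the extension lemma slightly differently, via the bijection of \cite[Lemma~3.4]{IgusaSen} between $\brick(\T)$ and the exceptional modules of a type $C_r$ tensor algebra, which lets it pull back Lemma~\ref{lem:exceptional_index_arbitrary} and Proposition~\ref{bin isom} directly; that difference is cosmetic.)

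However, there is a genuine gap in the middle of your argument: the claim that the right- and left-perpendicular categories of a $\brick(\T)$-brick sequence inside $\T$ are again tube categories of strictly smaller rank is false, and it is load-bearing for your induction establishing the analog of Proposition~\ref{prop:complete_ex_maximal} (and implicitly for the analog of Proposition~\ref{prop:maximal_exceptional}). If the sequence contains a non-exceptional brick $F_\beta$, its perpendicular category in $\T$ is representation-finite and exceptional, hence not a tube at all; and even for a single exceptional brick the perpendicular category generally decomposes as a tube \emph{plus} a nonzero exceptional summand, not as a single tube. The correct structural statement is the orthogonal decomposition $\W=\U\oplus\V$ with $\U$ a tube category or $0$ and $\V$ exceptional, which the paper imports from \cite[Proposition~9.10]{IPT}, together with the anti-automorphism $\W\mapsto\W^\perp$ of all of $\wide(\T)$ from \cite[Proposition~2.3.24]{dichev_thesis} and the bounded/unbounded dichotomy of \cite[Theorem~2.3.25]{dichev_thesis}. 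Note that in the hereditary setting the anti-automorphism is available only on $\ewide\Lambda$ (Corollary~\ref{cor:anti_isom_fg}), so having it on the full lattice $\wide(\T)$ is a genuinely new input, not something the arguments of Section~\ref{ex chain sec} supply; the same goes for the characterization of exceptional subcategories of $\T$ as perpendiculars of \emph{non}-exceptional brick sequences (the tube analog of Proposition~\ref{prop:wide_fg}), whose proof runs through these quoted results rather than through ``the same arguments as in Section~\ref{ex chain sec}.'' Your proof becomes correct once the false ``perpendicular is a smaller tube'' step is replaced by the decomposition $\U\oplus\V$ and the induction is replaced by applying the tube statement in $\U$ and Proposition~\ref{prop:complete_ex_maximal} in $\V\simeq\mods\Lambda'$.
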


The proof of Theorem~\ref{thm:tube_binary_chain} consists of gathering the analogs in $\T$ for results from Section~\ref{ex chain sec} that were used in the proof of Theorem~\ref{thm:exceptional_binary_chain}, and then giving that proof verbatim with the appropriate replacements.

We begin with the following, which follows from \cite[Lemma~3.4]{IgusaSen}. To avoid confusion with the notions inside $\T$, we use the symbols $\excep_\Gamma$ and $B_{\e_\Gamma}$ for the exceptional modules $\excep$ and the binary relation $B_\e$ for the algebra $\Gamma$ appearing in the statement.

\begin{proposition}\label{prop:IgusaSen}
    Suppose that $\T$ has rank $r$. Then there exists a tensor algebra~$\Gamma$ of type $C_r$ and a bijection $\brick(\T) \rightarrow \excep_\Gamma$ which induces a bijection $B_\T \rightarrow B_{\e_\Gamma}$. Thus, by Proposition~\ref{bin isom}.\ref{not yet}, $\C_\T$ is a binary chain system which is isomorphic to~$\C_{\e_\Gamma}$.
\end{proposition}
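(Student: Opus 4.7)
The plan is to leverage \cite[Lemma~3.4]{IgusaSen} directly, with a small amount of translation between their language and ours. First I would unwind the definitions: a ``soft exceptional sequence'' in the sense of \cite{IgusaSen} is a sequence of bricks satisfying the same $\Hom$ and $\Ext^1$ vanishing conditions in the backwards direction that we require for a $\brick(\T)$-brick sequence, so the two notions agree on the nose, and likewise the two-term versions agree with the elements of $B_\T$.

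Next, I would invoke the construction in \cite[Lemma~3.4]{IgusaSen}: starting from the cyclic ordering of the simple objects of $\T$ determined by $\tau$, one unfolds $\T$ to produce a tensor algebra $\Gamma$ of type $C_r$ and an explicit bijection $\brick(\T) \to \excep_\Gamma$ that matches the brick $R_{\beta,k}$ with a specific exceptional $\Gamma$-module. The cited lemma states that this bijection identifies the $\Hom/\Ext^1$ vanishing data on pairs; combined with Proposition~\ref{prop:hom_ext_vanish} (describing the vanishing of $\Hom$ and $\Ext^1$ for indecomposables in $\T$ combinatorially in terms of quasi-tops and quasi-lengths), this gives an induced bijection $B_\T \to B_{\e_\Gamma}$.

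Finally, since $\Gamma$ is a finite-dimensional hereditary algebra, Theorem~\ref{thm:exceptional_binary_chain}.\ref{Cx bcs} implies that $\C_{\e_\Gamma}$ is a binary chain system with binary compatibility relation $B_{\e_\Gamma}$. Applying Proposition~\ref{bin isom}.\ref{not yet} to the bijection $B_\T \to B_{\e_\Gamma}$ of the previous step then yields that $\C_\T$ is also a binary chain system and that the bijection extends to an isomorphism of chain systems $\C_\T \cong \C_{\e_\Gamma}$.

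The main obstacle, such as it is, is purely bookkeeping: confirming that the ``soft exceptional sequences'' of \cite{IgusaSen} coincide verbatim with our $\brick(\T)$-brick sequences (including the orientation convention on the vanishing conditions) and that the bijection of \cite[Lemma~3.4]{IgusaSen} preserves pairwise compatibility and not merely individual exceptionality. Once these translations are checked, the conclusion follows formally from the general chain system machinery of Section~\ref{chain sys sec}.
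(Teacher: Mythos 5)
Your proposal matches the paper's approach: the paper states this proposition as following directly from \cite[Lemma~3.4]{IgusaSen} (with the deduction via Proposition~\ref{bin isom}.\ref{not yet} and Theorem~\ref{thm:exceptional_binary_chain}.\ref{Cx bcs} built into the statement itself), and you cite the same lemma and perform the same formal steps. The extra bookkeeping you describe — checking that soft exceptional sequences coincide with $\brick(\T)$-brick sequences and that the bijection respects pairwise compatibility — is exactly the (unwritten) content the paper delegates to the citation.
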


We next quote five important results from \cite{dichev_thesis,IPT} about wide subcategories in tube categories. The results from \cite{dichev_thesis} were proved for abstract tube categories and the results of \cite{IPT} were proved in the context of tubes in tame hereditary algebras.  Both papers work over algebraically closed fields, but this assumption is not necessary for the results quoted here. 
The first quoted result is \cite[Proposition~2.3.24(ii)]{dichev_thesis}.

\begin{proposition}\label{prop:one tube_anti_isom}    
 The map $\W \mapsto \W^\perp$ is an anti-automorphism of $\wide(\T)$ with inverse $\W \mapsto {}^\perp \W$.
\end{proposition}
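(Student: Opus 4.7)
The plan is to deduce Proposition~\ref{prop:one tube_anti_isom} from its analogue for the finite-type algebra furnished by Proposition~\ref{prop:IgusaSen}. First I would dispatch the easy parts. That $\W^\perp$ and ${}^\perp\W$ are wide subcategories of $\T$, and that $(-)^\perp$ and ${}^\perp(-)$ are order-reversing, is the tube analogue of Lemma~\ref{lem:ortho_wide} and follows by the same proof. The adjunction-like containments $\W\subseteq{}^\perp(\W^\perp)$ and $\W\subseteq(\,{}^\perp\W)^\perp$ are immediate from the definitions. Consequently everything reduces to establishing the reverse containments $(\,{}^\perp\W)^\perp \subseteq \W$ and ${}^\perp(\W^\perp) \subseteq \W$.

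For the reverse containments I would transfer the question to the tensor algebra $\Gamma$ of type $C_r$ provided by Proposition~\ref{prop:IgusaSen}. Since $\Gamma$ is representation-finite, Remark~\ref{fin lat remark} gives $\wide\Gamma = \ewide\Gamma$, so Corollary~\ref{cor:anti_isom_fg} gives the analogous statement for $\Gamma$. The task is then to exhibit a poset isomorphism $\psi\colon\wide(\T)\to\wide\Gamma$ that intertwines the two perpendicular operators. Using Proposition~\ref{prop:semibrick}, both lattices are parametrized by their semibricks, and I would define $\psi$ by sending $\W\in\wide(\T)$ to $\Filt(\phi(\simp(\W)))$, where $\phi\colon\brick(\T)\to\excep_\Gamma$ is the bijection of Proposition~\ref{prop:IgusaSen}. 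Once $\psi$ is known to be a well-defined poset isomorphism, the commutation with perpendiculars reduces to the brick-level observation that, for any $\W\in\wide(\T)$ and $Y\in\brick(\T)$, the condition $Y\in\W^\perp$ is equivalent to $(Y,X)\in B_\T$ for every $X\in\simp(\W)$; this translates across $\phi$ because $\phi$ carries $B_\T$ to $B_{\e_\Gamma}$.

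The main obstacle is verifying that $\psi$ is well-defined, i.e.\ that $\phi$ sends semibricks to semibricks. The bijection in Proposition~\ref{prop:IgusaSen} only preserves $B_\T$, and $B_\T$ demands simultaneous Hom- and Ext-orthogonality, whereas a semibrick requires only Hom-orthogonality. Thus a short preliminary lemma is needed: \emph{in a tube category, any semibrick is automatically Ext-orthogonal, so that the underlying set of a semibrick in $\T$ is exactly the underlying set of a two-term $\brick(\T)$-brick sequence in either order.} I would prove this by inspecting the indecomposable classification $\{R_{\gamma,k}\}$ and applying Proposition~\ref{prop:hom_ext_vanish}, separated into the case where the semibrick contains one of the non-exceptional bricks $F_\gamma$ (where Proposition~\ref{prop:hom_ext_vanish} immediately gives simultaneous vanishing, since the hypotheses for \ref{rgam before} and \ref{rgam after} coincide) and the case of two distinct exceptional bricks in the tube (where a direct calculation via the Auslander--Reiten translate handles the claim). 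Once this lemma is in place, the transfer through $\phi$ is clean and Corollary~\ref{cor:anti_isom_fg} closes the argument.
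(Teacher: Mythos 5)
The paper does not actually prove this proposition: it is quoted verbatim from \cite[Proposition~2.3.24(ii)]{dichev_thesis}, so your argument is necessarily a new one, and unfortunately it has a fatal gap. The preliminary lemma you rely on --- that every semibrick in a tube category is automatically Ext-orthogonal, hence is the underlying set of a $\brick(\T)$-brick sequence --- is false. Take $\T$ non-homogeneous of rank $r\ge 2$ and let $\X=\simp(\T)$ be the set of quasi-simples. This is a semibrick (distinct simple objects of the abelian category $\T$ admit no nonzero morphisms between them), yet the almost split sequence $0\to\tau X\to E\to X\to 0$ is non-split, so $\Ext^1(X,\tau X)\neq 0$ for every quasi-simple $X$. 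Since $\tau$ permutes the quasi-simples in a single $r$-cycle, these nonvanishing Ext groups form a cycle and no linear ordering of $\X$ is a $\brick(\T)$-brick sequence (already for $r=2$, neither ordering of $\{X,\tau X\}$ works). Your parenthetical justification also misreads Proposition~\ref{prop:hom_ext_vanish}: the index sets $\set{c^i\beta:0\le i<k}$ and $\set{c^i\beta:0<i\le k}$ in its two parts do not coincide for $k<r_\beta$, and in any case that proposition only characterizes \emph{simultaneous} vanishing of Hom and Ext, so it cannot be used to deduce Ext-vanishing from Hom-vanishing.

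Without that lemma the well-definedness of $\psi$ collapses, and it cannot be rescued from the quoted ingredients: Proposition~\ref{prop:IgusaSen} only transports the \emph{directed} relation $\Bt$, and whether an unordered pair of bricks is a semibrick is not determined by which of its two orderings lie in $\Bt$ (the pair $\{X,\tau X\}$ above is a semibrick with exactly one ordering in $\Bt$, while a quasi-simple together with an indecomposable brick having it as quasi-top can likewise have exactly one ordering in $\Bt$ without being Hom-orthogonal). So you cannot conclude that $\phi(\simp\W)$ is a semibrick over $\Gamma$, nor that $\Filt(\phi(\simp\W))$ behaves functorially in $\W$. There is also a circularity to be wary of: the natural way to transport the lattice structure across the Igusa--Sen bijection is the identification $\wide(\T)\cong\Preeq(\Ct)$ of Theorem~\ref{thm:tube_binary_chain}, but in the paper that identification is proved \emph{using} Proposition~\ref{prop:one tube_anti_isom} (via Propositions~\ref{prop:tube_generation_1}, \ref{prop:one complete_generates_tube}, and~\ref{prop:perp_prefix_soft}). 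If you want a self-contained proof rather than the citation to Dichev, you would need either a direct computation of perpendicular categories of bricks in $\T$ from Proposition~\ref{prop:hom_ext_vanish}, or a transfer mechanism that genuinely matches wide subcategories, not just the compatibility relation on bricks.
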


The second result is a weak form of \cite[Theorem~2.2.10]{dichev_thesis}. 

\begin{theorem}\label{dichev 2.2.10}
Let $\W \subseteq \T$ be a wide subcategory which is  representation-infinite and connected. Then $\W$ is a tube category.
\end{theorem}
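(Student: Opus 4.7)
The plan is to leverage the explicit description of bricks in $\T$ to classify connected, representation-infinite wide subcategories directly. By Proposition~\ref{prop:semibrick} applied within $\T$, the subcategory $\W$ is determined by its semibrick of simples, $\W = \Filt(\simp(\W))$. By Proposition~\ref{prop:bricks_in_tube}, every brick of $\T$ is some $R_{\beta,k}$ with $\beta \in \Xi^c$ and $1 \le k \le r$; in particular $\brick(\T)$ is finite, so $\simp(\W)$ is a finite semibrick.

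First I would show that $\W$ must contain a non-exceptional brick $F_\gamma = R_{\gamma, r}$ for some $\gamma \in \Xi^c$. If instead every element of $\simp(\W)$ were exceptional, then $\simp(\W)$ could be ordered into an exceptional sequence in $\T$, making $\W$ an exceptional subcategory of $\T$. Transporting along the bijection of Proposition~\ref{prop:IgusaSen} and applying Proposition~\ref{prop:wide_fg} inside the type-$C_r$ tensor algebra $\Gamma$, such a $\W$ would be equivalent to $\mods \Lambda'$ for a hereditary algebra $\Lambda'$ whose Cartan data is a sub-diagram of the Dynkin diagram $C_r$, hence representation-finite. This contradicts the hypothesis.

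Next, given $F_\gamma \in \simp(\W)$, I would use the precise vanishing formulas of Proposition~\ref{prop:hom_ext_vanish} inside $\T$ to constrain the other simples. Any $Y = R_{\beta,k} \in \simp(\W) \setminus \set{F_\gamma}$ must satisfy $\Hom = \Ext^1 = 0$ with $F_\gamma$ in both directions, which forces $\beta$ to lie outside specific $c$-cyclic intervals around $\gamma$. Combining this with the pairwise semibrick condition on $\simp(\W)$, I would argue that the simples of $\W$ can be arranged in an order $Y_1, \ldots, Y_s$, cyclically compatible with the $c$-cycle on $\Xi^c$, such that $\Ext^1(Y_i, Y_j) \neq 0$ only for $j \equiv i+1 \pmod s$. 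Connectedness prevents this cyclic configuration from splitting into smaller orthogonal pieces, so all simples assemble into a single oriented $s$-cycle, and the inherited abelian, hereditary structure identifies $\W$ with the category of nilpotent representations of that $s$-cycle, namely a tube category of rank $s$.

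The main obstacle will be the combinatorial step of showing that a semibrick containing a non-exceptional brick must organize itself into a cyclic configuration rather than, say, a linear one. This requires a careful analysis of how the bricks $R_{\beta,k}$ interact under $\Hom$ and $\Ext^1$ when their indices $\beta$ are distributed around the cyclic set $\Xi^c$, using Proposition~\ref{prop:hom_ext_vanish} in tandem with the observation that the non-exceptional brick $F_\gamma$ serves as a ``wrap-around'' constraint that converts an otherwise linear Ext-quiver into a cyclic one. Connectedness is essential to preclude the degenerate possibility that $\W$ decomposes orthogonally into several smaller pieces, some of which could be representation-finite in isolation.
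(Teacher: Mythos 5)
First, a point of reference: the paper does not prove this statement at all — it is quoted as a weak form of \cite[Theorem~2.2.10]{dichev_thesis} — so you are attempting to supply a proof where the paper supplies only a citation. Unfortunately, your argument breaks at its first and most important step. You claim that if every element of $\simp(\W)$ were exceptional, then $\simp(\W)$ could be ordered into an exceptional sequence, making $\W$ exceptional and hence representation-finite. This is false: take $\W = \T$ itself with $r \geq 2$. Its simples are the $r$ quasi-simples $R_{\beta,1}$, each exceptional by Proposition~\ref{prop:bricks_in_tube} since $1 < r$, and they form a semibrick; yet $\T$ is representation-infinite and connected. A semibrick is only \emph{Hom}-orthogonal; to order it into an exceptional sequence you also need its $\Ext^1$-quiver to be acyclic, and for the quasi-simples of a tube that quiver is an oriented $r$-cycle ($\Ext^1(X,\tau X) \neq 0$ by the Auslander--Reiten formulas). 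Consequently your intended reduction --- that a representation-infinite $\W$ must have some $F_\gamma \in \simp(\W)$ --- fails on the most basic example of the theorem's conclusion, namely $\W = \T$, whose simples are all exceptional.

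The second paragraph of your plan compounds this. You assert that every $Y \in \simp(\W)\setminus\{F_\gamma\}$ must satisfy $\Hom = \Ext^1 = 0$ against $F_\gamma$ in both directions, citing the semibrick condition; but Proposition~\ref{prop:semibrick} gives only Hom-orthogonality, and the $\Ext^1$ groups between distinct simples of $\W$ are precisely what make $\W$ connected. If $F_\gamma$ really were Ext-orthogonal to all other simples, then $\Filt(F_\gamma)$ would split off as an orthogonal summand, contradicting connectedness whenever $|\simp(\W)| \geq 2$. The viable version of your strategy is to run the dichotomy on the shape of the $\Ext^1$-quiver of the semibrick $\simp(\W)$ (acyclic, hence orderable into an exceptional sequence and representation-finite, versus containing an oriented cycle, which together with connectedness forces a single cycle whose $\Filt$-closure is a tube); your third paragraph gestures at this, but it cannot be anchored on a non-exceptional simple. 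A further, smaller issue: Proposition~\ref{prop:IgusaSen} is a bijection of bricks and binary compatibility relations, not an exact equivalence of categories, so transporting $\W$ to $\mods\Gamma$ and invoking Proposition~\ref{prop:wide_fg} there does not by itself yield representation-finiteness of $\W \subseteq \T$; that transfer would need separate justification.
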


The third result is a restatement of \cite[Lemma~8.1]{IPT}. 

\begin{lemma}\label{IPT 8.1}
Let $\W \subseteq \T$ be a wide subcategory.
Then~$\W$ is representation-finite if and only if there is a simple module of $\T$ such that $\W$ is contained in the wide subcategory generated by the other simples. 
\end{lemma}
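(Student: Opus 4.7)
The plan is to prove both implications separately; the backward direction follows from the structure of ``removing a vertex from a cycle'', and the forward direction combines representation-finite restrictions with a cyclic arc-closing argument.

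For the $(\Leftarrow)$ direction, suppose $\W \subseteq \sW(\{X_\gamma : \gamma \in \Xi^c, \gamma \neq \beta_0\})$ for some $\beta_0 \in \Xi^c$. Removing the vertex $\beta_0$ from the $r$-cycle underlying $\T$ yields a linear $A_{r-1}$ quiver, and inheriting the Hom/Ext-structure of the simples from Proposition~\ref{prop:hom_ext_vanish}, the wide subcategory $\sW(\{X_\gamma : \gamma \neq \beta_0\}) = \Filt(\{X_\gamma : \gamma \neq \beta_0\})$ is exact-equivalent to the module category of a hereditary algebra of type $A_{r-1}$, which is representation-finite. Hence $\W$ is representation-finite.

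For the $(\Rightarrow)$ direction, assume $\W$ is representation-finite. First I would verify that no simple of $\W$ is non-exceptional. The non-exceptional bricks of $\T$ are exactly the modules $F_\beta = R_{\beta,r}$: by Lemma~\ref{lem:homogeneous} when $r = 1$, and by Proposition~\ref{prop:bricks_in_tube} when $r > 1$. Each $F_\beta$ has nontrivial self-extensions (an $\Ext^1$ computation using the Auslander-Reiten formula and Proposition~\ref{prop:hom_ext_vanish}), so $\Filt(F_\beta)$ is a representation-infinite subtube of $\T$. Thus no $F_\beta$ lies in $\W$, and every simple of $\W$ has the form $R_{\gamma_i,k_i}$ with $k_i < r$, say for $i = 1,\ldots,\ell$.

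Next I would find $\beta_0 \in \Xi^c$ not belonging to the union $U = \bigcup_{i=1}^\ell \{c^j \gamma_i : 0 \leq j < k_i\}$ of the arcs of the simples of $\W$, by contradiction. If $U = \Xi^c$, so the arcs cover the cycle, then the $\Ext^1$ non-vanishing encoded by Proposition~\ref{prop:hom_ext_vanish} allows the $R_{\gamma_i,k_i}$ to be spliced together via iterated nontrivial extensions inside $\Filt(\simp(\W)) = \W$ to produce a module of dimension vector $\delta$. Such an indecomposable module must be some $F_\gamma$, contradicting the previous paragraph. Any $\beta_0 \in \Xi^c \setminus U$ then gives $\simp(\W) \subseteq \sW(\{X_\gamma : \gamma \neq \beta_0\})$, from which $\W = \Filt(\simp(\W)) \subseteq \sW(\{X_\gamma : \gamma \neq \beta_0\})$ follows.

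The main obstacle will be the cycle-closing extension argument in the last step: making precise how a semibrick of exceptional bricks whose arcs cover the $r$-cycle can be assembled via iterated extensions into an $F_\gamma$. The key combinatorial input is that if the arcs cover the cycle, one can find a pair of arcs meeting end-to-end in the sense required by Proposition~\ref{prop:hom_ext_vanish} to support a nontrivial extension, and then iterate. Alternatively, one could invoke Proposition~\ref{prop:IgusaSen} to translate the question into a statement about exceptional semibricks in $\mods\Gamma$ of type $C_r$, where the claim reduces to the standard noncrossing-partition combinatorics of type $C$.
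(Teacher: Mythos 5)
The paper does not actually prove this lemma: it is quoted verbatim as \cite[Lemma~8.1]{IPT} in the sentence introducing Lemma~\ref{IPT 8.1}, so there is no in-paper argument to compare against, and any self-contained proof is by definition a different route. Judged on its own terms, your strategy is sound. The backward direction is correct: the modules filtered by the simples other than $X_{\beta_0}$ are exactly the nilpotent representations vanishing at the vertex $\beta_0$, i.e.\ representations of a linear $A_{r-1}$ quiver, which form a representation-finite category. Your step 1 in the forward direction is also correct, though it is most cleanly justified by Proposition~\ref{prop:bricks_in_tube} alone: $F_\beta$ is a brick that is not exceptional, hence not rigid, and since $R_{\beta,jr}\in\Filt(F_\beta)$ for every $j$, the wide subcategory $\Filt(F_\beta)$ is representation-infinite, so $F_\beta\notin\W$. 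Two cautions: Proposition~\ref{prop:hom_ext_vanish}, which you cite twice, only governs $\Hom$ and $\Ext^1$ against the $F_\gamma$; the computations you need between exceptional bricks $R_{\gamma,k}$ come instead from \cite[Corollary~X.2.7a]{Elements2} or from the uniserial structure of the tube. And you are right (whether or not intentionally) to avoid Proposition~\ref{prop:wide_tube} and Proposition~\ref{prop:tube_generation_1}, whose proofs in this paper depend on the very lemma you are proving.

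The gap you flag in the covering case is the real content, and it is fillable. Among the arcs of $\simp(\W)$ take those maximal under containment. Two distinct maximal arcs cannot cross: if the arc of $R_{\gamma',k'}$ begins at a point $c^j\gamma$ of the arc of $R_{\gamma,k}$ and extends past its end, then the length-$(k-j)$ prefix of the former equals the length-$(k-j)$ suffix of the latter, giving a nonzero composite $R_{\gamma',k'}\twoheadrightarrow R_{c^j\gamma,k-j}\hookrightarrow R_{\gamma,k}$ and contradicting the semibrick property (your rank-$3$ instinct here generalizes). Hence the maximal arcs are pairwise disjoint; if they cover the cycle they tile it, so some maximal arc is immediately followed by the start of another, say $\gamma'=c^k\gamma$. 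The canonical short exact sequence $0\to R_{c^k\gamma,k'}\to R_{\gamma,k+k'}\to R_{\gamma,k}\to 0$ then exhibits the uniserial module $R_{\gamma,k+k'}$ as an object of $\Filt(\simp\W)=\W$, and iterating around the cycle produces $R_{\gamma,r}=F_\gamma\in\W$, contradicting step 1; this is cleaner than arguing via dimension vector $\delta$. I would not lean on the proposed alternative through Proposition~\ref{prop:IgusaSen}: that bijection is defined only on bricks and two-term sequences, so it does not by itself transport the property of being representation-finite.
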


The fourth result is part of \cite[Proposition~9.10]{IPT} together with a fact that was established in the proof of \cite[Proposition~9.10]{IPT}.

\begin{proposition}\label{IPT 9.10} 
Let $\W \subseteq \T$ be a wide subcategory.
Then there exists an orthogonal decomposition $\W=\U\oplus\V$ such that every indecomposable module in $\V$ is exceptional and $\U$ is either 0 or a tube category.
\end{proposition}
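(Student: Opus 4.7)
My plan is to split on whether $\W$ is representation-finite. In the representation-finite case, Lemma~\ref{IPT 8.1} gives a simple object $S$ of $\T$ such that $\W\subseteq\sW(\simp(\T)\setminus\{S\})$. Removing one vertex from the oriented $r$-cycle of $\Ext^1$-arrows among the simples of $\T$ yields an $A_{r-1}$-type linear Ext-quiver on the remaining simples, so this containing wide subcategory is exact equivalent to $\mods$ of a representation-finite hereditary algebra of type $A_{r-1}$; every indecomposable of such an algebra is exceptional, so every indecomposable of $\W$ is exceptional. I take $\U=0$ and $\V=\W$.

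In the representation-infinite case, my plan is first to decompose $\W$ into its orthogonal connected components $\W=\W_1\oplus\cdots\oplus\W_p$, obtained by partitioning $\simp(\W)$ into Ext-connected classes $\X_1,\ldots,\X_p$ and setting $\W_i:=\Filt(\X_i)$. Each $\W_i$ is then either representation-finite (so all its indecomposables are exceptional by the first case applied to $\W_i$) or connected and representation-infinite; in the latter case Theorem~\ref{dichev 2.2.10} forces $\W_i$ to be a tube category. Bundling the tube components into $\U$ and the representation-finite components into $\V$ then yields the required decomposition, provided at most one $\W_i$ is a tube.

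The main obstacle I anticipate is precisely this uniqueness of the tube summand: ruling out the possibility that two distinct $\W_i$ are both tubes. My plan is to use Proposition~\ref{prop:hom_ext_vanish} together with the cyclic Auslander-Reiten structure of $\T$ to show that no two disjoint tube subcategories of $\T$ can be mutually orthogonal. Each tube subcategory $\W_i$ of rank $s_i$ contains a quasi-length-$s_i$ non-exceptional brick of $\W_i$ itself, and the vanishing patterns of $\Hom$ and $\Ext^1$ among such bricks inside the ambient $\T$ (governed by the $\tau$-orbit structure) are incompatible with the full mutual orthogonality required by an orthogonal decomposition. A secondary point to check is that the decomposition into connected components itself satisfies the two-sided perpendicular condition $\V\subseteq\U^\perp\cap{}^\perp\U$ of an orthogonal decomposition, which I plan to handle using Proposition~\ref{prop:one tube_anti_isom} to convert right-perpendicularity into left-perpendicularity.
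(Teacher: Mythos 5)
Your argument is correct, but it is worth noting that the paper does not prove this statement at all: it is quoted verbatim from \cite[Proposition~9.10]{IPT} (together with a fact from that proposition's proof), so any comparison is really between your proof and a black-box citation. Your assembly of a proof from the other quoted ingredients is sound. The representation-finite case via Lemma~\ref{IPT 8.1} and the wing of type $A_{r-1}$ works (every indecomposable of the wing is some $R_{\beta,k}$ with $k<r$, hence exceptional by Proposition~\ref{prop:bricks_in_tube}); the decomposition of $\simp(\W)$ into Ext-connected classes does yield an orthogonal decomposition, and your worry about the two-sided condition $\V\subseteq\U^\perp\cap{}^\perp\U$ is a non-issue, since the Ext-connectivity relation is already symmetric (both $\Ext^1(X,Y)$ and $\Ext^1(Y,X)$ vanish across classes by construction, and $\Hom$ vanishes both ways because $\simp(\W)$ is a semibrick), so no appeal to Proposition~\ref{prop:one tube_anti_isom} is needed. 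The ``main obstacle'' you flag is real but resolves exactly as you propose: each tube component $\W_i$ contains a brick with self-extensions, which by Remark~\ref{rem:exact} and Proposition~\ref{prop:bricks_in_tube} must be one of the non-exceptional bricks $F_{\gamma_i}=R_{\gamma_i,r}$ of the ambient~$\T$; taking $k=r$ in Proposition~\ref{prop:hom_ext_vanish}.\ref{rgam before}, the set $\{c^i\gamma_2: 0\le i<r\}$ is the full set of quasi-simple indices of $\T$, so $\Hom(F_{\gamma_1},F_{\gamma_2})$ and $\Ext^1(F_{\gamma_1},F_{\gamma_2})$ cannot both vanish, ruling out two mutually orthogonal tube components. (A slightly faster route to the same conclusion: a tube component is unbounded, so by Theorem~\ref{dichev 2.3.25} its right perpendicular category in $\T$ is bounded, hence representation-finite, and cannot contain a second tube.) Either way, your plan closes the gap and yields a self-contained proof of the quoted result from the paper's other cited inputs.
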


For the fifth result, say a subcategory $\W$ is \newword{bounded} if there exists an integer~$k$ such that every indecomposable $X\in\W$ has length at most $k$.  
The following is a weak form of \cite[Theorem~2.3.25]{dichev_thesis}.
\begin{theorem}\label{dichev 2.3.25}
Let $\W \subseteq \T$ be a wide subcategory.
Then~$\W$ is bounded if and only if $\W^\perp$ is unbounded if and only if ${}^\perp \W$ is unbounded.
\end{theorem}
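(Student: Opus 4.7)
The plan is to prove the two stated equivalences in sequence. First, I observe that, by the anti-automorphism of $\wide(\T)$ from Proposition~\ref{prop:one tube_anti_isom} (which has $(-)^\perp$ and ${}^\perp(-)$ as mutual inverses), it suffices to prove the single equivalence ``$\W$ is bounded if and only if $\W^\perp$ is unbounded''; applying this equivalence to ${}^\perp\W$ and using $({}^\perp\W)^\perp = \W$ yields ``${}^\perp\W$ is unbounded if and only if $\W$ is bounded'', which is the remaining equivalence.

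To prove the single equivalence, I would apply Proposition~\ref{IPT 9.10} to decompose $\W = \U \oplus \V$, with $\V$ additively generated by exceptional modules (hence representation-finite, hence bounded) and $\U$ either zero or a tube category. Since every tube category contains indecomposables of arbitrary quasi-length, $\W$ is bounded if and only if $\U = 0$, if and only if $\W$ is representation-finite. By orthogonality, $\W^\perp = \U^\perp \cap \V^\perp$.

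For the forward direction, assume $\W$ is bounded, so $\W = \V$ is representation-finite. By Lemma~\ref{IPT 8.1}, $\W$ is contained in the wide subcategory $\W_S$ generated by the simples of $\T$ other than some simple $S$. Then $\W^\perp \supseteq \W_S^\perp$, so it suffices to show $\W_S^\perp$ is unbounded. I would compute $\W_S^\perp$ directly: using the uniserial structure of indecomposables in $\T$ for the $\Hom$ condition and Auslander-Reiten duality for the $\Ext^1$ condition, the requirements $\Hom(S', X) = 0 = \Ext^1(S', X)$ for all simples $S' \neq S$ force the quasi-socle of an indecomposable $X = R_{\alpha,k}$ to be $S$ itself and its quasi-top to be $X_{c(S)}$; translating, this gives $\alpha = c(S)$ and $r \mid k$, so that $\W_S^\perp$ contains the infinite family $\{R_{c(S), mr} : m \geq 1\}$ of arbitrary quasi-length.

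For the reverse direction, assume $\W$ is unbounded, so $\U \neq 0$ is a sub-tube, and $\W^\perp \subseteq \U^\perp$. It therefore suffices to show $\U^\perp$ is bounded for every non-zero sub-tube $\U \subseteq \T$. This is the main obstacle: letting $Y_1, \ldots, Y_{r'}$ denote the quasi-simples of $\U$, which form a semibrick of bricks in $\T$ distributed around the $\tau$-cycle of $\T$, one must show that the simultaneous vanishings $\Hom(Y_i, X) = 0 = \Ext^1(Y_i, X)$ for all $i$ force a uniform upper bound on the quasi-length of $X$. A natural approach is to decompose $\U^\perp = \U_1 \oplus \V_1$ via Proposition~\ref{IPT 9.10} and rule out $\U_1 \neq 0$, using the fact that two orthogonal sub-tubes cannot coexist inside a single tube of finite rank, as reflected by Grothendieck-group rank bookkeeping in $K_0(\T)$.
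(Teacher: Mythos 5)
The paper does not actually prove this statement: it is quoted as a weak form of \cite[Theorem~2.3.25]{dichev_thesis}, so there is no internal proof to compare against, and a self-contained argument like yours would be genuinely new content. Your reduction to the single equivalence via Proposition~\ref{prop:one tube_anti_isom} is valid, and your forward direction is essentially complete and correct: Lemma~\ref{IPT 8.1} places a bounded (hence representation-finite) $\W$ inside $\W_S$, and your computation that $\W_S^\perp$ consists exactly of the modules with quasi-top $X_{c(\sigma)}$ and quasi-socle $X_\sigma$ (where $S=X_\sigma$), i.e.\ the unbounded family $R_{c(\sigma),mr}$, $m\ge 1$, checks out against the uniserial structure and the Auslander--Reiten formula.

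The gap is in the reverse direction, precisely where you flag ``the main obstacle.'' The claim that two nonzero sub-tubes of $\T$ cannot be mutually $\Hom$-$\Ext^1$-orthogonal is true, but Grothendieck-group bookkeeping cannot prove it: for every nonzero sub-tube the classes of its simples sum to the class $\sum_i[X_i]$ of a full-turn module, which spans the radical of the Euler form on $K_0(\T)$. In the critical case of two homogeneous sub-tubes $\Filt(F_\beta)$ and $\Filt(F_\gamma)$, every class involved is a multiple of this radical vector, so the Euler form between the two sub-tubes vanishes identically even though $\Hom(F_\beta,F_\gamma)$ and $\Ext^1(F_\beta,F_\gamma)$ are both nonzero (their dimensions agree, which is all $K_0$ sees). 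The obstruction is therefore invisible to $K_0$ and must be detected at the level of $\Hom$ and $\Ext^1$ themselves. The step can be repaired in a way that also makes Proposition~\ref{IPT 9.10} unnecessary in this direction: if $\W$ is unbounded, its sub-tube summand $\U$ contains a brick of full quasi-length in $\U$, which is non-rigid in $\U$, hence (since $\U\subseteq\T$ is full and exactly embedded) a non-rigid brick of $\T$, hence equal to $F_\beta$ for some $\beta$ by Proposition~\ref{prop:bricks_in_tube}. Proposition~\ref{prop:hom_ext_vanish} then says an indecomposable $R_{\alpha,k}$ lies in $F_\beta^\perp$ only if $\beta\notin\set{c^i\alpha : 0\le i<k}$, which forces $k<r$; hence $\W^\perp\subseteq F_\beta^\perp$ is bounded. (Note that the tempting shortcut through Proposition~\ref{prop:wide_tube} is unavailable here, since the paper's proof of that proposition invokes the present theorem.)
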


Some of the crucial results from Section~\ref{ex chain sec} that were stated in the context of $\mods\Lambda$ also hold in $\T$ because we can identify $\T$ with a tube in $\mods\Lambda$ for some connected tame hereditary algebra $\Lambda$.
For example, Lemma~\ref{lem:rep_finite_exceptional} holds for wide subcategories in $\T$ and we will use this in that context below.
(Since $\T \subseteq \mods\Lambda$ is a wide subcategory, any wide subcategory of $\T$ is also a wide subcategory of $\mods\Lambda$. Lemma~\ref{lem:rep_finite_exceptional} then holds for wide subcategories of $\T$ because the property of being exceptional is an intrinsic property of a wide subcategory.) 

We will also use the following characterization of exceptional subcategories of~$\T$, whose proof draws heavily on the results of \cite{dichev_thesis,IPT} quoted above.

\begin{proposition}\label{prop:wide_tube}  
Let $\W \subseteq \T$ be a wide subcategory.
Then the following are equivalent.
\begin{enumerate}[label=\rm(\roman*), ref=(\roman*)]
\item \label{tube exc}
$\W$ is exceptional.
\item \label{tube rep-fin}
$\W$ is representation-finite.
\item \label{tube exists right}
There exists a non-exceptional brick $X \in \T$ such that $\W \subseteq X^\perp$.
\item \label{tube exists left}
There exists a non-exceptional brick $X \in \T$ such that $\W \subseteq {}^\perp X$.
\item \label{tube all exc}
Every indecomposable module in $\W$ is exceptional.
\item \label{tube perp nonexc}
Both $\W^\perp$ and ${}^\perp \W$ are non-exceptional.
\item \label{tube perp nonexc alt}
At least one of $\W^\perp$ and ${}^\perp \W$ is non-exceptional.
\end{enumerate}
If these equivalent conditions fail, then there is a unique orthogonal decomposition $\W = \U \oplus \V$ with $\V$ exceptional and $\U$ a tube category.  
\end{proposition}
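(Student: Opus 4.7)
The plan is to establish the equivalence of (i)--(v) first, then bring (vi) and (vii) into the chain, and finally address the structural claim when all conditions fail. The homogeneous ($r=1$) case dispatches immediately: by Lemma~\ref{lem:homogeneous}, the only wide subcategories of $\T$ are $0$ and $\T$, and all seven conditions hold for $0$ and fail for $\T$. So assume $r > 1$ throughout. The central computational input is that Proposition~\ref{prop:hom_ext_vanish} pins down precisely which indecomposables $R_{\beta,k}$ lie in $F_\gamma^\perp$ (or ${}^\perp F_\gamma$), while Proposition~\ref{prop:bricks_in_tube} says the exceptional indecomposables are exactly the $R_{\beta,k}$ with $k<r$ --- a finite set of size $r(r-1)$.

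For the main cycle, (v) $\Rightarrow$ (ii) is immediate from this finite count, and (ii) $\Rightarrow$ (i) is Lemma~\ref{lem:rep_finite_exceptional} applied inside an ambient tame hereditary algebra containing $\T$. For (i) $\Rightarrow$ (ii), I would decompose $\W = \W_1 \oplus \cdots \oplus \W_\ell$ into orthogonal connected components; each $\W_i$ inherits exceptionality because the simples of an exceptional subcategory can be ordered to form an exceptional sequence whose restriction to each connected component is again an exceptional sequence. If any $\W_i$ were representation-infinite, Theorem~\ref{dichev 2.2.10} would force it to be a tube category, yet no tube category is of the form $\mods\Lambda'$ for a finite-dimensional hereditary $\Lambda'$ (one checks that the simples of a tube cannot be ordered into an exceptional sequence), contradicting Proposition~\ref{prop:wide_fg}. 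For (ii) $\Rightarrow$ (iii) and (ii) $\Rightarrow$ (iv), Lemma~\ref{IPT 8.1} places $\W$ inside $\sW(\{X_\beta : \beta \neq \gamma\})$ for some $\gamma \in \Xi^c$, and a direct composition-factor calculation via Proposition~\ref{prop:hom_ext_vanish} identifies this subcategory simultaneously as $F_\gamma^\perp$ and as ${}^\perp F_{c\gamma}$, yielding (iii) with $X = F_\gamma$ and (iv) with $X = F_{c\gamma}$. Finally, (iii) and (iv) each imply (v) because Proposition~\ref{prop:hom_ext_vanish} forces any $R_{\beta,k}$ in the relevant perpendicular category to satisfy $k < r$, hence to be exceptional.

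To incorporate (vi) and (vii): (vi) $\Rightarrow$ (vii) is trivial, and (ii) $\Rightarrow$ (vi) uses Theorem~\ref{dichev 2.3.25} --- a representation-finite $\W$ is bounded, so $\W^\perp$ and ${}^\perp\W$ are both unbounded, hence representation-infinite, hence non-exceptional by the established (i) $\Leftrightarrow$ (ii). For (vii) $\Rightarrow$ (iv), assume without loss of generality that $\W^\perp$ is non-exceptional. Then by the already-established (i) $\Leftrightarrow$ (v) applied to the wide subcategory $\W^\perp$, it contains a non-exceptional indecomposable. Apply Proposition~\ref{IPT 9.10} to $\W^\perp$: its tube summand $\U'$ must be nonzero (else every indecomposable of $\W^\perp$ would be exceptional), and inside $\U'$ one finds a non-exceptional brick of $\T$ (the unique brick when $\U'$ has rank 1, or any of its $F$-type bricks $R^{\U'}_{\beta,r'}$ when $\U'$ has higher rank; these fail rigidity in $\U'$ and hence in $\T$ by exact embedding). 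Any such brick $X$ lies in $\W^\perp$, giving $\W \subseteq {}^\perp X$, which is (iv).

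For the structural claim, suppose (i)--(vii) all fail and apply Proposition~\ref{IPT 9.10} to $\W$. The summand $\U$ cannot be zero (else $\W = \V$ would have all exceptional indecomposables, satisfying (v)), so $\U$ is a tube and $\V$ is exceptional by (v) $\Rightarrow$ (i). Uniqueness requires an intrinsic characterization of $\U$: any such decomposition must place every non-exceptional indecomposable of $\W$ into the tube summand (since $\V$'s indecomposables are exceptional), and conversely $\U$ is the wide subcategory of $\W$ generated by its non-exceptional bricks together with the exceptional objects forced by the tube structure; then $\V$ is determined as $\U^\perp \cap {}^\perp\U \cap \W$. The main obstacle I anticipate is rigorously pinning down this intrinsic description of the tube summand, since Proposition~\ref{IPT 9.10} as quoted only asserts existence of the decomposition, so uniqueness needs a separate argument tracing back through the classification of wide subcategories of tubes.
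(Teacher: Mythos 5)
Your argument is correct, and its skeleton coincides with the paper's: settle the homogeneous case via Lemma~\ref{lem:homogeneous}, then cycle through the implications using Theorem~\ref{dichev 2.2.10}, Lemma~\ref{IPT 8.1}, Theorem~\ref{dichev 2.3.25}, Proposition~\ref{IPT 9.10}, and the Hom/Ext computations of Propositions~\ref{prop:bricks_in_tube} and~\ref{prop:hom_ext_vanish}. Two of your steps take genuinely different local routes. For \ref{tube exc}$\implies$\ref{tube rep-fin}, the paper argues the contrapositive: a representation-infinite $\W$ has (since $\wide(\T)$ is finite) a connected representation-infinite orthogonal summand, which is a tube by Theorem~\ref{dichev 2.2.10} and hence not exceptional; your version, ordering the simples of each connected component into an exceptional sequence, carries the same content, and both versions silently rely on the standard fact that a tube category is not exceptional. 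For \ref{tube perp nonexc alt}, the paper closes the loop by proving \ref{tube perp nonexc alt}$\implies$\ref{tube rep-fin} via boundedness (Theorem~\ref{dichev 2.3.25}: a non-exceptional, hence representation-infinite, hence unbounded perpendicular category forces $\W$ to be bounded), whereas you prove \ref{tube perp nonexc alt}$\implies$\ref{tube exists left} by applying Proposition~\ref{IPT 9.10} to $\W^\perp$ and extracting a non-rigid brick from its tube summand; both are valid, the paper's being slightly shorter. Your simultaneous identification of $\sW(\set{X_\beta:\beta\neq\gamma})$ with $F_\gamma^\perp$ and ${}^\perp F_{c\gamma}$ checks out against the convention $\undim R_{\beta,k}=\beta+c(\beta)+\cdots+c^{k-1}(\beta)$, and it buys you \ref{tube exists right} and \ref{tube exists left} in one stroke where the paper derives their equivalence separately.

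On the final structural claim: your worry about uniqueness is legitimate, but you should know that the paper's own proof also establishes only existence, so you are not missing a trick the paper uses. The gap closes quickly without any intrinsic description of $\U$: in any orthogonal decomposition $\W=\U\oplus\V$ with $\U$ a tube category and $\V$ exceptional, each connected component of $\W$ lies entirely in $\U$ or entirely in $\V$ (otherwise that component would decompose orthogonally), so the connected $\U$ is a single component of $\W$; it is the unique representation-infinite component, since any other would lie in $\V$, whose indecomposables are all exceptional by \ref{tube exc}$\implies$\ref{tube all exc}. This pins down $\U$, and $\V$ is then the sum of the remaining components.
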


\begin{proof}
If $\T$ is homogeneous, then Lemma~\ref{lem:homogeneous} says there is a unique brick $X_\T$ in $\T$ and $X_\T$ is non-exceptional.
By Proposition~\ref{prop:semibrick}, 
the wide subcategories of $\T$ are $0$ and $\T = \Filt(X_\T)$. 
Now 0 is exceptional and $\T$ is not, and it is straightforward to verify the proposition in this case.
Suppose for the remainder of the proof that $\T$ is non-homogeneous.

Suppose $\W$ is representation-infinite.
Since $|\brick(\T)| < \infty$ by Proposition~\ref{prop:bricks_in_tube}, also $|\wide(\T)| < \infty$ by Proposition~\ref{prop:semibrick}. 
Thus there is an orthogonal decomposition $\W = \U \oplus \V$ such that $\U$ is a connected, representation-infinite wide subcategory. 
Theorem~\ref{dichev 2.2.10} says that $\U$ is a tube category. 
It follows that $\U$, and thus also $\W$, is not exceptional.  
We have proved that \ref{tube exc}$\implies$\ref{tube rep-fin}.  
The fact that \ref{tube rep-fin}$\implies$\ref{tube exc} is Lemma~\ref{lem:rep_finite_exceptional}.

Suppose $\W$ is representation-finite. 
Lemma~\ref{IPT 8.1} says that there exists a simple $R_{\gamma,1} \in \T$ such that $\W$ is contained in the wide subcategory generated by the other simples.
Proposition~\ref{prop:hom_ext_vanish} then implies that $\W \subseteq R_{\gamma,r_\gamma}^\perp$. 
Since $R_{\gamma,r_\gamma}$ is a non-exceptional brick by Proposition~\ref{prop:bricks_in_tube}, we have shown that \ref{tube rep-fin}$\implies$\ref{tube exists right}.

In light of Proposition~\ref{prop:bricks_in_tube}, comparison of the two parts of Proposition~\ref{prop:hom_ext_vanish} shows that \ref{tube exists right}$\iff$\ref{tube exists left}.
If \ref{tube exists right} holds, then as before, there exists $R_{\gamma,r} \in \T$ such that $\W \subseteq R_{\gamma,r}^\perp$. 
For any indecomposable $X_{\beta,k} \in \W \subseteq X_{\gamma,r}^\perp$,  Proposition~\ref{prop:hom_ext_vanish} implies that $k < r$. 
Thus $X_{\beta,k}$ is exceptional by Proposition~\ref{prop:bricks_in_tube}. 
We see that \ref{tube exists right}$\implies$\ref{tube all exc}.

By Proposition~\ref{prop:bricks_in_tube}, there are only finitely many exceptional modules in $\T$.
Thus, if \ref{tube all exc} holds, then $\W$ is bounded, and therefore, by Theorem~\ref{dichev 2.3.25}, $\W^\perp$ and ${}^\perp \W$ are unbounded and therefore representation-infinite. The equivalence \ref{tube exc}$\iff$\ref{tube rep-fin} already proved then implies that $\W^\perp$ and ${}^\perp \W$ are non-exceptional.
Thus \mbox{\ref{tube all exc}$\implies$\ref{tube perp nonexc}}.
Trivially, \ref{tube perp nonexc}$\implies$\ref{tube perp nonexc alt}.

Now suppose \ref{tube perp nonexc alt} holds.
In light of Lemma~\ref{lem:ortho_wide}, the equivalence \ref{tube exc}$\iff$\ref{tube rep-fin} already proved implies that at least one of $\W^\perp$ and ${}^\perp \W$ is representation-infinite.
The indecomposable module $R_{\beta,k}$ has length~$k$, so there are only finitely many indecomposable modules of any give length.
We conclude that at least one of $\W^\perp$ and ${}^\perp \W$ is unbounded.
By Theorem~\ref{dichev 2.3.25}, $\W$ is bounded and thus representation-finite.
We see that \ref{tube perp nonexc alt}$\implies$\ref{tube rep-fin}, and we have proven the equivalence of the seven conditions.

Finally, suppose these conditions fail for $\W$.
Decomposing $\W = \U \oplus \V$ as in Proposition~\ref{IPT 9.10}, the equivalence of the seven conditions (for $\V$) implies that $\V$ is exceptional and representation-finite. Since $\W$ is representation-infinite by assumption, we then have that $\U \neq 0$, and so $\U$ is a tube category.
\end{proof}

We now begin to gather analogs, in $\T$, of some results of Section~\ref{ex chain sec} about exceptional sequences and wide subcategories.
We begin with an analog of Lemma~\ref{lem:exceptional_index_arbitrary}, which follows immediately from Lemma~\ref{lem:exceptional_index_arbitrary} and Proposition~\ref{prop:IgusaSen}.

\begin{lemma}\label{lem:one soft_index_arbitrary}
Let $(X_1,\ldots,X_k)$ be a $\brick(\T)$-brick sequence that is not maximal. 
Then for any $0 \leq i \leq k$ there exists a maximal $\brick(\T)$-brick sequence of the form $(X_1,\ldots,X_i,Y_1,\ldots,Y_j,X_{i+1},\ldots,X_k)$.
\end{lemma}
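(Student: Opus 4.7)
The plan is to transfer the problem directly from $\T$ to a hereditary algebra where Lemma~\ref{lem:exceptional_index_arbitrary} applies, using Proposition~\ref{prop:IgusaSen} as the bridge. That proposition supplies a type-$C_r$ tensor algebra $\Gamma$ together with a bijection $\phi\colon\brick(\T)\to\excep_\Gamma$ inducing a bijection $B_\T\to B_{\e_\Gamma}$. Since the compatibility relations match, $\phi$ extends letter-wise to a bijection from $\brick(\T)$-brick sequences in $\T$ to exceptional sequences in $\mods\Gamma$, and it carries maximal sequences to maximal sequences, which by Proposition~\ref{prop:complete_ex_maximal} are the complete exceptional sequences of $\mods\Gamma$.

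Concretely, given a $\brick(\T)$-brick sequence $(X_1,\ldots,X_k)$ and an index $0 \le i \le k$, I would pass to the exceptional sequence $(\phi(X_1),\ldots,\phi(X_k))$ in $\mods\Gamma$ and apply Lemma~\ref{lem:exceptional_index_arbitrary} at position $i$. That lemma produces exceptional modules $Z_1,\ldots,Z_j$ of $\mods\Gamma$ (with $j = r-k$, where $r$ is the rank of $\Gamma$) such that
\[
(\phi(X_1),\ldots,\phi(X_i),Z_1,\ldots,Z_j,\phi(X_{i+1}),\ldots,\phi(X_k))
\]
is a complete exceptional sequence. Setting $Y_\ell = \phi^{-1}(Z_\ell)$ and using that $\phi^{-1}$ takes the induced bijection on chain systems backwards, the sequence $(X_1,\ldots,X_i,Y_1,\ldots,Y_j,X_{i+1},\ldots,X_k)$ is a maximal $\brick(\T)$-brick sequence, which is exactly what we want.

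There is no real obstacle here: everything is an immediate consequence of Proposition~\ref{prop:IgusaSen} together with Lemma~\ref{lem:exceptional_index_arbitrary}. The only thing to verify is that ``maximality'' is preserved under the bijection on chain systems, and this is built in via Proposition~\ref{bin isom}, since maximality of a sequence is defined purely in terms of the binary compatibility relation. The hypothesis that $(X_1,\ldots,X_k)$ is not maximal is not actually needed for the argument (it is automatic when $j\ge 1$), so I would either drop it or simply note that the $j=0$ case is trivial.
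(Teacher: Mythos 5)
Your proposal is correct and is exactly the argument the paper intends: the paper states that this lemma ``follows immediately from Lemma~\ref{lem:exceptional_index_arbitrary} and Proposition~\ref{prop:IgusaSen},'' and you have simply spelled out the transfer along the bijection $\brick(\T)\to\excep_\Gamma$ and back. Your side remark that the non-maximality hypothesis is inessential (the $j=0$ case being trivial) is also accurate.
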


The following can be compared to Proposition~\ref{prop:wide_fg}.

\begin{proposition}\label{prop:tube_generation_1} 
Let $\W \subseteq \T$ be a wide subcategory.
The following are equivalent.
\begin{enumerate}[label=\rm(\roman*), ref=(\roman*)]
\item \label{tg1 exc}
$\W$ is exceptional.
\item \label{tg1 rep-fin}
$\W$ is representation-finite.
\item \label{tg1 right perp}
There exists a non-exceptional $\brick(\T)$-brick sequence $(X_1,\ldots,X_k)$ such that $\W = (X_1,\ldots,X_k)^\perp$.
\item \label{tg1 left perp}
There exists a non-exceptional $\brick(\T)$-brick sequence $(X_1,\ldots,X_k)$ such that $\W = {}^\perp(X_1,\ldots,X_k)$.
\item \label{tg1 no nonexc}
There does not exist a non-exceptional $\brick(\T)$-brick sequence $(X_1,\ldots,X_k)$ such that $\W = \sW(X_1,\ldots,X_k)$.
\item \label{tg1 no exc right}
There does not exist an exceptional $\brick(\T)$-brick sequence $(X_1,\ldots,X_k)$ such that $\W = (X_1,\ldots,X_k)^\perp$.
\item \label{tg1 no exc left}
There does not exist an exceptional $\brick(\T)$-bricksequence $(X_1,\ldots,X_k)$ such that $\W = {}^\perp(X_1,\ldots,X_k)$.
\end{enumerate}
\end{proposition}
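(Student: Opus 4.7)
The plan is to mirror the proof strategy of Proposition~\ref{prop:wide_fg} for hereditary algebras, using Proposition~\ref{prop:wide_tube} in place of its various ingredients. The equivalence (i) $\iff$ (ii) is already in Proposition~\ref{prop:wide_tube}. For the remaining five conditions, the pivotal ingredient will be the following observation, which plays the role that Lemma~\ref{lem:exceptional_index_arbitrary} and Proposition~\ref{prop:complete_ex_maximal} played in Section~\ref{ex chain sec}: \emph{any maximal $\brick(\W)$-brick sequence generates $\W$, and contains a non-exceptional brick if and only if $\W$ is non-exceptional}.

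To establish this observation I would first apply the final statement of Proposition~\ref{prop:wide_tube} to decompose $\W = \U \oplus \V$ with $\V$ exceptional and $\U$ either $0$ or a tube category. Since $\V \subseteq \U^\perp \cap {}^\perp\U$, any $\brick(\W)$-brick sequence splits (up to reordering by commuting orthogonal entries) into a $\brick(\U)$-brick sequence and a $\brick(\V)$-brick sequence, and maximality in $\W$ is equivalent to maximality in each summand. Proposition~\ref{prop:IgusaSen} translates maximal $\brick(\U)$-brick sequences into complete exceptional sequences in a type-$C$ tensor algebra~$\Gamma$; applying Proposition~\ref{prop:complete_ex_maximal} to $\Gamma$ and transporting back through the semibrick bijection (Proposition~\ref{prop:semibrick}, using that finite-type $\Gamma$ has $\wide\Gamma = \ewide\Gamma$ by Lemma~\ref{lem:rep_finite_exceptional}), one sees that every maximal $\brick(\U)$-brick sequence satisfies $\sW = \U$. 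The same holds inside $\V$ by Proposition~\ref{prop:complete_ex_maximal}, so the concatenation satisfies $\sW = \W$. Moreover, if $\U \neq 0$ and the sequence in $\U$ contained only exceptional bricks, then Proposition~\ref{prop:wide_fg} would make $\U = \sW$ exceptional in $\mods\Lambda$, contradicting the tube structure in light of Proposition~\ref{prop:wide_tube}; hence a non-exceptional brick must appear exactly when $\W$ is non-exceptional.

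With the observation in hand, the remaining implications fall out. For (i) $\implies$ (iii) (and symmetrically (iv)), Proposition~\ref{prop:wide_tube}.\ref{tube perp nonexc} guarantees that $^\perp \W$ is non-exceptional; a maximal $\brick({}^\perp\W)$-brick sequence $(X_1,\ldots,X_k)$ is then non-exceptional by the observation, and by Lemma~\ref{lem:ortho_wide} and Proposition~\ref{prop:one tube_anti_isom}, $(X_1,\ldots,X_k)^\perp = ({}^\perp\W)^\perp = \W$. For (iii) $\implies$ (i), if $X_i$ is any non-exceptional term, then $\W \subseteq X_i^\perp$, so (i) follows from Proposition~\ref{prop:wide_tube}.\ref{tube exists right}; (iv) $\implies$ (i) uses Proposition~\ref{prop:wide_tube}.\ref{tube exists left} analogously. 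The equivalence (v) $\iff$ (i) is direct: Proposition~\ref{prop:wide_tube}.\ref{tube all exc} says all bricks of an exceptional $\W$ are exceptional, while conversely the observation provides a non-exceptional brick sequence with $\sW = \W$ whenever $\W$ is non-exceptional. For (vi), (vii) $\iff$ (i), argue by contrapositive: if $\W$ is non-exceptional, then by (the contrapositive of) Proposition~\ref{prop:wide_tube}.\ref{tube perp nonexc} both $^\perp\W$ and $\W^\perp$ are exceptional, so a maximal brick sequence in either consists only of exceptional bricks (again by Proposition~\ref{prop:wide_tube}.\ref{tube all exc}) and yields $\W$ as its perpendicular, violating (vi) respectively (vii); conversely if $\W$ is exceptional then the same calculation via the inverse anti-automorphism shows that any brick sequence generating $^\perp\W$ or $\W^\perp$ must contain a non-exceptional brick, again by the observation.

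The main obstacle I anticipate is the careful bookkeeping in the central observation, specifically checking that Proposition~\ref{prop:IgusaSen} transports the properties ``maximal,'' ``$\sW$ equals the ambient category,'' and ``all terms exceptional'' correctly between $\U$ and $\Gamma$. The Hom/Ext preservation from Proposition~\ref{prop:IgusaSen} handles ``maximal'' for free, but ``$\sW$ equals the ambient category'' requires the semibrick translation via Proposition~\ref{prop:semibrick}, and the final contradiction step (using Proposition~\ref{prop:wide_fg} to get that an exceptional brick sequence generates an exceptional wide subcategory of $\mods\Lambda$, hence of $\T$) is the place where the non-exceptional nature of tubes is ultimately exploited.
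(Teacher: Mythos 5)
Your architecture is genuinely different from the paper's. The paper proves \ref{tg1 exc}$\implies$\ref{tg1 right perp} by choosing a non-exceptional brick $X$ with $\W \subseteq X^\perp$, applying Proposition~\ref{prop:wide_fg} \emph{inside} the exceptional subcategory $X^\perp$ to get an exceptional sequence $(X_1,\ldots,X_k)$ there with $(X_1,\ldots,X_k,X)^\perp = \W$, and handles \ref{tg1 no nonexc}--\ref{tg1 no exc left} by a chain of contrapositives; you instead route everything through the single claim that a maximal $\brick(\W)$-brick sequence generates $\W$ and contains a non-exceptional term exactly when $\W$ is non-exceptional. Granted that claim, your downstream implications are essentially sound (with one small slip: for \ref{tg1 exc}$\implies$\ref{tg1 no exc right},\ref{tg1 no exc left} you invoke ``the observation'' for a sequence that need not be maximal, but the fact you actually need --- that an exceptional sequence generates an exceptional subcategory while ${}^\perp\W$ is non-exceptional --- is immediate from the definition of an exceptional subcategory together with Proposition~\ref{prop:wide_tube}).

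The genuine gap is exactly where you suspected it: your justification that a maximal $\brick(\U)$-brick sequence satisfies $\sW = \U$. Proposition~\ref{prop:IgusaSen} supplies only a bijection $\brick(\U) \to \excep_\Gamma$ carrying the binary compatibility relation; this transports \emph{maximality} of sequences, but it is not a functor and is not stated to induce an isomorphism of wide-subcategory lattices compatible with $\sW(-)$, so the property ``$\sW$ equals the ambient category'' cannot be pulled back through it, and Proposition~\ref{prop:semibrick} does not bridge this (a maximal brick sequence need not be a semibrick). The paper's isomorphism $\wide(\T)\cong\wide(\Gamma)$ (Corollary~\ref{cor:tube_type_C}) is a consequence of Theorem~\ref{thm:tube_binary_chain}, which rests on the present proposition, so it is off limits here. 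The repair is the argument the paper uses for Proposition~\ref{prop:one complete_generates_tube} (whose proof does not depend on the present proposition): if $(X_1,\ldots,X_r)$ is maximal in $\U$, then ${}^\perp(X_1,\ldots,X_r)\cap\U = 0$, since otherwise this nonzero wide subcategory contains a brick (Proposition~\ref{prop:semibrick}) that could be appended; then Lemma~\ref{lem:ortho_wide} and the anti-automorphism of $\wide(\U)$ from Proposition~\ref{prop:one tube_anti_isom} give $\sW(X_1,\ldots,X_r) = \bigl({}^\perp\sW(X_1,\ldots,X_r)\bigr)^\perp = 0^\perp = \U$. With that substitution your central observation holds and the rest of your proof goes through.
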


\begin{proof}
The equivalence of \ref{tg1 exc} and \ref{tg1 rep-fin} is Proposition~\ref{prop:wide_tube}.

Suppose \ref{tg1 exc}.
Then Proposition~\ref{prop:wide_tube} says that there is a non-exceptional brick $X \in \T$ with $\W \subseteq X^\perp$. 
In light of Proposition~\ref{prop:one tube_anti_isom}, Proposition~\ref{prop:wide_tube}, applied to $X^\perp$, says that $X^\perp$ is exceptional. 
Viewing $\T$ as a tube in $\mods\Lambda$ for some connected tame hereditary algebra, Proposition~\ref{prop:wide_fg} says that $X^\perp$ is exact equivalent to $\mods\Lambda'$ for some finite-dimensional algebra $\Lambda'$.
(Some care must be taken to apply Proposition~\ref{prop:wide_fg}:  
The wide subcategory of~$\T$ that we here call $X^\perp$ using the $(-)^\perp$ operator in $\T$ coincides with the wide subcategory of $\mods\Lambda$ that is described as $X^\perp\cap\T$ using the $(-)^\perp$ operator in $\mods\Lambda$.
We apply Proposition~\ref{prop:wide_fg} to $X^\perp\cap\T$.) 
This exact equivalence to $\mods\Lambda'$ allows us to use Proposition~\ref{prop:wide_fg} in $X^\perp$ to produce an exceptional sequence $(X_1,\ldots,X_k)$ in $X^\perp$ such that $(X_1,\ldots,X_k)^\perp \cap X^\perp= (X_1,\ldots,X_k,X)^\perp = \W$. 
We see that \ref{tg1 exc}$\implies$\ref{tg1 right perp}.

Conversely, suppose $(X_1,\ldots,X_k)$ is a non-exceptional $\brick(\T)$-brick sequence with $\W = (X_1,\ldots,X_k)^\perp$.
Then one of the $X_i$ is non-exceptional and $\W \subseteq X_i^\perp$, and thus $\W$ is exceptional by Proposition~\ref{prop:wide_tube}.
We have proved \ref{tg1 exc}$\iff$\ref{tg1 rep-fin}$\iff$\ref{tg1 right perp}, and the symmetric argument shows the equivalence of these with \ref{tg1 left perp}.

Suppose \ref{tg1 exc} fails.
Then ${}^\perp\W$ is exceptional by Proposition~\ref{prop:wide_tube}, so there is an exceptional sequence $(X_1,\ldots,X_k)$ with $\sW(X_1,\ldots,X_k) = {}^\perp \W$.
By Lemma~\ref{lem:ortho_wide} and Proposition~\ref{prop:one tube_anti_isom}, we see that ${(X_1,\ldots,X_k)^\perp = \W}$, so that \ref{tg1 no exc right} fails.

Now suppose \ref{tg1 no exc right} fails, so that $\W = (X_1,\ldots,X_k)^\perp$ for some exceptional sequence $(X_1,\ldots,X_k)$ in~$\T$. 
By the equivalence of \ref{tg1 exc} and \ref{tg1 left perp}, $\sW(X_1,\ldots,X_k)$ is ${}^\perp(Y_1,\ldots,Y_j)$ for some non-exceptional $\brick(\T)$-brick sequence $(Y_1,\ldots,Y_j)$.
Now Lemma~\ref{lem:ortho_wide} and Proposition~\ref{prop:one tube_anti_isom} imply that \ref{tg1 no nonexc} fails, because
\[\sW(Y_1,\ldots,Y_j) = ({}^\perp(Y_1,\ldots,Y_j))^\perp = (X_1,\ldots,X_k)^\perp = \W.\]

Finally, suppose \ref{tg1 no nonexc} fails, so that $\W = \sW(X_1,\ldots,X_k)$ for some non-exceptional $\brick(\T)$-brick sequence $(X_1,\ldots,X_k)$. 
Thus, one of the $X_i$ is a non-exceptional brick in $\W$, so \ref{tg1 exc} fails by Proposition~\ref{prop:wide_tube}.

We have proved \ref{tg1 exc}$\iff$\ref{tg1 no nonexc}$\iff$\ref{tg1 no exc right}, and the symmetric argument shows the equivalence of these with \ref{tg1 no exc left}.
\end{proof}

Next we prove analogs of Proposition~\ref{prop:complete_ex_maximal} and of Proposition~\ref{prop:maximal_exceptional}.

\begin{proposition}\label{prop:one complete_generates_tube}
Let $(X_1,\ldots,X_k)$ be a $\brick(\T)$-brick sequence. Then the following are equivalent.
\begin{enumerate}[label=\rm(\roman*), ref=(\roman*)]
\item \label{ct1} $(X_1,\ldots,X_k) \in \Ct$, that is, $(X_1,\ldots,X_k)$ is maximal.
\item \label{ct2} $k = r$ (the rank of $\T$).
\item \label{ct3} $\sW(X_1,\ldots,X_k) = \T$.
\item \label{ct4} $(X_1,\ldots,X_k)^\perp = 0$.
\item \label{ct5} ${}^\perp(X_1,\ldots,X_k) = 0$.
\end{enumerate}
\end{proposition}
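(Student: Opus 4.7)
The plan is to mirror the proof of Proposition~\ref{prop:complete_ex_maximal}, using Proposition~\ref{prop:IgusaSen} as the bridge between $\T$ and a type~$C_r$ situation, and using the anti-automorphism of $\wide(\T)$ from Proposition~\ref{prop:one tube_anti_isom} in place of Proposition~\ref{prop:wide_fg}.

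First I would establish \ref{ct1}$\iff$\ref{ct2} by invoking Proposition~\ref{prop:IgusaSen}: it produces a tensor algebra $\Gamma$ of type $C_r$ and an isomorphism of binary chain systems $\Ct \to \C_{\e_\Gamma}$, which transports maximality in $\Ct$ to maximality in $\C_{\e_\Gamma}$. By Proposition~\ref{prop:complete_ex_maximal} applied in $\mods\Gamma$, maximal exceptional sequences are precisely the complete ones, which have length equal to the number of simples of $\Gamma$, namely $r$. Pulling back along the bijection gives \ref{ct1}$\iff$\ref{ct2}.

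Next I would prove \ref{ct1}$\implies$\ref{ct3}. Assuming maximality, suppose toward a contradiction that $\sW(X_1,\ldots,X_k)\neq\T$. By Proposition~\ref{prop:one tube_anti_isom}, the operator $(-)^\perp$ is an anti-automorphism of $\wide(\T)$, and since it sends $\T$ to $0$, it must send $\sW(X_1,\ldots,X_k)$ to a nonzero wide subcategory. Lemma~\ref{lem:ortho_wide} identifies this with $(X_1,\ldots,X_k)^\perp$, and Proposition~\ref{prop:semibrick} then yields a brick $Y$ in $(X_1,\ldots,X_k)^\perp$. The sequence $(Y,X_1,\ldots,X_k)$ is a $\brick(\T)$-brick sequence, contradicting maximality. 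The implications \ref{ct3}$\implies$\ref{ct4} and \ref{ct3}$\implies$\ref{ct5} are then immediate from Lemma~\ref{lem:ortho_wide} together with the obvious equalities $\T^\perp = 0 = {}^\perp\T$.

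Finally, for \ref{ct4}$\implies$\ref{ct1} I would argue by contrapositive: if $(X_1,\ldots,X_k)$ is not maximal, Lemma~\ref{lem:one soft_index_arbitrary} with $i=0$ produces bricks $Y_1,\ldots,Y_j$ such that $(Y_1,\ldots,Y_j,X_1,\ldots,X_k)$ is a $\brick(\T)$-brick sequence, and the brick-sequence conditions force each $Y_l \in (X_1,\ldots,X_k)^\perp$, so \ref{ct4} fails. The implication \ref{ct5}$\implies$\ref{ct1} is symmetric, applying Lemma~\ref{lem:one soft_index_arbitrary} with $i=k$ to obtain bricks in ${}^\perp(X_1,\ldots,X_k)$. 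No step presents a real obstacle: every needed ingredient—the chain-system isomorphism to type $C_r$, the perpendicular anti-automorphism, and the extension lemma—has already been assembled in this section.
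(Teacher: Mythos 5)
Your proof is correct and uses essentially the same ingredients as the paper's: the equivalence \ref{ct1}$\iff$\ref{ct2} via Propositions~\ref{prop:IgusaSen} and~\ref{prop:complete_ex_maximal}, the anti-automorphism of Proposition~\ref{prop:one tube_anti_isom} together with Lemma~\ref{lem:ortho_wide} to connect \ref{ct3} with \ref{ct4} and \ref{ct5}, and Lemma~\ref{lem:one soft_index_arbitrary} to link maximality with the vanishing of the perpendicular categories. The paper merely arranges the cycle of implications slightly differently (proving \ref{ct1}$\iff$\ref{ct5} and \ref{ct1}$\iff$\ref{ct4} directly, then \ref{ct3}$\iff$\ref{ct5} via the identity $\sW(X_1,\ldots,X_k)=({}^\perp(X_1,\ldots,X_k))^\perp$), so no substantive difference.
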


\begin{proof}
The equivalence \ref{ct1}$\iff$\ref{ct2} follows from Propositions~\ref{prop:IgusaSen} and~\ref{prop:complete_ex_maximal}.

Now observe that ${}^\perp(X_1,\ldots,X_k)=0$ if and only if no brick can be appended to the end of $(X_1,\ldots,X_k)$ to make a $\brick(\T)$-brick sequence. 
Thus Lemma~\ref{lem:one soft_index_arbitrary} implies that $(X_1,\ldots,X_k)$ is maximal if and only if ${}^\perp(X_1,\ldots,X_k)=0$. We see that \ref{ct1}$\iff$\ref{ct5}. A similar argument shows \ref{ct1}$\iff$\ref{ct4}.

By Proposition~\ref{prop:one tube_anti_isom} and Lemma~\ref{lem:ortho_wide}, we have 
$\sW(X_1,\ldots,X_k) = ({}^\perp(X_1,\ldots,X_k))^\perp$ and ${}^\perp \sW(X_1,\ldots,X_k) = {}^\perp(X_1,\ldots,X_k).$
This implies the equivalence \ref{ct3}$\iff\ref{ct5}$. A similar argument shows \ref{ct3}$\iff$\ref{ct4}.
\end{proof}

\begin{proposition}\label{prop:one maximal_soft}  
Let $(X_1,\ldots,X_k)$ be a $\brick(\T)$-brick sequence.   
Then each of the following conditions is equivalent to $(X_1,\ldots,X_k)$ being maximal.
\begin{enumerate}[label=\rm(\roman*), ref=(\roman*)]
\item For $0\le i\le j\le k$, $\sW(X_{i+1},\ldots,X_j) = {}^\perp(X_1,\ldots,X_i) \cap (X_{j+1},\ldots,X_k)^\perp$.
\item There exist $i\le j$ with $\sW(X_{i+1},\ldots,X_j) = {}^\perp(X_1,\ldots,X_i) \cap (X_{j+1},\ldots,X_k)^\perp$.
\end{enumerate}
\end{proposition}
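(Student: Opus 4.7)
The plan is to mimic the proof of Proposition~\ref{prop:maximal_exceptional} as closely as possible. Fix indices $i \le j \in \{0,\ldots,k\}$ and set $\W = {}^\perp(X_1,\ldots,X_i) \cap (X_{j+1},\ldots,X_k)^\perp$. The first step is to observe that Lemma~\ref{lem:one soft_index_arbitrary} implies that $(X_1,\ldots,X_k)$ is maximal if and only if $(X_{i+1},\ldots,X_j)$ is maximal as a $\brick(\T) \cap \brick(\W)$-brick sequence, i.e.\ maximal in $\W$. This reduces the proof to showing that $(X_{i+1},\ldots,X_j)$ is maximal in $\W$ if and only if $\sW(X_{i+1},\ldots,X_j) = \W$.

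The second and main step is to prove this latter equivalence. In the original proof of Proposition~\ref{prop:maximal_exceptional}, Proposition~\ref{prop:wide_fg} was used to identify $\W$ with $\mods\Lambda'$ for some hereditary algebra, and then Proposition~\ref{prop:complete_ex_maximal} was applied inside $\mods\Lambda'$. Here, I would invoke Proposition~\ref{prop:wide_tube}, which gives two cases. If $\W$ is exceptional, then $\W$ is exact-equivalent to $\mods\Lambda'$ for some finite-dimensional hereditary algebra via Proposition~\ref{prop:wide_fg} (viewing $\T$ as a tube in a connected tame hereditary algebra), and Proposition~\ref{prop:complete_ex_maximal} applied inside $\mods\Lambda'$ finishes this case. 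Otherwise, Proposition~\ref{prop:wide_tube} gives a unique orthogonal decomposition $\W = \U \oplus \V$ with $\V$ exceptional and $\U$ a tube category.

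In the non-exceptional case, the mutual orthogonality of $\U$ and $\V$ lets me show that the sequence $(X_{i+1},\ldots,X_j)$ splits (after the commuting terms are tracked) into a $\brick(\U)$-brick sequence in $\U$ and an exceptional sequence in $\V$. Maximality of $(X_{i+1},\ldots,X_j)$ in $\W$ is then equivalent to maximality of each of these subsequences in its respective summand. Applying Proposition~\ref{prop:one complete_generates_tube} in $\U$ and Proposition~\ref{prop:complete_ex_maximal} in $\V$ (via an exact equivalence with $\mods\Lambda'$, as in the exceptional case) converts each maximality condition into a generation condition, and the decomposition $\W = \U \oplus \V$ allows these to be combined into the single condition $\sW(X_{i+1},\ldots,X_j) = \W$.

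The main obstacle is the non-exceptional case: showing that the ordered sequence $(X_{i+1},\ldots,X_j)$ decomposes cleanly along the orthogonal decomposition $\W = \U \oplus \V$. The decomposition itself is free in the sense that any two terms lying in different summands are mutually Hom- and $\Ext^1$-orthogonal (since $\V \subseteq \U^\perp \cap {}^\perp\U$), so the ordering of terms between the two summands can be freely rearranged without affecting the brick-sequence property. Once this reordering is in hand, the argument reduces to the already-established Propositions~\ref{prop:one complete_generates_tube} and~\ref{prop:complete_ex_maximal} inside each summand.
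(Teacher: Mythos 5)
Your proposal is correct and takes essentially the same approach as the paper: reduce to maximality in $\W$ via Lemma~\ref{lem:one soft_index_arbitrary}, decompose $\W = \U \oplus \V$ with $\U$ a tube category (or $0$) and $\V$ exceptional, and apply Proposition~\ref{prop:one complete_generates_tube} in $\U$ and Proposition~\ref{prop:complete_ex_maximal} in $\V$. The paper handles your two cases uniformly by allowing $\U = 0$, and leaves implicit the splitting of the sequence along the orthogonal decomposition that you spell out, but these are presentational differences only.
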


\begin{proof}
Fix indices $i \leq j \in \set{0,\ldots,k}$ and let $\W = {}^\perp(X_1,\ldots,X_i) \cap (X_{j+1},\ldots,X_k)^\perp$.
Lemma~\ref{lem:one soft_index_arbitrary} implies that $(X_1,\ldots,X_k)$ is maximal if and only if $(X_{i+1},\ldots,X_j)$ is maximal in $\W$. 
Proposition~\ref{IPT 8.1} says that there is an orthogonal decomposition $\W=\U\oplus\V$ such that $\U$ is a tube category (or 0) and every module in $\V$ is exceptional.
Therefore $\V$ is exceptional by Proposition~\ref{prop:wide_tube}.
Moreover (taking care as explained in the proof of Proposition~\ref{prop:tube_generation_1}), Proposition~\ref{prop:wide_fg} says that there is a finite-dimensional algebra $\Lambda'$ such that $\mods\Lambda'$ is exact equivalent to~$\V$.
Proposition~\ref{prop:one complete_generates_tube} (applied in~$\U$) and Proposition~\ref{prop:complete_ex_maximal} (applied in $\V \simeq \mods\Lambda')$ then imply that $(X_{i+1},\ldots,X_j)$ is maximal in $\W$ if and only if $\sW(X_{i+1},\ldots,X_j) = \W$. 
\end{proof}

We conclude our preparations by pointing out that the analog of Proposition~\ref{prop:perp_prefix} holds in~$\T$.
The proposition can be proved by repeating the proof of Proposition~\ref{prop:perp_prefix} verbatim but replacing Lemma~\ref{lem:exceptional_index_arbitrary} by Lemma~\ref{lem:one soft_index_arbitrary} and Proposition~\ref{prop:maximal_exceptional} by Proposition~\ref{prop:one maximal_soft}.

\begin{proposition}\label{prop:perp_prefix_soft}
Let $(X_1,\ldots,X_k)$ and $(X'_1,\ldots,X'_i)$ be $\brick(\T)$-brick sequences. Then the following are equivalent.
\begin{enumerate}[label=\rm(\roman*), ref=(\roman*)]
\item \label{pps1} $(X_1,\ldots,X_k)$ and $(X'_1,\ldots,X'_i)$ are equivalent as prefixes in $\Ct$.
\item \label{pps2} $(X_1,\ldots,X_k)$ and $(X'_1,\ldots,X'_i)$ are equivalent as postfixes in $\Ct$.
\item \label{pps3} $\sW(X_1,\ldots,X_k) = \sW(X'_1,\ldots,X'_i)$.
\item \label{pps4} $(X_1,\ldots,X_k)^\perp = (X'_1,\ldots,X'_i)^\perp$.
\item \label{pps5} ${}^\perp(X_1,\ldots,X_k) = {}^{\perp}(X'_1,\ldots,X'_i)$.
\end{enumerate}
\end{proposition}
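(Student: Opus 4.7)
The plan is to mimic the proof of Proposition~\ref{prop:perp_prefix} verbatim, substituting the tube-theoretic analogs of each tool used there. Specifically, I will replace Lemma~\ref{lem:exceptional_index_arbitrary} with Lemma~\ref{lem:one soft_index_arbitrary} and Proposition~\ref{prop:maximal_exceptional} with Proposition~\ref{prop:one maximal_soft}, while continuing to invoke Lemma~\ref{lem:ortho_wide}, which holds equally well in $\T$ (with the same proof). The strategy is to establish the cycle \ref{pps1}$\Rightarrow$\ref{pps3}$\Rightarrow$\ref{pps5}$\Rightarrow$\ref{pps1} and, symmetrically, \ref{pps2}$\Rightarrow$\ref{pps3}$\Rightarrow$\ref{pps4}$\Rightarrow$\ref{pps2}.

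For \ref{pps1}$\Rightarrow$\ref{pps3}: if the two sequences are equivalent as prefixes, then by definition there exists a common postfix $(Y_1,\ldots,Y_j)$ such that both concatenations lie in $\Ct$. Applying Proposition~\ref{prop:one maximal_soft} to each maximal $\brick(\T)$-brick sequence at the cut between the prefix and postfix yields
\[\sW(X_1,\ldots,X_k) = (Y_1,\ldots,Y_j)^\perp = \sW(X'_1,\ldots,X'_i).\]
For \ref{pps3}$\Rightarrow$\ref{pps5}: Lemma~\ref{lem:ortho_wide} (in~$\T$) gives ${}^\perp\sW(\X) = {}^\perp \X$ for any subcategory, so equal wide closures force equal left-perpendicular categories.

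For \ref{pps5}$\Rightarrow$\ref{pps1}: use Lemma~\ref{lem:one soft_index_arbitrary} to extend $(X_1,\ldots,X_k)$ to a maximal $\brick(\T)$-brick sequence $(X_1,\ldots,X_k,Y_1,\ldots,Y_j)$. Proposition~\ref{prop:one maximal_soft} says that $\sW(Y_1,\ldots,Y_j) = {}^\perp(X_1,\ldots,X_k)$, which by hypothesis equals ${}^\perp(X'_1,\ldots,X'_i)$. In particular $(X'_1,\ldots,X'_i,Y_1,\ldots,Y_j)$ is a $\brick(\T)$-brick sequence, and a second application of Proposition~\ref{prop:one maximal_soft} (checking the condition at the cut between the two pieces) certifies that this concatenation is maximal. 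The symmetric chain \ref{pps2}$\Rightarrow$\ref{pps3}$\Rightarrow$\ref{pps4}$\Rightarrow$\ref{pps2} is obtained by reversing roles of prefixes and postfixes and of the two perpendicular operators.

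No serious obstacle is anticipated: the entire content of the proof is packaged into Propositions~\ref{prop:one complete_generates_tube} and~\ref{prop:one maximal_soft} together with Lemma~\ref{lem:one soft_index_arbitrary}, which are already in hand. The only mild care needed is ensuring that Lemma~\ref{lem:ortho_wide} is applied inside $\T$ (viewed as an abelian category in its own right, by Remark~\ref{rem:exact}), so that the identities ${}^\perp \sW(\X) = {}^\perp \X$ and $\sW(\X)^\perp = \X^\perp$ refer to perpendicular operators computed intrinsically in $\T$. This is legitimate because $\T$ is a wide subcategory of $\mods\Lambda$ for a suitable connected tame hereditary $\Lambda$, and wide subcategories inherit the ambient $\Ext$-functor.
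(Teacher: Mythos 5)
Your proposal is correct and takes essentially the same approach as the paper, whose proof is precisely to repeat the proof of Proposition~\ref{prop:perp_prefix} verbatim with Lemma~\ref{lem:exceptional_index_arbitrary} replaced by Lemma~\ref{lem:one soft_index_arbitrary} and Proposition~\ref{prop:maximal_exceptional} replaced by Proposition~\ref{prop:one maximal_soft}. Your closing remark about applying Lemma~\ref{lem:ortho_wide} intrinsically in $\T$ matches the paper's earlier observation that this lemma holds for subcategories of $\T$ by the same proof.
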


Now, the main result of this section, Theorem~\ref{thm:tube_binary_chain}, is proved as follows.

\begin{proof}[Proof of Theorem~\ref{thm:tube_binary_chain}]
    Assertion \ref{Ct bcs} was proved as part of Proposition~\ref{prop:IgusaSen}. For the other assertions we repeat the proof of Theorem~\ref{thm:exceptional_binary_chain}.\ref{Pc to fwide}-\ref{Cx labels} except replacing ``exceptional sequence'' by ``$\brick(\T)$-brick sequence'' throughout and replacing references to 
Propositions~
\ref{prop:wide_fg},  \ref{prop:maximal_exceptional}, and~\ref{prop:perp_prefix} and Lemma~\ref{lem:exceptional_index_arbitrary} by their analogs in~$\T$ proved in this section (Propositions~
\ref{prop:tube_generation_1}, \ref{prop:one maximal_soft}, and~\ref{prop:perp_prefix_soft} and Lemma~\ref{lem:one soft_index_arbitrary}).
\end{proof}

We conclude this section by recording the following consequence of Theorem~\ref{thm:tube_binary_chain}.\ref{Pc to wideT}, Proposition~\ref{prop:IgusaSen},  Theorem~\ref{thm:exceptional_binary_chain}.\ref{Pc to fwide}, and Proposition~\ref{isom isom}.

\begin{corollary}\label{cor:tube_type_C}
    Let $\T$ be a tube category of rank $r \geq 1$. Then $\wide(\T) \cong \wide(\Gamma)$ for $\Gamma$ a tensor algebra of type $C_r$.
\end{corollary}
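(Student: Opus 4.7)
The plan is to chain together several isomorphisms that are all explicitly available in the excerpt, so no new work is really required beyond bookkeeping. The starting point is Proposition~\ref{prop:IgusaSen}, which provides a tensor algebra $\Gamma$ of type $C_r$ together with a bijection $\brick(\T) \to \excep_\Gamma$ that restricts to a bijection $B_\T \to B_{\e_\Gamma}$ of binary compatibility relations. By Proposition~\ref{bin isom}.\ref{not yet}, this bijection is an isomorphism of binary chain systems $\Ct \to \C_{\e_\Gamma}$ (here $\C_{\e_\Gamma}$ is known to be a binary chain system by Theorem~\ref{thm:exceptional_binary_chain}.\ref{Cx bcs}). Proposition~\ref{isom isom} then upgrades this to an isomorphism of labeled posets $\P(\Ct) \cong \P(\C_{\e_\Gamma})$.

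Next I would identify each side with a lattice of wide subcategories. Theorem~\ref{thm:tube_binary_chain}.\ref{Pc to wideT} gives an isomorphism $\P(\Ct) \cong \wide(\T)$, and Theorem~\ref{thm:exceptional_binary_chain}.\ref{Pc to fwide} gives $\P(\C_{\e_\Gamma}) \cong \ewide(\Gamma)$. The final step is to replace $\ewide(\Gamma)$ with $\wide(\Gamma)$: because $\Gamma$ is of finite type $C_r$, $\mods\Gamma$ is representation-finite, so Lemma~\ref{lem:rep_finite_exceptional} (as noted in Remark~\ref{fin lat remark}) implies that every wide subcategory of $\mods\Gamma$ is exceptional, i.e.\ $\ewide(\Gamma) = \wide(\Gamma)$. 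Concatenating the three isomorphisms yields $\wide(\T) \cong \wide(\Gamma)$.

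There is no real obstacle here; the corollary is essentially a formal consequence of Proposition~\ref{prop:IgusaSen}, Theorem~\ref{thm:tube_binary_chain}, and Theorem~\ref{thm:exceptional_binary_chain}, together with the finite-type observation that $\ewide(\Gamma) = \wide(\Gamma)$. The only thing to be slightly careful about is distinguishing binary chain system isomorphisms (bijections of alphabets inducing bijections of words) from labeled-poset isomorphisms, but Proposition~\ref{isom isom} bridges the two cleanly.
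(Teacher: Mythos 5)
Your proof is correct and follows essentially the same route the paper indicates: the paper derives the corollary precisely from Proposition~\ref{prop:IgusaSen}, Theorem~\ref{thm:tube_binary_chain}.\ref{Pc to wideT}, Theorem~\ref{thm:exceptional_binary_chain}.\ref{Pc to fwide}, and Proposition~\ref{isom isom}. Your explicit inclusion of the step $\ewide(\Gamma)=\wide(\Gamma)$ via representation-finiteness (Lemma~\ref{lem:rep_finite_exceptional}, as in Remark~\ref{fin lat remark}) is a detail the paper leaves implicit but is exactly what is needed to pass from exceptional subcategories to all wide subcategories of $\Gamma$.
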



\section{Regular para-exceptional sequences}\label{sec:rep_theory_tubes}
We now return to the setting of connected tame hereditary algebras. In this section, we use the results of Section~\ref{sec:one tube} to study those para-exceptional sequences whose terms are all regular.

Let $\excep_\reg\subseteq\excep$ be the set of all regular exceptional modules, so that 
$\excep_\reg\cup\nrig$ is the set of regular para-exceptional modules. As a consequence of \cite[Proposition~7.14]{affscat}, an exceptional module $X$ is regular if and only if $\undim X\in \Upsilon^c$.
Thus, the bijection $X \mapsto t_{\undim X}$ restricts to bijections $\excep_\reg \to T_H$ and ${\excep\setminus\excep_\reg \to T_V}$.
A \newword{regular para-exceptional sequence} is an $(\excep_\reg\cup\nrig)$-brick sequence.  
We write $\Brpx$ for the binary compatibility relation on the alphabet $\excep_\reg\cup\nrig$ consisting of all two-term regular para-exceptional sequences and $\Crpx$ for the set $\C(\Brpx)$ of maximal regular para-exceptional sequences.

Recall that $\N=\set{\T_1,\ldots\T_m}$ is the set of non-homogeneous tubes.  
The important insight about regular para-exceptional sequences is most readily phrased in terms of shuffles, as in Section~\ref{shuffle sec}:
$\Crpx=\C_{\T_1}\shuffle\cdots\shuffle\C_{\T_m}$.
(This is immediate from Lemma~\ref{lem:tubes_ortho}.)  
It is also apparent that $\Brpx$ is obtained from $B_{\T_1},\dots,B_{\T_m}$ by the construction described in Proposition~\ref{shuf prod}.\ref{shuf bin}, iterated if $m>2$.

Recall that $\nonhomo$ denotes the wide subcategory of $\mods\Lambda$ consisting of regular modules whose indecomposable direct summands lie in the non-homogeneous tubes. We note that $\nonhomo = \sW(\excep_\reg) = \sW(\excep_\reg \cup \nrig)$. We also recall from Proposition~\ref{prop:tubes_ortho}
that there is an orthogonal decomposition $\nonhomo = \T_1 \oplus \cdots \oplus \T_m$. This induces a decomposition of the interval $\wide(\nonhomo) = [0,\nonhomo] \subseteq \wide \Lambda$ as $\wide(\nonhomo) \cong \prod_{i = 1}^m \wide(\T_i)$. 

These considerations, together with Proposition~\ref{shuf prod} and Theorem~\ref{thm:tube_binary_chain}, amount to an entirely representation-theoretic proof of the following theorem, which holds without any assumption on the number of non-homogeneous tubes of~$\Lambda$.

\begin{theorem}\label{thm:soft_binary_chain}
Let $\Lambda$ be a connected tame hereditary algebra. 
\begin{enumerate}[label=\bf\arabic*., ref=\arabic*]
\item \label{Crpx bcs}
The set $\Crpx$ is a binary chain system with binary compatibility relation $\Brpx$.
\item \label{Pc to wideWEH}
The map $\Preeq(\Crpx) \rightarrow \wide(\exreg)$ that sends $P$ to $\sW(X_1,\ldots,X_k)$, for any representative $(X_1,\ldots,X_k) \in P$, is an isomorphism of posets.
\item \label{Crpx labels cov}
If $\V,\W\in\wide \exreg$ have $\W\subseteq\V$, then $\V$ covers $\W$ in $\wide \exreg$ if and only if $\V\cap{}^\perp\W$ contains a unique brick.
\item \label{Crpx labels}
If $P\covered P'$ in $\Preeq(\Crpx)$ is labeled by $X$ and sent by the  isomorphism in Assertion~\ref{Pc to wideWEH} to $\W \covered \V$, then $X$ is the unique brick in $\V \cap {}^\perp \W$.
\end{enumerate}
\end{theorem}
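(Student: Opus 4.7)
The plan is to reduce the theorem to the single-tube case established in Theorem~\ref{thm:tube_binary_chain} via the shuffle product machinery of Section~\ref{shuffle sec}. The alphabets $\brick(\T_i)$ for the various non-homogeneous tubes are pairwise disjoint (since each indecomposable regular module lies in a unique tube), so the shuffle product of $\C_{\T_1},\ldots,\C_{\T_m}$ is defined.

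First, I would observe that $\Crpx = \C_{\T_1}\shuffle\cdots\shuffle\C_{\T_m}$ and that $\Brpx$ is exactly the binary compatibility relation built from $B_{\T_1},\ldots,B_{\T_m}$ by the recipe in Proposition~\ref{shuf prod}.\ref{shuf bin}. Both facts are immediate from Lemma~\ref{lem:tubes_ortho}: if $X\in\T_i$ and $Y\in\T_j$ with $i\neq j$, then $(X,Y)$ and $(Y,X)$ are trivially two-term regular para-exceptional sequences, so maximality of a regular para-exceptional sequence means precisely that its restriction to each $\brick(\T_i)$ is a maximal $\brick(\T_i)$-brick sequence. Since each $\C_{\T_i}$ is a binary chain system by Theorem~\ref{thm:tube_binary_chain}.\ref{Ct bcs}, iterating Proposition~\ref{shuf prod}.\ref{shuf bin} yields Assertion~\ref{Crpx bcs}. (The degenerate case $m = 0$ is trivial: $\exreg = 0$, $\Crpx=\{\emptyset\}$, and there is nothing to prove.)

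For Assertion~\ref{Pc to wideWEH}, I would combine Proposition~\ref{shuf prod}.\ref{shuf chain sys} with Theorem~\ref{thm:tube_binary_chain}.\ref{Pc to wideT} to obtain the chain of isomorphisms
\[
\P(\Crpx)\;\cong\;\P(\C_{\T_1})\times\cdots\times\P(\C_{\T_m})\;\cong\;\wide(\T_1)\times\cdots\times\wide(\T_m)\;\cong\;\wide(\exreg),
\]
where the last isomorphism comes from the orthogonal decomposition $\exreg = \T_1\oplus\cdots\oplus\T_m$ (Proposition~\ref{prop:tubes_ortho}). To check that this composite is the map described in the theorem, I would note that if $(X_1,\ldots,X_k)\in P$, then writing $(X_1^{(i)},\ldots,X_{k_i}^{(i)})$ for its restriction to $\T_i$, the product isomorphism sends $P$ to the tuple of wide subcategories $\bigl(\sW_{\T_i}(X_1^{(i)},\ldots,X_{k_i}^{(i)})\bigr)_i$, whose sum under the orthogonal decomposition coincides with $\sW(X_1,\ldots,X_k)$.

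Finally, Assertions~\ref{Crpx labels cov} and~\ref{Crpx labels} transport across the isomorphism established in Assertion~\ref{Pc to wideWEH}. Under the product decomposition of $\wide(\exreg)$, a cover $\W\covered\V$ in $\wide(\exreg)$ is the same as a cover in a single factor $\wide(\T_i)$ while the other coordinates remain fixed, and the corresponding intersection $\V\cap{}^\perp\W$ is computed inside that single tube (the orthogonality of the tubes forces contributions from the other factors to vanish). Thus Theorem~\ref{thm:tube_binary_chain}.\ref{Ct labels cov} and~\ref{Ct labels} yield the desired statements. The only step that requires any care is the identification of edge labels: the edge labeling of $\P(\C_{\T_1})\times\cdots\times\P(\C_{\T_m})$ given in Section~\ref{shuffle sec} inherits labels from the factors, and under the isomorphism $\P(\C_{\T_i})\cong\wide(\T_i)$ a cover edge is labeled by the unique brick in the associated ``right-perp intersected with target'' category, exactly matching the label coming from Theorem~\ref{thm:tube_binary_chain}.\ref{Ct labels}. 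The main (modest) obstacle is purely bookkeeping: verifying that the shuffle-product poset isomorphism and the orthogonal-decomposition poset isomorphism agree on edge labels and on the assignment $P\mapsto\sW(X_1,\ldots,X_k)$, which is routine once the setup is in place.
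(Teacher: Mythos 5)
Your proposal is correct and follows essentially the same route as the paper: the paper also observes that $\Crpx=\C_{\T_1}\shuffle\cdots\shuffle\C_{\T_m}$ (immediate from Lemma~\ref{lem:tubes_ortho}), that $\Brpx$ arises from the $B_{\T_i}$ via Proposition~\ref{shuf prod}.\ref{shuf bin}, and then combines Proposition~\ref{shuf prod}, Theorem~\ref{thm:tube_binary_chain}, and the orthogonal decomposition $\wide(\exreg)\cong\prod_{i=1}^m\wide(\T_i)$ from Proposition~\ref{prop:tubes_ortho}. The extra bookkeeping you spell out about compatibility of the labelings is exactly what the paper leaves implicit.
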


The remainder of the section gathers facts about regular para-exceptional sequences that are obtained easily from results of Section~\ref{sec:one tube} and the observations (based on Proposition~\ref{prop:tubes_ortho}) that $\Crpx=\C_{\T_1}\shuffle\cdots\shuffle\C_{\T_m}$ and $\wide(\nonhomo) \cong \prod_{i = 1}^m \wide(\T_i)$. 
When the proofs are immediate from these observations and a result from Section~\ref{sec:one tube}, we will merely identify that result.  
Many of these facts are analogs, for regular para-exceptional sequences and wide subcategories in $\wide(\sW(\exreg))$, of facts about exceptional sequences and exceptional subcategories that were reviewed in Section~\ref{ex chain sec}.

\begin{proposition}\label{prop:tube_anti_isom}  \quad
\begin{enumerate}[label=\bf\arabic*., ref=\arabic*]
\item \label{tai 1}
The map $\W \mapsto \W^\perp \cap \nonhomo$ is an anti-automorphism of $\wide(\nonhomo)$, with inverse $\W \mapsto {}^\perp \W \cap \nonhomo$.
\item \label{tai 2}
The map $\W \mapsto \W^\perp \cap \reg$ is an anti-automorphism of $\wide(\reg)$, with inverse $\W \mapsto {}^\perp \W \cap \reg$.  
\end{enumerate}
\end{proposition}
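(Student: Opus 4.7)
The plan is to reduce both assertions to the tube-wise anti-automorphism of Proposition~\ref{prop:one tube_anti_isom}, using the orthogonal decompositions supplied by Proposition~\ref{prop:tubes_ortho}.

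For Assertion~\ref{tai 1}, I would first invoke Proposition~\ref{prop:tubes_ortho} to identify $\wide(\nonhomo)$ with the product $\prod_{i=1}^{m}\wide(\T_i)$ via $\W\mapsto(\W\cap\T_1,\ldots,\W\cap\T_m)$, with inverse given by orthogonal direct sum. The key step is to show that the perpendicular operation respects this product structure, i.e.
\[
\W^\perp\cap\nonhomo \;=\; \bigoplus_{i=1}^{m}\bigl((\W\cap\T_i)^\perp\cap\T_i\bigr),
\]
together with the symmetric identity for ${}^\perp\W\cap\nonhomo$. This follows by a pointwise check on indecomposables: if $Y\in\T_j$ is indecomposable, then $Y\in\W^\perp$ iff $\Hom(W,Y)=0=\Ext^1(W,Y)$ for all indecomposable summands $W$ of objects of $\W$, and by Lemma~\ref{lem:tubes_ortho} only the summands lying in $\T_j$ itself can contribute, i.e.\ only those in $\W\cap\T_j$. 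Remark~\ref{rem:exact} then identifies $(\W\cap\T_i)^\perp\cap\T_i$ with the perpendicular category computed intrinsically inside $\T_i$, so Proposition~\ref{prop:one tube_anti_isom} applied in each $\T_i$ gives the claimed anti-automorphism and identifies its inverse as the claimed formula.

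For Assertion~\ref{tai 2}, the strategy is the same but now one uses the additional orthogonal decomposition $\reg=\homo\oplus\nonhomo$ supplied by Proposition~\ref{prop:tubes_ortho}. The homogeneous part is itself an orthogonal direct sum over its (infinitely many) rank-one tubes, each of which by Lemma~\ref{lem:homogeneous} has only two wide subcategories $\{0,\T\}$ interchanged by $(-)^\perp\cap\T$, in agreement with Proposition~\ref{prop:one tube_anti_isom}. Decomposing $\W^\perp\cap\reg$ tube-by-tube, as in Assertion~\ref{tai 1}, and combining the homogeneous and non-homogeneous contributions via Assertion~\ref{tai 1}, reduces the claim to Proposition~\ref{prop:one tube_anti_isom} applied factor-by-factor.

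I do not foresee any genuine obstacle: the only delicate point is checking that the perp computed inside a single tube coincides with the ambient perp intersected with that tube, which is exactly the content of Remark~\ref{rem:exact}. Once the pointwise vanishing argument is written out, both assertions reduce to assembling Proposition~\ref{prop:one tube_anti_isom} over the (possibly infinite) set of tubes making up $\nonhomo$ or $\reg$.
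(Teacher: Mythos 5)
Your proposal is correct and follows essentially the same route as the paper, whose proof simply cites Proposition~\ref{prop:one tube_anti_isom}, Lemma~\ref{lem:tubes_ortho}, and Proposition~\ref{prop:tubes_ortho}; you have merely written out the tube-by-tube reduction that the paper leaves implicit. The details you supply (the pointwise perpendicularity check via Lemma~\ref{lem:tubes_ortho} and the treatment of the homogeneous tubes via Lemma~\ref{lem:homogeneous}) are exactly the right ones.
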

\begin{proof}
Follows from Proposition~\ref{prop:one tube_anti_isom}, Lemma~\ref{lem:tubes_ortho}, and Proposition~\ref{prop:tubes_ortho}.
\end{proof}

\begin{definition}\label{def:full}
Let $(X_1,\ldots,X_k)$ be a para-exceptional sequence. We say that $(X_1,\ldots,X_k)$ is \newword{fully non-exceptional} if it is not an exceptional sequence and contains a non-exceptional module from every non-homogeneous tube. 
Note that Proposition~\ref{prop:hom_ext_vanish} implies that a fully non-exceptional para-exceptional sequence contains \emph{exactly one} non-exceptional module from each non-homogeneous tube.
\end{definition}

\begin{remark}\label{rem:exclude_kronecker}
    The requirement that $(X_1,\ldots,X_k)$ is not an exceptional sequence in Definition~\ref{def:full} is made to exclude the empty sequence in the case where $\N = \emptyset$. (This only occurs in rank~$2$.) Indeed, in this case, the empty sequence is the only regular para-exceptional sequence. It is also an exceptional sequence, even though, vacuously, it contains a non-homogeneous brick from every non-homogeneous tube.
\end{remark}

\begin{corollary}\label{cor:maximal_non_exceptional}
Let $(X_1,\ldots,X_k)$ be a maximal regular para-exceptional sequence. If $\N \neq \emptyset$, then $(X_1,\ldots,X_k)$ is fully non-exceptional.
\end{corollary}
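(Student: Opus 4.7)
The plan is to exploit the shuffle decomposition $\Crpx=\C_{\T_1}\shuffle\cdots\shuffle\C_{\T_m}$ recorded just before Theorem~\ref{thm:soft_binary_chain} and following from Lemma~\ref{lem:tubes_ortho}, which reduces the claim to a per-tube statement. Since $\N\ne\emptyset$, the maximal sequence $(X_1,\ldots,X_k)\in\Crpx$ is a shuffle of words $(X_1,\ldots,X_k)|_{\T_i}\in\C_{\T_i}$ for $i=1,\ldots,m$, and its maximality in $\Crpx$ corresponds, via Proposition~\ref{shuf prod}, to maximality of each restriction in $\C_{\T_i}$.

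Next, for each $i$ I would apply Proposition~\ref{prop:one complete_generates_tube} inside $\T_i$ to the maximal $\brick(\T_i)$-brick sequence $(X_1,\ldots,X_k)|_{\T_i}$, concluding that $\sW((X_1,\ldots,X_k)|_{\T_i})=\T_i$. Since $\T_i$ is a non-homogeneous tube, it is representation-infinite, so by Proposition~\ref{prop:wide_tube} (applied with both the ambient and the test subcategory equal to $\T_i$) it is not an exceptional subcategory of itself. Therefore the restriction $(X_1,\ldots,X_k)|_{\T_i}$ cannot consist entirely of exceptional modules, since otherwise $\T_i=\sW((X_1,\ldots,X_k)|_{\T_i})$ would be exceptional by the definition of an exceptional subcategory.

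Finally, by Proposition~\ref{prop:bricks_in_tube} the non-exceptional bricks of $\T_i$ are precisely the modules $F_\gamma=R_{\gamma,r_\gamma}$ with $\gamma\in\Xi^c$ and $\T_\gamma=\T_i$, and each of these lies in $\nrig$ by Definition~\ref{para exc EF def}. This produces, for every non-homogeneous tube $\T_i$, at least one term of the sequence lying in $\nrig$ with that tube as its ambient tube, which is precisely the fully non-exceptional condition of Definition~\ref{def:full}; the presence of any such $F_\gamma$ simultaneously certifies that $(X_1,\ldots,X_k)$ is not an exceptional sequence. I do not anticipate a serious obstacle: the argument amounts to unpacking the shuffle identity for $\Crpx$ and then assembling the tube-level results of Section~\ref{sec:one tube}, principally Propositions~\ref{prop:one complete_generates_tube} and~\ref{prop:wide_tube}.
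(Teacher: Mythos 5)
Your argument is correct, but it takes a genuinely different route from the paper's. The paper proves the contrapositive: if some non-homogeneous tube $\T_j$ contributes no non-exceptional term, then Proposition~\ref{prop:wide_tube} (the implication from ``every indecomposable is exceptional'' to ``contained in $X^\perp$ for some non-exceptional brick $X$'') together with Lemma~\ref{lem:tubes_ortho} produces a non-exceptional brick $X\in\T_j$ that can be appended to the sequence, contradicting maximality. You instead argue forward: maximality forces each tube restriction to be maximal in $\C_{\T_i}$, Proposition~\ref{prop:one complete_generates_tube} then says the restriction generates all of $\T_i$, and since $\T_i$ is representation-infinite it is not an exceptional subcategory, so some term of the restriction must be a non-exceptional brick, hence one of the $F_\gamma\in\nrig$ by Proposition~\ref{prop:bricks_in_tube}. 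Both proofs reduce to the tube level via Lemma~\ref{lem:tubes_ortho} and ultimately rest on the dichotomy of Proposition~\ref{prop:wide_tube}; the paper's version is lighter, needing only one implication of that proposition and an explicit one-step extension, whereas yours routes through Proposition~\ref{prop:one complete_generates_tube}, whose proof in turn rests on the Igusa--Sen bijection with type $C$ exceptional sequences. One small citation quibble: the equivalence between maximality of the shuffled sequence and maximality of each tube restriction is the content of the identity $\Crpx=\C_{\T_1}\shuffle\cdots\shuffle\C_{\T_m}$ (together with the definition of the shuffle product) rather than of Proposition~\ref{shuf prod} itself; this does not affect the validity of your argument.
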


\begin{proof}
Suppose $(X_1,\ldots,X_k)$ is a regular para-exceptional sequence that is not fully non-exceptional. 
Thus for some $j\in\set{1,\ldots,m}$, no $X_i$ is both non-exceptional and contained in $\T_j$. Proposition~\ref{prop:wide_tube} and Lemma~\ref{lem:tubes_ortho} then imply that there exists a non-exceptional brick $X \in \T$ such that $\sW(X_1,\ldots,X_k) \subseteq X^\perp$. 
Then $(X_1,\ldots,X_k,X)$ is a regular para-exceptional sequence, so $(X_1,\ldots,X_k)$ is not maximal.
\end{proof}
\begin{lemma}\label{lem:soft_index_arbitrary}
Let $(X_1,\ldots,X_k)$ be a regular para-exceptional sequence which is not maximal. 
Then for any $0 \leq i \leq k$ there exist a maximal regular para-exceptional sequence of the form $(X_1,\ldots,X_i,Y_1,\ldots,Y_j,X_{i+1},\ldots,X_k)$.
\end{lemma}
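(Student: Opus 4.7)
The plan is to exploit the fact that regular para-exceptional sequences naturally decompose as shuffles of $\brick(\T_\ell)$-brick sequences, one for each non-homogeneous tube, and then to extend each component separately using Lemma~\ref{lem:one soft_index_arbitrary}.  This reflects the shuffle-product identity $\Crpx=\C_{\T_1}\shuffle\cdots\shuffle\C_{\T_m}$ and the corresponding orthogonal decomposition $\nonhomo=\T_1\oplus\cdots\oplus\T_m$ from Proposition~\ref{prop:tubes_ortho}.

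First, partition $(X_1,\ldots,X_k)$ according to which tube contains each term.  For each $\ell\in\{1,\ldots,m\}$, let $(X_{\ell,1},\ldots,X_{\ell,k_\ell})$ be the subsequence of terms lying in $\T_\ell$, and let $i_\ell$ denote the number of indices $j\le i$ with $X_j\in\T_\ell$.  Lemma~\ref{lem:tubes_ortho} ensures that each such subsequence is itself a $\brick(\T_\ell)$-brick sequence.  Since $(X_1,\ldots,X_k)$ is not maximal, at least one of these subsequences fails to be maximal in its tube---for if they were all maximal, each would generate $\T_\ell$ by Proposition~\ref{prop:one complete_generates_tube} and the whole sequence would already generate $\nonhomo$, contradicting Corollary~\ref{cor:maximal_non_exceptional} and the non-maximality hypothesis.

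Next, for each $\ell$ apply Lemma~\ref{lem:one soft_index_arbitrary} inside $\T_\ell$ to produce modules $Y_{\ell,1},\ldots,Y_{\ell,j_\ell}$ (with $j_\ell\ge 0$, and $j_\ell=0$ when the $\T_\ell$-subsequence is already maximal in $\T_\ell$) so that
\[
(X_{\ell,1},\ldots,X_{\ell,i_\ell},\,Y_{\ell,1},\ldots,Y_{\ell,j_\ell},\,X_{\ell,i_\ell+1},\ldots,X_{\ell,k_\ell})
\]
is a maximal $\brick(\T_\ell)$-brick sequence.  Concatenate the $Y_{\ell,a}$ across $\ell$ in any fixed order to obtain $(Y_1,\ldots,Y_j)$ with $j=\sum_\ell j_\ell\ge 1$.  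Lemma~\ref{lem:tubes_ortho} handles pairs from distinct tubes and the $\brick(\T_\ell)$-brick condition handles same-tube pairs, so the inserted sequence $(X_1,\ldots,X_i,Y_1,\ldots,Y_j,X_{i+1},\ldots,X_k)$ is a regular para-exceptional sequence.

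Finally, verify maximality: any candidate brick $Z$ for insertion is a regular non-homogeneous brick, hence lies in a unique $\T_{\ell_0}$, and by Lemma~\ref{lem:tubes_ortho} inserting $Z$ at the corresponding position within the $\T_{\ell_0}$-subsequence would produce a strictly longer $\brick(\T_{\ell_0})$-brick sequence, contradicting Proposition~\ref{prop:one complete_generates_tube}.  The only subtlety is the index bookkeeping needed to translate between the full sequence and its tube-by-tube components, but the mutual Hom- and Ext-orthogonality between tubes makes this reduction to the already established one-tube case routine, so no genuine obstacle arises.
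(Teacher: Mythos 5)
Your proof is correct and follows essentially the same route as the paper, whose entire proof reads ``Follows from Lemmas~\ref{lem:one soft_index_arbitrary} and~\ref{lem:tubes_ortho}''; you have simply spelled out the tube-by-tube decomposition and recombination that those two citations encode. (One small blemish: your appeal to Corollary~\ref{cor:maximal_non_exceptional} in the second paragraph is not the right statement for that step, but that step is not actually needed, since your final paragraph verifies maximality of the extended sequence directly.)
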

\begin{proof}
Follows from Lemmas~\ref{lem:one soft_index_arbitrary} and~\ref{lem:tubes_ortho}.
\end{proof}

\begin{proposition}\label{prop:tube_generation}
Let $\W \in \wide(\exreg)$. 
\begin{enumerate}[label=\bf\arabic*., ref=\arabic*]
\item There exists a regular para-exceptional sequence $(X_1,\ldots,X_k)$ such that $\W = \sW(X_1,\ldots,X_k)$.
\item There exists a regular para-exceptional sequence $(X_1,\ldots,X_k)$ such that $\W = (X_1,\ldots,X_k)^\perp \cap \exreg$.
\item There exists a regular para-exceptional sequence $(X_1,\ldots,X_k)$ such that $\W = {}^\perp(X_1,\ldots,X_k) \cap \exreg$.
\end{enumerate}
\end{proposition}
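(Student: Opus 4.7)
The plan is to reduce everything to the single-tube statements already established, by exploiting the orthogonal decomposition $\nonhomo = \T_1 \oplus \cdots \oplus \T_m$ from Proposition~\ref{prop:tubes_ortho}. Given $\W \in \wide(\exreg) = \wide(\nonhomo)$, this decomposition of $\nonhomo$ induces an orthogonal decomposition $\W = \W_1 \oplus \cdots \oplus \W_m$, where $\W_i := \W \cap \T_i$ is a wide subcategory of the tube category $\T_i$. The idea is to choose an appropriate $\brick(\T_i)$-brick sequence for each $\W_i$ using the single-tube results of Section~\ref{sec:one tube}, then concatenate these sequences.

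The key observation making the concatenation strategy work is Lemma~\ref{lem:tubes_ortho}: any two bricks lying in distinct non-homogeneous tubes are mutually Hom-$\Ext^1$ orthogonal. Consequently, if $(X_1^{(i)}, \ldots, X_{k_i}^{(i)})$ is a $\brick(\T_i)$-brick sequence for each $i \in \{1,\ldots,m\}$, then the concatenation
\[
(X_1^{(1)}, \ldots, X_{k_1}^{(1)}, X_1^{(2)}, \ldots, X_{k_2}^{(2)}, \ldots, X_1^{(m)}, \ldots, X_{k_m}^{(m)})
\]
is automatically a regular para-exceptional sequence. Moreover, Lemma~\ref{lem:tubes_ortho} implies that for any module $X_j^{(i)} \in \T_i$ and any wide subcategory $\U \subseteq \T_{i'}$ with $i' \neq i$, we have $\U \subseteq (X_j^{(i)})^\perp \cap {}^\perp(X_j^{(i)})$. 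This ensures that the left and right perpendicular operators, when restricted to $\exreg$, decompose across the tubes: for the concatenated sequence, $(X_1,\ldots,X_k)^\perp \cap \exreg$ equals $\bigoplus_{i=1}^m ((X_1^{(i)},\ldots,X_{k_i}^{(i)})^\perp \cap \T_i)$, and similarly for $\sW$ and for the left-perpendicular.

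For part (1), apply Theorem~\ref{thm:tube_binary_chain}.\ref{Pc to wideT} within each $\T_i$ to obtain a $\brick(\T_i)$-brick sequence generating $\W_i$; concatenating yields a regular para-exceptional sequence whose $\sW$-closure is $\W_1 \oplus \cdots \oplus \W_m = \W$. For parts (2) and (3), apply Proposition~\ref{prop:tube_generation_1} within each $\T_i$ (which covers both the exceptional and non-exceptional cases by choosing, respectively, a non-exceptional or an exceptional $\brick(\T_i)$-brick sequence) to represent $\W_i$ as the appropriate perpendicular in $\T_i$. Concatenation then produces the desired regular para-exceptional sequence, with the perpendicular computation verified via the cross-tube orthogonality noted above.

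The main point requiring care — though not a serious obstacle — is confirming that the perpendicular of a brick sequence, computed in $\mods\Lambda$ and intersected with $\exreg$, agrees tube-by-tube with the perpendicular computed intrinsically within each $\T_i$; this follows from Lemma~\ref{lem:tubes_ortho} combined with Remark~\ref{rem:exact}, which ensures that $\Ext^1$ computed in $\mods\Lambda$ agrees with $\Ext^1$ computed in the wide subcategory $\T_i$.
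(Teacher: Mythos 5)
Your proposal is correct and follows essentially the same route as the paper, whose proof is the one-line citation of Proposition~\ref{prop:tube_generation_1}, Lemma~\ref{lem:tubes_ortho}, and Proposition~\ref{prop:tubes_ortho}: decompose $\W$ tube-by-tube via the orthogonal decomposition of $\nonhomo$, solve each piece inside its tube, and concatenate using the cross-tube orthogonality. Your additional appeal to Theorem~\ref{thm:tube_binary_chain}.\ref{Pc to wideT} for part (1) is an equally valid way to get the per-tube surjectivity that the paper extracts from Proposition~\ref{prop:tube_generation_1}.
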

\begin{proof}
Follows from Proposition~\ref{prop:tube_generation_1}, Lemma~\ref{lem:tubes_ortho}, and Proposition~\ref{prop:tubes_ortho}.
\end{proof}

\begin{proposition}\label{prop:tube_generation_2}
Let $(X_1,\ldots,X_k)$ be a regular para-exceptional sequence.
\begin{enumerate}[label=\bf\arabic*., ref=\arabic*]
\item \label{tg2 1}
The following are equivalent:
\begin{enumerate}[label=\rm(\alph*), ref=(\alph*)]
\item $\sW(X_1,\ldots,X_k)$ is representation-finite.
\item $\sW(X_1,\ldots,X_k)$ is an exceptional subcategory.
\item $\sW(X_1,\ldots,X_k) \cap \T$ is representation-finite for all $T\in\N$.
\item $\sW(X_1,\ldots,X_k) \cap \T$ is an exceptional subcategory for all $\T\in\N$.
\item $(X_1,\ldots,X_k)^\perp \cap\T$ is representation-infinite for all $\T\in\N$.
\item ${}^\perp(X_1,\ldots,X_k) \cap\T$ is representation-infinite for all $\T\in\N$.
\item $(X_1,\ldots,X_k)$ is an exceptional sequence.
\end{enumerate}
\item \label{tg2 2}
The following are equivalent:
\begin{enumerate}[label=\rm(\alph*), ref=(\alph*)]
\item $(X_1,\ldots,X_k)^\perp \cap \nonhomo$ is representation-finite.
\item ${}^\perp(X_1,\ldots,X_k) \cap \nonhomo$ is representation-finite.
\item $(X_1,\ldots,X_k)^\perp \cap \nonhomo$ is an exceptional subcategory
\item ${}^\perp(X_1,\ldots,X_k) \cap \nonhomo$ is an exceptional subcategory.
\item $(X_1,\ldots,X_k)^\perp \cap \T$ is representation-finite for all $\T\in\N$.
\item ${}^\perp(X_1,\ldots,X_k)\cap \T$ is representation-finite for all $\T\in\N$.
\item $(X_1,\ldots,X_k)^\perp \cap \T$ is an exceptional subcategory for all $\T\in\N$.
\item ${}^\perp(X_1,\ldots,X_k) \cap \T$ is an exceptional subcategory for all $\T\in\N$.
\item $(X_1,\ldots,X_k)$ is fully non-exceptional.
\end{enumerate}
\end{enumerate}
\end{proposition}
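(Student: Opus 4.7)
The plan is to reduce everything to wide subcategories of a single tube and apply Proposition~\ref{prop:wide_tube}. The key structural input is the orthogonal decomposition $\nonhomo = \T_1 \oplus \cdots \oplus \T_m$ from Proposition~\ref{prop:tubes_ortho}, together with Lemma~\ref{lem:tubes_ortho}, which guarantee that any wide subcategory of $\nonhomo$ splits as a direct sum of wide subcategories of the individual tubes. Both $\sW(X_1,\ldots,X_k)$ and its perpendicular categories intersected with $\nonhomo$ are thus determined, tube by tube, by the subsequence of $(X_1,\ldots,X_k)$ consisting of those terms lying in a given $\T$.

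For Part 1, I would first note that $\sW(X_1,\ldots,X_k) \cap \T$ is generated by the subsequence of $X_i$'s lying in $\T$ (using Lemma~\ref{lem:tubes_ortho}). Since there are only finitely many tubes and the decomposition is orthogonal, the properties ``representation-finite'' and ``exceptional'' pass between $\sW(X_1,\ldots,X_k)$ and its tube summands, giving (a)$\iff$(c) and (b)$\iff$(d). Lemma~\ref{lem:rep_finite_exceptional} yields (a)$\implies$(b), and (b)$\implies$(a) follows because an exceptional sequence for $\sW(X_1,\ldots,X_k)$ can be sorted by tube into exceptional sequences generating each summand, each rep-finite by Proposition~\ref{prop:wide_tube}. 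The chain (c)$\iff$(d)$\iff$(e)$\iff$(f) is then exactly Proposition~\ref{prop:wide_tube} applied in each tube. For (d)$\iff$(g), condition (d) forces (via Proposition~\ref{prop:wide_tube}\ref{tube exc}$\iff$\ref{tube all exc}) that every $X_i$, being an indecomposable of the exceptional subcategory $\sW(X_1,\ldots,X_k) \cap \T$ for its tube $\T$, is itself exceptional; conversely, if every $X_i$ is exceptional, the subsequence in each tube is an exceptional sequence and so generates an exceptional subcategory.

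For Part 2, I would identify $(X_1,\ldots,X_k)^\perp \cap \T$ with the right-perp in $\T$ of $\V_\T := \sW(X_1,\ldots,X_k) \cap \T$ via Lemma~\ref{lem:tubes_ortho}. Since $(X_1,\ldots,X_k)^\perp \cap \nonhomo$ is a wide subcategory of $\nonhomo$, it decomposes orthogonally by tube, which, combined with Lemma~\ref{lem:rep_finite_exceptional} and the sorting argument from Part 1, gives (a)$\iff$(c)$\iff$(e)$\iff$(g), and similarly (b)$\iff$(d)$\iff$(f)$\iff$(h) for the left perp. The crossover (e)$\iff$(f) comes from Proposition~\ref{prop:wide_tube} in the form ``rep-finite iff at least one (equivalently, both) perps rep-infinite'': in each tube, the right-perp of $\V_\T$ being rep-finite is equivalent to $\V_\T$ being non-exceptional, which in turn is equivalent to the left-perp of $\V_\T$ being rep-finite. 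Finally, (g)$\iff$(i) requires translating ``$\V_\T$ is non-exceptional for every $\T \in \N$'' into ``the subsequence in each $\T$ contains a term of $\nrig$'': the reverse direction is immediate since any $F_\gamma$ appearing in the sequence is a non-exceptional brick of $\V_\T$; the forward direction follows from Proposition~\ref{prop:wide_tube}\ref{tube exc}$\iff$\ref{tube all exc}, because if no term of the subsequence in $\T$ lay in $\nrig$, the subsequence would be an exceptional sequence generating an exceptional $\V_\T$, contradicting the hypothesis.

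The main obstacle I foresee is the subtlety in the step (g)$\iff$(i), together with the edge case $\N = \emptyset$ (the Kronecker algebra), in which the only regular para-exceptional sequence is empty: conditions (a)--(h) hold trivially or vacuously, while (i) fails because the empty sequence is exceptional. The clause ``not an exceptional sequence'' in Definition~\ref{def:full} (see Remark~\ref{rem:exclude_kronecker}) is exactly what creates this mismatch, so the proposition should be read as implicitly assuming $\N \neq \emptyset$; otherwise the outline above goes through cleanly.
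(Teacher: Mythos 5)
Your proof is correct and is essentially the paper's argument: the paper's own proof is a one-line citation of Proposition~\ref{prop:tube_generation_1}, Lemma~\ref{lem:tubes_ortho}, and Proposition~\ref{prop:tubes_ortho}, i.e., exactly the tube-by-tube reduction via the orthogonal decomposition $\nonhomo=\T_1\oplus\cdots\oplus\T_m$ that you spell out (you invoke Proposition~\ref{prop:wide_tube} directly where the paper routes through Proposition~\ref{prop:tube_generation_1}, but that is the same single-tube input). Your observation that Part~2 fails literally for the empty sequence when $\N=\emptyset$ is a fair catch, consistent with Remark~\ref{rem:exclude_kronecker}, though the paper's proof does not address that degenerate case either.
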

\begin{proof}
Follows from Proposition~\ref{prop:tube_generation_1}, Lemma~\ref{lem:tubes_ortho}, and Proposition~\ref{prop:tubes_ortho}.
\end{proof}

\begin{proposition}\label{prop:complete_generates_tube}
Let $(X_1,\ldots,X_k)$ be a regular para-exceptional sequence. Then the following are equivalent.
\begin{enumerate}[label=\rm(\roman*), ref=(\roman*)]
\item \label{crp1} $(X_1,\ldots,X_k) \in \Crpx$, that is, $(X_1,\ldots,X_k)$ is maximal.
\item \label{crp2} $k = n-2 + m$.
\item \label{crp3} $\sW(X_1,\ldots,X_k) = \nonhomo$.
\item \label{crp4} $(X_1,\ldots,X_k)^\perp \cap \nonhomo = 0$.
\item \label{crp5} ${}^\perp(X_1,\ldots,X_k) \cap \nonhomo = 0$.
\end{enumerate}
\end{proposition}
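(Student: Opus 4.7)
The plan is to reduce the proposition tube-by-tube to Proposition~\ref{prop:one complete_generates_tube}. By Lemma~\ref{lem:tubes_ortho}, terms of $(X_1,\ldots,X_k)$ lying in distinct non-homogeneous tubes are mutually Hom- and Ext-orthogonal, so for each $i\in\{1,\ldots,m\}$ the ordered subsequence $(X_1,\ldots,X_k)|_{\T_i}$ of terms lying in $\T_i$ is itself a $\brick(\T_i)$-brick sequence. Conversely, any shuffle (in the sense of Section~\ref{shuffle sec}) of $\brick(\T_i)$-brick sequences, one per $i$, is a regular para-exceptional sequence. Since any brick $Y$ used to extend $(X_1,\ldots,X_k)$ lives in exactly one tube $\T_i$ and is automatically Hom- and Ext-orthogonal to terms in the other tubes, $(X_1,\ldots,X_k)$ is maximal if and only if each subsequence $(X_1,\ldots,X_k)|_{\T_i}$ is maximal as a $\brick(\T_i)$-brick sequence.

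Next, Proposition~\ref{prop:tubes_ortho} supplies the orthogonal decomposition
\[
\sW(X_1,\ldots,X_k) \;=\; \bigoplus_{i=1}^{m} \sW\bigl((X_1,\ldots,X_k)|_{\T_i}\bigr),
\]
and likewise, using Lemma~\ref{lem:tubes_ortho} to discard contributions from outside each $\T_i$,
\[
(X_1,\ldots,X_k)^\perp \cap \nonhomo \;=\; \bigoplus_{i=1}^{m} \Bigl(\bigl((X_1,\ldots,X_k)|_{\T_i}\bigr)^\perp \cap \T_i\Bigr),
\]
together with the analogous identity for the left perpendicular. Applying the equivalence of conditions (i), (iii), (iv), (v) of Proposition~\ref{prop:one complete_generates_tube} inside each $\T_i$ and then reassembling these decompositions, conditions \ref{crp1}, \ref{crp3}, \ref{crp4}, and \ref{crp5} become simultaneously true or false, yielding their equivalence.

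For \ref{crp2}, I would invoke the standard fact from the tubular classification \cite{DlabRingel} that the ranks $r_1,\ldots,r_m$ of the non-homogeneous tubes of a connected tame hereditary algebra of rank $n$ satisfy $\sum_{i=1}^{m}(r_i-1)=n-2$, i.e.\ $\sum_{i=1}^{m} r_i = n-2+m$. (In the language of Section~\ref{McSul chain sec}, this reflects the partition of the $n-2$ simple horizontal roots in $\Xi^c$ into the components of $\Upsilon^c$, with $r_i$ simples in the $i$th component.) Since Proposition~\ref{prop:one complete_generates_tube} gives that a maximal $\brick(\T_i)$-brick sequence has length exactly $r_i$, the tube-by-tube maximality criterion from the first paragraph forces any maximal regular para-exceptional sequence to have length $\sum_i r_i = n-2+m$, and conversely any regular para-exceptional sequence of that length must have maximal length in each tube, hence be maximal. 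This yields \ref{crp1}$\iff$\ref{crp2}.

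The only step with real content is the tube-by-tube decoupling in the first paragraph; once that is in hand, the rest is a direct assembly from Proposition~\ref{prop:one complete_generates_tube} together with the rank identity $\sum r_i = n-2+m$. I do not anticipate any serious obstacle, since the heavy lifting was already carried out in Section~\ref{sec:one tube}.
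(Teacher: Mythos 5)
Your proposal is correct and follows essentially the same route as the paper: the paper's proof likewise reduces tube-by-tube via Proposition~\ref{prop:tubes_ortho} (the decoupling you spell out is the observation $\Crpx=\C_{\T_1}\shuffle\cdots\shuffle\C_{\T_m}$ made at the start of Section~\ref{sec:rep_theory_tubes}), applies Proposition~\ref{prop:one complete_generates_tube} in each tube, and cites the identity $\sum_i r_i = n-2+m$ from \cite[Section~6]{DlabRingel}. You have simply written out in detail what the paper compresses into two sentences.
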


\begin{proof}
The sum of the ranks of the non-homogeneous tubes is known to be $n - 2 + m$, see  \cite[Section~6]{DlabRingel}. The result thus follows from Propositions~\ref{prop:one complete_generates_tube} and~\ref{prop:tubes_ortho}.
\end{proof}

\begin{proposition}\label{prop:maximal_soft}
Let $(X_1,\ldots,X_k)$ be a regular para-exceptional sequence. 
Then each of the following conditions is equivalent to $(X_1,\ldots,X_k)$ being maximal.
\begin{enumerate}[label=\rm(\roman*), ref=(\roman*)]
\item $\sW(X_{i+1},\ldots,X_j) = {}^\perp(X_1,\ldots,X_i) \cap (X_{j+1},\ldots,X_k)^\perp \cap \exreg$ for all $i \leq j \in \{0,\ldots,k\}$.
\item $\sW(X_{i+1},\ldots,X_j) = {}^\perp(X_1,\ldots,X_i) \cap (X_{j+1},\ldots,X_k)^\perp \cap \exreg$ for some $i \leq j \in \{0,\ldots,k\}$.
\end{enumerate}
\end{proposition}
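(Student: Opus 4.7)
The strategy is to follow the proof of Proposition~\ref{prop:one maximal_soft} (the single-tube analog), using the orthogonal decomposition $\nonhomo = \T_1 \oplus \cdots \oplus \T_m$ from Proposition~\ref{prop:tubes_ortho} to split everything tube by tube. It suffices to show, for each fixed pair $i \leq j \in \{0,\ldots,k\}$, that $(X_1,\ldots,X_k)$ is a maximal regular para-exceptional sequence if and only if $\sW(X_{i+1},\ldots,X_j)$ equals
\[ \W := {}^\perp(X_1,\ldots,X_i) \cap (X_{j+1},\ldots,X_k)^\perp \cap \exreg, \]
since this yields both equivalences in the statement at once.

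First, I would invoke Lemma~\ref{lem:soft_index_arbitrary} to observe that $(X_1,\ldots,X_k)$ is maximal as a regular para-exceptional sequence if and only if $(X_{i+1},\ldots,X_j)$ is maximal as a $(\brick(\W))$-brick sequence; because $\W \subseteq \exreg$, every brick of $\W$ is non-homogeneous, so this is precisely the ``maximal in $\W$'' condition of Definition~\ref{def:exceptional_sequence} applied to the regular para-exceptional alphabet.

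Next, I decompose $\W = \bigoplus_{\ell=1}^m (\W \cap \T_\ell)$ using Proposition~\ref{prop:tubes_ortho}. Because each entry of $(X_{i+1},\ldots,X_j)$ is a non-homogeneous brick, it lies in a unique tube $\T_{\ell(a)}$; let $\sigma_\ell$ denote the subsequence lying in $\T_\ell$. Lemma~\ref{lem:tubes_ortho} forces $\Hom$ and $\Ext^1$ between modules in distinct non-homogeneous tubes to vanish, so a candidate insertion brick $Y \in \brick(\W)$ automatically commutes with entries in tubes other than its own. Hence $(X_{i+1},\ldots,X_j)$ is maximal as a $\brick(\W)$-brick sequence if and only if each $\sigma_\ell$ is maximal as a $\brick(\W \cap \T_\ell)$-brick sequence in $\T_\ell$. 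Lemma~\ref{lem:tubes_ortho} also identifies $\W \cap \T_\ell$ with the perpendicular category, computed inside $\T_\ell$, of the $\T_\ell$-subsequences of $(X_1,\ldots,X_i)$ and $(X_{j+1},\ldots,X_k)$.

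The key step is then to apply Proposition~\ref{prop:one maximal_soft} inside each $\T_\ell$ to the $\T_\ell$-subsequence of the full sequence $(X_1,\ldots,X_k)$, which gives that $\sigma_\ell$ is maximal in $\W \cap \T_\ell$ if and only if $\sW(\sigma_\ell) = \W \cap \T_\ell$. Summing orthogonally across $\ell$ and using $\sW(X_{i+1},\ldots,X_j) = \bigoplus_\ell \sW(\sigma_\ell)$ combines these per-tube equivalences into the global equality $\sW(X_{i+1},\ldots,X_j) = \W$. The only bookkeeping obstacle is verifying the identification of perpendicular categories tube by tube, which is exactly the content of Lemma~\ref{lem:tubes_ortho}; once that is in hand, the proof reduces mechanically to the single-tube case already handled in Proposition~\ref{prop:one maximal_soft}.
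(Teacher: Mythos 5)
Your proof is correct and takes essentially the same approach as the paper: the paper's proof of Proposition~\ref{prop:maximal_soft} is the one-line citation ``Follows from Propositions~\ref{prop:one maximal_soft} and~\ref{prop:tubes_ortho},'' and your argument is exactly the tube-by-tube expansion of that citation, using the orthogonal decomposition to reduce to the single-tube case. The details you supply (the reduction of maximality of the whole sequence to maximality of $(X_{i+1},\ldots,X_j)$ in $\W$ via Lemma~\ref{lem:soft_index_arbitrary}, and the identification of perpendicular categories tube by tube via Lemma~\ref{lem:tubes_ortho}) are precisely what the paper leaves implicit.
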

\begin{proof}
Follows from Propositions~\ref{prop:one maximal_soft} and~\ref{prop:tubes_ortho}.
\end{proof}

\begin{proposition}\label{prop:perp_prefix_reg}
Let $(X_1,\ldots,X_k)$ and $(Y_1,\ldots,Y_i)$ be regular para-exceptional sequences. Then the following are equivalent.
\begin{enumerate}[label=\rm(\roman*), ref=(\roman*)]
\item\label{reg1} $(X_1,\ldots,X_k)$ and $(Y_1,\ldots,Y_i)$ are equivalent as prefixes in $\Crpx$.
\item\label{reg2}  $(X_1,\ldots,X_k)$ and $(Y_1,\ldots,Y_i)$ are equivalent as postfixes in $\Crpx$.
\item\label{reg3}  $\sW(X_1,\ldots,X_k) = \sW(Y_1,\ldots,Y_i)$.
\item\label{reg4}  $(X_1,\ldots,X_k)^\perp \cap \nonhomo = (Y_1,\ldots,Y_i)^\perp \cap \nonhomo$.
\item\label{reg5}  $(X_1,\ldots,X_k)^\perp = (Y_1,\ldots,Y_i)^\perp$.
\item\label{reg6}  ${}^{\perp}(X_1,\ldots,X_k)\cap \nonhomo = {}^{\perp} (Y_1,\ldots,Y_i) \cap \nonhomo$.
\item\label{reg7}  ${}^{\perp}(X_1,\ldots,X_k) = {}^{\perp}(Y_1,\ldots,Y_i)$.
\end{enumerate}
\end{proposition}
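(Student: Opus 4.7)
The plan is to follow the template of Proposition~\ref{prop:perp_prefix}, now fed by the tube-theoretic inputs of this section: Lemma~\ref{lem:soft_index_arbitrary} in place of Lemma~\ref{lem:exceptional_index_arbitrary}, Proposition~\ref{prop:maximal_soft} in place of Proposition~\ref{prop:maximal_exceptional}, and Proposition~\ref{prop:tube_anti_isom}.\ref{tai 1} to supply the anti-automorphism that handles the extra conditions \ref{reg4} and \ref{reg6} involving $\cap\,\nonhomo$.

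First I would dispose of the equivalences among \ref{reg3}, \ref{reg4}, \ref{reg5}, \ref{reg6}, and \ref{reg7}. The implications \ref{reg3}$\Rightarrow$\ref{reg5} and \ref{reg3}$\Rightarrow$\ref{reg7} follow immediately from Lemma~\ref{lem:ortho_wide}, which rewrites $(X_1,\ldots,X_k)^\perp = \sW(X_1,\ldots,X_k)^\perp$ (and the analog for ${}^\perp(-)$); intersecting with $\nonhomo$ then yields \ref{reg5}$\Rightarrow$\ref{reg4} and \ref{reg7}$\Rightarrow$\ref{reg6}. To close the cycle I would prove \ref{reg4}$\Rightarrow$\ref{reg3} (and, symmetrically, \ref{reg6}$\Rightarrow$\ref{reg3}) by observing that $\sW(X_1,\ldots,X_k)$ and $\sW(Y_1,\ldots,Y_i)$ both lie in $\wide(\nonhomo)$, since every regular para-exceptional module is in $\nonhomo$. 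Lemma~\ref{lem:ortho_wide} rewrites \ref{reg4} as $\sW(X_1,\ldots,X_k)^\perp\cap\nonhomo = \sW(Y_1,\ldots,Y_i)^\perp\cap\nonhomo$, and injectivity of the anti-automorphism $\W\mapsto\W^\perp\cap\nonhomo$ on $\wide(\nonhomo)$ forces \ref{reg3}.

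The key step is \ref{reg1}$\Rightarrow$\ref{reg3}. Unwinding the definition of prefix equivalence in $\Crpx$ produces a regular para-exceptional sequence $(Z_1,\ldots,Z_j)$ such that both $(X_1,\ldots,X_k,Z_1,\ldots,Z_j)$ and $(Y_1,\ldots,Y_i,Z_1,\ldots,Z_j)$ lie in $\Crpx$. Applying Proposition~\ref{prop:maximal_soft} to each of these two maximal sequences, with the left block empty, gives
\[\sW(X_1,\ldots,X_k) \;=\; (Z_1,\ldots,Z_j)^\perp\cap\exreg \;=\; \sW(Y_1,\ldots,Y_i),\]
which is \ref{reg3}. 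The analogous argument using postfix equivalence establishes \ref{reg2}$\Rightarrow$\ref{reg3}.

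Finally, for \ref{reg3}$\Rightarrow$\ref{reg1} I would use Lemma~\ref{lem:soft_index_arbitrary} to extend $(X_1,\ldots,X_k)$ to some maximal $(X_1,\ldots,X_k,Z_1,\ldots,Z_j)\in\Crpx$. Since \ref{reg3} is now known to be equivalent to \ref{reg5}, each $Z_a$ lies in $(X_1,\ldots,X_k)^\perp = (Y_1,\ldots,Y_i)^\perp$, so $(Y_1,\ldots,Y_i,Z_1,\ldots,Z_j)$ is again a regular para-exceptional sequence; its maximality then follows from Proposition~\ref{prop:maximal_soft}, because $\sW(Y_1,\ldots,Y_i) = \sW(X_1,\ldots,X_k) = (Z_1,\ldots,Z_j)^\perp\cap\exreg$. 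Prepending via Lemma~\ref{lem:soft_index_arbitrary} gives the symmetric \ref{reg3}$\Rightarrow$\ref{reg2}. No step poses a genuine obstacle; the only point requiring care is remembering, when invoking the anti-automorphism of $\wide(\nonhomo)$, that the wide subcategories in play automatically lie in $\wide(\nonhomo)$.
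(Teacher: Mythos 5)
Your proposal is correct in substance, but it takes a different (and longer) route than the paper. The paper's proof is essentially a one-line reduction: since $\Crpx=\C_{\T_1}\shuffle\cdots\shuffle\C_{\T_m}$ and $\nonhomo=\T_1\oplus\cdots\oplus\T_m$ (Proposition~\ref{prop:tubes_ortho}), the equivalence of \ref{reg1}, \ref{reg2}, \ref{reg3}, \ref{reg4}, and \ref{reg6} is read off componentwise from the single-tube statement, Proposition~\ref{prop:perp_prefix_soft}, and then \ref{reg5} and \ref{reg7} are attached via Lemma~\ref{lem:ortho_wide} and the trivial implications \ref{reg5}$\implies$\ref{reg4}, \ref{reg7}$\implies$\ref{reg6}. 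You instead rerun the template of Proposition~\ref{prop:perp_prefix} directly in $\nonhomo$, using Lemma~\ref{lem:soft_index_arbitrary}, Proposition~\ref{prop:maximal_soft}, and the anti-automorphism of Proposition~\ref{prop:tube_anti_isom}.\ref{tai 1}. This works, because those globalized tools were themselves obtained by the same orthogonal-decomposition reduction; your proof is self-contained at the level of $\nonhomo$ at the cost of redoing work the shuffle formalism already packages.

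One direction error to fix in your final step: for the maximal sequence $(X_1,\ldots,X_k,Z_1,\ldots,Z_j)$, the brick-sequence condition gives $\Hom(Z_a,X_b)=0=\Ext^1(Z_a,X_b)$, i.e.\ each $Z_a$ lies in ${}^{\perp}(X_1,\ldots,X_k)$, not in $(X_1,\ldots,X_k)^{\perp}$ as you wrote. You should therefore invoke \ref{reg3}$\implies$\ref{reg7} (equality of left-perpendiculars) rather than \ref{reg3}$\implies$\ref{reg5} to conclude that $(Y_1,\ldots,Y_i,Z_1,\ldots,Z_j)$ is a regular para-exceptional sequence; this is exactly how the paper's proof of Proposition~\ref{prop:perp_prefix} handles the corresponding step. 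The rest of your argument, including the use of Proposition~\ref{prop:maximal_soft} to certify maximality, is fine.
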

\begin{proof}
The equivalence of \ref{reg1}, \ref{reg2}, \ref{reg3}, \ref{reg4}, and \ref{reg6} follows from Proposition~\ref{prop:perp_prefix_soft} and~\ref{prop:tubes_ortho}. The implications \ref{reg5}$\implies$\ref{reg4} and \ref{reg7}$\implies$\ref{reg6} are trivial. The implications \ref{reg3}$\implies$\ref{reg5} and \ref{reg3}$\implies$\ref{reg7} follow from Lemma~\ref{lem:ortho_wide}.
\end{proof}

\section{A wide subcategory model for the McCammond-Sulway lattice}\label{sec:rep_theory_model}

In this section, we construct a wide subcategory model for the lattice $\P(\Cpx) \cong [1,c]_{T \cup F}$. 
We prove analogs of Proposition~\ref{prop:perp_prefix} and Theorem~\ref{thm:exceptional_binary_chain} for para-exceptional sequences using representation-theoretic arguments.
The first goal of this section is to understand why the obvious idea for a wide subcategory model does not work.
Recall that $\Lambda$ is always a connected tame hereditary algebra.

The considerations of Section~\ref{McSul chain sec} show that $\C_c^+$ is the disjoint union $\C_c \sqcup \C_c^F$. 
The following proposition includes the representation-theoretic analog of this fact. 

\begin{proposition}\label{prop:chain_system_union}
Let $(X_1,\ldots,X_k)$ be a para-exceptional sequence.
\begin{enumerate}[label=\bf\arabic*., ref=\arabic*]
\item \label{Cpx Cx}
If $(X_1,\ldots,X_k)$ is an exceptional sequence, then it is maximal as a para-exceptional sequence (that is, is is in $\Cpx$) if and only if it is maximal as an exceptional sequence (that is, it is in $\Cx$). Moreover, if it is maximal, then it is not a regular para-exceptional sequence.
\item \label{Cpx Crpx}
Suppose $\N \neq \emptyset$. If $(X_1,\ldots,X_k)$ is a regular para-exceptional sequence, then it is maximal as a para-exceptional sequence (that is, it is in~$\Cpx$) if and only if it is maximal as a regular para-exceptional sequence (that is, it is  in $\Crpx$). 
Moreover, if it is maximal, then it is fully non-exceptional.
\item \label{index_arbitrary_full}
If $(X_1,\ldots,X_k)$ is a non-maximal para-exceptional sequence and $0 \leq i \leq k$, then there exist $Y_1,\ldots,Y_j$ such~that $(X_1,\ldots,X_i,Y_1,\ldots,Y_j,X_{i+1},\ldots,X_k)$ is a maximal para-exceptional sequence.
\end{enumerate}
\end{proposition}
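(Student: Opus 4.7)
My plan is to first establish a single structural ``incompatibility'' observation and then deduce all three parts as short consequences. The observation to prove first is: whenever a para-exceptional sequence contains some $F_\gamma\in\nrig$, every entry of the sequence is regular. To verify this, I will suppose $F_\gamma$ sits at position $p$ and some preprojective exceptional module $Y$ sits at position $q$; whether $p<q$ or $p>q$, Lemma~\ref{lem:hom_ext} (applied with the non-exceptional indecomposable $X=F_\gamma$) supplies a nonzero $\Hom$ or $\Ext^1$ going in the direction forbidden by the definition of a para-exceptional sequence. The symmetric case-check for preinjective $Y$ is identical. Since every entry of a para-exceptional sequence lies in $\excep\cup\nrig$, ruling out preprojective and preinjective entries forces each entry to be regular, so the sequence is in fact a regular para-exceptional sequence.

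Granted this observation, Part~1 is quick. One direction holds since $\excep\subseteq\pexcep$. For the converse, if $(X_1,\ldots,X_k)$ is a complete exceptional sequence then for any $i$, Proposition~\ref{prop:maximal_exceptional} taken with $j=i$ gives ${}^\perp(X_1,\ldots,X_i)\cap(X_{i+1},\ldots,X_k)^\perp=\sW(\emptyset)=0$, which contains no module and in particular no $F_\gamma$; together with the easy direction this rules out para-exceptional insertions at every position. For the ``moreover'' in Part~1, Proposition~\ref{prop:complete_ex_maximal} gives $\sW(X_1,\ldots,X_k)=\mods\Lambda$, whereas the entries of a regular para-exceptional sequence would all lie in $\excep_\reg\cup\nrig\subseteq\reg\subsetneq\mods\Lambda$. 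Part~2 is handled symmetrically: one direction is immediate, and for the other a maximal regular para-exceptional sequence is fully non-exceptional by Corollary~\ref{cor:maximal_non_exceptional} and therefore contains some $F_\gamma$, so by the preliminary observation any would-be para-exceptional insertion $Y$ must itself be regular and hence contradict maximality among regular para-exceptional sequences. The ``moreover'' of Part~2 is a direct restatement of Corollary~\ref{cor:maximal_non_exceptional}.

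For Part~3, I will split on whether the given non-maximal para-exceptional sequence contains any entry in $\nrig$. If it does not, every entry lies in $\excep$, so the sequence is exceptional; by Part~1 it is non-maximal as an exceptional sequence, so Lemma~\ref{lem:exceptional_index_arbitrary} extends it at position $i$ to a complete exceptional sequence, which Part~1 in turn certifies as maximal para-exceptional. If it does contain some $F_\gamma$, then $\N\neq\emptyset$ and the preliminary observation forces the sequence to be regular para-exceptional; by Part~2 it is non-maximal as a regular para-exceptional sequence, so Lemma~\ref{lem:soft_index_arbitrary} extends it at position $i$ to a maximal regular para-exceptional sequence, which Part~2 certifies as maximal para-exceptional. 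The main obstacle is not mathematical but organizational: one must verify that the four sub-cases of the preliminary observation are all cleanly ruled out by Lemma~\ref{lem:hom_ext}, and then keep careful track of the several notions of ``maximality'' when cycling between Parts~1,~2, and~3.
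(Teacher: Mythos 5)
Your proof is correct and follows essentially the same route as the paper's: both rest on Lemma~\ref{lem:hom_ext} to show that an entry of $\nrig$ forces all entries to be regular, then deduce Parts~1 and~2 from Proposition~\ref{prop:complete_ex_maximal} and Corollary~\ref{cor:maximal_non_exceptional}, and Part~3 from Lemmas~\ref{lem:exceptional_index_arbitrary} and~\ref{lem:soft_index_arbitrary}. The only (harmless) variation is that for the converse in Part~1 you invoke the vanishing of ${}^\perp(X_1,\ldots,X_i)\cap(X_{i+1},\ldots,X_k)^\perp$ via Proposition~\ref{prop:maximal_exceptional} to block all insertions at once, where the paper instead applies Lemma~\ref{lem:hom_ext} to a non-regular entry $X_i$.
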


\begin{proof}
An exceptional sequence that is in $\Cpx$ is also maximal as an exceptional sequence. 
Conversely, suppose $(X_1,\ldots,X_k)\in\Cx$.
Then $\sW(X_1,\ldots,X_k) = \mods\Lambda$ by Proposition~\ref{prop:complete_ex_maximal}, so some $X_i$ is not regular.
Thus $(X_1,\ldots,X_k)$ is not a regular para-exceptional sequence. 
Also, Lemma~\ref{lem:hom_ext} says that any non-exceptional brick $X$ has $X \notin X_i^\perp$ and $X \notin {}^\perp X_i$. 
Thus $(X_1,\ldots,X_k)\in\Cpx$.
This is Assertion~\ref{Cpx Cx}.

Suppose for this paragraph that $\N \neq \emptyset$. A regular para-exceptional sequence that is in $\Cpx$ is also maximal as a regular para-exceptional sequence. 
Conversely, suppose that $(X_1,\ldots,X_k)\in\Crpx$, so that $(X_1,\ldots,X_k)$ is fully non-exceptional by Corollary~\ref{cor:maximal_non_exceptional}.
In particular, some $X_i$ is non-exceptional. 
Lemma~\ref{lem:hom_ext} says that any non-regular exceptional module $X$ has $X \notin X_i^\perp$ and $X \notin {}^\perp X_i$.
Thus $(X_1,\ldots,X_k)$ is also maximal as a para-exceptional sequence.
This is Assertion~\ref{Cpx Crpx}.

If $(X_1,\ldots,X_k)$ is a non-maximal para-exceptional sequence, then by Lemma~\ref{lem:hom_ext}, either Assertion~\ref{Cpx Cx} or Assertion~\ref{Cpx Crpx} applies.
In the former case, Assertion~\ref{index_arbitrary_full} follows from Lemma~\ref{lem:exceptional_index_arbitrary}.
In the latter case, Assertion~\ref{index_arbitrary_full} follows from Lemma~\ref{lem:soft_index_arbitrary}.
\end{proof}

We are now able to fill in the converse to Proposition~\ref{ttprime}.

\begin{corollary}\label{ttprime iff}
Let $W$ be a Coxeter group of affine type and let $c$ be a Coxeter element of~$W$ such that $\Upsilon^c$ has more than one component.
If $t,t'\in T_H$, then $tt'$ is a prefix of a word in $\C_c^F$ if and only if $tt'$ is a prefix of a word in $\C_c$.
\end{corollary}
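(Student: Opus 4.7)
The forward implication is Proposition~\ref{ttprime}, so the plan focuses on the converse: assuming $tt'$ is a prefix of a word in $\C_c$, we must produce a word in $\C_c^F$ with the same prefix. The natural strategy is to pass through the bijection $\omega$ from Theorem~\ref{thm:bijection_on_chain_systems}, extending an associated exceptional sequence to a fully non-exceptional regular para-exceptional sequence.

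First, since $t,t'\in T_H$, the corresponding positive real roots are regular, so $X:=X_t$ and $X':=X_{t'}$ are regular exceptional modules. By Corollary~\ref{cor:bijection_on_chain_systems_exceptional}, the hypothesis that $tt'$ is a prefix of some word in $\C_c$ translates to the statement that $(X,X')$ is a two-term exceptional sequence; in particular it is a regular para-exceptional sequence. Next I would argue that $(X,X')$ is \emph{not} maximal as a regular para-exceptional sequence: since $\Upsilon^c$ has more than one component, $\N\neq\emptyset$, so by Corollary~\ref{cor:maximal_non_exceptional} every maximal regular para-exceptional sequence is fully non-exceptional; but $(X,X')$ consists of two exceptional modules.

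Now apply Lemma~\ref{lem:soft_index_arbitrary} to extend $(X,X')$ to a maximal regular para-exceptional sequence $(X,X',Y_1,\ldots,Y_j)\in\Crpx$. By Proposition~\ref{prop:chain_system_union}.\ref{Cpx Crpx}, this extended sequence also belongs to $\Cpx$, and by Corollary~\ref{cor:maximal_non_exceptional} it contains at least one module from $\nrig$. Applying $\omega$ and using Theorem~\ref{thm:bijection_on_chain_systems}, we obtain a word in $\C_c^+$ whose initial letters are $t$ and $t'$; all of its letters lie in $T_H\cup F$ (because every module in the sequence is regular), and at least one letter lies in $F$ (because the sequence contains a non-exceptional module). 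Since $\C_c^+=\C_c\cup\C_c^F$ and every word in $\C_c$ must contain a letter in $T_V$ (otherwise, as in the proof of Proposition~\ref{Ccplus bin}, its product would fix the Coxeter axis pointwise and could not equal $c$), we conclude that the word lies in $\C_c^F$, so $tt'$ is a prefix of a word in $\C_c^F$, as required.

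There is no serious obstacle: the entire argument amounts to assembling the bijective dictionary $\omega$ with Proposition~\ref{prop:chain_system_union} and Lemma~\ref{lem:soft_index_arbitrary}. The one subtlety worth stating cleanly is the dichotomy $\Cpx=\Cx\sqcup\Crpx$ (Proposition~\ref{prop:chain_system_union}.\ref{Cpx Cx} and~\ref{Cpx Crpx}), which ensures that extending the \emph{regular} para-exceptional sequence $(X,X')$ to a maximal para-exceptional sequence necessarily lands in $\Crpx$ rather than in $\Cx$ once we know $(X,X')$ is not already fully non-exceptional.
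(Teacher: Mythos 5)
Your proposal is correct and follows essentially the same route as the paper: translate $tt'$ through $\omega^{-1}$ to a two-term exceptional sequence, observe it is regular because $t,t'\in T_H$, extend it to a maximal regular para-exceptional sequence, invoke Proposition~\ref{prop:chain_system_union}.\ref{Cpx Crpx} to see this is maximal as a para-exceptional sequence, and map back via $\omega$ to obtain a word in $\C_c^F$ with prefix $tt'$. The extra details you supply (checking non-maximality before applying Lemma~\ref{lem:soft_index_arbitrary}, and the explicit argument that the resulting word lies in $\C_c^F$ rather than $\C_c$) are sound but only make explicit what the paper leaves implicit.
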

\begin{proof}
One direction is Proposition~\ref{ttprime}.
If $tt'$ is a prefix of $\C_c$, then Theorem~\ref{thm:bijection_on_chain_systems} implies that $(\omega^{-1}(t),\omega^{-1}(t'))$ is a prefix of $\Cpx$, or in other words a para-exceptional sequence (and in fact an exceptional sequence).  
Since $t,t'\in T_H$, in fact $(\omega^{-1}(t),\omega^{-1}(t'))$ is a regular para-exceptional sequence, and is therefore a prefix of a sequence $(X_1,\ldots,X_k)$ that is maximal as a regular para-exceptional sequence.
Proposition~\ref{prop:chain_system_union}.\ref{Cpx Crpx} says that $(X_1,\ldots,X_k)$ is maximal as a para-exceptional sequence.
Thus $\omega(X_1)\cdots\omega(X_k)$ is a word in $\C_c^F$ with $tt'$ as a prefix.
\end{proof}

Proposition~\ref{prop:chain_system_union} allows us to restrict the isomorphism $\Cpx \rightarrow \C_c^+$ from Theorem~\ref{thm:bijection_on_chain_systems}.\ref{Cpx bij} to isomorphisms $\Cx \rightarrow \C_c$ and $\Crpx \rightarrow \C_c^F$.
The first of these isomorphisms is Corollary~\ref{cor:bijection_on_chain_systems_exceptional}.
We record the second of these isomorphisms as part of the following corollary. The second assertion of the corollary amounts to a representation-theoretic proof of \cite[Proposition~7.6]{McSul}.

\begin{corollary}\label{CcF cor}
Let $W$ be a Coxeter group of affine type and let $c$ be a Coxeter element of~$W$ such that $\Upsilon^c$ has more than one component.
\begin{enumerate}[label=\bf\arabic*., ref=\arabic*]
\item \label{CcF isom}
The isomorphism in  Theorem~\ref{thm:bijection_on_chain_systems}.\ref{Cpx bij} restricts to an isomorphism $\Crpx \rightarrow \C_c^F$ and this restriction induces an isomorphism $\P(\Crpx) \rightarrow [1,c]_{T_H \cup F}$.
\item \label{THF type B}
$[1,c]_{T_H\cup F}$ is isomorphic to a product of noncrossing partition lattices of type~B, with ranks equal to the number of simple roots in the components of~$\Upsilon^c$. 
\end{enumerate}
\end{corollary}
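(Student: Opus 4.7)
The plan is to derive Assertion~\ref{CcF isom} directly from the bijection $\omega$ of Theorem~\ref{thm:bijection_on_chain_systems} together with the characterization of maximal regular para-exceptional sequences in Proposition~\ref{prop:chain_system_union}, and then obtain Assertion~\ref{THF type B} by chaining the isomorphism of Assertion~\ref{CcF isom} with Theorem~\ref{thm:soft_binary_chain}, Proposition~\ref{prop:tubes_ortho}, and Corollary~\ref{cor:tube_type_C}.

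For Assertion~\ref{CcF isom}, the hypothesis that $\Upsilon^c$ has more than one component forces $m\ge 2$, so $\N\neq\emptyset$ and Proposition~\ref{prop:chain_system_union}.\ref{Cpx Crpx} applies. First I would check that $\omega$ carries the alphabet $\excep_\reg\cup\nrig$ of $\Crpx$ bijectively onto the alphabet $T_H\cup F$ of $\C_c^F$: this uses the observation from Section~\ref{sec:rep_theory_tubes} that $X\in\excep$ is regular exactly when $\undim X\in\Upsilon^c$, so $\omega$ restricts to a bijection $\excep_\reg\to T_H$, together with the defining bijection $\nrig\to F$. Next, I would characterize the elements of $\Crpx$ among those of $\Cpx$ as precisely those sequences whose letters all lie in $\excep_\reg\cup\nrig$ and that contain at least one entry from $\nrig$; the forward direction uses Corollary~\ref{cor:maximal_non_exceptional}, and the reverse is Proposition~\ref{prop:chain_system_union}.\ref{Cpx Crpx}. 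A parallel characterization holds on the $\C_c^+$ side: a word in $\C_c^+$ lies in $\C_c^F$ if and only if its letters all lie in $T_H\cup F$ and at least one letter lies in $F$ (this was established in Section~\ref{McSul chain sec}, using that a word in $T_H$ alone fixes the Coxeter axis pointwise and so cannot equal $c$). Since $\omega$ is already an isomorphism $\Cpx\to\C_c^+$, these two characterizations assemble into the restricted bijection $\Crpx\to\C_c^F$, which is an isomorphism of binary chain systems by Proposition~\ref{bin isom}.\ref{already}. The induced isomorphism $\P(\Crpx)\to[1,c]_{T_H\cup F}$ then follows from Proposition~\ref{isom isom} together with the fact, recorded in Proposition~\ref{factorable bin}, that $\P(\C_c^F)\cong[1,c]_{T_H\cup F}$.

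For Assertion~\ref{THF type B}, I would combine the isomorphism of Assertion~\ref{CcF isom} with Theorem~\ref{thm:soft_binary_chain}.\ref{Pc to wideWEH} to obtain $[1,c]_{T_H\cup F}\cong\P(\Crpx)\cong\wide(\exreg)$. Since $\sW(\excep_\reg\cup\nrig)=\nonhomo$, the right-hand side is $\wide(\nonhomo)$, and the orthogonal decomposition $\nonhomo=\T_1\oplus\cdots\oplus\T_m$ from Proposition~\ref{prop:tubes_ortho} yields $\wide(\nonhomo)\cong\prod_{i=1}^m\wide(\T_i)$. By Corollary~\ref{cor:tube_type_C}, each factor $\wide(\T_i)$ is isomorphic to $\wide(\Gamma_i)$ for a tensor algebra $\Gamma_i$ of type $C_{r_i}$, where $r_i$ is the rank of $\T_i$; as noted in Section~\ref{sec:brick_sequences}, $r_i$ equals the number of simple roots of $\Upsilon^c$ in the $i$-th component. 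Because $\Gamma_i$ is representation-finite, Remark~\ref{fin lat remark} gives $\wide(\Gamma_i)=\ewide(\Gamma_i)$, which by Theorem~\ref{thm:NC} is the noncrossing partition lattice of the associated finite Coxeter group. Since the Coxeter systems of types $B_{r_i}$ and $C_{r_i}$ coincide (they have identical generators and relations, hence identical reflections), this noncrossing partition lattice is equally well described as one of type $B_{r_i}$, giving the desired product.

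The main obstacle is conceptual rather than technical: one must carefully distinguish the binary chain system $\Crpx$ (maximal with respect to regular para-exceptional sequences) from $\Cpx$ (maximal with respect to all para-exceptional sequences), and the key input that identifies the two notions of maximality on the relevant subset is Proposition~\ref{prop:chain_system_union}.\ref{Cpx Crpx} in combination with Corollary~\ref{cor:maximal_non_exceptional}. The identification of type C with type B at the Coxeter-group level is a standard, if small, point that must be explicitly noted in order to match the statement of the corollary.
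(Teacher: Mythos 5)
Your proposal is correct and follows essentially the same route as the paper: Assertion~1 is obtained by restricting the isomorphism $\omega$ of Theorem~\ref{thm:bijection_on_chain_systems}.\ref{Cpx bij} using the disjoint decompositions $\Cpx=\Cx\sqcup\Crpx$ and $\C_c^+=\C_c\sqcup\C_c^F$ from Proposition~\ref{prop:chain_system_union} and Section~\ref{McSul chain sec}, and Assertion~2 by chaining $\P(\Crpx)\cong\wide(\nonhomo)\cong\prod_i\wide(\T_i)$ with Corollary~\ref{cor:tube_type_C} and Theorem~\ref{thm:NC}. The extra details you supply (the alphabet check, the explicit appeal to Remark~\ref{fin lat remark} before invoking Theorem~\ref{thm:NC}, and the B/C identification) are all correct and merely make explicit what the paper leaves implicit.
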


\begin{proof}
The isomorphism $\Crpx \rightarrow \C_c^F$ is obtained from Proposition~\ref{prop:chain_system_union} and Theorem~\ref{thm:bijection_on_chain_systems}.\ref{Cpx bij}, as described above.  
The isomorphism $\P(\Crpx) \rightarrow [1,c]_{T_H \cup F}$ then follows from Propositions~\ref{isom isom} and~\ref{factorable bin}.
Now $\P(\Crpx) \cong \prod_{\T \in \mathfrak{T}} \wide(\T)$ by Theorem~\ref{thm:soft_binary_chain}.\ref{Pc to wideWEH} and Proposition~\ref{prop:tubes_ortho}. 
For $\T$ a tube of rank $r$, $\wide(\T)$ is isomorphic to the noncrossing partition lattice of type $B_r$ by Corollary~\ref{cor:tube_type_C} and Theorem~\ref{thm:NC}.
(Recall that, on the level of Coxeter groups, types B and C coincide.)
\end{proof}

Proposition~\ref{prop:chain_system_union} suggests that we can construct a wide subcategory model for $\P(\Cpx)$ by simply combining the models $\P(\Crpx) \cong \wide(\nonhomo)$ and $\P(\Cx) \cong \ewide \Lambda$. 
The analog of Proposition~\ref{prop:perp_prefix} for para-exceptional sequences would be obtained by simply replacing ``exceptional'' with ``para-exceptional'' and ``$\Cx$'' with ``$\Cpx$'' throughout.
The following proposition implies that this verbatim analog does not work.
Recall that $\homo$ is the wide subcategory of homogeneous modules.

\begin{proposition}\label{prop:not_big_enough}
Suppose $\N \neq \emptyset$, let $(X_1,\ldots,X_n)$ be a maximal exceptional sequence, and let $(Y_1,\ldots,Y_k)$ be a maximal regular para-exceptional sequence. Then:
\begin{enumerate}[label=\bf\arabic*., ref=\arabic*]
\item \label{wide subsetneq}
$\sW(Y_1,\ldots,Y_k) = \nonhomo \subsetneq \mods\Lambda = \sW(X_1,\ldots,X_n)$.
\item \label{right perp subsetneq}
$(X_1,\ldots,X_n)^\perp = 0 \subsetneq \homo = (Y_1,\ldots,Y_k)^\perp$.
\item \label{left perp subsetneq}
${}^\perp(X_1,\ldots,X_n) = 0 \subsetneq \homo = {}^\perp(Y_1,\ldots,Y_k)$.
\end{enumerate}
\end{proposition}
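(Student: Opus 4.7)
The plan is to derive each of the three assertions from the chain-system characterizations already established in Propositions~\ref{prop:complete_ex_maximal} and~\ref{prop:complete_generates_tube}, combined with the orthogonal decomposition $\reg = \homo \oplus \nonhomo$ from Proposition~\ref{prop:tubes_ortho} and the hom/ext vanishing statements in Lemma~\ref{lem:hom_ext}.

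For Assertion~\ref{wide subsetneq}, the equality $\sW(X_1,\ldots,X_n) = \mods\Lambda$ is part of Proposition~\ref{prop:complete_ex_maximal} and the equality $\sW(Y_1,\ldots,Y_k) = \nonhomo$ is part of Proposition~\ref{prop:complete_generates_tube}. The strict containment holds because $\Lambda$ admits infinitely many homogeneous tubes, so $\homo$ is nonzero and is disjoint from $\nonhomo$, whence $\nonhomo \subsetneq \mods\Lambda$.

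For Assertion~\ref{right perp subsetneq}, the equality $(X_1,\ldots,X_n)^\perp = 0$ is another part of Proposition~\ref{prop:complete_ex_maximal}. Lemma~\ref{lem:ortho_wide} together with $\sW(Y_1,\ldots,Y_k) = \nonhomo$ reduces computing $(Y_1,\ldots,Y_k)^\perp$ to showing $\nonhomo^\perp = \homo$. The inclusion $\homo \subseteq \nonhomo^\perp$ is immediate from Proposition~\ref{prop:tubes_ortho}. For the reverse inclusion, I would take $X \in \nonhomo^\perp$ and examine each indecomposable summand $X_0$ of $X$. If $X_0 \in \nonhomo$, then the identity on $X_0$ provides a nonzero element of $\Hom(X_0, X)$, contradicting $X \in X_0^\perp$. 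Otherwise, since $\N \neq \emptyset$, I fix a non-exceptional brick $F \in \nonhomo$ (for instance $F = F_\beta$ for some $\beta \in \Xi^c$). If $X_0$ is preprojective, Lemma~\ref{lem:hom_ext} gives $\Ext^1(F, X_0) \neq 0$, contradicting $\Ext^1(F, X) = 0$; if $X_0$ is preinjective, the same lemma gives $\Hom(F, X_0) \neq 0$, contradicting $\Hom(F, X) = 0$. Hence every indecomposable summand of $X$ is homogeneous regular, so $X \in \homo$. The strict containment $0 \subsetneq \homo$ follows as in the previous paragraph.

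Assertion~\ref{left perp subsetneq} follows by the left--right dual argument: Proposition~\ref{prop:complete_ex_maximal} yields ${}^\perp(X_1,\ldots,X_n) = 0$, Lemma~\ref{lem:ortho_wide} combined with Proposition~\ref{prop:complete_generates_tube} gives ${}^\perp(Y_1,\ldots,Y_k) = {}^\perp\nonhomo$, and the same case-by-case analysis on indecomposable summands establishes ${}^\perp\nonhomo = \homo$, with the roles of Hom and Ext interchanged in the preprojective versus preinjective cases. No step presents a serious obstacle; the only point requiring care is tracking which half of Lemma~\ref{lem:hom_ext} (the Hom statement or the Ext statement) to invoke for each of the preprojective and preinjective summands, and in which argument.
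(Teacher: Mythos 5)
Your proposal is correct and follows essentially the same route as the paper's proof: Assertion~1 from Propositions~\ref{prop:complete_ex_maximal} and~\ref{prop:complete_generates_tube}, and Assertions~2--3 by combining Lemma~\ref{lem:ortho_wide} with Lemmas~\ref{lem:hom_ext} and~\ref{lem:tubes_ortho} to identify $(Y_1,\ldots,Y_k)^\perp$ and ${}^\perp(Y_1,\ldots,Y_k)$ with $\homo$. The paper simply cites those two lemmas for the identification, whereas you spell out the summand-by-summand case analysis they encode; the argument is the same.
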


\begin{proof}
Assertion~\ref{wide subsetneq} follows from Proposition~\ref{prop:complete_generates_tube} and Proposition~\ref{prop:complete_ex_maximal}. 
Lemma~\ref{lem:ortho_wide} says that $(X_1,\ldots,X_n)^\perp = 0 = {}^{\perp}(X_1,\ldots,X_n)$, but Lemmas~\ref{lem:hom_ext} and~\ref{lem:tubes_ortho} imply that $(Y_1,\ldots,Y_k)^\perp = \homo = {}^{\perp}(Y_1,\ldots,Y_k)$
\end{proof}

By Proposition~\ref{prop:chain_system_union}, any maximal exceptional sequence is also in $\Cpx$, and thus the sequences $(X_1,\ldots,X_n)$ and $(Y_1,\ldots,Y_k)$ in Proposition~\ref{prop:not_big_enough} are trivially equivalent as prefixes of $\Cpx$ and as postfixes of $\Cpx$.
A verbatim restatement of Proposition~\ref{prop:perp_prefix} for $\Cpx$ would thus say that 
$\sW(X_1,\ldots,X_n) = \sW(Y_1,\ldots,Y_i)$, $(X_1,\ldots,X_n)^\perp = (Y_1,\ldots,Y_i)^\perp$, and
${}^\perp(X_1,\ldots,X_n) = {}^{\perp}(Y_1,\ldots,Y_i)$, contradicting Proposition~\ref{prop:not_big_enough}.
We see that the isomorphisms $\P(\Crpx) \rightarrow \wide(\nonhomo)$ and $\P(\Cx) \rightarrow \ewide \Lambda$ (from Theorems~\ref{thm:soft_binary_chain} and~\ref{thm:exceptional_binary_chain}) do not induce an isomorphism $\P(\Cpx) \rightarrow \wide(\nonhomo) \cup \ewide\Lambda$. 
We now introduce the correct analogs of Proposition~\ref{prop:perp_prefix} and Theorem~\ref{thm:exceptional_binary_chain} for~$\Cpx$.
Recall that $\reg$ is the subcategory of regular modules. 
The notation $\join_{\wide \Lambda}$ stands for the join in the lattice of wide subcategories.

\begin{definition}\label{def:E_perp}
For $\Lambda$ a connected tame hereditary algebra and $\W\in\wide\Lambda$, define
\begin{align*}\aug(\W) &= \begin{cases}
\W \join_{\wide \Lambda} \homo & \text{if $\W$ is representation-infinite}\\
\W & \text{if $\W$ is representation-finite.}
\end{cases}\\
\red(\W) &= \begin{cases}
\W \cap \nonhomo & \text{if $\W \subseteq \reg$ and $\W \cap \nonhomo$ is representation-finite}\\
\W & \text{otherwise.}
\end{cases}\\
\overline{\W} &= {}^\perp(\aug(\W)^\perp).
\end{align*}
\end{definition}

The next lemma justifies the description of the operator $\W \mapsto \overline{\W}$ as ``closure-like'' in the introduction.

\begin{lemma}\label{lem:closure}
    Let $\W$ be a wide subcategory. Then the following hold.
\begin{enumerate}[label=\bf\arabic*., ref=\arabic*]
        \item\label{closure 1} 
        $\W \subseteq \overline{\W}$.
        \item\label{closure 2} 
        $\overline{\V} \subseteq \overline{\W}$ for any wide subcategory $\V \subseteq \W$.
        \item\label{closure 3} $\overline{\overline{\W}} = \overline{\W}$.
    \end{enumerate}
\end{lemma}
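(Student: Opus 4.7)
The plan is to handle the three parts in order, using two elementary facts about the pair $\bigl((-)^\perp, {}^\perp(-)\bigr)$: both operators reverse inclusions, and the adjunction-style identity $({}^\perp(\X^\perp))^\perp = \X^\perp$, which follows from $\X \subseteq {}^\perp(\X^\perp)$ by applying $(-)^\perp$. Part~\ref{closure 3} will be split by whether $\W$ is representation-finite.

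For Part~\ref{closure 1}, the definition of $\aug$ gives $\W \subseteq \aug(\W)$ in both branches, so applying $(-)^\perp$ and then ${}^\perp(-)$ yields $\W \subseteq {}^\perp(\W^\perp) \subseteq {}^\perp(\aug(\W)^\perp) = \overline{\W}$. For Part~\ref{closure 2}, I would first verify that $\aug$ is monotone. If $\V \subseteq \W$, then $\W$ is representation-infinite whenever $\V$ is, so a case check (on whether each of $\V,\W$ is representation-finite or infinite) gives $\aug(\V) \subseteq \aug(\W)$, using the trivial chain $\V \subseteq \W \subseteq \W \join_{\wide\Lambda} \homo$ in the mixed case. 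Applying $(-)^\perp$ and ${}^\perp(-)$ then yields $\overline{\V} \subseteq \overline{\W}$.

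For Part~\ref{closure 3}, suppose first that $\W$ is representation-finite. Then Lemma~\ref{lem:rep_finite_exceptional} gives $\W \in \ewide\Lambda$, so Corollary~\ref{cor:anti_isom_fg} yields ${}^\perp(\W^\perp) = \W$; since $\aug(\W) = \W$, this forces $\overline{\W} = \W$ and hence $\overline{\overline{\W}} = \overline{\W}$. Now suppose $\W$ is representation-infinite. I claim $\homo \subseteq \overline{\W}$: since $\homo \subseteq \aug(\W)$, we have $\aug(\W)^\perp \subseteq \homo^\perp$, and so $\overline{\W} \supseteq {}^\perp(\homo^\perp) \supseteq \homo$. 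Combined with Part~\ref{closure 1}, $\overline{\W}$ is then representation-infinite and contains $\homo$, so $\aug(\overline{\W}) = \overline{\W} \join_{\wide\Lambda} \homo = \overline{\W}$. The adjunction identity applied to $\aug(\W)$ gives $\overline{\W}^\perp = \aug(\W)^\perp$, and therefore
\[
\overline{\overline{\W}} = {}^\perp\bigl(\aug(\overline{\W})^\perp\bigr) = {}^\perp(\overline{\W}^\perp) = {}^\perp(\aug(\W)^\perp) = \overline{\W}.
\]

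The step requiring the most care is the inclusion $\homo \subseteq \overline{\W}$ in the representation-infinite branch of Part~\ref{closure 3}, since this is exactly what collapses $\aug(\overline{\W})$ back to $\overline{\W}$ and allows Part~\ref{closure 3} to reduce to the adjunction identity; without it, one could not rule out that $\aug$ strictly enlarges $\overline{\W}$ on iteration.
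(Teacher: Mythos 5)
Your proof is correct and follows essentially the same route as the paper's: both arguments reduce Part~3 to showing $\aug(\overline{\W}) = \overline{\W}$ by splitting on representation-finiteness, using Lemma~\ref{lem:rep_finite_exceptional} and Corollary~\ref{cor:anti_isom_fg} in the finite case and the containment $\homo \subseteq \overline{\W}$ in the infinite case, with Parts~1 and~2 handled identically via monotonicity of $\aug$ and order-reversal of the perpendicular operators. The only cosmetic difference is that you package the final step as the identity $({}^\perp(\X^\perp))^\perp = \X^\perp$ (which, note, needs the symmetric containment $\X^\perp \subseteq ({}^\perp(\X^\perp))^\perp$ as well as the one you cite), where the paper proves the two inclusions of $\overline{\overline{\W}} = \overline{\W}$ by hand.
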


\begin{proof}
    Let $X \in \W$ and $Y \in \W^\perp$. By definition, this means $\Hom(X,Y) = 0 = \Ext^1(X,Y)$. Thus $X \in {}^\perp(\W^\perp)$. Thus \begin{equation}\label{eqn:closure 1}\W \subseteq {}^\perp(\W^\perp).\end{equation}
    Now clearly $\W \subseteq \aug(\W)$ by definition, so Assertion~\ref{closure 1} follows from applying~\eqref{eqn:closure 1} to $\aug(\W)$.

    Now let $\V \subseteq \W$. Again, it is immediate from the definition that $\aug(\V) \subseteq \aug(\W)$. Assertion~\ref{closure 2} thus follows from the fact that $(-)^\perp$ and ${}^\perp(-)$ are order-reversing.

    To prove Assertion~\ref{closure 3}, we first claim that
    \begin{equation}\label{eqn:closure 2} \aug(\overline{\W}) = \overline{\W}.
    \end{equation}
    We verify \eqref{eqn:closure 2} in two cases.

    Suppose first that $\W$ is representation-finite. Then $\aug(\W) = \W$ by definition. Moreover, $\W$ is exceptional by Lemma~\ref{lem:rep_finite_exceptional}, and so Corollary~\ref{cor:anti_isom_fg} implies that ${}^\perp(\W^\perp) = \W$. We conclude that $\aug(\W) = \W = \overline{\W}$.

    Now suppose that $\W$ is representation-infinite. Let $\U = \aug(\W)$. Applying \eqref{eqn:closure 1} to $\U$, and using the definition of $\aug(-)$, we see that $\homo \subseteq \U \subseteq {}^\perp(\U^\perp)$. Thus \[\aug(\overline{\W}) = \aug({}^\perp(\U^\perp)) = {}^\perp(\U^\perp) = \overline{\W}.\]
    We have verified \eqref{eqn:closure 2} in all cases. Returning to the case where $\W$ is arbitrary, we apply \eqref{eqn:closure 1} and \eqref{eqn:closure 2} to $\overline{\W}$ to obtain
    \[\overline{\W} = {}^\perp(\U^\perp) \subseteq {}^\perp(({}^\perp(\U^\perp))^\perp) = \overline{\overline{\W}}.\]
    Now let $X \in \overline{\overline{\W}} = {}^\perp((^\perp(\U^\perp))^\perp)$. This means $\Hom(X,Y) = 0 = \Ext^1(X,Y)$ for all $Y \in (^\perp(\U^\perp))^\perp$. Moreover, an argument symmetric to the one used to show \eqref{eqn:closure 1} implies $\U^\perp \subseteq (^\perp(\U^\perp))^\perp$. We conclude that $X \in {}^\perp(\U^\perp) = \overline{\W}$. This shows Assertion~\ref{closure 3}.
\end{proof}

\begin{definition}\label{def:para}
A wide subcategory is \newword{para-exceptional} if it is of the form $\overline{\sW(X_1,\ldots,X_k)}$ for some para-exceptional sequence $(X_1,\ldots,X_k)$.
We write $\pewide\Lambda$ for the set of para-exceptional subcategories of $\Lambda$, partially ordered by containment.
\end{definition}

We now state para-exceptional analogs of Proposition~\ref{prop:perp_prefix} and Theorem~\ref{thm:exceptional_binary_chain}.

\begin{proposition}\label{prop:perp_prefix_full}  
Let $(X_1,\ldots,X_k)$ and $(Y_1,\ldots,Y_i)$ be para-exceptional sequences. Then the following are equivalent.
\begin{enumerate}[label=\rm(\roman*), ref=(\roman*)]
\item \label{ppf 1}
$(X_1,\ldots,X_k)$ and $(Y_1,\ldots,Y_i)$ are equivalent as prefixes in $\Cpx$.
\item \label{ppf 2}
 $(X_1,\ldots,X_k)$ and $(Y_1,\ldots,Y_i)$ are equivalent as postfixes in $\Cpx$.
\item \label{ppf 3}
 $\overline{\sW(X_1,\ldots,X_k)} = \overline{\sW(Y_1,\ldots,Y_i)}$.
\item \label{ppf 4}
 $\red((X_1,\ldots,X_k)^\perp) = \red((Y_1,\ldots,Y_i)^\perp)$.
\item \label{ppf 5}
 $\red({}^\perp(X_1,\ldots,X_k)) = \red(^{\perp}(Y_1,\ldots,Y_i))$.
\end{enumerate}
\end{proposition}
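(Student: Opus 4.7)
The plan is to prove the equivalence of the five conditions by case analysis based on Proposition~\ref{prop:chain_system_union}, which gives (when $\N \neq \emptyset$) a disjoint decomposition $\Cpx = \Cx \sqcup \Crpx$. Any common postfix $Z$ witnessing condition (i) extends $(X_1,\ldots,X_k)$ and $(Y_1,\ldots,Y_i)$ to sequences that lie either both in $\Cx$, or both in $\Crpx$, or one in each.

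For the forward implications (i) $\Rightarrow$ (iii), (iv), (v), I will proceed by case analysis. In the two pure cases, Propositions~\ref{prop:perp_prefix} and~\ref{prop:perp_prefix_reg} directly give $\sW(X_1,\ldots,X_k) = \sW(Y_1,\ldots,Y_i)$ (or its intersection-with-$\nonhomo$ analog). Conditions (iii), (iv), (v) then follow readily, using Lemma~\ref{lem:ortho_wide} and Corollary~\ref{cor:anti_isom_fg} to translate between $\overline{\sW(-)}$, $\red((-)^\perp)$, $\red({}^\perp(-))$ and the underlying $\sW$'s; note that in the exceptional pure case, $(X)^\perp$ contains non-regular modules so $\red$ is trivial on it, while in the regular pure case the $\red$-conditions are exactly those encoded in Proposition~\ref{prop:perp_prefix_reg}.

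The main obstacle is the mixed case, where (say) $(X,Z) \in \Cx$ and $(Y,Z) \in \Crpx$. Here the raw wide subcategories $\sW(X_1,\ldots,X_k)$ and $\sW(Y_1,\ldots,Y_i)$ are genuinely different (the former strictly larger, containing both $\homo$ and some non-regular modules), and the role of the operators $\overline{(-)}$ and $\red$ is precisely to reconcile them. By Proposition~\ref{prop:chain_system_union}, $Z$ must be a regular exceptional sequence and $(Y_1,\ldots,Y_i)$ must be fully non-exceptional; Propositions~\ref{prop:maximal_exceptional} and~\ref{prop:maximal_soft} then give $\sW(X_1,\ldots,X_k) = \sW(Z)^\perp$ and $\sW(Y_1,\ldots,Y_i) = \sW(Z)^\perp \cap \nonhomo$. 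Because $\sW(Z) \subseteq \nonhomo$, Lemma~\ref{lem:tubes_ortho} forces $\homo \subseteq \sW(X_1,\ldots,X_k)$, so $\aug(\sW(X_1,\ldots,X_k)) = \sW(X_1,\ldots,X_k)$, and hence $\overline{\sW(X_1,\ldots,X_k)} = \sW(X_1,\ldots,X_k)$ by Corollary~\ref{cor:anti_isom_fg}. The key remaining step is to show that $\overline{\sW(Y_1,\ldots,Y_i)} = \sW(X_1,\ldots,X_k)$: unwinding the definitions and using $\homo^\perp = \nonhomo$, this reduces to the identity $\sW(Y_1,\ldots,Y_i)^\perp \cap \nonhomo = \sW(X_1,\ldots,X_k)^\perp = \sW(Z)^{\perp\perp}$, which I plan to verify via Auslander-Reiten duality applied to the regular part of $\mods\Lambda$. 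The conditions (iv) and (v) in this mixed case follow by analogous computations on the reduction operator $\red$.

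For the reverse direction, given any of (iii), (iv), (v), I will extend $(X_1,\ldots,X_k)$ to a maximal para-exceptional sequence $(X, Z) \in \Cpx$ via Proposition~\ref{prop:chain_system_union}.\ref{index_arbitrary_full}, and verify that $(Y, Z) \in \Cpx$ using the invariant equality together with the maximality characterizations of Propositions~\ref{prop:maximal_exceptional} and~\ref{prop:maximal_soft}, being careful to track whether $(X,Z)$ lies in $\Cx$ or in $\Crpx$ and adapting accordingly. Finally, (i) $\iff$ (ii) follows by applying the same arguments to the left-right duals.
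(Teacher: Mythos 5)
Your proposal takes a genuinely different route from the paper. The paper does no case analysis inside this proof at all: it proves the cycle \ref{ppf 1}$\implies$\ref{ppf 3}$\implies$\ref{ppf 5}$\implies$\ref{ppf 1} (and the symmetric cycle through \ref{ppf 4}) in a few lines, having front-loaded all of the casework into Proposition~\ref{prop:maximal_characterization} (the maximality characterization $\overline{\sW(X_{i+1},\ldots,X_j)} = \red({}^{\perp}(X_1,\ldots,X_i) \cap (X_{j+1},\ldots,X_k)^{\perp})$) and Lemma~\ref{lem:perp_formula} ($\red({}^\perp\W)=\aug({}^\perp\overline{\W})$). Your split according to whether the common maximal extensions land in $\Cx$, in $\Crpx$, or one in each is a legitimate alternative (Proposition~\ref{prop:chain_system_union} makes it exhaustive), and your treatment of the mixed case is essentially right: the identity $\sW(Y_1,\ldots,Y_i)^\perp\cap\nonhomo=\sW(X_1,\ldots,X_k)^\perp$ is exactly Lemma~\ref{lem:perp_in_tubes} (proved in the paper via regular para-exceptional sequences and Proposition~\ref{prop:maximal_soft}, not by Auslander-Reiten duality per se), and the conclusion $\overline{\sW(Y_1,\ldots,Y_i)}=\sW(X_1,\ldots,X_k)$ is cleanest via Proposition~\ref{prop:unique_intersection_with_wide}. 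Two small corrections: $\red$ is trivial on $(X_1,\ldots,X_k)^\perp$ in the pure exceptional case because that category is exceptional (Lemma~\ref{lem:reduce_fg}), not because it "contains non-regular modules" (it need not); and the equivalence \ref{ppf 1}$\iff$\ref{ppf 2} is not a formal duality but comes from both conditions being equivalent to \ref{ppf 3}.

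The one genuine gap is in the reverse direction, specifically \ref{ppf 3}$\implies$\ref{ppf 1}. After extending $(X_1,\ldots,X_k)$ to a maximal sequence $(X_1,\ldots,X_k,Z_1,\ldots,Z_j)$, you must first show $\sW(Z_1,\ldots,Z_j)\subseteq{}^\perp(Y_1,\ldots,Y_i)$ before $(Y_1,\ldots,Y_i,Z_1,\ldots,Z_j)$ is even a para-exceptional sequence. The hypothesis gives ${}^\perp\overline{\sW(X_1,\ldots,X_k)}={}^\perp\overline{\sW(Y_1,\ldots,Y_i)}$, but what you know about $Z$ is only $\sW(Z_1,\ldots,Z_j)\subseteq{}^\perp\sW(X_1,\ldots,X_k)$, and the containment ${}^\perp\overline{\sW(X_1,\ldots,X_k)}\subseteq{}^\perp\sW(X_1,\ldots,X_k)$ goes the wrong way; so the needed containment does not follow formally from "the invariant equality together with the maximality characterizations." The missing bridge is precisely Lemma~\ref{lem:perp_formula}, which identifies $\red({}^\perp(Y_1,\ldots,Y_i))$ with $\aug({}^\perp\overline{\sW(Y_1,\ldots,Y_i)})$ and hence with $\overline{\sW(Z_1,\ldots,Z_j)}\supseteq\sW(Z_1,\ldots,Z_j)$. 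You would need to prove this formula (or redo the equivalent computations of Lemma~\ref{lem:overline_compute} case by case) before the reverse implications close up.
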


\begin{theorem}\label{thm:binary_chain_full}
Let $\Lambda$ be a connected tame hereditary algebra with more than one non-homogeneous tube.
\begin{enumerate}[label=\bf\arabic*., ref=\arabic*]
\item \label{Cpx bcs}
The set $\Cpx$ of maximal para-exceptional sequences is a binary chain system with binary compatibility relation~$\Bpx$.
\item \label{Pc to pewide}
The map $\Preeq(\Cpx)\rightarrow\pewide\Lambda$ that sends $P \in \Preeq(\Cpx)$ to $\overline{\sW(X_1,\ldots,X_k)}$, for any representative $(X_1,\ldots,X_k) \in P$, is an isomorphism of posets.
\item \label{Cpx labels cov}
If $\V,\W\in\pewide\Lambda$ have $\W\subseteq\V$, then $\V$ covers $\W$ in $\pewide\Lambda$ if and only if $\V\cap{}^\perp\W$ contains a unique non-homogeneous brick.
\item \label{Cpx labels}
If $P\covered P'$ in $\Preeq(\Cpx)$ is labeled by $X$ and sent by the  isomorphism in Assertion~\ref{Pc to pewide} to $\W \covered \V$, then $X$ is the unique non-homogeneous brick in $\V \cap {}^\perp \W$.
\end{enumerate}
\end{theorem}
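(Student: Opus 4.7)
The plan is to mirror the proof of Theorem~\ref{thm:exceptional_binary_chain}, substituting Proposition~\ref{prop:perp_prefix_full} for Proposition~\ref{prop:perp_prefix} and the closure operator $\overline{\sW(-)}$ for $\sW(-)$. The bridge to the already-established cases is Proposition~\ref{prop:chain_system_union}: every maximal para-exceptional sequence is either a maximal exceptional sequence or a fully non-exceptional maximal regular para-exceptional sequence, so many steps reduce to either Theorem~\ref{thm:exceptional_binary_chain} or Theorem~\ref{thm:soft_binary_chain}.

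For Assertion~\ref{Cpx bcs}, I would verify the three conditions of Definition~\ref{chain sys def} in turn. A uniform length bound (condition~\ref{bound}) holds because maximal exceptional sequences have length $n$ (Proposition~\ref{prop:complete_ex_maximal}) and maximal regular para-exceptional sequences have length $n-2+m$ (Proposition~\ref{prop:complete_generates_tube}). For the convergence condition~\ref{conv}, given representatives $(X_1,\ldots,X_k)\in P$ and $(Y_1,\ldots,Y_\ell)\in X$ whose concatenation lies in $\Cpx$, the equivalent characterizations \ref{ppf 3}--\ref{ppf 5} of Proposition~\ref{prop:perp_prefix_full} guarantee that any other pair of representatives still fits together to yield a maximal para-exceptional sequence, by the same perpendicular-and-wide argument used in Theorem~\ref{thm:exceptional_binary_chain}.\ref{Cx bcs}. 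The non-substitution condition~\ref{non subst} follows from Proposition~\ref{prop:perp_prefix_full} together with the observation that $\overline{\sW(X)}$ has $X$ as its unique non-homogeneous brick (a consequence of Corollary~\ref{cor:semibrick} and the fact that the closure operator $\overline{(-)}$ only adjoins homogeneous modules, since $\aug(-)$ adds only $\homo$), which rules out replacing a single letter $a$ by a strictly longer subword $w$.

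Assertion~\ref{Pc to pewide} then follows the template of Theorem~\ref{thm:exceptional_binary_chain}.\ref{Pc to fwide}: well-definedness and injectivity of the map $\Preeq(\Cpx)\to\pewide\Lambda$ come directly from Proposition~\ref{prop:perp_prefix_full}; surjectivity is Definition~\ref{def:para}; and order-preservation in both directions uses Proposition~\ref{prop:chain_system_union}.\ref{index_arbitrary_full} together with the equivalence of \ref{ppf 1} and \ref{ppf 3} of Proposition~\ref{prop:perp_prefix_full}. For Assertions~\ref{Cpx labels cov} and~\ref{Cpx labels}, suppose $\W\subseteq\V$ in $\pewide\Lambda$. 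Assertion~\ref{Pc to pewide} and Proposition~\ref{prop:chain_system_union}.\ref{index_arbitrary_full} produce a maximal para-exceptional sequence $(X_1,\ldots,X_N)$ and indices $i\le j$ such that $\W$ and $\V$ correspond to the prefix classes of $(X_1,\ldots,X_i)$ and $(X_1,\ldots,X_j)$. Each $X_{i+1},\ldots,X_j$ is then a non-homogeneous brick in $\V\cap{}^\perp\W$, and the cover condition forces $j=i+1$, with $X_{i+1}$ the unique non-homogeneous brick in $\V\cap{}^\perp\W$.

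The main obstacle is pinning down the uniqueness claim in the last step: in the exceptional setting, Corollary~\ref{cor:semibrick} immediately isolated the sole brick of $\sW(X_i)$, but here $\overline{\sW(X_i)}$ can contain all the homogeneous bricks $X_\T$ in addition to $X_i$. The resolution is to restrict attention to non-homogeneous bricks in $\V\cap{}^\perp\W$ and to exploit the precise interaction between $\aug(-)$, $\red(-)$, and the perpendicular operators built into Definition~\ref{def:E_perp}. Specifically, non-homogeneous bricks in $\V\cap{}^\perp\W$ correspond exactly to letters that can extend the prefix associated to $\W$ one step toward the prefix associated to $\V$, so the uniqueness becomes a reformulation of the cover condition already encoded in the chain system structure verified in Assertion~\ref{Cpx bcs}.
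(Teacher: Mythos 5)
Your overall strategy is exactly the paper's: split $\Cpx$ into $\Cx$ and $\Crpx$ via Proposition~\ref{prop:chain_system_union}, and rerun the proof of Theorem~\ref{thm:exceptional_binary_chain} with Proposition~\ref{prop:perp_prefix_full} and $\overline{\sW(-)}$ in place of Proposition~\ref{prop:perp_prefix} and $\sW(-)$. Two points need tightening. First, the unnamed bridge that makes your ``perpendicular-and-wide argument'' run is the para-exceptional analog of Proposition~\ref{prop:maximal_exceptional}, namely Proposition~\ref{prop:maximal_characterization}, which characterizes maximality by $\overline{\sW(X_{i+1},\ldots,X_j)} = \red({}^\perp(X_1,\ldots,X_i)\cap(X_{j+1},\ldots,X_k)^\perp)$; Conditions~\ref{conv} and~\ref{non subst} and the surjectivity and order-preservation steps all route through it. Second, your justification of Condition~\ref{non subst} (``the closure only adjoins homogeneous modules'') is valid only because the hypothesis of more than one non-homogeneous tube prevents a single non-exceptional brick $F_\beta$ from being a fully non-exceptional sequence; when it is fully non-exceptional, $\overline{\sW(-)}$ adjoins far more than $\homo$ (Lemma~\ref{lem:overline_compute}.\ref{oc fne}), which is exactly why the theorem fails for one tube. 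The correct citation is Lemma~\ref{lem:overline_compute}.\ref{oc nfne}\ref{oc nfne 3}, not a direct appeal to $\aug(-)$, since $\overline{\W}={}^\perp(\aug(\W)^\perp)$ involves a double perpendicular that can add much more than $\aug$ does.

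The genuine gap is in Assertions~\ref{Cpx labels cov} and~\ref{Cpx labels}. You identify the right obstacle, but ``the uniqueness becomes a reformulation of the cover condition already encoded in the chain system structure'' is not an argument. The chain system tells you that the interval $[\W,\V]$ is labeled by the bricks of $\overline{\sW(X_{i+1},\ldots,X_j)} = \red(\aug({}^\perp\W)\cap\V)$ (via Proposition~\ref{prop:maximal_characterization}, Lemma~\ref{lem:double_red}, and Lemma~\ref{lem:perp_formula}), whereas the statement concerns ${}^\perp\W\cap\V$, and these categories differ in general. What has to be proved is that they contain the same \emph{non-homogeneous} bricks. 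The $\red$ is harmless (it deletes only homogeneous objects), but handling $\aug({}^\perp\W)$ versus ${}^\perp\W$ requires a case split on whether $\W$ is exceptional: if it is, $\aug({}^\perp\W)={}^\perp\W$ by Corollary~\ref{cor:anti_isom_fg} and Lemma~\ref{lem:exp_compute}; if not, one must show ${}^\perp\W$ is representation-infinite and contained in $\nonhomo$ (using Lemma~\ref{lem:overline_compute} and Proposition~\ref{prop:tube_generation_2}) so that $\aug({}^\perp\W)={}^\perp\W\oplus\homo$, which again changes only the homogeneous part. This representation-theoretic input cannot be extracted from the chain-system formalism alone, and without it the identification of edge labels with non-homogeneous bricks of $\V\cap{}^\perp\W$ is unproved.
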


Before proving Proposition~\ref{prop:perp_prefix_full} and Theorem~\ref{thm:binary_chain_full}, we mention a corollary that combines Theorem~\ref{thm:binary_chain_full} with Theorem~\ref{thm:bijection_on_chain_systems}.

The bijection $\omega$ from $\E\cup\nrig$ to $T_V\cup T_H\cup F$ given by 
\[\omega(X) = \begin{cases} t_{\undim X} & \text{if $X \in \excep$}\\f_\gamma & \text{if $X = F_\gamma\in \nrig$}\end{cases}\]
has an obvious inverse mapping $f_\gamma\in F$ to $F_\gamma\in\nrig$ and $t\in T_V\cup T_H$ to $X_t$. 
We use $\omega^{-1}$ to define a map $\pi$ from the McCammond-Sulway lattice to the set $\pewide\Lambda$ of para-exceptional subcategories. 
Given an element $x\in[1,c]_{T\cup F}$, choose a reduced word $a_1\cdots a_k$ for $x$ in the alphabet $T\cup F$ and define $\pi(x)$ to be the para-exceptional subgategory $\overline{\sW(\omega^{-1}(a_1),\ldots,\omega^{-1}(a_k))}$.
Combining  Theorem~\ref{thm:bijection_on_chain_systems}.\ref{PCpx McSul} with Theorem~\ref{thm:binary_chain_full}.\ref{Pc to pewide}, we obtain the following corollary.

\begin{corollary}\label{McSul to pewide}
Let $\Lambda$ be a connected tame hereditary algebra with more than one non-homogeneous tube.
The map $\pi$ is an isomorphism from the McCammond-Sulway lattice $[1,c]_{T\cup F}$ to the poset $\pewide\Lambda$ of para-exceptional subcategories.
\end{corollary}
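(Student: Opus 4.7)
The plan is to realize $\pi$ as the composition of the two isomorphisms supplied by the cited theorems, after checking that the explicit description in terms of reduced words does indeed recover this composition (and in particular is well defined).

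First, I would unpack Theorem~\ref{thm:bijection_on_chain_systems}.\ref{PCpx McSul}: the bijection $\omega$ on alphabets induces an isomorphism of posets $\Omega \colon [1,c]_{T \cup F} \to \Preeq(\Cpx)$, sending an element $x$ with reduced word $a_1 \cdots a_k$ to the prefix-equivalence class of $(\omega^{-1}(a_1), \ldots, \omega^{-1}(a_k))$. The content there is precisely that this equivalence class depends only on the product $x$ and not on the particular reduced word chosen. Next, Theorem~\ref{thm:binary_chain_full}.\ref{Pc to pewide} gives a poset isomorphism $\Pi \colon \Preeq(\Cpx) \to \pewide \Lambda$ sending the class of $(X_1, \ldots, X_k)$ to $\overline{\sW(X_1, \ldots, X_k)}$.

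I would then simply verify that $\pi = \Pi \circ \Omega$. Given $x \in [1,c]_{T \cup F}$ with reduced word $a_1 \cdots a_k$, by construction
\[
(\Pi \circ \Omega)(x) \;=\; \Pi\bigl([(\omega^{-1}(a_1), \ldots, \omega^{-1}(a_k))]\bigr) \;=\; \overline{\sW(\omega^{-1}(a_1), \ldots, \omega^{-1}(a_k))} \;=\; \pi(x).
\]
In particular the value $\pi(x)$ does not depend on the choice of reduced word, so $\pi$ is well defined, and being a composition of two poset isomorphisms, it is itself a poset isomorphism.

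There is no real obstacle here: the corollary is a formal consequence of the two theorems, and the only sliver of verification is to match the definition of $\pi$ (given in terms of an arbitrary reduced word representative of $x$) with the composition $\Pi \circ \Omega$, which is immediate from the descriptions of $\Omega$ and $\Pi$.
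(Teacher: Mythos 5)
Your proof is correct and follows exactly the paper's approach: the corollary is stated there as an immediate consequence of composing the isomorphism $\Preeq(\Cpx)\cong[1,c]_{T\cup F}$ from Theorem~\ref{thm:bijection_on_chain_systems}.\ref{PCpx McSul} with the isomorphism $\Preeq(\Cpx)\cong\pewide\Lambda$ from Theorem~\ref{thm:binary_chain_full}.\ref{Pc to pewide}. Your extra check that the composite agrees with the explicit reduced-word description of $\pi$ (and hence that $\pi$ is well defined) is exactly the small verification the paper leaves implicit.
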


We begin preparing for the proofs of Proposition~\ref{prop:perp_prefix_full} and Theorem~\ref{thm:binary_chain_full} by considering representation-infinite and representation-finite subcategories of $\mods\Lambda$.
We quote two lemmas and prove two more lemmas and a proposition.  
The first lemma is \cite[Lemma~4.5]{IPT}.
(The statement in \cite{IPT} assumes that $\field$ is algebraically closed, but the proof given there is valid without that assumption.)

\begin{lemma}\label{lem:finite_type}  
Let $X$ be an exceptional module. If $X^{\perp}$ is representation-infinite, then $X$ is regular and $X^\perp \not \subseteq \reg$.
\end{lemma}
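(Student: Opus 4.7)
Writing $E$ for the exceptional module in the statement, I would treat the two conclusions---``$E$ is regular'' and ``$E^\perp\not\subseteq\reg$''---by separate arguments.

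For the first conclusion, I would argue the contrapositive: assume $E$ is preprojective (the preinjective case is symmetric, using the corresponding clause of Lemma~\ref{lem:hom_ext}) and show that $E^\perp$ is representation-finite. By Corollary~\ref{cor:middle_exceptional} the subcategory $E^\perp$ is exceptional, so by Proposition~\ref{prop:wide_fg} it is exact-equivalent to $\mods\Lambda'$ for some finite-dimensional hereditary algebra $\Lambda'$. Suppose toward contradiction that $\Lambda'$ is representation-infinite. Then, by the standard fact that every representation-infinite hereditary algebra admits a non-exceptional indecomposable module (for instance, via Kac's theorem: a non-positive-definite Cartan matrix admits a positive imaginary root $d$, and any indecomposable of dimension vector $d$ fails to be rigid), there is a non-exceptional indecomposable $M \in \mods\Lambda'$. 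Transporting $M$ along the exact embedding $E^\perp\hookrightarrow\mods\Lambda$ yields a non-exceptional indecomposable $M\in E^\perp\subseteq\mods\Lambda$. But Lemma~\ref{lem:hom_ext}, applied with this $M$ and with the preprojective module $E$, gives $\Hom(E,M)\neq 0$, contradicting $M\in E^\perp$. Hence $E^\perp$ is representation-finite.

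For the second conclusion, assume $E$ is a regular exceptional module. Apply Lemma~\ref{lem:exceptional_index_arbitrary} to the length-one exceptional sequence $(E)$ with insertion index $i=0$: there exist modules $Y_1,\ldots,Y_{n-1}$ such that $(Y_1,\ldots,Y_{n-1},E)$ is a complete exceptional sequence. The defining condition of an exceptional sequence, applied with $E$ in the last position, gives $\Hom(E,Y_j)=0=\Ext^1(E,Y_j)$ for each $j<n$, so $Y_j\in E^\perp$. Proposition~\ref{prop:complete_ex_maximal} then yields $\sW(Y_1,\ldots,Y_{n-1},E)=\mods\Lambda$. Because $\reg$ is a proper wide subcategory of $\mods\Lambda$ (it contains no nonzero projective modules, for instance), the modules $E,Y_1,\ldots,Y_{n-1}$ cannot all lie in $\reg$. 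Since $E\in\reg$ by hypothesis, some $Y_j$ must be non-regular, and this $Y_j$ lies in $E^\perp$, witnessing $E^\perp\not\subseteq\reg$.

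The main obstacle is the ``standard fact'' invoked in the first paragraph---that every representation-infinite hereditary algebra admits a non-exceptional indecomposable. This is classical but is not stated in the excerpt. For the purposes of this paper one could restrict attention to the tame case, observing that $\Lambda'$ inherits positive semi-definiteness of its Cartan matrix from $\Lambda$ and is therefore of tame or finite type; in the tame case, $\mods\Lambda'$ contains homogeneous tubes whose quasi-simples are non-exceptional bricks, which suffices for the contradiction. Some care is also needed to check that an indecomposable of $\mods\Lambda'$ remains indecomposable and non-exceptional after transport along the exact embedding into $\mods\Lambda$, but this is immediate because these notions depend only on the intrinsic Hom- and Ext-groups, which are preserved by the exact equivalence (Remark~\ref{rem:exact}).
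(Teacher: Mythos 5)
Your argument is correct, but note that the paper does not prove Lemma~\ref{lem:finite_type} at all: it is quoted verbatim from \cite[Lemma~4.5]{IPT} (with a remark that the proof there works over arbitrary fields), so there is no in-paper proof to match. Your reconstruction is a reasonable self-contained substitute and is close in spirit to the cited argument. For the first conclusion, the chain $X^\perp$ exceptional (Proposition~\ref{prop:wide_fg}) $\Rightarrow$ $X^\perp\simeq\mods\Lambda'$ $\Rightarrow$ a non-exceptional indecomposable $M\in X^\perp$ $\Rightarrow$ contradiction with Lemma~\ref{lem:hom_ext} is sound; the one soft spot is exactly the one you flag, namely that a representation-infinite hereditary algebra has a non-exceptional indecomposable. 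Your patch for the tame case works, but it silently uses two further classical inputs not stated in the paper: that the Euler form of $\Lambda'$ is the restriction of $E_{c^{-1}}$ along the (injective, since the simples of $X^\perp$ form an exceptional sequence) map $K_0(\Lambda')\to K_0(\Lambda)$, so that positive semi-definiteness is inherited; and the Dlab--Ringel theorem that a positive definite Cartan matrix forces representation-finiteness. Both are standard, and since the paper itself outsources this lemma to the literature, relying on them is fair. Your second conclusion is actually proved in slightly greater generality than stated, without the representation-infiniteness hypothesis: for any regular exceptional $X$, completing $(X)$ to a complete exceptional sequence $(Y_1,\ldots,Y_{n-1},X)$ via Lemma~\ref{lem:exceptional_index_arbitrary} puts all $Y_j$ in $X^\perp$, and since $\sW(Y_1,\ldots,Y_{n-1},X)=\mods\Lambda\not\subseteq\reg$ while $\reg$ is wide, some $Y_j$ is non-regular. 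This is a clean and correct way to finish.
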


For the next lemma, see \cite[Theorem~3.2.15]{dichev_thesis}, \cite[Theorem~9.8]{IPT}, or \cite[Proposition~6.14]{kohler}.

\begin{lemma}\label{lem:non_finite_reg} 
Let $\W$ be a non-exceptional wide subcategory. Then $\W \subseteq \reg$.
\end{lemma}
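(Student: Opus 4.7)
I would prove the contrapositive: any wide subcategory $\W$ containing a non-regular indecomposable is exceptional. The first reduction is to locate a non-regular simple object of $\W$. By Proposition~\ref{prop:semibrick}, $\W = \Filt(\simp(\W))$, and $\reg$ is a wide subcategory and hence closed under extensions; so if every element of $\simp(\W)$ were regular, then $\W \subseteq \reg$, contrary to hypothesis. Thus some $Y \in \simp(\W)$ is a non-regular brick, and as a non-regular indecomposable of $\mods\Lambda$, $Y$ is preprojective or preinjective, and in particular exceptional.

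Next I would show $\simp(\W) \subseteq \excep$. For any other $Z \in \simp(\W)$, the semibrick property forces $\Hom(Y,Z) = 0 = \Hom(Z,Y)$. If $Z$ were non-exceptional, Lemma~\ref{lem:hom_ext} applied with $Y$ preprojective (respectively preinjective) would yield $\Hom(Z,Y) \neq 0$ (respectively $\Hom(Y,Z) \neq 0$), contradicting the semibrick condition. So every simple of $\W$ is exceptional. Note also that, since $Y$ is exceptional and non-regular, Lemma~\ref{lem:finite_type} says that $Y^\perp$ is representation-finite, and the left analogue (applied via the duality $\mods\Lambda \leftrightarrow \mods\Lambda^{\mathrm{op}}$, which swaps preprojective with preinjective and $Y^\perp$ with a right-perpendicular in the dual algebra) gives that ${}^\perp Y$ is also representation-finite; by Lemma~\ref{lem:rep_finite_exceptional}, both are exceptional.

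To finish, it suffices to exhibit an exceptional sequence generating $\W$. Equivalently, I would order the exceptional semibrick $\simp(\W) = \set{X_1,\ldots,X_k}$ so that $\Ext^1(X_i,X_j) = 0$ whenever $i > j$; then $(X_1,\ldots,X_k)$ is an exceptional sequence with $\sW(X_1,\ldots,X_k) = \Filt(\simp(\W)) = \W$. The main obstacle is showing that such an ordering exists, i.e.\ that the Ext-quiver on the semibrick $\simp(\W)$ is acyclic. My plan for this is to establish an Ext-dichotomy: each $Z \in \simp(\W) \setminus \set{Y}$ lies in $Y^\perp$ or in ${}^\perp Y$, which (given the vanishing of Homs) amounts to saying that at least one of $\Ext^1(Y,Z)$ and $\Ext^1(Z,Y)$ vanishes. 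Such a dichotomy can be approached through Auslander--Reiten duality, which translates each Ext group into a Hom group involving $\tau Y$ or $\tau Z$ whose vanishing is controlled by the relative position of $Y$ and $Z$ in the preprojective/preinjective components of the AR-quiver of $\mods\Lambda$. Once the dichotomy is in place, $\simp(\W) \setminus \set{Y}$ is partitioned into two subsets of the representation-finite categories $Y^\perp$ and ${}^\perp Y$; in particular $\simp(\W)$ is finite, the semibrick can be ordered into an exceptional sequence, and $\W$ is exceptional. The detailed execution of this AR-theoretic case analysis is the technically delicate step and would follow the approach of \cite[Theorem~9.8]{IPT}.
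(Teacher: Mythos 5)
First, a point of comparison: the paper does not prove this lemma at all --- it is quoted from the literature, with citations to Dichev's thesis, to \cite[Theorem~9.8]{IPT}, and to K\"ohler. So your proposal is not competing with an in-paper argument; it is an attempt to supply a proof the authors chose to import. Your opening moves are correct and cleanly executed: $\W=\Filt(\simp\W)$ forces a non-regular $Y\in\simp(\W)$ since $\reg$ is extension-closed, $Y$ is then preprojective or preinjective and hence exceptional, and Lemma~\ref{lem:hom_ext} plus the semibrick condition rules out any non-exceptional member of $\simp(\W)$. (You have the two Hom-directions in that application swapped --- for $Y$ preprojective and $Z$ non-exceptional the lemma gives $\Hom(Y,Z)\neq 0$, not $\Hom(Z,Y)\neq 0$ --- but either nonvanishing contradicts the semibrick condition, so this is harmless.) The observation that $Y^\perp$ and ${}^\perp Y$ are representation-finite, via Lemma~\ref{lem:finite_type} and its dual, is also correct.

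The gap is in the endgame, and it is twofold. First, the ``Ext-dichotomy'' you propose --- that every $Z\in\simp(\W)\setminus\{Y\}$ lies in $Y^\perp$ or ${}^\perp Y$ --- is exactly the kind of AR-theoretic statement that constitutes the real content of \cite[Theorem~9.8]{IPT}; deferring it to that reference makes your argument a reduction to the citation the paper already gives, not an independent proof. Second, and more seriously, even granting the dichotomy, your conclusion does not follow. The dichotomy only excludes $2$-cycles of the Ext-quiver of $\simp(\W)$ passing through $Y$; to order the semibrick into an exceptional sequence you need the Ext-quiver on \emph{all} of $\simp(\W)$ to be acyclic. Concretely, writing $\simp(\W)=\{Y\}\cup A\cup B$ with $A\subseteq{}^\perp Y$ and $B\subseteq Y^\perp$, the candidate sequence $(b_1,\ldots,b_q,Y,a_1,\ldots,a_p)$ requires $\Ext^1(a_i,b_j)=0$ for all $i,j$, which nothing in your argument controls; a $3$-cycle $\Ext^1(Y,a)\neq0$, $\Ext^1(a,b)\neq0$, $\Ext^1(b,Y)\neq0$ is not excluded by the dichotomy. (The dichotomy does give finiteness of $\simp(\W)$, but finiteness of the semibrick does not make $\W=\Filt(\simp\W)$ representation-finite --- the two simples of the Kronecker quiver already generate the whole module category --- so you cannot shortcut via Lemma~\ref{lem:rep_finite_exceptional} either.) To close the argument you would need to prove acyclicity of the full Ext-quiver, or restructure the proof (e.g.\ by an induction passing to the representation-finite exceptional subcategories $Y^\perp$ and ${}^\perp Y$), which is essentially what the cited proofs do.
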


For the next lemma, see also the proof of \cite[Lemma~4.5]{IPT}.

\begin{lemma}\label{lem:homogeneous_stable}  
 Let $\W \in \ewide \Lambda$. If $\W$ is representation-infinite, then $\homo \subseteq \W$.
\end{lemma}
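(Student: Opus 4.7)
The plan is to proceed by induction on the length of a generating exceptional sequence for $\W^\perp$. By Proposition~\ref{prop:wide_fg}, write $\W = (X_1,\ldots,X_k)^\perp$ for some exceptional sequence $(X_1,\ldots,X_k)$. In the base case $k=0$, $\W = \mods\Lambda$ and the claim is trivial.

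For the inductive step, let $\W' = (X_2,\ldots,X_k)^\perp$, so that $\W \subseteq \W'$ and $\W' \supseteq \W$ is representation-infinite and exceptional. By Proposition~\ref{prop:wide_fg}, $\W'$ is exact-equivalent to $\mods\Lambda'$ for some finite-dimensional hereditary algebra $\Lambda'$; one checks that the symmetrizable Cartan matrix of $\Lambda'$ is obtained by restricting the Euler form of $\mods\Lambda$ to the classes of the simples of $\W'$, so positive semidefiniteness is inherited, making $\Lambda'$ (connected components) tame. By induction, $\homo \subseteq \W'$. Within $\W'$, we have $X_1 \in \W'$ exceptional, and $\W$ equals the right-perpendicular of $X_1$ computed inside $\W'$. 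Since this perpendicular is representation-infinite, Lemma~\ref{lem:finite_type} applied inside $\W' \simeq \mods\Lambda'$ forces $X_1$ to be regular in $\W'$, and in particular $X_1$ lies in some non-homogeneous tube of $\W'$ (since it is exceptional and regular there).

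Next I would show: for every homogeneous brick $X_\T$ of $\mods\Lambda$, $X_\T$ is a quasi-simple in a homogeneous tube of $\W'$. Indeed, $X_\T \in \W'$ by the inductive hypothesis, and since $\W'$ is closed under extensions in $\mods\Lambda$, the wide subcategory $\sW_{\W'}(X_\T)$ coincides with $\sW_{\mods\Lambda}(X_\T) = \Filt(X_\T) = \T$ by Corollary~\ref{cor:semibrick} and Lemma~\ref{lem:homogeneous}. This is a rank-$1$ tube category, so $X_\T$ is a non-exceptional brick in $\W'$ generating a homogeneous tube of $\W'$, hence it is a quasi-simple in that homogeneous tube of $\W'$. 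Since $X_1$ is in a non-homogeneous tube of $\W'$ and $X_\T$ is in a homogeneous tube of $\W'$, these two tubes of $\W'$ are distinct, and Lemma~\ref{lem:tubes_ortho} applied inside $\W'$ gives $\Hom(X_1,X_\T) = 0 = \Ext^1(X_1,X_\T)$. Therefore $X_\T \in X_1^{\perp_{\W'}} = \W$, completing the induction.

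The main obstacle will be verifying that $\W'$ is itself a (connected) tame hereditary category so that the tools invoked (Lemma~\ref{lem:finite_type}, the trichotomy preprojective/preinjective/regular, the classification of non-exceptional bricks as quasi-simples in homogeneous tubes or tops of non-homogeneous tubes, and Lemma~\ref{lem:tubes_ortho}) can be legitimately applied within $\W'$. This reduces to the statement that the restriction of the Euler form to the subspace spanned by the classes of the simples of $\W'$ remains positive semidefinite (so that $\Lambda'$ is not wild), together with the fact that regularity and brick-type properties of modules in $\W'$ are intrinsic and not an artifact of the ambient category.
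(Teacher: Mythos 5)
Your induction introduces a genuine gap that the paper's argument avoids entirely. The obstacle you flag at the end --- that $\W' \simeq \mods\Lambda'$ must be a \emph{connected} tame hereditary category in order for Lemma~\ref{lem:finite_type}, the tube classification, and Lemma~\ref{lem:tubes_ortho} to be applied inside it --- is real and is not resolved by your sketch. Positive semidefiniteness of the restricted Euler form only tells you that each connected component of $\Lambda'$ is of finite or affine type; $\Lambda'$ can be disconnected, and the cited lemmas are stated (and proved in their sources) for connected tame algebras. You would also need to justify that the tubes of $\W'$ are what you claim: that $\Filt(X_\T)$ is a homogeneous tube \emph{of $\W'$} (i.e.\ the extension closure of an orbit of the Auslander-Reiten translate of $\W'$, which need not be the restriction of $\tau$ from $\mods\Lambda$), and that $X_1$, being regular exceptional in its component of $\W'$, sits in a non-homogeneous tube of $\W'$. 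None of this is established in the paper, and patching it would be a nontrivial detour.

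The paper's proof shows the induction is unnecessary. Write $\W = (X_1,\ldots,X_k)^\perp = \bigcap_{i=1}^k X_i^\perp$. For each $i$ separately, $X_i^\perp \supseteq \W$ is representation-infinite, so Lemma~\ref{lem:finite_type}, applied directly in $\mods\Lambda$, says $X_i$ is regular. Each $X_i$ is then a regular exceptional module of the ambient category, hence lies in a non-homogeneous tube of $\mods\Lambda$ (homogeneous tubes contain no exceptional modules by Lemma~\ref{lem:homogeneous}). Every homogeneous indecomposable lies in a homogeneous tube, distinct from each of these non-homogeneous tubes, so Lemma~\ref{lem:tubes_ortho} gives $\homo \subseteq X_i^\perp$ for every $i$, hence $\homo \subseteq \W$. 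The observation you were missing is that representation-infiniteness of the intersection $(X_1,\ldots,X_k)^\perp$ forces \emph{each} $X_i^\perp$ to be representation-infinite, so Lemma~\ref{lem:finite_type} can be applied to every term of the sequence at once in $\mods\Lambda$, with no need to descend into perpendicular subcategories.
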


\begin{proof}
Since $\W$ is exceptional, Proposition~\ref{prop:wide_fg} implies that there is an exceptional sequence $(X_1,\ldots,X_k)$ with $\W = (X_1,\ldots,X_k)^\perp$. 
Since $\W$ is representation-infinite and $(X_1,\ldots,X_k)^\perp=\bigcap_{i=1}^kX_i^\perp$, each $X_i$ is regular by Lemma~\ref{lem:finite_type}.
Thus $(X_1,\ldots,X_k)$ is a regular para-exceptional sequence. 
Lemma~\ref{lem:tubes_ortho} completes the proof. 
\end{proof}

\begin{lemma}\label{lem:perp_in_tubes}  
Let $\W$ be an representation-infinite exceptional subcategory. 
Then $\W^\perp = (\exreg \cap \W)^\perp \cap \exreg$ and ${}^\perp\W = {}^\perp(\exreg\cap \W) \cap \exreg$.
\end{lemma}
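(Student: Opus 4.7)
The plan is to deduce both equalities from the anti-automorphisms of $\wide(\exreg)$ provided by Proposition~\ref{prop:tube_anti_isom}, after verifying that $\W^\perp$ and ${}^\perp\W$ both already lie inside $\exreg$. The key point is that, under the anti-automorphism $\Psi:\X\mapsto{}^\perp\X\cap\exreg$, both $\W^\perp$ and the candidate $(\exreg\cap\W)^\perp\cap\exreg$ send to $\W\cap\exreg$, so bijectivity of $\Psi$ forces them to coincide.

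First I would show $\W^\perp\subseteq\exreg$ (the argument for ${}^\perp\W\subseteq\exreg$ is symmetric). Because $\W$ is a representation-infinite exceptional subcategory, Lemma~\ref{lem:homogeneous_stable} gives $\homo\subseteq\W$, so $\W^\perp\subseteq\homo^\perp$. I would then verify that $\homo^\perp=\exreg$. For any $M\in\homo^\perp$ and any homogeneous tube $\T$ with unique (non-exceptional) brick $X_\T$ (Lemma~\ref{lem:homogeneous}), Lemma~\ref{lem:hom_ext} gives $\Ext^1(X_\T,Y)\neq 0$ for every preprojective indecomposable $Y$ and $\Hom(X_\T,Z)\neq 0$ for every preinjective indecomposable $Z$; thus $M$ has no preprojective and no preinjective summand. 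Nor can $M$ have a homogeneous summand $M_h$ in some tube $\T'$, since then $\Hom(X_{\T'},M)\neq 0$ would violate $M\in X_{\T'}^\perp$. Hence $M\in\nonhomo=\exreg$, establishing the claim. (Passing from the generating set $\{X_\T\}$ to all of $\homo$ uses the fact that, in the hereditary setting, Hom- and Ext-vanishing against a semibrick propagates to its filtration closure.)

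Once $\W^\perp\in\wide(\exreg)$ and $(\exreg\cap\W)^\perp\cap\exreg\in\wide(\exreg)$ are both in place, I would apply the anti-automorphism $\Psi:\X\mapsto{}^\perp\X\cap\exreg$ of $\wide(\exreg)$ from Proposition~\ref{prop:tube_anti_isom}.\ref{tai 1}, which is the inverse of $\Phi:\X\mapsto\X^\perp\cap\exreg$. On the one hand, since $\W\in\ewide\Lambda$, Corollary~\ref{cor:anti_isom_fg} gives ${}^\perp(\W^\perp)=\W$, so $\Psi(\W^\perp)=\W\cap\exreg$. On the other hand, the identity $\Psi\circ\Phi=\mathrm{id}$ on $\wide(\exreg)$ immediately yields $\Psi((\exreg\cap\W)^\perp\cap\exreg)=\exreg\cap\W$. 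Bijectivity of $\Psi$ then forces $\W^\perp=(\exreg\cap\W)^\perp\cap\exreg$. The equality ${}^\perp\W={}^\perp(\exreg\cap\W)\cap\exreg$ is proved symmetrically, applying $\Phi$ rather than $\Psi$ and using $({}^\perp\W)^\perp=\W$ from Corollary~\ref{cor:anti_isom_fg}.

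I do not foresee a serious obstacle: the substantive technical point is the computation $\homo^\perp=\exreg$ from Lemma~\ref{lem:hom_ext}, which is routine once each homogeneous tube is identified with the filtration closure of its unique brick. Everything after that is bookkeeping with the well-established anti-automorphisms of $\wide(\exreg)$ and $\ewide\Lambda$.
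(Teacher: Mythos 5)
Your proposal is correct, and it takes a genuinely different route from the paper. The paper writes $\W={}^\perp(X_1,\ldots,X_k)$ for an exceptional sequence, deduces $\W^\perp=\sW(X_1,\ldots,X_k)$ and (as you do) that $\W^\perp\subseteq\exreg$, then extends $(X_1,\ldots,X_k)$ to a \emph{maximal regular para-exceptional sequence} via Lemma~\ref{lem:soft_index_arbitrary} and applies Proposition~\ref{prop:maximal_soft} twice to identify $(\exreg\cap\W)^\perp\cap\exreg$ with $\sW(X_1,\ldots,X_k)=\W^\perp$. You instead finish by observing that both $\W^\perp$ and $(\exreg\cap\W)^\perp\cap\exreg$ lie in $\wide(\exreg)$ and have the same image $\W\cap\exreg$ under the anti-automorphism $\X\mapsto{}^\perp\X\cap\exreg$ of Proposition~\ref{prop:tube_anti_isom}.\ref{tai 1} (using ${}^\perp(\W^\perp)=\W$ from Corollary~\ref{cor:anti_isom_fg} for the first and $\Psi\circ\Phi=\mathrm{id}$ for the second), so bijectivity forces equality. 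The two arguments share the essential first step --- $\homo\subseteq\W$ by Lemma~\ref{lem:homogeneous_stable}, hence $\W^\perp\subseteq\homo^\perp=\exreg$ by Lemmas~\ref{lem:hom_ext} and~\ref{lem:tubes_ortho}, which you spell out in more detail than the paper does --- but your finish trades the sequence-extension machinery for the already-established anti-automorphisms, which is arguably cleaner and introduces no circularity since Proposition~\ref{prop:tube_anti_isom} and Corollary~\ref{cor:anti_isom_fg} are independent of this lemma. What the paper's route buys in exchange is that the same computation with maximal regular para-exceptional sequences is reused verbatim in later arguments (e.g.\ Proposition~\ref{prop:maximal_characterization}), so the authors get it essentially for free.
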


\begin{proof}
We prove the identity for $\W^\perp$; the proof for ${}^\perp\W$ is similar. 
By Proposition~\ref{prop:wide_fg}, there is an exceptional sequence $(X_1,\ldots,X_k)$ with $\W = {}^\perp(X_1,\ldots,X_k)$. Corollary~\ref{cor:anti_isom_fg} (and Lemma~\ref{lem:ortho_wide}) then implies that $\W^\perp = \sW(X_1,\ldots,X_k)$.

Lemma~\ref{lem:homogeneous_stable} says that $\W$ contains all of the homogeneous tubes. 
By Lemma~\ref{lem:homogeneous}, the modules in these tubes are non-exceptional, so $\W^\perp \subseteq \exreg$ by Lemma~\ref{lem:hom_ext}.
Therefore $(X_1,\ldots,X_k)$ is a regular para-exceptional sequence.

By Lemma~\ref{lem:soft_index_arbitrary}, we extend $(X_1,\ldots,X_k)$ to a maximal regular para-exceptional sequence $(X_1,\ldots,X_k,Y_1,\ldots,Y_j)$. 
Proposition~\ref{prop:maximal_soft} 
implies that 
\[\sW(Y_1,\ldots,Y_j) = {}^\perp(X_1,\ldots,X_k) \cap \exreg = \W \cap \exreg.\]
Again applying Proposition~\ref{prop:maximal_soft} (and Lemma~\ref{lem:ortho_wide}), we conclude that 
\[(\exreg \cap \W)^\perp \cap \exreg = (Y_1,\ldots,Y_j)^\perp \cap \exreg = \sW(X_1,\ldots,X_k) = \W^\perp.\qedhere\]
\end{proof}

\begin{proposition}\label{prop:unique_intersection_with_wide} 
A representation-infinite exceptional subcategory is determined by its intersection with the non-homogeneous tubes.
More specifically, two representation-infinite exceptional subcategories $\W$ and $\V$ are equal if and only if $\W \cap \reg = \V \cap \reg$ if and only if $\W \cap \exreg = \V \cap \exreg$.
\end{proposition}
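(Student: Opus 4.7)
The plan is to establish the cycle of implications
\[\W=\V \implies \W\cap\reg = \V\cap\reg \implies \W\cap\exreg = \V\cap\exreg \implies \W = \V,\]
of which only the last is nontrivial.

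First, $\W=\V \implies \W\cap\reg = \V\cap\reg$ is immediate. For the equivalence $\W\cap\reg = \V\cap\reg \iff \W\cap\exreg = \V\cap\exreg$, I will use the orthogonal decomposition $\reg = \homo \oplus \exreg$ from Proposition~\ref{prop:tubes_ortho}, together with Lemma~\ref{lem:homogeneous_stable}, which guarantees that each of the representation-infinite exceptional subcategories $\W$ and $\V$ contains $\homo$. Hence $\W\cap\homo = \homo = \V\cap\homo$, and so the intersections with $\reg$ agree if and only if the intersections with $\exreg$ do.

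The substantive step is $\W\cap\exreg = \V\cap\exreg \implies \W=\V$, and this is essentially a one-line consequence of machinery already in place. By Lemma~\ref{lem:perp_in_tubes}, applied to each of $\W$ and $\V$, we have
\[\W^\perp = (\exreg\cap\W)^\perp \cap \exreg \qquad\text{and}\qquad \V^\perp = (\exreg\cap\V)^\perp \cap \exreg.\]
Thus the hypothesis $\W\cap\exreg = \V\cap\exreg$ forces $\W^\perp = \V^\perp$. Since $\W$ and $\V$ are exceptional, Corollary~\ref{cor:anti_isom_fg} says that $\W \mapsto \W^\perp$ is an anti-automorphism of $\ewide\Lambda$ with inverse ${}^\perp(-)$, so applying ${}^\perp(-)$ recovers $\W = {}^\perp(\W^\perp) = {}^\perp(\V^\perp) = \V$.

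The main (and only) obstacle would have been knowing that the perpendicular category of a representation-infinite exceptional subcategory is detected already by its intersection with the non-homogeneous part, but this is precisely the content of Lemma~\ref{lem:perp_in_tubes}; with that lemma in hand, the proposition reduces to a short bookkeeping argument.
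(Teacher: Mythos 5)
Your proposal is correct and follows essentially the same route as the paper's own proof: the key step in both is to combine Lemma~\ref{lem:perp_in_tubes} (so that equal intersections with $\exreg$ force $\W^\perp = \V^\perp$) with the anti-automorphism of Corollary~\ref{cor:anti_isom_fg} to recover $\W = \V$. Your extra remark justifying the equivalence of the two intermediate conditions via $\homo \subseteq \W, \V$ (Lemma~\ref{lem:homogeneous_stable}) is a harmless elaboration of what the paper dismisses as the ``clear'' direction.
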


\begin{proof}
One direction of implications is clear.
Suppose $\W \cap \exreg = \V \cap \exreg$.
Then Corollary~\ref{cor:anti_isom_fg} and Lemma~\ref{lem:perp_in_tubes} imply that
\[
\W = {}^\perp(\W^\perp) = {}^\perp((\exreg \cap \W)^\perp \cap \exreg)
={}^\perp((\exreg \cap \V)^\perp \cap \exreg)={}^\perp(\V^\perp) =\V. \qedhere
\]
\end{proof}

We continue preparing for the proofs of Proposition~\ref{prop:perp_prefix_full} and Theorem~\ref{thm:binary_chain_full} by establishing properties of and formulas for the constructions in Definition~\ref{def:E_perp}.
The first property is immediate from Proposition~\ref{prop:tubes_ortho}.

\begin{lemma}\label{lem:join oplus}
If $\W$ is a representation-infinite wide subcategory and $\W\subseteq\nonhomo$, then $\aug(\W)=\W\oplus\homo$.
\end{lemma}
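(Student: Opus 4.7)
The plan is to show that $\W$ and $\homo$ are Hom-and-Ext-orthogonal in both directions, and then to conclude that their join in $\wide \Lambda$ is in fact an orthogonal direct sum.

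First, I would observe that by hypothesis $\W \subseteq \nonhomo$, so Proposition~\ref{prop:tubes_ortho} (specifically the orthogonal decomposition $\reg = \homo \oplus \nonhomo$, combined with Lemma~\ref{lem:tubes_ortho}) immediately gives $\Hom(X,Y) = 0 = \Hom(Y,X)$ and $\Ext^1(X,Y) = 0 = \Ext^1(Y,X)$ for all $X \in \W$ and $Y \in \homo$. In other words, $\homo \subseteq \W^\perp \cap {}^\perp \W$, which is exactly the condition needed for an orthogonal decomposition.

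Next, I would check that $\W \oplus \homo$ is a wide subcategory. Using the orthogonality above, any morphism between an object $X_1 \oplus Y_1$ and $X_2 \oplus Y_2$ (with $X_i \in \W$, $Y_i \in \homo$) decomposes as a pair of morphisms $X_1 \to X_2$ and $Y_1 \to Y_2$, so kernels and cokernels decompose and lie in $\W \oplus \homo$. The vanishing of $\Ext^1$ across the decomposition similarly shows closure under extensions.

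Finally, since $\W \oplus \homo$ is a wide subcategory containing both $\W$ and $\homo$, the minimality of the join yields $\W \join_{\wide \Lambda} \homo \subseteq \W \oplus \homo$. Conversely, any wide subcategory containing $\W$ and $\homo$ is closed under direct sums and so contains $\W \oplus \homo$, giving the reverse inclusion. Since $\W$ is representation-infinite, $\aug(\W) = \W \join_{\wide \Lambda} \homo$ by definition, and the result follows. No step here presents a real obstacle; the lemma is essentially a bookkeeping consequence of Proposition~\ref{prop:tubes_ortho} packaged for later use.
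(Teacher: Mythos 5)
Your proof is correct and follows the same route as the paper, which simply declares the lemma ``immediate from Proposition~\ref{prop:tubes_ortho}''; you have just filled in the routine orthogonality and join computations that the authors leave implicit. No issues.
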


\begin{lemma}\label{lem:exp_compute} 
If $(X_1,\ldots,X_k)$ is a para-exceptional sequence such that $\W = \sW(X_1,\ldots,X_k)$, then
\[\aug(\W) = \begin{cases}
\W & \text{if $(X_1,\ldots,X_k)$ is an exceptional sequence}\\
\W \oplus \homo & \text{otherwise.}
\end{cases}\]
\end{lemma}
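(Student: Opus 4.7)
The plan is to split into two cases based on whether $(X_1,\ldots,X_k)$ is an exceptional sequence, handling the exceptional case via Lemma~\ref{lem:homogeneous_stable} and the non-exceptional case by reducing to Lemma~\ref{lem:join oplus}.

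For Case~1, if $(X_1,\ldots,X_k)$ is an exceptional sequence, then $\W$ is an exceptional subcategory. When $\W$ is representation-finite, the definition of $\aug$ immediately yields $\aug(\W)=\W$. When $\W$ is representation-infinite, Lemma~\ref{lem:homogeneous_stable} provides the inclusion $\homo\subseteq\W$, which forces $\W\join_{\wide\Lambda}\homo=\W$, so again $\aug(\W)=\W$.

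In Case~2, some $X_i$ lies in $\nrig$, say $X_i=F_\beta$. My first step will be to show that every other $X_j$ is regular. For $j<i$ the para-exceptional sequence condition gives $\Hom(F_\beta,X_j)=0=\Ext^1(F_\beta,X_j)$; combining this with Lemma~\ref{lem:hom_ext} applied to the non-exceptional module~$F_\beta$ rules out $X_j$ being preprojective or preinjective, and a symmetric argument handles $j>i$. Since no para-exceptional module is homogeneous (by Lemma~\ref{lem:homogeneous} and Proposition~\ref{prop:bricks_in_tube}), I conclude $X_j\in\nonhomo$ and hence $\W\subseteq\nonhomo$. Next I will verify that $\W$ is representation-infinite by noting that $\sW(F_\beta)\subseteq\W$ and arguing that $\sW(F_\beta)$ itself must be representation-infinite: otherwise Lemma~\ref{lem:rep_finite_exceptional} together with Proposition~\ref{prop:wide_fg} would identify $\sW(F_\beta)$ with the module category of a representation-finite hereditary algebra, whose indecomposable modules are all exceptional, contradicting that $F_\beta$ is non-exceptional. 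With $\W$ representation-infinite and $\W\subseteq\nonhomo$ established, Lemma~\ref{lem:join oplus} delivers $\aug(\W)=\W\oplus\homo$.

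The principal obstacle is the first step of Case~2: forcing every $X_j$ to be regular. This is what places $\W$ inside $\nonhomo$ and makes Lemma~\ref{lem:join oplus} applicable, and it crucially uses both directions of the Hom/Ext vanishings imposed by the para-exceptional sequence condition together with the non-exceptionality of $F_\beta$, which is precisely the feature distinguishing para-exceptional sequences from ordinary exceptional ones.
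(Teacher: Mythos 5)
Your proof is correct and follows essentially the same route as the paper: the same case split, Lemma~\ref{lem:homogeneous_stable} for the exceptional case, and Lemma~\ref{lem:hom_ext} plus Lemma~\ref{lem:join oplus} for the non-exceptional case (your unpacking of why every term is regular is exactly what the paper's citation of Lemma~\ref{lem:hom_ext} is doing). The only minor deviation is how you get representation-infiniteness of $\W$: the paper cites Proposition~\ref{prop:tube_generation_2}.\ref{tg2 1}, whereas you argue directly that $\sW(F_\beta)$ is representation-infinite via Lemma~\ref{lem:rep_finite_exceptional} and Proposition~\ref{prop:wide_fg}, which works equally well.
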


\begin{proof}
Suppose $(X_1,\ldots,X_k)$ is an exceptional sequence. 
If $\W$ is representation-finite, then $\aug(\W)=\W$.
If $\W$ is representation-infinite, then $\homo \subseteq \W$ by Lemma~\ref{lem:homogeneous_stable}, and so $\aug(\W) = \W$.
Now suppose that $(X_1,\ldots,X_k)$ is not an exceptional sequence. 
Then $\W \subseteq \nonhomo$ by Lemma~\ref{lem:hom_ext}, so $\aug(\W) = \W \oplus \homo$ by Proposition~\ref{prop:tube_generation_2}.\ref{tg2 1} and Lemma~\ref{lem:join oplus}.
\end{proof}

\begin{lemma}\label{lem:reduce_fg}  
Let $\W \in \ewide \Lambda$. Then $\red(\W) = \W$.
\end{lemma}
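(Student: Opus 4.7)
The plan is to unwind the definition of $\red$. Recall
\[\red(\W) = \begin{cases} \W \cap \nonhomo & \text{if $\W \subseteq \reg$ and $\W \cap \nonhomo$ is representation-finite,}\\ \W & \text{otherwise.}\end{cases}\]
If we land in the second branch there is nothing to prove, so the entire content of the lemma is to show that, whenever $\W \in \ewide\Lambda$ satisfies $\W \subseteq \reg$, one has $\W \subseteq \nonhomo$ (whence $\red(\W)=\W\cap\nonhomo=\W$). In view of the orthogonal decomposition $\reg = \homo \oplus \nonhomo$ furnished by Proposition~\ref{prop:tubes_ortho}, this is equivalent to showing $\W \cap \homo = 0$. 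I will prove the contrapositive: if $\W \subseteq \reg$ and $\W \cap \homo \neq 0$, then $\W$ cannot be exceptional.

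Assume $\W \cap \homo \neq 0$. First I will observe that $\W$ must contain at least one entire homogeneous tube. Indeed, by Lemma~\ref{lem:tubes_ortho} and Proposition~\ref{prop:tubes_ortho}, any wide subcategory of $\homo$ decomposes as an orthogonal direct sum of wide subcategories of individual homogeneous tubes $\T$. By Lemma~\ref{lem:homogeneous} each such $\T$ contains a unique brick $X_\T$, so Proposition~\ref{prop:semibrick} shows that the only wide subcategories of $\T$ are $0$ and $\T$ itself. Consequently $\W \cap \homo$ contains some entire homogeneous tube, and since every homogeneous tube is representation-infinite, $\W$ is representation-infinite.

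Now suppose, for contradiction, that $\W$ is exceptional. Lemma~\ref{lem:homogeneous_stable} then forces $\homo \subseteq \W$, so in particular $X_\T \in \W$ for every (of the infinitely many) homogeneous tubes $\T$. The key observation is that each such $X_\T$ is simple in $\W$: since $X_\T$ is quasi-simple, its only subobjects in the abelian category $\reg$ are $0$ and $X_\T$, and any subobject of $X_\T$ in $\W$ is automatically a subobject in $\reg$ because $\W \subseteq \reg$ (here I use that the inclusion $\W \hookrightarrow \mods\Lambda$ is exact, Remark~\ref{rem:exact}). Hence $\simp(\W)$ is infinite, contradicting Proposition~\ref{prop:wide_fg}.\ref{wfg4}, which identifies exceptional subcategories with module categories of finite-dimensional hereditary algebras (which have only finitely many simples). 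This completes the argument.

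The only step that requires real thought is the observation that $X_\T$ remains simple in any wide subcategory of $\reg$ containing it; the rest of the proof is a careful case analysis combined with direct appeals to Lemmas~\ref{lem:homogeneous}, \ref{lem:tubes_ortho}, \ref{lem:homogeneous_stable} and Propositions~\ref{prop:semibrick}, \ref{prop:tubes_ortho}, \ref{prop:wide_fg}.
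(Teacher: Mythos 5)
Your proof is correct, but it reaches the key point---that an exceptional $\W\subseteq\reg$ must avoid the homogeneous tubes---by a different route than the paper. The paper's proof is a single line: it combines Proposition~\ref{prop:tubes_ortho} with Proposition~\ref{prop:wide_tube} to argue tube-by-tube that each orthogonal summand $\W\cap\T$ of the exceptional category $\W$ (for $\T$ a homogeneous tube) is itself exceptional, hence representation-finite, hence zero, since the only wide subcategories of a homogeneous tube are $0$ and the whole (non-exceptional) tube. You instead take a single nonzero homogeneous piece, amplify it via Lemma~\ref{lem:homogeneous_stable} to conclude $\homo\subseteq\W$, and then derive a contradiction globally by observing that the infinitely many quasi-simples $X_\T$ would all be simple in $\W$, which is incompatible with $\W\simeq\mods\Lambda'$ for a finite-dimensional algebra $\Lambda'$ (Proposition~\ref{prop:wide_fg}). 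Both arguments are sound; yours is longer and leans on the extra input of Lemma~\ref{lem:homogeneous_stable}, but it has the virtue of making explicit the step the paper leaves implicit, namely why a nonzero homogeneous summand is incompatible with exceptionality (the paper tacitly uses that orthogonal summands of exceptional subcategories are exceptional). Two small remarks: the further decomposition of $\W\cap\homo$ into individual homogeneous tubes is not literally stated in Proposition~\ref{prop:tubes_ortho} (which lumps $\homo$ into one summand), though it follows immediately from Lemma~\ref{lem:tubes_ortho}; and, as you note yourself, only the case where $\W\cap\nonhomo$ is representation-finite actually requires an argument, so both you and the paper prove slightly more than is strictly needed.
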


\begin{proof}
If $\W \subseteq \reg$, then $\W \subseteq \nonhomo$ by Proposition~\ref{prop:wide_tube} and Proposition~\ref{prop:tubes_ortho}.
\end{proof}

\begin{lemma}\label{lem:double_red} 
If $\W$ and $\V$ are wide subcategories, then 
\[\red(\W \cap \V) = \red(\red(\W) \cap \red(\V)).\]
\end{lemma}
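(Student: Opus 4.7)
The plan is to argue by a short case analysis on the value of $\red(\W \cap \V)$, using a single preliminary observation to handle most of the work uniformly.

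First, I would record the key observation that, for any wide subcategory $\U \subseteq \mods \Lambda$,
\[\red(\U) \cap \nonhomo = \U \cap \nonhomo.\]
Indeed, if $\red(\U) = \U$ this is immediate, and otherwise $\red(\U) = \U \cap \nonhomo$, in which case intersecting once more with $\nonhomo$ is idempotent. Applying this to both $\W$ and $\V$ gives
\[\red(\W) \cap \red(\V) \cap \nonhomo = \W \cap \V \cap \nonhomo. \tag{$*$}\]

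Now I split into two cases according to the definition of $\red(\W \cap \V)$.

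\emph{Case 1:} $\red(\W \cap \V) = \W \cap \V$. By the definition of $\red$, this means either $\W \cap \V \not\subseteq \reg$ or $\W \cap \V \cap \nonhomo$ is representation-infinite. In the first subcase, a non-regular module in $\W \cap \V$ witnesses that $\W \not\subseteq \reg$ and $\V \not\subseteq \reg$, so $\red(\W) = \W$ and $\red(\V) = \V$. In the second subcase, $\W \cap \V \cap \nonhomo \subseteq \W \cap \nonhomo$ and $\W \cap \V \cap \nonhomo \subseteq \V \cap \nonhomo$, so both $\W \cap \nonhomo$ and $\V \cap \nonhomo$ are representation-infinite, which again forces $\red(\W) = \W$ and $\red(\V) = \V$. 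In either subcase, $\red(\red(\W) \cap \red(\V)) = \red(\W \cap \V) = \W \cap \V$.

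\emph{Case 2:} $\red(\W \cap \V) = \W \cap \V \cap \nonhomo$. Then $\W \cap \V \subseteq \reg$ and $\W \cap \V \cap \nonhomo$ is representation-finite. Since $\red(\W) \cap \red(\V) \subseteq \W \cap \V \subseteq \reg$, and since $(*)$ gives $(\red(\W) \cap \red(\V)) \cap \nonhomo = \W \cap \V \cap \nonhomo$, which is representation-finite, the definition of $\red$ yields
\[\red(\red(\W) \cap \red(\V)) = \red(\W) \cap \red(\V) \cap \nonhomo = \W \cap \V \cap \nonhomo = \red(\W \cap \V).\]

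The main (and essentially only) subtlety is verifying, in Case~1, that the reduction status of $\W \cap \V$ is inherited by each of $\W$ and $\V$; the observation $(*)$ then does the rest with no further representation-theoretic input.
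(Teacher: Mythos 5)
Your proof is correct and takes essentially the same approach as the paper: both are direct case analyses unwinding the definition of $\red$, the only difference being that you organize the cases by the reduction status of $\W \cap \V$ while the paper's proof splits on whether $\red$ acts trivially on $\W$ and $\V$ individually. Your observation $(*)$ is implicit in the paper's one-line computation, and your Case~1 is the contrapositive of the paper's reduction to the nontrivial case.
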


\begin{proof}
The lemma is trivial when $\red(\W)=\W$ and $\red(\V)=\V$.
Otherwise, without loss of generality $\W \subseteq \reg$ and $\W \cap \nonhomo$ is representation-finite. 
Then also $\W \cap \V\subseteq\R$ and $\red(\W) \cap \red(\V)$ is representation-finite. 
Thus 
\[\red(\W \cap \V) = \W \cap \V \cap \nonhomo = \red(\red(\W) \cap \red(\V)). \qedhere\] 
\end{proof}

\begin{lemma}\label{lem:overline_compute} 
Suppose $(X_1,\ldots,X_k)$ is a para-exceptional sequence and suppose ${\W = \sW(X_1,\ldots,X_k)}$. 
\begin{enumerate}[label=\bf\arabic*., ref=\arabic*, leftmargin=23pt]
\item \label{oc ne}
If $(X_1,\ldots,X_k)$ is non-exceptional, then 
\begin{enumerate}[label=\bf\alph*., ref=\alph*, leftmargin=16pt]
\item \label{oc ne 1}
${(\aug(\W))^\perp = (\W \oplus \homo)^\perp = \W^\perp \cap \nonhomo}$.
\item \label{oc ne 2}
${}^{\perp}(\aug(\W)) = {}^\perp(\W \oplus \homo) = {}^{\perp}\W \cap \nonhomo$.
\item \label{oc ne 3}
$\W \subseteq \nonhomo$.
\end{enumerate}
\item \label{oc fne}
If $(X_1,\ldots,X_k)$ is fully non-exceptional, then
\begin{enumerate}[label=\bf\alph*., ref=\alph*, leftmargin=16pt]
\item \label{oc fne 1}
$(\aug(\W))^\perp$ is exceptional, representation-finite, and contained in $\nonhomo$.
\item \label{oc fne 2}
${}^{\perp}(\aug(\W))$ is exceptional, representation-finite, and contained in $\nonhomo$.
\item \label{oc fne 3}
$\overline{\W}$ is exceptional and representation-infinite.
\item \label{oc fne 4}
$\overline{\W}^\perp = (\aug(\W))^\perp$ and ${}^\perp\overline{\W} = {}^\perp(\aug(\W))$.
\item \label{oc fne 5}
$\W = \overline{\W} \cap \nonhomo \subsetneq \overline{\W}$.
\item \label{oc fne 6}
$\overline{\W} = ({}^\perp(\aug(\W))^\perp$.
\end{enumerate}
\item \label{oc nfne}
If $(X_1,\ldots,X_k)$ is non-exceptional, but not fully non-exceptional, then
\begin{enumerate}[label=\bf\alph*., ref=\alph*, leftmargin=16pt]
\item \label{oc nfne 1}
There exists a non-homogeneous tube $\T$ such that $(\aug(\W))^\perp \cap \T$ and ${}^\perp(\aug(\W))\cap \T$ are representation-finite.
\item \label{oc nfne 2}
There exists a non-homogeneous tube $\T$ such that $(\aug(\W))^\perp \cap \T$ and ${}^\perp(\aug(\W))\cap \T$ are representation-infinite.
\item \label{oc nfne 3}
$\W = \overline{\W} \cap \nonhomo \subsetneq \overline{\W} = \aug(\W) = \W \oplus \homo$.
\item \label{oc nfne 4}
$\overline{\W} = ({}^\perp(\aug(\W))^\perp.$
\end{enumerate}
\item \label{oc e}
If $(X_1,\ldots,X_k)$ is exceptional, then $\overline{\W} = \W = ({}^\perp(\aug(\W))^\perp$.
\end{enumerate}
\end{lemma}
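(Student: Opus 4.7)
The proof handles the four cases in turn, all resting on a common strategy: reduce everything to computations inside $\nonhomo$ via the orthogonality $\homo^\perp = \nonhomo = {}^\perp\homo$, and then apply the anti-automorphisms supplied by Corollary~\ref{cor:anti_isom_fg}, Proposition~\ref{prop:one tube_anti_isom}, and Proposition~\ref{prop:tube_anti_isom}.\ref{tai 1}.

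First I would dispose of Assertion~\ref{oc e}: Lemma~\ref{lem:exp_compute} gives $\aug(\W) = \W$, and since $\W$ is exceptional, Corollary~\ref{cor:anti_isom_fg} immediately yields $\overline{\W} = {}^\perp(\W^\perp) = \W = ({}^\perp\W)^\perp$. In the remaining cases, the first step is Assertion~\ref{oc ne}.\ref{oc ne 3}: a non-exceptional para-exceptional sequence contains some $F_\gamma \in \nrig$, and Lemma~\ref{lem:hom_ext} rules out any preprojective or preinjective term of the sequence, forcing every $X_i \in \nonhomo$ and hence $\W \subseteq \nonhomo$. Lemma~\ref{lem:exp_compute} then gives $\aug(\W) = \W \oplus \homo$, and the perp identities \ref{oc ne}.\ref{oc ne 1} and \ref{oc ne}.\ref{oc ne 2} follow from the observation that $\homo^\perp = \nonhomo = {}^\perp\homo$, the nontrivial direction coming from Lemma~\ref{lem:hom_ext} applied with $X$ a homogeneous brick.

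For Assertion~\ref{oc fne}, Assertion~\ref{oc ne}.\ref{oc ne 1} reduces $(\aug(\W))^\perp$ to $\W^\perp \cap \nonhomo$, which Proposition~\ref{prop:tube_generation_2}.\ref{tg2 2} identifies as an exceptional, representation-finite subcategory of $\nonhomo$; this gives \ref{oc fne}.\ref{oc fne 1} and symmetrically \ref{oc fne}.\ref{oc fne 2}. Since $(\aug(\W))^\perp$ is then exceptional, Corollary~\ref{cor:anti_isom_fg} shows $\overline{\W}$ is exceptional and produces the right-perp identity $\overline{\W}^\perp = (\aug(\W))^\perp$; the containment $\homo \subseteq \overline{\W}$ (because $(\aug(\W))^\perp \subseteq \nonhomo$) gives \ref{oc fne}.\ref{oc fne 3}. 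Assertion~\ref{oc fne}.\ref{oc fne 5} then follows by combining the right-perp identity with Lemma~\ref{lem:perp_in_tubes} (applied to the representation-infinite exceptional $\overline{\W}$) and the injectivity of Proposition~\ref{prop:tube_anti_isom}.\ref{tai 1}. The left-perp identity in \ref{oc fne}.\ref{oc fne 4} follows from another application of Lemma~\ref{lem:perp_in_tubes} together with \ref{oc fne}.\ref{oc fne 5} and Assertion~\ref{oc ne}.\ref{oc ne 2}, and \ref{oc fne}.\ref{oc fne 6} is then immediate from Corollary~\ref{cor:anti_isom_fg}.

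For Assertion~\ref{oc nfne}, the subclaims \ref{oc nfne}.\ref{oc nfne 1} and \ref{oc nfne}.\ref{oc nfne 2} reduce via Assertion~\ref{oc ne}.\ref{oc ne 1}--\ref{oc ne 2} to tube-wise statements: Proposition~\ref{prop:wide_tube} shows that a tube $\T_i$ containing a non-exceptional $X_j$ has $\W \cap \T_i$ representation-infinite with representation-finite perps in $\T_i$, while a tube $\T_i$ containing only exceptional terms of the sequence has $\W \cap \T_i$ exceptional with representation-infinite perps in $\T_i$. The main obstacle is \ref{oc nfne}.\ref{oc nfne 3}, which requires showing $\overline{\W} = \W \oplus \homo$; to prove the nontrivial inclusion $\overline{\W} \subseteq \W \oplus \homo$ I would examine each indecomposable $Y \in \overline{\W} = {}^\perp(\W^\perp \cap \nonhomo)$: the case $Y \in \homo$ is given by Lemma~\ref{lem:tubes_ortho}; preprojective and preinjective $Y$ are excluded using the non-exceptional brick in $\W^\perp \cap \nonhomo$ furnished by \ref{oc nfne}.\ref{oc nfne 2} together with Lemma~\ref{lem:hom_ext}; and for $Y \in \T_i$ the intersection $\overline{\W} \cap \T_i$ equals $\W \cap \T_i$ by Proposition~\ref{prop:one tube_anti_isom} applied inside the tube, after using Lemma~\ref{lem:tubes_ortho} to separate the different tubes. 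Assembling these via the orthogonal decomposition in Proposition~\ref{prop:tubes_ortho} yields \ref{oc nfne}.\ref{oc nfne 3}, and \ref{oc nfne}.\ref{oc nfne 4} follows by the symmetric argument on left perps.
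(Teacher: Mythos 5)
Your proposal is correct and follows essentially the same route as the paper's proof: the same reduction to $\nonhomo$ via $\aug(\W)=\W\oplus\homo$ and $\homo^\perp=\nonhomo$, the same appeals to Proposition~\ref{prop:tube_generation_2}, Lemma~\ref{lem:perp_in_tubes}, Corollary~\ref{cor:anti_isom_fg}, and the regular anti-automorphisms. The only deviations are cosmetic reorderings: you establish Assertion~\ref{oc fne}.\ref{oc fne 5} first and deduce the left-perp half of Assertion~\ref{oc fne}.\ref{oc fne 4} and Assertion~\ref{oc fne}.\ref{oc fne 6} from it (the paper goes the other way, using Proposition~\ref{prop:unique_intersection_with_wide} for \ref{oc fne}.\ref{oc fne 6}), and in Assertion~\ref{oc nfne}.\ref{oc nfne 3} you argue tube-by-tube with Proposition~\ref{prop:one tube_anti_isom} where the paper invokes the global anti-automorphism of $\wide(\reg)$ from Proposition~\ref{prop:tube_anti_isom}.\ref{tai 2}.
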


\begin{proof}
Suppose $(X_1,\ldots,X_k)$ is non-exceptional.
Then $\aug(\W) = \W \oplus \homo$ by Lemma~\ref{lem:exp_compute}, so $(\aug(\W))^\perp = \W^\perp \cap \homo^\perp$. 
Now $\homo^\perp = \nonhomo$ by Lemmas~\ref{lem:hom_ext} and~\ref{lem:tubes_ortho}.
This is Assertion~\ref{oc ne}\ref{oc ne 1}, and Assertion~\ref{oc ne}\ref{oc ne 2} is symmetric.
Lemma~\ref{lem:hom_ext} implies that each $X_i$ is regular, so $\W \subseteq \nonhomo$.
This is Assertion~\ref{oc ne}\ref{oc ne 3}.

Suppose $(X_1,\ldots,X_k)$ is fully non-exceptional.
By Assertion~\ref{oc ne}\ref{oc ne 1}, $(\aug(\W))^\perp = \W^\perp \cap \nonhomo\subseteq\nonhomo$.
By Lemma~\ref{lem:ortho_wide}, $\W^\perp = (X_1,\ldots,X_k)^\perp$, so by Proposition~\ref{prop:tube_generation_2}.\ref{tg2 2}, $(\aug(\W))^\perp$ is exceptional and representation-finite. 
This is Assertion~\ref{oc fne}\ref{oc fne 1}, and Assertion~\ref{oc fne}\ref{oc fne 2} is symmetric.
By Corollary~\ref{cor:anti_isom_fg}, $\overline{\W} = {}^\perp((\aug(\W))^\perp)$ is also exceptional. 
Since $\W\subseteq\aug(\W)$, Proposition~\ref{prop:tube_anti_isom} implies that $\W \subseteq \overline{\W}$. 
By Proposition~\ref{prop:tube_generation_2}.\ref{tg2 1}, $\W$ is representation-infinite, so $\overline{\W}$ is also representation-infinite.
This is Assertion~\ref{oc fne}\ref{oc fne 3}.
Because ${\overline{\W} = {}^\perp((\aug(\W))^\perp)}$ is exceptional, Assertion~\ref{oc fne}\ref{oc fne 4} follows by Corollary~\ref{cor:anti_isom_fg}.
Since $\W \subseteq \nonhomo$ and $\overline{\W}$ is exceptional and representation-infinite, Assertions~\ref{oc ne}\ref{oc ne 1} and~\ref{oc fne}\ref{oc fne 4} combine with Lemma~\ref{lem:perp_in_tubes} to yield
\[\W^\perp \cap \nonhomo = \overline{\W}^\perp = (\nonhomo \cap \overline{\W})^\perp \cap \nonhomo.\]
Proposition~\ref{prop:tube_anti_isom}.\ref{tai 1} then implies that $\W = \nonhomo \cap \overline{\W}$.
The inclusion $\W \subseteq \overline{\W}$ is proper because $\overline{\W}$ is exceptional and, by Proposition~\ref{prop:tube_generation_2}.\ref{tg2 1}, $\W$ is not exceptional.
This is Assertion~\ref{oc fne}\ref{oc fne 5}.
The wide subcategories $\overline{\W}$ and $({}^\perp(\aug(\W))^\perp$ are related by the symmetry of exchanging left and right duals.
Thus the symmetric version of the argument above shows that $({}^\perp(\aug(\W))^\perp$ is exceptional, representation-infinite, and satisfies $\W = ({}^\perp(\aug(\W))^\perp \cap \nonhomo$. 
Since these properties also hold for $\overline{\W}$, Proposition~\ref{prop:unique_intersection_with_wide} implies that $\overline{\W} = ({}^\perp(\aug(\W))^\perp$.
This is Assertion~\ref{oc fne}\ref{oc fne 6}.

Assertion~\ref{oc ne} and Lemma~\ref{lem:ortho_wide} say that $(\aug(\W))^\perp = (X_1,\ldots,X_k)^\perp \cap \nonhomo$ and ${}^\perp(\aug(\W)) = {}^\perp(X_1,\ldots,X_k) \cap \nonhomo$. 
Since $(X_1,\ldots,X_k)$ is not exceptional, Assertion~\ref{oc nfne}\ref{oc nfne 1} follows by Proposition~\ref{prop:tube_generation_2}.\ref{tg2 1}.
Since $(X_1,\ldots,X_k)$ is not fully non-exceptional, Assertion~\ref{oc nfne}\ref{oc nfne 2} follows by Proposition~\ref{prop:tube_generation_2}.\ref{tg2 2}.
 Now by Assertion \ref{oc nfne}\ref{oc nfne 2} and Proposition~\ref{prop:wide_tube}, there exists a non-exceptional indecomposable $Y \in (\aug(\W))^\perp$. Thus $\overline{\W} \subseteq \reg$ by Lemma~\ref{lem:hom_ext}. Also, by Assertion~\ref{oc ne}, $(\aug(\W))^\perp = (\W \oplus \homo)^\perp\subseteq\reg$, so applying ${}^\perp$ to the left, we obtain $\overline{\W}={}^\perp((\W \oplus \homo)^\perp) = {}^\perp((\W \oplus \homo)^\perp\cap\reg)\cap\reg$.
By Proposition~\ref{prop:tube_anti_isom}.\ref{tai 2}, $\overline{\W}=\W \oplus \homo$.
In particular, $\W\subsetneq\overline{\W}$.
Applying the same argument to the definition $\overline{\W}={}^\perp((\aug(\W))^\perp)$, we obtain $\overline{\W}=\aug(\W)$.
Intersecting both sides of $\overline{\W}=\W \oplus \homo$ with $\nonhomo$, we obtain
$\overline{\W}\cap\nonhomo=(\W \oplus \homo)\cap\nonhomo=\W$ by Assertion~\ref{oc ne}.\ref{oc ne 3}.
This is Assertion~\ref{oc nfne}\ref{oc nfne 3}.
An analogous argument shows that $({}^\perp(\aug(\W))^\perp = \W \oplus \homo$, and Assertion~\ref{oc nfne}\ref{oc nfne 4} follows.
Assertion~\ref{oc e} follows from Lemma~\ref{lem:exp_compute} and Corollary~\ref{cor:anti_isom_fg}.
\end{proof}

\begin{lemma}\label{lem:perp_formula}
Suppose $\W = \sW(X_1,\ldots,X_k)$ for some para-exceptional sequence $(X_1,\ldots,X_k)$.
Then $\red(\W^\perp) = \aug(\overline{\W}^\perp)$ and $\red({}^\perp\W) = \aug({}^\perp\overline{\W}).$
\end{lemma}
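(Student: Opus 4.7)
My plan is to prove the identity $\red(\W^\perp) = \aug(\overline{\W}^\perp)$ by case analysis on the type of the para-exceptional sequence $(X_1,\ldots,X_k)$ and then obtain the identity $\red({}^\perp\W) = \aug({}^\perp\overline{\W})$ by the left-right symmetric argument. The three cases, corresponding to the four parts of Lemma~\ref{lem:overline_compute}, are: (I) $(X_1,\ldots,X_k)$ is exceptional; (II) $(X_1,\ldots,X_k)$ is fully non-exceptional; and (III) $(X_1,\ldots,X_k)$ is non-exceptional but not fully non-exceptional. In each case, the strategy is to compute both sides directly using Lemma~\ref{lem:overline_compute} to pin down $\overline{\W}$ and using Lemma~\ref{lem:hom_ext} and the orthogonal decomposition $\reg = \homo \oplus \nonhomo$ (Proposition~\ref{prop:tubes_ortho}) to pin down $\W^\perp$.

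In Case (I), Lemma~\ref{lem:overline_compute}.\ref{oc e} gives $\overline{\W} = \W$, and $\W^\perp$ is exceptional by Corollary~\ref{cor:anti_isom_fg}. Then $\red(\W^\perp) = \W^\perp$ by Lemma~\ref{lem:reduce_fg}, while $\aug(\W^\perp) = \W^\perp$ either trivially (if $\W^\perp$ is representation-finite) or because $\homo \subseteq \W^\perp$ by Lemma~\ref{lem:homogeneous_stable} (if $\W^\perp$ is representation-infinite). In Case (II), the sequence contains a non-exceptional module, so Lemma~\ref{lem:hom_ext} forces $\W^\perp \subseteq \reg$; moreover $\W \subseteq \nonhomo$ by Assertion~\ref{oc ne}\ref{oc ne 3}, giving $\homo \subseteq \W^\perp$. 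Combining Assertions~\ref{oc ne}\ref{oc ne 1} and~\ref{oc fne}\ref{oc fne 4} yields $\overline{\W}^\perp = \W^\perp \cap \nonhomo$, which is representation-finite by Assertion~\ref{oc fne}\ref{oc fne 1}. Thus $\W^\perp \subseteq \reg$ with $\W^\perp \cap \nonhomo$ representation-finite, so by the definition of $\red$, $\red(\W^\perp) = \W^\perp \cap \nonhomo = \overline{\W}^\perp$, and since $\overline{\W}^\perp$ is representation-finite, $\aug(\overline{\W}^\perp) = \overline{\W}^\perp$.

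In Case (III), Assertion~\ref{oc nfne}\ref{oc nfne 3} gives $\overline{\W} = \W \oplus \homo$, so by the orthogonality of $\homo$ and $\nonhomo$, $\overline{\W}^\perp = \W^\perp \cap \nonhomo$. By Assertion~\ref{oc nfne}\ref{oc nfne 2}, $\W^\perp \cap \nonhomo$ is representation-infinite, and thus so is $\overline{\W}^\perp$. Therefore $\red(\W^\perp) = \W^\perp$ by definition. On the other side, using the same facts that $\W^\perp \subseteq \reg$ (Lemma~\ref{lem:hom_ext}) and $\homo \subseteq \W^\perp$ (since $\W \subseteq \nonhomo$), the decomposition $\reg = \homo \oplus \nonhomo$ yields $\W^\perp = \homo \oplus (\W^\perp \cap \nonhomo) = \homo \oplus \overline{\W}^\perp = \aug(\overline{\W}^\perp)$ (the last equality because $\overline{\W}^\perp \subseteq \nonhomo$ is representation-infinite, so $\homo$ is a complementary summand in $\aug(\overline{\W}^\perp)$ by Lemma~\ref{lem:join oplus}).

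The main obstacle is purely bookkeeping: the operators $\aug$ and $\red$ are defined by disjunctive conditions, so the proof is really about recognizing which branch of each definition applies. The case-splitting is natural and essentially forced by Lemma~\ref{lem:overline_compute}, and once the correct case is identified, every equality reduces to the standard decomposition $\reg = \homo \oplus \nonhomo$ together with the elementary orthogonality relations of Lemma~\ref{lem:hom_ext} and Lemma~\ref{lem:tubes_ortho}. No deeper structural result is needed beyond what is already in Lemma~\ref{lem:overline_compute}.
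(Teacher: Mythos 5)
Your proposal is correct and follows essentially the same route as the paper: the same three-way case split according to whether the sequence is exceptional, fully non-exceptional, or non-exceptional but not fully non-exceptional, with each case resolved by Lemma~\ref{lem:overline_compute} together with the decomposition $\reg = \homo \oplus \nonhomo$. The only cosmetic difference is that in the exceptional case you invoke Lemma~\ref{lem:homogeneous_stable} directly where the paper cites Lemma~\ref{lem:exp_compute}, but these carry the same content.
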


\begin{proof}
If $(X_1,\ldots,X_k)$ is an exceptional sequence, then $\aug(\overline{\W}^\perp)=\aug(\W^\perp)$ by Lemma~\ref{lem:overline_compute}.\ref{oc e}, which equals $\W^\perp $ by Corollary~\ref{cor:anti_isom_fg} and Lemma~\ref{lem:exp_compute}, which further equals $\red(\W^\perp)$ by Lemma~\ref{lem:reduce_fg}.

If $(X_1,\ldots,X_k)$ is fully non-exceptional, Lemma~\ref{lem:overline_compute}.\ref{oc fne}\ref{oc fne 4} and Lemma~\ref{lem:overline_compute}.\ref{oc ne}\ref{oc ne 1} say that $\overline{\W}^\perp = (\aug(\W))^\perp = \W^\perp \cap \nonhomo$.
This subcategory is representation-finite by Lemma~\ref{lem:overline_compute}.\ref{oc fne}\ref{oc fne 1}, so $\overline{\W}^\perp = \aug(\overline{\W}^\perp)$. 
Moreover, $\W^\perp \subseteq \reg$ by Lemma~\ref{lem:hom_ext}, so $\W^\perp \cap \nonhomo = \red(\W^\perp)$. 
Thus $\aug(\overline{\W}^\perp) = \overline{\W}^\perp = \W^\perp \cap \nonhomo = \red(\W^\perp)$.

If $(X_1,\ldots,X_k)$ is non-exceptional, but not fully non-exceptional, Lemma~\ref{lem:overline_compute}.\ref{oc nfne}\ref{oc nfne 3} says
$\overline{\W} = \aug(\W) = \W \oplus \homo$. 
Lemma~\ref{lem:overline_compute}.\ref{oc ne}\ref{oc ne 1} then implies that $\overline{\W}^\perp = \W^\perp \cap \nonhomo$. 
This subcategory is representation-infinite by Lemma~\ref{lem:overline_compute}.\ref{oc nfne}\ref{oc nfne 2}, so $\aug(\overline{\W}^\perp) = \aug(\W^\perp \cap \nonhomo) = (\W^\perp \cap \nonhomo) \oplus \homo = \W^\perp$ by Lemmas~\ref{lem:join oplus}, \ref{lem:tubes_ortho}, and~\ref{lem:hom_ext}.
Also because $\W^\perp \cap \nonhomo$ is representation-infinite, ${\red(W^\perp) = \W^\perp}$. 
\end{proof}

We now use Lemma~\ref{lem:overline_compute} to prove an analog of Propositions~\ref{prop:maximal_exceptional} and~\ref{prop:maximal_soft} for para-exceptional sequences.

\begin{proposition}\label{prop:maximal_characterization}
Let $(X_1,\ldots,X_k)$ be a para-exceptional sequence. 
Then each of the following conditions is equivalent to $(X_1,\ldots,X_k)$ being maximal.
\begin{enumerate}[label=\rm(\roman*), ref=(\roman*)]
\item $\overline{\sW(X_{i+1},\ldots,X_j)} = \red({}^{\perp}(X_1,\ldots,X_i) \cap (X_{j+1},\ldots,X_k)^{\perp})$ for all $i,j$ with $0\le i\le j\le k$.
\item $\overline{\sW(X_{i+1},\ldots,X_j)} = \red({}^{\perp}(X_1,\ldots,X_i) \cap (X_{j+1},\ldots,X_k)^{\perp})$ for some $i,j$ with $0\le i\le j\le k$.
\end{enumerate}
\end{proposition}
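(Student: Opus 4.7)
The implication (i)$\implies$(ii) is immediate, so the plan is to prove that maximality implies (i) and that (ii) implies maximality. Throughout, I would write $\W = \sW(X_{i+1},\ldots,X_j)$ and $\U = {}^\perp(X_1,\ldots,X_i)\cap(X_{j+1},\ldots,X_k)^\perp$.

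For the forward direction, assuming $(X_1,\ldots,X_k)$ is maximal, I would use Proposition~\ref{prop:chain_system_union} to split into the cases that $(X_1,\ldots,X_k)$ is either a maximal exceptional sequence or fully non-exceptional. In the first case, Proposition~\ref{prop:maximal_exceptional} gives $\W=\U$; then $\U$ is exceptional by Corollary~\ref{cor:middle_exceptional}, so $\red(\U)=\U$ by Lemma~\ref{lem:reduce_fg} and $\overline{\W}=\W$ by Lemma~\ref{lem:overline_compute}.\ref{oc e}. In the fully non-exceptional case, Proposition~\ref{prop:maximal_soft} gives $\W=\U\cap\nonhomo$, and I would split further on the character of the middle sub-sequence $(X_{i+1},\ldots,X_j)$, using that a fully non-exceptional sequence carries exactly one non-exceptional brick from each of the $m$ non-homogeneous tubes. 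If the middle is itself exceptional, the outer sub-sequence absorbs all $m$ non-exceptional bricks, so Lemma~\ref{lem:hom_ext} gives $\U\subseteq\reg$ and $\U\cap\nonhomo=\W$ is representation-finite, hence $\red(\U)=\W=\overline{\W}$. If the middle is fully non-exceptional, the outer is exceptional, so $\U$ is exceptional (Corollary~\ref{cor:middle_exceptional}) and representation-infinite; both $\overline{\W}$ and $\U$ are then representation-infinite exceptional subcategories meeting $\nonhomo$ in $\W$, so they coincide by Proposition~\ref{prop:unique_intersection_with_wide}, while $\red(\U)=\U$ because $\U\cap\nonhomo$ is representation-infinite. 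If the middle is non-exceptional but not fully non-exceptional, Lemma~\ref{lem:overline_compute}.\ref{oc nfne 3} gives $\overline{\W}=\W\oplus\homo$; the outer contains a non-exceptional brick, so Lemma~\ref{lem:hom_ext} yields $\U\subseteq\reg$, Lemma~\ref{lem:tubes_ortho} yields $\homo\subseteq\U$, and the orthogonal decomposition $\reg=\nonhomo\oplus\homo$ forces $\U=\W\oplus\homo$, matching $\overline{\W}$ and equal to $\red(\U)$ since $\W$ is representation-infinite.

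For the reverse direction, I would argue the contrapositive. If $(X_1,\ldots,X_k)$ is non-maximal, then Proposition~\ref{prop:chain_system_union}.\ref{index_arbitrary_full} lets me insert a single para-exceptional module $Y$ between $X_i$ and $X_{i+1}$. The defining Hom/Ext-vanishing conditions place $Y\in{}^\perp(X_1,\ldots,X_i)\cap(X_{i+1},\ldots,X_k)^\perp$, which is contained both in $\U$ and, by Lemma~\ref{lem:ortho_wide}, in $\W^\perp$. Since $Y$ is a non-homogeneous brick (Remark~\ref{rem:non-homogeneous}), $Y\in\nonhomo$, so $Y\in\red(\U)$ in either branch of the definition of $\red$. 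Assuming (ii) would then give $Y\in\overline{\W}$; but in each case of Lemma~\ref{lem:overline_compute} a non-homogeneous brick in $\overline{\W}$ must actually lie in $\W$ itself, because the possible homogeneous summand of $\overline{\W}$ cannot contain a non-homogeneous indecomposable, while the other cases yield $\overline{\W}=\W$ or $\overline{\W}\cap\nonhomo=\W$. Then $Y\in\W\cap\W^\perp$ forces $\End(Y)=\Hom(Y,Y)=0$, contradicting that $Y$ is a brick.

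The hard part will be the bookkeeping in the fully non-exceptional forward case: the natural ambient category is neither $\U$ nor $\W$ but $\red(\U)$, and matching it against $\overline{\W}$ requires tracking which sub-case of Lemma~\ref{lem:overline_compute} applies to the middle sub-sequence $(X_{i+1},\ldots,X_j)$. Proposition~\ref{prop:unique_intersection_with_wide} is the essential tool for closing the representation-infinite exceptional sub-case, where neither $\overline{\W}$ nor $\red(\U)$ can be read off directly from the formulas.
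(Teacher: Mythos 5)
Your overall strategy is sound and your forward direction is essentially the paper's argument: the paper proves, for each fixed pair $i\le j$, the biconditional ``maximal $\iff$ $\overline{\sW(X_{i+1},\ldots,X_j)}=\red(\U)$'' by a five-case analysis on the exceptional/non-exceptional/fully-non-exceptional status of the middle and outer subsequences, and your three sub-cases of the fully non-exceptional situation are exactly the paper's Cases 1, 3, and 5 (the cases compatible with maximality), closed with the same tools (Proposition~\ref{prop:maximal_soft}, Lemma~\ref{lem:overline_compute}, Proposition~\ref{prop:unique_intersection_with_wide}). Where you genuinely diverge is the reverse direction: the paper disposes of non-maximality inside the case analysis (its Cases 2 and 4 show that when the middle/outer split is incompatible with maximality, $\overline{\W}$ and $\red(\U)$ differ for structural reasons, one being exceptional and the other not), whereas you give a uniform contrapositive: insert a brick $Y$ at position $i$ via Proposition~\ref{prop:chain_system_union}.\ref{index_arbitrary_full}, observe $Y\in\U\cap\W^\perp$, and derive $\Hom(Y,Y)=0$ from $Y\in\W\cap\W^\perp$. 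This is an attractive simplification that avoids two of the paper's cases entirely.

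There is, however, one step you need to repair. You assert that $Y$, being a non-homogeneous brick, lies in $\nonhomo$. That is false as stated: $\nonhomo$ consists of non-homogeneous \emph{regular} modules, and a para-exceptional module can be preprojective or preinjective. You use this claim twice. First, to get $Y\in\red(\U)$: here the conclusion survives because the branch $\red(\U)=\U\cap\nonhomo$ only applies when $\U\subseteq\reg$, in which case $Y\in\U$ is regular and hence genuinely in $\nonhomo$; in the other branch $\red(\U)=\U\ni Y$ trivially. Second, to get $Y\in\W$ from $Y\in\overline{\W}$ in the sub-case where $(X_{i+1},\ldots,X_j)$ is fully non-exceptional and $\overline{\W}\cap\nonhomo=\W$: there $\overline{\W}$ is a representation-infinite exceptional subcategory and may well contain non-regular modules, so you must separately show $Y$ is regular. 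This follows because $\W$ then contains a non-exceptional brick $F$ and $Y\in\W^\perp\subseteq F^\perp$, which by Lemma~\ref{lem:hom_ext} excludes $Y$ preprojective or preinjective. With these two patches the contrapositive closes correctly, but as written the inference from ``non-homogeneous brick'' to ``element of $\nonhomo$'' is a gap.
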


\begin{proof}
Suppose  $\W = \sW(X_{i+1},\ldots,X_k)$ and $\V = {}^{\perp}(X_1,\ldots,X_k) \cap (X_{j+1},\ldots,X_k)^{\perp}$.
We show that $(X_1,\ldots,X_k)$ is maximal if and only if $\overline{\W}=\red(\V)$.
If $(X_1,\ldots,X_k)$ is exceptional, then Proposition~\ref{prop:chain_system_union}.\ref{Cpx Cx} says that $(X_1,\ldots,X_k)$ is maximal as a para-exceptional sequence if and only if it is maximal as an exceptional sequence. 
Since $\V$ is exceptional by Corollary~\ref{cor:middle_exceptional}, $\red(\V) = \V$ by Lemma~\ref{lem:reduce_fg}.
Moreover, $\overline{\W} = \W$ by Lemma~\ref{lem:overline_compute}.\ref{oc e}, so the result follows from Proposition~\ref{prop:maximal_exceptional}.

Suppose from now on that $(X_1,\ldots,X_k)$ is non-exceptional, so that in particular, $(X_1,\ldots,X_k)$ is a regular para-exceptional sequence by Lemma~\ref{lem:hom_ext}. The existence of a non-exceptional para-exceptional sequence in particular implies $\N \neq \emptyset$.
Proposition~\ref{prop:chain_system_union}.\ref{Cpx Crpx} then says that $(X_1,\ldots,X_k)$ is maximal as a para-exceptional sequence if and only if it is maximal as a regular para-exceptional sequence.
We break into cases based on the status of $(X_{i+1},\ldots,X_j)$ and $(X_1,\ldots,X_i,X_{j+1},\ldots,X_k)$ as exceptional, non-exceptional but not fully non-exceptional, or fully non-exceptional.
Of the nine possible cases, we rule out four:
At least one of the sequences is non-exceptional and if one of the sequences is fully non-exceptional, then Proposition~\ref{prop:hom_ext_vanish} implies that the other sequence is exceptional.

\smallskip\noindent
\textbf{Case 1.}
$(X_{i+1},\ldots,X_j)$ is fully non-exceptional and $(X_1,\ldots,X_i,X_{j+1},\ldots,X_k)$ is exceptional.
Lemma~\ref{lem:reduce_fg} says that $\red(\V) = \V$. 
Moreover, Lemma~\ref{lem:overline_compute}.\ref{oc fne}\ref{oc fne 2} says that $\overline{\W}$ is representation-infinite and $\W = \overline{\W} \cap \nonhomo$. 
Since $\W \subseteq \V$, also $\red(\V) = \V$ is representation-infinite. 
Proposition~\ref{prop:unique_intersection_with_wide} then implies that $\red(\V) = \V = \overline{\W}$ if and only if ${\V \cap \nonhomo = \overline{\W} \cap \nonhomo = \W}$. 
By Proposition~\ref{prop:maximal_soft}, also $(X_1,\ldots,X_k)$ is maximal if and only if ${\V \cap \nonhomo = \W}$.

\smallskip\noindent
\textbf{Case 2.}
$(X_{i+1},\ldots,X_j)$ is non-exceptional, but not fully non-exceptional and $(X_1,\ldots,X_i,X_{j+1},\ldots,X_k)$ is exceptional.
By Lemma~\ref{lem:overline_compute}.\ref{oc nfne} and Lemma~\ref{lem:exp_compute}, $\overline{\W} = \aug(\W) = \W \oplus \homo$ is representation-infinite and contained in $\reg$. 
Moreover,~$\overline{\W}$ is not exceptional by Proposition~\ref{prop:tube_generation_2}.\ref{tg2 1}.
Proposition~\ref{prop:chain_system_union}.\ref{Cpx Cx} says that $(X_1,\ldots,X_k)$ is not maximal.
Again Lemma~\ref{lem:reduce_fg} says that $\red(\V) = \V$ and this is an exceptional subcategory.
Since $\overline{\W}$ is not exceptional, $\overline{\W} \neq \red(\V)$.

\smallskip\noindent
\textbf{Case 3.}
$(X_{i+1},\ldots,X_j)$ is non-exceptional, but not fully non-exceptional and $(X_1,\ldots,X_i,X_{j+1},\ldots,X_k)$ is non-exceptional, but not fully non-exceptional.
Proposition~\ref{prop:tube_generation_2}.\ref{tg2 1} says that $\W$ is representation-infinite. 
Since $\W\subseteq\V$, also $\V$ is representation-infinite, so $\red(\V) = \V$.
As in Case 2, $\overline{\W} = \aug(\W) = \W \oplus \homo$ is representation-infinite and non-exceptional.
Since $(X_1,\ldots,X_i,X_{j+1},\ldots,X_k)$ is non-exceptional, Lemma~\ref{lem:hom_ext} implies that $\V\subseteq\reg$.  
Also by Lemma~\ref{lem:hom_ext}, each module in the sequence $(X_1,\ldots,X_k)$ is in $\nonhomo$, so $\homo\subseteq\V$ by Lemma~\ref{lem:tubes_ortho}.
We conclude that $\V = (\V \cap \nonhomo) \oplus \homo$, see Proposition~\ref{prop:tubes_ortho}.
Since $\overline{\W} = \W \oplus \homo$ and $\V = (\V \cap \nonhomo) \oplus \homo$, it follows that $\overline{\W} = \red(\V) = \V$ if and only if $\W = \V \cap \nonhomo$. 
Finally, we have that $\W = \V \cap \nonhomo$ if and only if $(X_1,\ldots,X_k)$ is maximal by Proposition~\ref{prop:maximal_soft}.

\smallskip\noindent
\textbf{Case 4.}  
$(X_{i+1},\ldots,X_j)$ is exceptional and $(X_1,\ldots,X_i,X_{j+1},\ldots,X_k)$ is non-exceptional, but not fully non-exceptional. Proposition~\ref{prop:chain_system_union}.\ref{Cpx Crpx} says that $(X_1,\ldots,X_k)$ is not maximal. Now as in Case 3, we have that $\homo \subseteq \red(\V) = \V \subseteq \reg$. Thus $\V$ is non-exceptional by Propositions~\ref{prop:tubes_ortho} and~\ref{prop:wide_tube}. Moreover, since $(X_{i+1},\ldots,X_j)$ is exceptional, we have that $\overline{\W} = \W$ by Lemma~\ref{lem:overline_compute}.\ref{oc e}. This is an exceptional subcategory, and so $\overline{\W} \neq \V$.

\smallskip\noindent
\textbf{Case 5.}  
$(X_{i+1},\ldots,X_j)$ is exceptional and $(X_1,\ldots,X_i,X_{j+1},\ldots,X_k)$ is fully non-exceptional. Since $(X_1,\ldots,X_i,X_{j+1},\ldots,X_k)$ is non-exceptional, ${\V \subseteq \reg}$ by Lemma~\ref{lem:hom_ext}. Moreover, because $(X_1,\ldots,X_i,X_{j+1},\ldots,X_k)$ is fully non-exceptional, Propositions~\ref{prop:wide_tube} and~\ref{prop:tubes_ortho} imply that $\V \cap \nonhomo$ is representation-finite. Thus $\red(\V) = \V \cap \nonhomo$. On the other hand, as in Case 4, we have that $\overline{\W} = \W$. Now the fact that $\W = \V \cap \nonhomo$ if and only if $(X_1,\ldots,X_k)$ is maximal follows from Proposition~\ref{prop:maximal_soft}. 
\end{proof}

We now give several characterizations of para-exceptional subcategories.

\begin{proposition}\label{prop:reduced_characterization}
Let $\W$ be a wide subcategory.  
Then the following are equivalent.
\begin{enumerate}[label=\rm(\roman*), ref=(\roman*)]
\item \label{rc 1}
$\W \in \pewide\Lambda$.
\item \label{rc 2}
$\W=\red((X_1,\ldots,X_k)^{\perp})$ for some para-exceptional sequence $(X_1,\ldots,X_k)$.
\item \label{rc 3}
$\W=\red({}^\perp(X_1,\ldots,X_k))$ for some para-exceptional sequence $(X_1,\ldots,X_k)$.
\item \label{rc 4}
Either $\W$ is exceptional or the following all hold.
\begin{enumerate}[label=\rm(\alph*), ref=(\alph*)]
\item $\W \subseteq \reg$.
\item $\W$ contains all of the homogeneous tubes.
\item There is a non-homogeneous tube $\T$ such that $\W \cap \T$ is exceptional.  
\item There is a non-homogeneous tube $\S$ such that $\W \cap \S$ is not exceptional.
\end{enumerate}
 \item \label{rc 5}
$\W = (X_1,\ldots,X_k)^\perp$ for some para-exceptional sequence $(X_1,\ldots,X_k)$ that is not fully non-exceptional
 \item \label{rc 6}
$\W = {}^\perp(X_1,\ldots,X_k)$ for some para-exceptional sequence $(X_1,\ldots,X_k)$ that is not fully non-exceptional
\end{enumerate}
\end{proposition}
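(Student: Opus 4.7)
My plan is to prove each of (ii) through (vi) equivalent to (i), using Lemmas~\ref{lem:closure}--\ref{lem:perp_formula}, Proposition~\ref{prop:maximal_characterization}, and the extension result Proposition~\ref{prop:chain_system_union}.\ref{index_arbitrary_full}.

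For (i)$\iff$(ii) and (i)$\iff$(iii), the key is Proposition~\ref{prop:maximal_characterization}. Given $\W = \overline{\sW(Y_1,\ldots,Y_\ell)}$ from (i), I would extend $(Y_1,\ldots,Y_\ell)$ on the right to a maximal para-exceptional sequence $(Y_1,\ldots,Y_\ell,Z_1,\ldots,Z_m)$ via Proposition~\ref{prop:chain_system_union}.\ref{index_arbitrary_full}, then apply Proposition~\ref{prop:maximal_characterization} with $i=0$, $j=\ell$ to obtain $\W = \red((Z_1,\ldots,Z_m)^\perp)$; extending on the left and taking $j=k$ yields (iii). The converses follow by extending the sequence of (ii) or (iii) on the opposite side and applying the same proposition. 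For (i)$\iff$(iv), I would use Lemma~\ref{lem:overline_compute} to split into cases on whether $(X_1,\ldots,X_k)$ is exceptional, fully non-exceptional, or non-exceptional but not fully non-exceptional. In the first two cases $\W = \overline{\sW(X_1,\ldots,X_k)}$ is exceptional by parts~\ref{oc e} or~\ref{oc fne}\ref{oc fne 3}; in the third, $\W = \sW(X_1,\ldots,X_k) \oplus \homo$ by~\ref{oc nfne}\ref{oc nfne 3}, from which the four conditions of the non-exceptional branch of (iv) follow by direct inspection, noting that the restriction of $(X_1,\ldots,X_k)$ to a single tube is itself an exceptional sequence in that tube. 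For (iv)$\implies$(i), the exceptional case uses Proposition~\ref{prop:wide_fg} and Lemma~\ref{lem:overline_compute}.\ref{oc e}; in the non-exceptional case, applying the first assertion of Proposition~\ref{prop:tube_generation} to $\W \cap \nonhomo$ produces a regular para-exceptional sequence $(X_1,\ldots,X_k)$ generating $\W \cap \nonhomo$, and the conditions on $\W \cap \T$ and $\W \cap \S$ in (iv) (combined with Proposition~\ref{prop:wide_tube}) ensure this sequence is non-exceptional but not fully non-exceptional, so Lemma~\ref{lem:overline_compute}.\ref{oc nfne}\ref{oc nfne 3} yields $\overline{\sW(X_1,\ldots,X_k)} = \W$.

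For (i)$\iff$(v) and (i)$\iff$(vi), I would argue (v)$\implies$(ii) by showing that $\red(\W) = \W$ whenever $\W = (X_1,\ldots,X_k)^\perp$ for a para-exceptional sequence that is not fully non-exceptional: in the exceptional case, $\W$ is exceptional, and if $\W \subseteq \reg$ then $\W \subseteq \nonhomo$ (all indecomposables of the equivalent finite-type hereditary algebra are non-homogeneous), so $\red(\W)=\W$; in the non-exceptional-but-not-fully case, Proposition~\ref{prop:tube_generation_2}.\ref{tg2 2} forces $\W \cap \nonhomo$ to be representation-infinite, so the non-trivial branch of $\red$ does not apply. For (i)$\implies$(v) via the characterization (iv): if $\W$ is exceptional, apply Proposition~\ref{prop:wide_fg}; otherwise, apply the second assertion of Proposition~\ref{prop:tube_generation} to $\W \cap \nonhomo$ to obtain a regular para-exceptional sequence $(X_1,\ldots,X_k)$ with $(X_1,\ldots,X_k)^\perp \cap \nonhomo = \W \cap \nonhomo$; Proposition~\ref{prop:tube_generation_2} together with (iv) shows this sequence is non-exceptional but not fully non-exceptional, and then $(X_1,\ldots,X_k)^\perp = \homo \oplus (\W \cap \nonhomo) = \W$ follows from Proposition~\ref{prop:tubes_ortho} and Lemmas~\ref{lem:hom_ext} and~\ref{lem:tubes_ortho}. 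The case (vi) is symmetric. The main obstacle is the tube-by-tube construction in (iv)$\implies$(i) and (i)$\implies$(v), where one must verify that the constructed sequence has exactly the right structural type (non-exceptional but not fully non-exceptional) to trigger the correct branch of Lemma~\ref{lem:overline_compute}.
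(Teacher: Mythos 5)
Your proposal is correct and uses essentially the same toolkit and case analysis as the paper's proof: Proposition~\ref{prop:maximal_characterization} together with Proposition~\ref{prop:chain_system_union}.\ref{index_arbitrary_full} for the equivalences with \ref{rc 2} and \ref{rc 3}, Lemma~\ref{lem:overline_compute} for the trichotomy exceptional / fully non-exceptional / neither, and Propositions~\ref{prop:tube_generation}, \ref{prop:tube_generation_2}, \ref{prop:wide_tube}, and \ref{prop:tubes_ortho} for passing between $\W$ and $\W\cap\nonhomo$. The only difference is organizational: the paper runs the single cycle \ref{rc 1}$\implies$\ref{rc 4}$\implies$\ref{rc 5}$\implies$\ref{rc 2}$\implies$\ref{rc 1} (plus its left-right mirror), whereas you prove each condition equivalent to \ref{rc 1} with some redundancy, but the individual implications are the same arguments.
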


\begin{proof}
We prove that \ref{rc 1}$\implies$\ref{rc 4}$\implies$\ref{rc 5}$\implies$\ref{rc 2}$\implies$\ref{rc 1}.
The proof that \mbox{\ref{rc 4}$\implies$\ref{rc 6}$\implies$\ref{rc 3}$\implies$\ref{rc 1}} is symmetric.

Suppose 
$\W = \overline{\sW(X_1,\ldots,X_k)}$ for some para-exceptional sequence $(X_1,\ldots,X_k)$.
If $(X_1,\ldots,X_k)$ is exceptional or fully non-exceptional, then $\W$ is exceptional by Lemma~\ref{lem:overline_compute}.\ref{oc e} or Lemma~\ref{lem:overline_compute}.\ref{oc fne}\ref{oc fne 3}. 
If $(X_1,\ldots,X_k)$ is non-exceptional, but not fully non-exceptional, then Lemma~\ref{lem:overline_compute}.\ref{oc nfne}\ref{oc nfne 3} and Proposition~\ref{prop:tube_generation_2} imply that $\W$ satisfies properties (a)--(d) in condition~\ref{rc 4}.
We see that \ref{rc 1}$\implies$\ref{rc 4}.

Now suppose \ref{rc 4}.
If $\W$ is exceptional \ref{rc 5} follows from Proposition~\ref{prop:wide_fg}.
Suppose that $\W$ is not exceptional. 
Applying Proposition~\ref{prop:tube_generation} to $\W \cap \nonhomo$, there exists a regular para-exceptional sequence $(X_1,\ldots,X_k)$ with $\W \cap \nonhomo = (X_1,\ldots,X_k)^\perp \cap \nonhomo$. 
Proposition~\ref{prop:tube_generation_2} implies that $(X_1,\ldots,X_k)$ is non-exceptional, but not fully non-exceptional.
Since this sequence is non-exceptional, Lemmas~\ref{lem:hom_ext} and~\ref{lem:tubes_ortho} imply that 
$(X_1,\ldots,X_k)^\perp\subseteq\reg$ and $(X_1,\ldots,X_k)^\perp$ contains all the homogenous tubes.
Thus $(X_1,\ldots,X_k)^\perp = ((X_1,\ldots,X_k)^\perp \cap \nonhomo) \oplus \homo$, see Proposition~\ref{prop:tubes_ortho}.
Likewise ${\W=(\W\cap\nonhomo)\oplus\homo}$, and therefore $\W = (X_1,\ldots,X_k)^\perp$.
Thus \ref{rc 4}$\implies$\ref{rc 5}.

Suppose $\W = (X_1,\ldots,X_k)^\perp$ for a para-exceptional sequence $(X_1,\ldots,X_k)$ that is not fully non-exceptional.
If $(X_1,\ldots,X_k)$ is exceptional, then Proposition~\ref{prop:wide_fg} says that $(X_1,\ldots,X_k)^\perp$ is exceptional, so $\red((X_1,\ldots,X_k)^\perp) = (X_1,\ldots,X_k)^\perp=\W$ by Lemma~\ref{lem:reduce_fg}.
If $(X_1,\ldots,X_k)$ is non-exceptional, then each $X_i$ is regular by Lemma~\ref{lem:hom_ext}. Moreover, since $(X_1,\ldots,X_k)$ is not fully non-exceptional, Proposition~\ref{prop:tube_generation_2}.\ref{tg2 2} implies that $(X_1,\ldots,X_k)^\perp \cap \nonhomo$ is representation-infinite. Thus $\W = (X_1,\ldots,X_k)^\perp = \red((X_1,\ldots,X_k)^\perp)$. 
We see that \ref{rc 5}$\implies$\ref{rc 2}.

Finally, suppose $\W = \red((X_1,\ldots,X_k)^{\perp})$ for some para-exceptional sequence  $(X_1,\ldots,X_k)$.
By Proposition~\ref{prop:chain_system_union}.\ref{index_arbitrary_full}, there is a maximal para-exceptional sequence $(Y_1,\ldots,Y_i,X_1,\ldots,X_k)$.
Now $\W = \overline{\sW(Y_1,\ldots,Y_i)}$ by Proposition~\ref{prop:maximal_characterization}.
Thus \ref{rc 2}$\implies$\ref{rc 1}.
\end{proof}

We pause to mention a pair of useful corollaries.
The following result is immediate from Proposition~\ref{prop:reduced_characterization}.

\begin{corollary}\label{excep subs}
$\ewide\Lambda\subseteq\pewide\Lambda$, with strict containment if and only if $\Lambda$ has more than one non-homogenous tube.
\end{corollary}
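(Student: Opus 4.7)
The containment $\ewide\Lambda \subseteq \pewide\Lambda$ is immediate: any $\W \in \ewide\Lambda$ equals $\sW(X_1,\ldots,X_k)$ for some exceptional sequence by Proposition~\ref{prop:wide_fg}, and Lemma~\ref{lem:overline_compute}.\ref{oc e} gives $\overline{\W} = \W$, so $\W \in \pewide\Lambda$ by Definition~\ref{def:para}.

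For the equivalence ``strict iff $m \geq 2$,'' first suppose that $\Lambda$ has $m \leq 1$ non-homogeneous tubes and let $\W \in \pewide\Lambda$. By Proposition~\ref{prop:reduced_characterization}.\ref{rc 4}, either $\W$ is already exceptional or $\W$ satisfies conditions (a)--(d) of that proposition; the latter requires a non-homogeneous tube $\T$ with $\W \cap \T$ exceptional together with a non-homogeneous tube $\S$ with $\W \cap \S$ not exceptional. If $m = 0$ these conditions fail vacuously, and if $m = 1$ they force $\S = \T$ and thus $\W \cap \T$ to be simultaneously exceptional and not exceptional, a contradiction. Hence $\W \in \ewide\Lambda$ in either case.

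Now suppose $m \geq 2$; I will produce an explicit element of $\pewide\Lambda$ that is not in $\ewide\Lambda$. Fix any $\gamma \in \Xi^c$ and consider the length-one para-exceptional sequence $(F_\gamma)$. This sequence is non-exceptional because $F_\gamma \in \nrig$ is itself non-exceptional, and it fails to be fully non-exceptional because $F_\gamma$ lies in only the single non-homogeneous tube $\T_\gamma$, whereas $m \geq 2$ demands non-exceptional representatives from at least two such tubes. Lemma~\ref{lem:overline_compute}.\ref{oc nfne}\ref{oc nfne 3} therefore yields $\overline{\sW(F_\gamma)} = \sW(F_\gamma) \oplus \homo$, which belongs to $\pewide\Lambda$ by Definition~\ref{def:para}.

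The main obstacle is to verify that $\sW(F_\gamma) \oplus \homo$ is not exceptional; I expect to do this by counting simple objects in this abelian category. The brick $F_\gamma$ is simple in $\sW(F_\gamma)$ by Corollary~\ref{cor:semibrick}, and Lemma~\ref{lem:tubes_ortho} rules out nonzero maps from any homogeneous module into $F_\gamma$, so $F_\gamma$ remains simple in $\sW(F_\gamma) \oplus \homo$. For each homogeneous tube $\T$, the unique brick $X_\T$ given by Lemma~\ref{lem:homogeneous} is simple in $\T$; Lemma~\ref{lem:tubes_ortho} rules out any submodule coming from $\sW(F_\gamma) \subseteq \T_\gamma$ or from any other homogeneous tube, so $X_\T$ also remains simple in $\sW(F_\gamma) \oplus \homo$. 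There are infinitely many homogeneous tubes, so $\sW(F_\gamma) \oplus \homo$ has infinitely many simple objects. If this subcategory were exceptional, Proposition~\ref{prop:wide_fg} would make it exact equivalent to $\mods\Lambda'$ for a finite-dimensional hereditary algebra $\Lambda'$, which has only finitely many simples, yielding the desired contradiction.
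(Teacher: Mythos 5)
Your proof is correct. The containment and the ``$m\le 1$ implies equality'' direction follow the paper's own route exactly: the paper disposes of the whole corollary with the single remark that it is immediate from Proposition~\ref{prop:reduced_characterization}, and your use of condition~\ref{rc 4} of that proposition (noting that its clauses (c) and (d) cannot coexist when there are fewer than two non-homogeneous tubes) is precisely the intended reading. Where you go beyond the paper is in the strictness direction for $m\ge 2$: you produce the explicit witness $\overline{\sW(F_\gamma)}=\sW(F_\gamma)\oplus\homo$ via Lemma~\ref{lem:overline_compute}.\ref{oc nfne}\ref{oc nfne 3} and then prove non-exceptionality by exhibiting infinitely many simple objects (the homogeneous bricks $X_\T$) and invoking the exact equivalence of Proposition~\ref{prop:wide_fg}\ref{wfg4} with $\mods\Lambda'$ for a finite-dimensional $\Lambda'$. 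The paper leaves this witness implicit (one could instead read it off from condition~\ref{rc 4} or~\ref{rc 5} of Proposition~\ref{prop:reduced_characterization} applied to, say, $F_\gamma^\perp$, together with the observation used in the proof of Corollary~\ref{lattice iff} that no exceptional subcategory contains a full non-homogeneous tube and sits inside $\reg$). Your infinite-semibrick argument is a clean, self-contained way to certify non-exceptionality and is a reasonable addition; the orthogonality checks via Lemmas~\ref{lem:tubes_ortho} and~\ref{lem:homogeneous} that keep $F_\gamma$ and each $X_\T$ simple in the larger category are all valid.
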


\begin{corollary}\label{para reduced}
Let $\W \in \pewide\Lambda$. Then $\red(\W) = \W$.
\end{corollary}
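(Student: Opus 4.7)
The plan is to leverage Proposition~\ref{prop:reduced_characterization}.\ref{rc 4}, which gives a structural description of para-exceptional subcategories, and then split into cases based on the definition of $\red$. Recall that $\red(\W) = \W$ unless $\W \subseteq \reg$ and $\W \cap \nonhomo$ is representation-finite, in which case $\red(\W) = \W \cap \nonhomo$. So it suffices to show that if $\W \in \pewide\Lambda$ with $\W \subseteq \reg$, then either $\W$ is exceptional (so that Lemma~\ref{lem:reduce_fg} finishes the argument) or $\W \cap \nonhomo$ is representation-infinite.

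First I would dispose of the exceptional case: if $\W$ is exceptional, Lemma~\ref{lem:reduce_fg} immediately gives $\red(\W) = \W$. The other trivial case is when $\W \not\subseteq \reg$, where $\red(\W) = \W$ by definition. The remaining case is when $\W$ is non-exceptional and $\W \subseteq \reg$. Here I would apply Proposition~\ref{prop:reduced_characterization}.\ref{rc 4}: since $\W$ is non-exceptional, properties (a)--(d) of that condition must hold. Property (d) provides a non-homogeneous tube $\S$ such that $\W \cap \S$ is not exceptional.

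The key observation is then that $\W \cap \S$ is a wide subcategory of $\S$ which is not exceptional, so by Proposition~\ref{prop:wide_tube} (the equivalence of ``exceptional'' and ``representation-finite'' for wide subcategories of a tube category), $\W \cap \S$ is representation-infinite. Since $\W \cap \S \subseteq \W \cap \nonhomo$, it follows that $\W \cap \nonhomo$ is representation-infinite as well. Therefore $\red(\W) = \W$ by the definition of $\red$. This covers all cases. There is no real obstacle here since all the heavy lifting has been done in Proposition~\ref{prop:reduced_characterization} and Proposition~\ref{prop:wide_tube}; the corollary is essentially a matter of unpacking definitions and combining these two results.
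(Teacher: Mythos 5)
Your proof is correct, but it takes a different route from the one in the paper. The paper's argument is shorter: it invokes the characterization $\W = \red(\U)$ for some wide subcategory $\U$ (condition \ref{rc 2} of Proposition~\ref{prop:reduced_characterization}) and then observes that the operator $\red(-)$ is idempotent, so $\red(\W) = \red(\red(\U)) = \red(\U) = \W$. You instead unpack condition \ref{rc 4} of the same proposition and argue by cases, using Lemma~\ref{lem:reduce_fg} when $\W$ is exceptional and, in the non-exceptional case, property (d) together with the equivalence of ``exceptional'' and ``representation-finite'' for wide subcategories of a tube (Proposition~\ref{prop:wide_tube}) to conclude that $\W \cap \nonhomo$ is representation-infinite, so the reduction clause in the definition of $\red$ never fires. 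Both arguments are sound and both lean entirely on Proposition~\ref{prop:reduced_characterization}; the paper's version buys brevity (idempotence is a one-line check from the definition), while yours makes explicit the structural fact that a non-exceptional para-exceptional subcategory always meets some non-homogeneous tube in a representation-infinite subcategory, which is a useful observation in its own right.
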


\begin{proof}
	By Proposition~\ref{prop:reduced_characterization}, there exists a wide subcategory $\U$ such that $\W = \red(\U)$. It is then clear from the definition of $\red(-)$ that $\red(\W) = \red(\red(\U)) = \red(\U) = \W$.
\end{proof}

We now prove the main results of the section.

\begin{proof}[Proof of Proposition~\ref{prop:perp_prefix_full}]
We prove that \ref{ppf 1}$\implies$\ref{ppf 3}$\implies$\ref{ppf 5}$\implies$\ref{ppf 1}. 
The proof that \ref{ppf 2}$\implies$\ref{ppf 3}$\implies$\ref{ppf 4}$\implies$\ref{ppf 2} is symmetric.

Suppose \ref{ppf 1} holds, and specifically suppose that both $(X_1,\ldots,X_k,Z_1,\ldots,Z_j)$ and $(Y_1,\ldots,Y_i,Z_1,\ldots,Z_j)$ are maximal para-exceptional sequences.
It follows that $\overline{\sW(X_1,\ldots,X_k)} = \red((Z_1,\ldots,Z_j)^{\perp}) = \overline{\sW(Y_1,\ldots,Y_i)}$, by Proposition~\ref{prop:maximal_characterization}. Thus \ref{ppf 1}$\implies$\ref{ppf 3}.

Suppose \ref{ppf 3} holds.
Then $\aug({}^\perp\overline{\sW(X_1,\ldots,X_k)}) = \aug({}^\perp\overline{\sW(Y_1,\ldots,Y_i)})$, so Lemmas~\ref{lem:perp_formula} and~\ref{lem:ortho_wide} combine to imply~\ref{ppf 5}.

Suppose \ref{ppf 5} holds. 
By Proposition~\ref{prop:chain_system_union}.\ref{index_arbitrary_full}, there is a maximal para-exceptional sequence $(X_1,\ldots,X_k,Z_1,\ldots,Z_j)$.
By definition, \ref{ppf 5}, and Proposition~\ref{prop:maximal_characterization}, 
\[{}^\perp(Y_1,\ldots,Y_j)\supseteq\red({}^\perp(Y_1,\ldots,Y_j))= \red({}^\perp(X_1,\ldots,X_j))=\overline{\sW(Z_1,\ldots,Z_j)}.\]
Since $\overline{\sW(Z_1,\ldots,Z_j)}\supseteq \sW(Z_1,\ldots,Z_j)$ by Lemma~\ref{lem:closure}.\ref{closure 1}, $(Y_1,\ldots,Y_i,Z_1,\ldots,Z_j)$ is also para-exceptional. 
By \ref{ppf 5} and Proposition~\ref{prop:maximal_characterization}, $(Y_1,\ldots,Y_i,Z_1,\ldots,Z_j)$ is also maximal. 
Thus $(X_1,\ldots,X_k)$ and $(Y_1,\ldots,Y_i)$ are equivalent as prefixes in $\Cpx$.
\end{proof}

\begin{proof}[Proof of Theorem~\ref{thm:binary_chain_full}]
\textit{Assertion~\ref{Cpx bcs}.}
We will show that $\Cpx$ is a chain system. 
The fact that the chain system is binary follows immediately from the definition.

Condition \ref{bound} in Definition~\ref{chain sys def} holds because $\Cpx = \Crpx\cup \Cx$ (Proposition~\ref{prop:chain_system_union}) and because $\Crpx$ and $\Cx$ are both chain systems (Theorems~\ref{thm:exceptional_binary_chain} and~\ref{thm:soft_binary_chain}).

To prove Condition~\ref{conv}, let $\X\in \Preeq(\Cpx)$ and $\Y\in \Posteq(\Cpx)$ and suppose $(X_1,\ldots,X_k)\in\X$ and $(Y_1,\ldots,Y_\ell)\in\Y$ have $(X_1,\ldots,X_k,Y_1,\ldots,Y_\ell) \in \Cpx$. 
For any $(X'_1,\ldots,X'_{k'})\in\X$ and $(Y'_1,\ldots,Y'_{\ell'})\in\Y$, by Proposition~\ref{prop:perp_prefix_full}, then Proposition~\ref{prop:maximal_characterization}, then Proposition~\ref{prop:perp_prefix_full} again,
\[\overline{\sW(X'_1,\ldots,X'_{k'})}=\overline{\sW(X_1,\ldots,X_k)}=\red((Y_1,\ldots,Y_\ell)^{\perp})= \red((Y'_1,\ldots,Y'_{\ell'})^{\perp}).\]
Thus $(X'_1,\ldots,X'_{k'},Y'_1,\ldots,Y'_{\ell'}) \in \Cpx$ by Proposition~\ref{prop:maximal_characterization}. 
This is Condition~\ref{conv}.

Now suppose $(X_1,\ldots,X_n)$, $(X_1,\ldots,X_{i-1},Y_1,\ldots,Y_j,X_{i+1},\ldots,X_n) \in \Cpx$.
Then $\overline{\sW(X_i)} = \overline{\sW(Y_1,\ldots,Y_j)}$ by Proposition~\ref{prop:maximal_characterization}.
By the hypothesis that there is more than one non-homogeneous tube, 
the sequence $(X_i)$ is not fully non-exceptional. We thus have two cases to consider.
If $X_i$ is exceptional, then $\overline{\sW(X_i)} = \sW(X_i)$ by Lemma~\ref{lem:overline_compute}.\ref{oc e}.
Corollary~\ref{cor:semibrick} says that $X_i$ is the unique brick in $\sW(X_i)$ and thus the unique brick in $\overline{\sW(X_i)}$.
If $X_i$ is non-exceptional, then $\overline{\sW(X_i)}=\sW(X_i)\oplus \homo$ by Lemma~\ref{lem:overline_compute}.\ref{oc nfne}\ref{oc nfne 3}. 
Thus $X_i$ is the only non-homogeneous brick in $\overline{\sW(X_i)}$ by Corollary~\ref{cor:semibrick}. In either case, Lemma~\ref{lem:closure}.\ref{closure 1} implies that $Y_1,\ldots,Y_j \in \overline{\sW(Y_1,\ldots,Y_j)} = \overline{\sW(X_i)}$.
But $Y_1,\ldots,Y_j$ are non-homogenous bricks, so $(Y_1,\ldots,Y_j)=(X_i)$ as sequences.
This is Condition~\ref{non subst}.

\smallskip \noindent
\textit{Assertion~\ref{Pc to pewide}.}
Proposition~\ref{prop:perp_prefix_full} implies that the map $\Preeq(\Cx) \rightarrow \pewide\Lambda$ is well-defined and bijective. 
It remains to show that the bijection and its inverse are order-preserving.
Let $\W,\V\in\pewide\Lambda$ and write $\X_\W$ and $\X_\V$ for their pre-images in $\Preeq(\Cpx)$.

Suppose first that $\X_\W \leq_\pre \X_\V$. 
Then there exists a maximal para-exceptional sequence $(X_1,\ldots,X_k)$ such that $\W = \overline{\sW(X_1,\ldots,X_i)}$ and $\V = \overline{\sW(X_1,\ldots,X_j)}$ for some $i\le j$.
In particular, $\sW(X_1,\ldots,X_i) \subseteq \sW(X_1,\ldots,X_j)$. 
If $(X_1,\ldots,X_i)$ is exceptional, then by Lemma~\ref{lem:overline_compute}.\ref{oc e}, then because $i\le j$, then by Lemma~\ref{lem:closure}.\ref{closure 1}, 
\[\overline{\sW(X_1,\ldots,X_i)} = \sW(X_1,\ldots,X_i) \subseteq \sW(X_1,\ldots,X_j) \subseteq \overline{\sW(X_1,\ldots,X_j)}.\]
If $(X_1,\ldots,X_i)$ is non-exceptional, then so is $(X_1,\ldots,X_j)$. 
Lemma~\ref{lem:exp_compute} then says that $\aug(\sW(X_1,\ldots,X_i)) = \sW(X_1,\ldots,X_i) \oplus \homo$, and likewise for $(X_1,\ldots,X_j)$. 
Therefore, $\aug(\sW(X_1,\ldots,X_i)) \subseteq \aug(\sW(X_1,\ldots,X_j))$.
Since the operators $(-)^\perp$ and ${}^\perp(-)$ are order-reversing, it follows that $\overline{\sW(X_1,\ldots,X_i)} \subseteq \overline{\sW(X_1,\ldots,X_j)}$. 

Conversely, suppose that $\W \subseteq \V$. 
By definition and by Proposition~\ref{prop:reduced_characterization}, there exist para-exceptional sequences $(X_1,\ldots,X_i)$ and $(Z_1,\ldots,Z_j)$ such that $\W = \overline{\sW(X_1,\ldots,X_i)}$ and $\V = \red((Z_1,\ldots,Z_j)^{\perp})$. 
Then $\W \subseteq \V \subseteq (Z_1,\ldots,Z_j)^\perp$, so also $(X_1,\ldots,X_i,Z_1,\ldots,Z_j)$ is a para-exceptional sequence. 
By Proposition~\ref{prop:chain_system_union}.\ref{index_arbitrary_full}, there is a maximal para-exceptional sequence $(X_1,\ldots,X_i,Y_1,\ldots,Y_k,Z_1,\ldots,Z_j)$. 
By Proposition~\ref{prop:maximal_characterization}, $\overline{\sW(X_1,\ldots,X_i,Y_1,\ldots,Y_k)} = \red((Z_1,\ldots,Z_j)^{\perp}) = \V$, so ${\X_\W \leq_{\pre} \X_\V}$.

\smallskip \noindent
\textit{Assertions~\ref{Cpx labels cov} and~\ref{Cpx labels}.}
Suppose that $\W \subseteq \V$ in $\pewide\Lambda$. 
By Assertions~\ref{Cpx bcs} and~\ref{Pc to pewide}, there is a maximal para-exceptional sequence $(X_1,\ldots,X_k)$ and an indices $i\leq j \in \{0,\ldots,k\}$ such that $\W = \overline{\sW(X_1,\ldots,X_{i})}$ and $\V = \overline{\sW(X_1,\ldots,X_j)}$. 
In this case, the cover relation is labeled by $X_j$. We will show that $\overline{\sW(X_{i+1},\ldots,X_j)}$ and ${}^\perp \W \cap \V$ contain the same non-homogeneous bricks.

For readability, write $\U_1 = \sW(X_1,\ldots,X_{i})$ and $\U_2 = \sW(X_{j+1},\ldots,X_k)$.
Now since $(X_1,\ldots,X_k)$ is maximal, and freely using Lemma~\ref{lem:ortho_wide}, by Proposition~\ref{prop:maximal_characterization}, then Lemma~\ref{lem:double_red}, then Proposition~\ref{prop:maximal_characterization} again, and finally Lemma~\ref{lem:perp_formula}, 
\begin{multline*}
\overline{\sW(X_{i+1},\ldots,X_j)}= \red({}^\perp\U_1 \cap \U_2^\perp) = \red(\red({}^\perp\U_1) \cap \red(\U_2^\perp)) \\
= \red(\red({}^\perp\U_1) \cap \V) = \red(\aug({}^\perp\W) \cap \V).
\end{multline*}
Now note that if $\red(\aug({}^\perp\W)\cap \V) \subsetneq \aug({}^\perp\W)\cap \V$, then $\aug({}^\perp\W)\cap \V \subseteq \reg$ and $\red(\aug({}^\perp\W)\cap \V) = \aug({}^\perp\W)\cap \V \cap \nonhomo$. 
Thus the non-homogeneous bricks in $\overline{\sW(X_{i+1},\ldots,X_j)} = \red(\aug({}^\perp\W)\cap \V)$ and in $\aug({}^\perp\W)\cap \V$ are the same. 
We now consider two cases, depending on whether $\W$ is exceptional.

Suppose first that $\W$ is exceptional. Then ${}^\perp \W$ is also exceptional by Corollary~\ref{cor:anti_isom_fg} and $\aug({}^\perp\W) = {}^\perp\W$ by Lemma~\ref{lem:exp_compute}. We conclude that the non-homogeneous bricks in $\aug({}^\perp\W) \cap \V$ and in ${}^\perp\W \cap \V$ are the same in this case.

Now suppose that $\W$ is not an exceptional subcategory. Then $(X_1,\ldots,X_i)$ is non-exceptional, but not fully non-exceptional, by Lemma~\ref{lem:overline_compute}.\ref{oc fne}\ref{oc fne 3} or \ref{lem:overline_compute}.\ref{oc e}. 
Then $\W = \overline{\U_1} = \aug(\U_1) = \U_1 \oplus \homo$ by Lemma~\ref{lem:overline_compute}.\ref{oc nfne}\ref{oc nfne 3}. 
Lemma~\ref{lem:overline_compute}.\ref{oc ne} then implies that ${}^\perp \W = {}^\perp \U_1 \cap \nonhomo$. Since $(X_1,\ldots,X_{i})$ is not fully non-exceptional, Proposition~\ref{prop:tube_generation_2}.\ref{tg2 2} then says that ${}^\perp\W$ is representation-infinite. Since ${}^\perp\W \subseteq \nonhomo$, Proposition~\ref{prop:tubes_ortho} and the definition thus yield $\aug({}^\perp\W) = {}^\perp\W \oplus \homo$. Again, we conclude that the non-homogeneous bricks in ${}^\perp\W \cap \V$ and in $\aug({}^\perp\W) \cap \V = ({}^\perp\W \cap \homo)\cap \V$ are the same.

We have shown that the non-homogeneous bricks in $\overline{\sW(X_{i+1},\ldots,X_{j})}$ and ${}^\perp \W \cap \V$ coincide. Now, by Assertions~\ref{Cpx bcs} and~\ref{Pc to pewide}, there is a cover relation $\W \covered \V$ if and only if $j = i+1$. When these equivalent conditions hold, the cover relation is labeled by $X_j$, and we showed in the proof of Assertion~\ref{Cpx bcs} that $X_j$ is the unique non-homogeneous brick in $\overline{\sW(X_j)}$. Thus suppose that $\W$ is not covered by $\V$. If $\W = \V$, then ${}^\perp\W \cap \V$ contains no non-homogeneous bricks. Otherwise $j > i+1$ and, by Lemma~\ref{lem:closure}.\ref{closure 1}, $X_{i+1},\ldots,X_j$ are all non-homogeneous bricks in $\overline{\sW(X_{i+1},\ldots,X_j)}$.
\end{proof}


\section{Lattice and Garside properties}\label{sec:lattice}
One of the key results of \cite{McSul} is that the labeled poset $[1,c]_{T \cup F}$ is a combinatorial Garside structure. 
(See Section~\ref{Garside sec}.)
In this section, we prove the following theorem, which combines with Theorem~\ref{thm:bijection_on_chain_systems} to yield a new proof of the McCammond-Sulway result. Recall that $\Lambda$ is always a connected tame hereditary algebra.

\begin{theorem}\label{thm:garside}
Suppose $\Lambda$ is a connected tame hereditary algebra with more than one non-homogeneous tube.
Then $\P(\Cpx)$ is a combinatorial Garside structure.
\end{theorem}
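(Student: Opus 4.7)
The plan is to verify each defining condition of a combinatorial Garside structure for $\P(\Cpx)$, using Theorem~\ref{thm:binary_chain_full} to pass between the chain system $\Cpx$ and the poset $\pewide\Lambda$ of para-exceptional subcategories, and following the template of Corollary~\ref{cor:garside_if_lattice}.

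First I would equip the alphabet with weights, assigning weight $1$ to each exceptional module and weight $2/m$ to each brick $F_\beta\in\nrig$, where $m\ge 2$ is the number of non-homogeneous tubes. By Proposition~\ref{prop:chain_system_union}, a maximal para-exceptional sequence is either a maximal exceptional sequence, hence of length $n$ with no terms in $\nrig$ (Proposition~\ref{prop:complete_ex_maximal}), or a fully non-exceptional maximal regular para-exceptional sequence, which by Proposition~\ref{prop:complete_generates_tube} and Definition~\ref{def:full} has length $n-2+m$ with exactly $m$ terms in $\nrig$. In either case the total weight is $n$, so $\P(\Cpx)$ is weighted-graded. Finite height, a unique minimum, and a unique maximum follow from Theorem~\ref{thm:binary_chain_full} and Proposition~\ref{chain sys poset}, together with a direct application of Lemma~\ref{lem:overline_compute} showing that $\overline{\sW(X_1,\ldots,X_k)}=\mods\Lambda$ for any maximal word. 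The balanced condition $\Pre(\Cpx)=\Post(\Cpx)$ is immediate from Proposition~\ref{prop:chain_system_union}.\ref{index_arbitrary_full}, which lets us extend any para-exceptional sequence on either side to a maximal one.

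The key step is the group-like condition. Imitating Lemma~\ref{lem:interval_labels_excep}, I would prove that for $\W\subseteq\V$ in $\pewide\Lambda$, the restriction $\Cpx|_{[\W,\V]}$ consists of exactly those para-exceptional sequences $(X_1,\ldots,X_k)$ with $\overline{\sW(X_1,\ldots,X_k)}=\red(({}^\perp\W)\cap\V)$. The forward direction combines Proposition~\ref{prop:maximal_characterization}, applied to a maximal para-exceptional sequence built by splicing together a saturated chain from $\W$ to $\V$ with extensions below $\W$ and above $\V$, with Proposition~\ref{prop:perp_prefix_full} to eliminate the dependence on the particular extensions. The reverse direction uses Proposition~\ref{prop:chain_system_union}.\ref{index_arbitrary_full} to extend on both sides and then Proposition~\ref{prop:maximal_characterization} to identify the extended sequence as maximal with the desired prefix and postfix classes. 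With this interval-label lemma in hand, if two intervals $[\W,\V]$ and $[\W',\V']$ share any word in their restricted chain systems, then $\red(({}^\perp\W)\cap\V)=\red(({}^\perp\W')\cap\V')$, and so the restrictions coincide; thus $\Cpx$ has the restriction property, and Proposition~\ref{res prop} yields that $\P(\Cpx)$ is group-like.

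The lattice property, finally, is provided by Corollary~\ref{McSul to pewide} together with the fact that the McCammond-Sulway lattice $[1,c]_{T\cup F}$ is a lattice \cite[Theorem~8.9]{McSul}, or alternatively by Theorem~\ref{thm:join}. I expect the main obstacle to be the interval-label lemma: the operators $\overline{(-)}$ and $\red(-)$ interact less cleanly than the exceptional-subcategory operators used in Lemma~\ref{lem:interval_labels_excep}, since the formulas in Lemma~\ref{lem:overline_compute} split into several cases depending on whether the relevant sequence is exceptional, fully non-exceptional, or neither, and this case analysis must be matched against the hypotheses of Proposition~\ref{prop:maximal_characterization}. Careful bookkeeping, aided by Corollary~\ref{para reduced}, should make the argument go through.
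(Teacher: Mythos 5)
Your proposal follows the paper's proof essentially step for step: the same weight assignment ($1$ on $\excep$, $2/m$ on $\nrig$) with the same length count via Propositions~\ref{prop:chain_system_union} and~\ref{prop:complete_generates_tube}; balancedness from Proposition~\ref{prop:chain_system_union}.\ref{index_arbitrary_full}; the group-like property via an interval-label lemma feeding into the restriction property and Proposition~\ref{res prop}; and the lattice property from Theorem~\ref{thm:join}. The one substantive discrepancy is in the statement of your interval-label lemma. You characterize $\Cpx|_{[P_\W,P_\V]}$ as the sequences with $\overline{\sW(X_1,\ldots,X_k)}=\red(({}^\perp\W)\cap\V)$, but the correct invariant is $\red(\aug({}^\perp\W)\cap\V)$, and the $\aug$ cannot be dropped. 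Indeed, when $\W\in\pewide\Lambda$ is not exceptional, Lemma~\ref{lem:overline_compute}.\ref{oc ne} gives ${}^\perp\W={}^\perp(\W\cap\nonhomo)\cap\nonhomo$, which is representation-infinite and contained in $\nonhomo$, so $\aug({}^\perp\W)={}^\perp\W\oplus\homo\supsetneq{}^\perp\W$; taking $\V=\mods\Lambda$, Proposition~\ref{prop:maximal_characterization} and Lemma~\ref{lem:perp_formula} show the postfix of a maximal word through $\W$ generates $\aug({}^\perp\W)={}^\perp\W\oplus\homo$, not ${}^\perp\W$, and $\red$ does not repair the difference since both categories have representation-infinite intersection with $\nonhomo$. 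So the lemma as you state it is false in that case, and the forward direction of your argument would break there. The fix is exactly the bookkeeping you anticipated: run the computation $\red({}^\perp\U_1\cap\U_2^\perp)=\red(\red({}^\perp\U_1)\cap\red(\U_2^\perp))=\red(\aug({}^\perp\W)\cap\V)$ using Lemmas~\ref{lem:double_red} and~\ref{lem:perp_formula}, which inserts the $\aug$ automatically. With that correction the rest of your argument goes through as in the paper.
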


We begin with some lemmas and propositions, leading to a proof that $\pewide\Lambda$ is a lattice.
The proof of the lattice property does not need the hypothesis that $\Lambda$ has more than one non-homogeneous tube.

\begin{lemma}\label{lem:infinite_tubes}
Let $\W, \V \in \ewide \Lambda$. If $\W \cap \V \cap \T$ is representation-infinite for every non-homogeneous tube~$\T$, then $\W \cap \V \in \ewide \Lambda$.
\end{lemma}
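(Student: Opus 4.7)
The plan is to identify $(\W \cap \V)^\perp$ as an exceptional subcategory of $\exreg$, using Lemma~\ref{lem:perp_in_tubes} together with the non-homogeneous-tube hypothesis, and then to invoke the anti-automorphism of $\ewide \Lambda$ from Corollary~\ref{cor:anti_isom_fg} to conclude that $\W \cap \V$ itself is exceptional. The technical heart is a short computation in the sublattice $\wide(\exreg)$ that expresses the relevant orthogonal as a wide join of $\W^\perp$ and $\V^\perp$.

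First I will observe that both $\W$ and $\V$ are representation-infinite, since each contains $\W \cap \V \cap \T$ for any non-homogeneous tube $\T$. So Lemma~\ref{lem:perp_in_tubes} applies: $\W^\perp, \V^\perp \subseteq \exreg$ with $\W^\perp = (\W \cap \exreg)^\perp \cap \exreg$, and symmetrically for $\V$. Next, Proposition~\ref{prop:tube_generation} provides a regular para-exceptional sequence $(X_1,\ldots,X_k)$ with $\sW(X_1,\ldots,X_k) = \W \cap \V \cap \exreg$. Using the tube orthogonality of Lemma~\ref{lem:tubes_ortho} to reduce to each tube separately and applying Proposition~\ref{prop:wide_tube}, the hypothesis that each $\W \cap \V \cap \T$ is representation-infinite forces $(X_1,\ldots,X_k)$ to contain a non-exceptional module from every non-homogeneous tube, so the sequence is fully non-exceptional in the sense of Definition~\ref{def:full}. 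By Proposition~\ref{prop:tube_generation_2}.\ref{tg2 2}, it follows that $(\W \cap \V \cap \exreg)^\perp \cap \exreg$ is an exceptional subcategory.

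I will then apply the anti-automorphism $\X \mapsto \X^\perp \cap \exreg$ of $\wide(\exreg)$ from Proposition~\ref{prop:tube_anti_isom}.\ref{tai 1} to the identity $\W \cap \V \cap \exreg = (\W \cap \exreg) \cap (\V \cap \exreg)$. This converts the meet into a join, giving
\[
(\W \cap \V \cap \exreg)^\perp \cap \exreg \;=\; \W^\perp \vee_{\wide(\exreg)} \V^\perp \;=\; \sW(\W^\perp \cup \V^\perp),
\]
where the second equality uses that $\W^\perp,\V^\perp \subseteq \exreg$ and that $\exreg$ is itself a wide subcategory of $\mods \Lambda$. Combined with the previous paragraph, this shows that $\sW(\W^\perp \cup \V^\perp)$ is exceptional.

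Finally, Corollary~\ref{cor:anti_isom_fg} tells us that ${}^\perp \sW(\W^\perp \cup \V^\perp)$ is also exceptional, and Lemma~\ref{lem:ortho_wide} together with the same corollary gives
\[
{}^\perp \sW(\W^\perp \cup \V^\perp) \;=\; {}^\perp(\W^\perp) \cap {}^\perp(\V^\perp) \;=\; \W \cap \V,
\]
which completes the argument. The main obstacle I anticipate is careful bookkeeping about the ambient category of each perpendicular and join — in particular, verifying that the $\wide(\exreg)$-join $\W^\perp \vee \V^\perp$ really coincides with the wide-subcategory closure $\sW(\W^\perp \cup \V^\perp)$ in $\wide \Lambda$, and that the condition ``representation-infinite in every non-homogeneous tube'' is precisely matched by the fully-non-exceptional condition needed to invoke Proposition~\ref{prop:tube_generation_2}.\ref{tg2 2}.
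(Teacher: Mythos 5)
Your proof is correct, but it takes a genuinely different route from the paper's. The paper presents $\W = {}^\perp(X_1,\ldots,X_k)$ and $\V = {}^\perp(X_{k+1},\ldots,X_j)$ via Proposition~\ref{prop:wide_fg}, so that $\W\cap\V = {}^\perp(X_1,\ldots,X_j)$, and then verifies directly that $\sW(X_1,\ldots,X_j)$ is representation-finite and contained in $\nonhomo$: the hypothesis supplies a non-exceptional brick $Y\in\W\cap\V\cap\T$ in each non-homogeneous tube, each $X_i$ lies in $Y^\perp$, and $Y^\perp\cap\T$ is representation-finite by Proposition~\ref{prop:wide_tube}; Lemma~\ref{lem:rep_finite_exceptional} and Corollary~\ref{cor:anti_isom_fg} then finish. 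You instead generate $\W\cap\V\cap\nonhomo$ by a regular para-exceptional sequence, show it is fully non-exceptional, and use Proposition~\ref{prop:tube_generation_2}.\ref{tg2 2} together with the anti-automorphism of $\wide(\nonhomo)$ and Lemma~\ref{lem:perp_in_tubes} to identify $\sW(\W^\perp\cup\V^\perp)$ as an exceptional subcategory before applying ${}^\perp(-)$. The two arguments are really dual descriptions of the same auxiliary object --- your $\sW(\W^\perp\cup\V^\perp)$ coincides with the paper's $\sW(X_1,\ldots,X_j)$ --- but the paper's verification of its representation-finiteness is more elementary (one brick per tube does all the work), whereas yours leans on the fully-non-exceptional formalism of Section~\ref{sec:rep_theory_tubes}; what your version buys is that the exceptionality of the auxiliary category is read off from an already-established equivalence rather than re-derived, and your lattice-theoretic identification of the join is a nice conceptual packaging. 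One shared caveat worth noting: your first step (deducing that $\W$ and $\V$ are representation-infinite so that Lemma~\ref{lem:perp_in_tubes} applies) and the paper's own argument both implicitly assume that at least one non-homogeneous tube exists; when there are none the hypothesis is vacuous and the statement must be checked separately (it holds trivially in that case).
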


\begin{proof}
Since $\W$ and $\V$ are both exceptional, by Proposition~\ref{prop:wide_fg} there exist exceptional sequences $(X_1,\ldots,X_k)$ and $(X_{k+1},\ldots,X_j)$ such that $\W = {}^\perp(X_1,\ldots,X_k)$ and $\V = {}^\perp(X_{k+1},\ldots,X_j)$. 
Thus $\W \cap \V = {}^\perp(X_1,\ldots,X_j)$. 
(Although $(X_1,\ldots,X_j)$ may not be an exceptional sequence, it still has a perpendicular category.)
We now show that $\sW(X_1,\ldots,X_j) \subseteq \nonhomo$ and that $\sW(X_1,\ldots,X_j)$ is representation-finite.

Let $\T$ be a non-homogeneous tube, so that $\W \cap \V \cap \T$ is representation-infinite by hypothesis.
Proposition~\ref{prop:tube_generation_1} implies that there exists a non-exceptional brick $Y \in \W \cap \V \cap \T$. 
By Lemma~\ref{lem:hom_ext}, each $X_i$ is regular, so $\sW(X_1,\ldots,X_j) \subseteq \nonhomo$. 
Moreover, each $X_i$ lies in $Y^\perp$, so $\sW(X_1,\ldots,X_j) \cap \T \subseteq Y^\perp \cap \T$ is representation-finite by Proposition~\ref{prop:wide_tube}. 
Since $\T$ was chosen arbitrarily, Proposition~\ref{prop:tubes_ortho} implies that $\sW(X_1,\ldots,X_j)$ is representation-finite.

Since $\sW(X_1,\ldots,X_j)$ is representation-finite, it is also exceptional by Lemma~\ref{lem:rep_finite_exceptional}. 
Thus $\W \cap \V = {}^\perp(X_1,\ldots,X_j)$ is exceptional by Corollary~\ref{cor:anti_isom_fg} and Lemma~\ref{lem:ortho_wide}.
\end{proof}

\begin{lemma}\label{lem:reduced_tube_infinite}
Let $\W$ be a wide subcategory.
If $\W$ contains all of the homogeneous tubes
and there exists a non-homogeneous tube $\T$ such that $\W \cap \T$ is representation-finite, then $\red(\W) \in \pewide\Lambda$.
\end{lemma}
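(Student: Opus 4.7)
The plan is to prove this by case analysis based on whether $\W \subseteq \reg$ and, if so, whether $\W \cap \nonhomo$ is representation-finite. These are exactly the cases that distinguish between the two branches of the definition of $\red(-)$, so they arise naturally.

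First I would handle the case $\W \not\subseteq \reg$. Here Lemma~\ref{lem:non_finite_reg} (which says any non-exceptional wide subcategory is contained in $\reg$) implies that $\W$ is exceptional, and by definition $\red(\W) = \W$. Then $\red(\W) \in \ewide\Lambda \subseteq \pewide\Lambda$ by Corollary~\ref{excep subs}.

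Second, I would handle the case $\W \subseteq \reg$ and $\W \cap \nonhomo$ representation-finite. Here $\red(\W) = \W \cap \nonhomo$ by definition, and this is a wide subcategory of $\mods\Lambda$ (wide subcategories of the wide subcategory $\nonhomo$ are wide in $\mods\Lambda$). Being representation-finite, it is exceptional by Lemma~\ref{lem:rep_finite_exceptional}, so $\red(\W) \in \ewide\Lambda \subseteq \pewide\Lambda$.

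The last case is $\W \subseteq \reg$ with $\W \cap \nonhomo$ representation-infinite, where $\red(\W) = \W$. Here I would appeal to the characterization of para-exceptional subcategories in Proposition~\ref{prop:reduced_characterization}\ref{rc 4}. If $\W$ happens to be exceptional we are done, so assume it is not. The hypothesis already gives (a) $\W \subseteq \reg$ and (b) $\W$ contains all homogeneous tubes. For (c), the given non-homogeneous tube $\T$ has $\W \cap \T$ representation-finite, which is a wide subcategory of the tube category $\T$; by Proposition~\ref{prop:wide_tube} (applied inside $\T$) it is exceptional. For (d), use Proposition~\ref{prop:tubes_ortho} to decompose $\W = (\W \cap \homo) \oplus (\W \cap \T_1) \oplus \cdots \oplus (\W \cap \T_m)$; since $\W \cap \nonhomo$ is representation-infinite, some $\W \cap \T_i$ is representation-infinite, and hence not exceptional by Proposition~\ref{prop:wide_tube} again.

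The main subtlety is just bookkeeping: making sure the hypotheses of Lemma~\ref{lem:rep_finite_exceptional} and Proposition~\ref{prop:wide_tube} apply in the intended ambient categories, and correctly matching the definition of $\red(-)$ to each case. No genuine obstacle appears — this lemma is essentially a translation between the definition of $\red(-)$, the tube decomposition of $\reg$, and the characterization \ref{rc 4} of $\pewide\Lambda$ already established.
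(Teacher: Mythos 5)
Your proof is correct and follows essentially the same route as the paper's: both arguments reduce to Proposition~\ref{prop:reduced_characterization} (condition~\ref{rc 4} in the non-exceptional case) via Lemma~\ref{lem:non_finite_reg}, Proposition~\ref{prop:wide_tube}, and the tube decomposition of Proposition~\ref{prop:tubes_ortho}. The only difference is cosmetic: you organize the cases by the two branches of the definition of $\red(-)$, whereas the paper first splits on whether $\W$ is exceptional and then on whether some $\W\cap\S$ is representation-infinite.
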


\begin{proof}
If $\W$ is exceptional then $\red(\W) = \W \in \pewide\Lambda$ by Lemma~\ref{lem:reduce_fg} and Proposition~\ref{prop:reduced_characterization}, so suppose $\W$ is not exceptional. Thus $\W \subseteq \reg$ by Lemma~\ref{lem:non_finite_reg}. 
If there exists a non-homogeneous tube $\S$ such that $\W \cap \S$ is representation-infinite, then since $\W \cap \S \subseteq \W \cap \nonhomo$,
also $\W \cap \nonhomo$ is representation-infinite.
Thus $\red(\W) = \W$, and $\W \in \pewide\Lambda$ by Proposition~\ref{prop:reduced_characterization}.
On the other hand, if $\W \cap \S$ is representation-finite for every non-homogeneous tube $\S$, then Proposition~\ref{prop:wide_tube} and Proposition~\ref{prop:tubes_ortho} imply that $\W \cap \nonhomo$ is representation-finite and exceptional. 
Then $\red(\W) = \W \cap \nonhomo \in \pewide\Lambda$ by Proposition~\ref{prop:reduced_characterization}.
\end{proof}

\begin{proposition}\label{prop:intersection}
Let $\W, \V \in \pewide\Lambda$. Then $\red(\W \cap \V) \in \pewide\Lambda$.
\end{proposition}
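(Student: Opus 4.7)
The plan is a case analysis governed by Proposition~\ref{prop:reduced_characterization}, splitting on whether $\W\cap\V$ is representation-finite and, when not, on whether some non-homogeneous tube $\T$ has $\W\cap\V\cap\T$ representation-finite. Both of the technical lemmas Lemma~\ref{lem:infinite_tubes} and Lemma~\ref{lem:reduced_tube_infinite} are tailored to exactly the two sub-branches that will arise, so the argument is essentially bookkeeping once the cases are laid out.

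First I would dispose of the case in which $\W\cap\V$ is representation-finite: Lemma~\ref{lem:rep_finite_exceptional} then makes $\W\cap\V$ exceptional, Lemma~\ref{lem:reduce_fg} yields $\red(\W\cap\V)=\W\cap\V$, and Corollary~\ref{excep subs} places it in $\pewide\Lambda$. So I may assume $\W\cap\V$ is representation-infinite. Neither $\W$ nor $\V$ is then representation-finite, and every representation-infinite member of $\pewide\Lambda$ contains $\homo$: if exceptional this is Lemma~\ref{lem:homogeneous_stable}, and otherwise it is built into Proposition~\ref{prop:reduced_characterization}.\ref{rc 4}. Hence $\homo\subseteq\W\cap\V$.

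Next I would split on whether there exists a non-homogeneous tube $\T$ with $\W\cap\V\cap\T$ representation-finite. If there is, Lemma~\ref{lem:reduced_tube_infinite} applies directly to $\W\cap\V$ and gives $\red(\W\cap\V)\in\pewide\Lambda$. If there is not, I claim both $\W$ and $\V$ must be exceptional; indeed, if (say) $\V$ were not exceptional, Proposition~\ref{prop:reduced_characterization}.\ref{rc 4} would produce a non-homogeneous tube $\S$ with $\V\cap\S$ exceptional, hence representation-finite by Proposition~\ref{prop:wide_tube}, and then $\W\cap\V\cap\S\subseteq\V\cap\S$ would be representation-finite, contradicting the assumption. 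With $\W,\V\in\ewide\Lambda$ and every $\W\cap\V\cap\T$ representation-infinite, Lemma~\ref{lem:infinite_tubes} gives $\W\cap\V\in\ewide\Lambda$, and we conclude exactly as in the first case.

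The only point I expect to require any care is ensuring that $\red$ does not accidentally drop $\W\cap\V$ out of $\pewide\Lambda$ when it actually shrinks the intersection; but that is precisely the scenario isolated by Lemma~\ref{lem:reduced_tube_infinite}, whose hypotheses $\homo\subseteq\W\cap\V$ and rep-finite intersection with some tube have been verified in the relevant branch. Beyond that, the proof is a clean case decomposition with no hidden surprises.
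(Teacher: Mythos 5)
Your proof is correct and follows essentially the same route as the paper's: both arguments reduce to the same two key lemmas (Lemma~\ref{lem:infinite_tubes} and Lemma~\ref{lem:reduced_tube_infinite}), with the only difference being that you organize the case split around properties of $\W\cap\V$ while the paper splits first on whether $\W$ and $\V$ themselves are representation-finite or exceptional. The bookkeeping in each branch (including deducing $\homo\subseteq\W\cap\V$ from Lemma~\ref{lem:homogeneous_stable} and Proposition~\ref{prop:reduced_characterization}) matches the paper's.
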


\begin{proof}
If at least one of $\W$ or $\V$ is representation-finite, then $\W \cap \V$ is representation-finite and thus exceptional by Lemma~\ref{lem:rep_finite_exceptional}. 
Thus $\red(\W \cap \V) = \W \cap \V \in \pewide\Lambda$ by Lemma~\ref{lem:reduce_fg} and Proposition~\ref{prop:reduced_characterization}.
Thus for the rest of the proof, we suppose that both $\W$ and $\V$ are representation-infinite. 
By Lemma~\ref{lem:homogeneous_stable} and Proposition~\ref{prop:reduced_characterization}, $\W \cap \V$ contains all of the homogeneous tubes. 
If one of the two (without loss of generality $\W$) is not exceptional, then Proposition~\ref{prop:reduced_characterization} says that $\W \cap \T$ is representation-finite for some non-homogeneous tube $\T$. 
Then $\W \cap \V \cap \T$ is also representation-finite, so $\red(\W \cap \V) \in \pewide\Lambda$ by Lemma~\ref{lem:reduced_tube_infinite}.

Finally, suppose that both $\W$ and $\V$ are exceptional. 
If $\W \cap \V \cap \T$ is representation-infinite for every non-homogeneous tube $\T$, then $\W \cap \V$ is exceptional by Lemma~\ref{lem:infinite_tubes}, and so $\red(\W \cap \V) = \W \cap \V\in \pewide\Lambda$ by Lemma~\ref{lem:reduce_fg} and Proposition~\ref{prop:reduced_characterization}. 
On the other hand, if there exists a non-homogeneous tube $\T$ such that $\W \cap \V \cap \T$ is representation-finite, then $\red(\W \cap \V) \in \pewide\Lambda$ by Lemmas~\ref{lem:homogeneous_stable} and~\ref{lem:reduced_tube_infinite}.
\end{proof}

\begin{proposition}\label{prop:meet_semilattice}
The poset $\pewide\Lambda$ is a meet semi-lattice with meet operation $\W \meet \V = \red(\W \cap \V)$.
\end{proposition}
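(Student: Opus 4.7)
The plan is to verify directly the two axioms that define a meet: the putative meet $\W \meet \V = \red(\W \cap \V)$ is an element of $\pewide\Lambda$, is a lower bound of $\set{\W,\V}$, and is the greatest such lower bound. The first of these is already handled: Proposition~\ref{prop:intersection} immediately gives $\red(\W\cap\V)\in\pewide\Lambda$. For the second, observe that $\red(\U)\subseteq\U$ for every wide subcategory~$\U$, directly from the definition of $\red$, so $\red(\W\cap\V)\subseteq\W\cap\V\subseteq\W$ and likewise $\red(\W\cap\V)\subseteq\V$.

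For the greatest-lower-bound property, suppose $\U\in\pewide\Lambda$ with $\U\subseteq\W$ and $\U\subseteq\V$, so that $\U\subseteq\W\cap\V$. The goal is to show $\U\subseteq\red(\W\cap\V)$. I would split into two cases according to which branch of the definition of $\red$ applies to $\W\cap\V$. In the easy case, $\red(\W\cap\V)=\W\cap\V$, and the inclusion $\U\subseteq\W\cap\V$ is the whole story. In the remaining case, $\W\cap\V\subseteq\reg$ and $\W\cap\V\cap\nonhomo$ is representation-finite, and $\red(\W\cap\V)=\W\cap\V\cap\nonhomo$; here I need to show $\U\subseteq\nonhomo$.

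To handle that subcase, I invoke Corollary~\ref{para reduced}, which gives $\U=\red(\U)$. Since $\U\subseteq\W\cap\V\subseteq\reg$, we have $\U\subseteq\reg$; and $\U\cap\nonhomo\subseteq\W\cap\V\cap\nonhomo$ is representation-finite. Hence the first branch of the definition of $\red$ applies to $\U$, yielding $\U=\red(\U)=\U\cap\nonhomo$, so $\U\subseteq\nonhomo$. Combining with $\U\subseteq\W\cap\V$, we get $\U\subseteq\W\cap\V\cap\nonhomo=\red(\W\cap\V)$, completing the greatest-lower-bound verification.

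The main obstacle, as the case analysis above highlights, is that $\red$ is not monotone on arbitrary wide subcategories, so we cannot argue by simply applying $\red$ to the inclusion $\U\subseteq\W\cap\V$. The saving grace is Corollary~\ref{para reduced}: $\U$ is already $\red$-fixed, and this fixedness propagates downward (because the branch of $\red$'s definition that applies to $\W\cap\V$ also applies to any para-exceptional subcategory contained in it). Once this monotonicity-on-$\pewide\Lambda$ issue is isolated and resolved by the above case split, the argument reduces to routine bookkeeping.
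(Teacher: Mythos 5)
Your proof is correct and follows essentially the same route as the paper's: both rely on Proposition~\ref{prop:intersection} for membership of $\red(\W\cap\V)$ in $\pewide\Lambda$ and on Corollary~\ref{para reduced} (applied to the lower bound $\U$) for the greatest-lower-bound property. The only cosmetic difference is that you split cases directly on which branch of the definition of $\red$ applies to $\W\cap\V$, whereas the paper splits on whether $\W\cap\V$ is exceptional (via Lemmas~\ref{lem:reduce_fg} and~\ref{lem:non_finite_reg}); the substance is identical.
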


\begin{proof}
Suppose $\W, \V \in \pewide\Lambda$. 
Then $\red(\W \cap \V) \in \pewide\Lambda$ by Proposition~\ref{prop:intersection}.
It remains to show that if $\U \subseteq \W \cap \V$ for some $\U \in \pewide\Lambda$, then $\U \subseteq \red(\W \cap \V)$.
If $\W \cap \V$ is exceptional, then $\red(\W \cap \V) = \W \cap \V$ by Lemma~\ref{lem:reduce_fg} and we are done. 
If $\W \cap \V$ is not exceptional, then $\U \subseteq \W \cap \V \subseteq \reg$ by Lemma~\ref{lem:non_finite_reg}. 
Noting that if $\W \cap \V \cap \nonhomo$ is representation-finite then $\U \cap \nonhomo$ is representation-finite, Corollary~\ref{para reduced} and the definition of $\red(-)$ then implies that $\U = \red(\U) \subseteq \red(\V \cap \W)$.
\end{proof}

\begin{proposition}\label{prop:anti_isom}
The map $\W \mapsto \aug(\W^\perp)$ is an anti-automorphism of the poset $\pewide\Lambda$, with inverse $\W \mapsto \aug({}^\perp\W)$.
\end{proposition}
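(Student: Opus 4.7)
The plan is to reduce everything to the description of para-exceptional subcategories in terms of maximal para-exceptional sequences, so that the anti-automorphism statement becomes an identity pairing ``left half'' with ``right half'' of such a sequence. Concretely, for any $\W \in \pewide\Lambda$, I would first write $\W = \overline{\sW(X_1,\ldots,X_k)}$ for a para-exceptional sequence $(X_1,\ldots,X_k)$ (which exists by Definition~\ref{def:para}), then invoke Proposition~\ref{prop:chain_system_union}.\ref{index_arbitrary_full} to extend this to a maximal para-exceptional sequence $(X_1,\ldots,X_k,Y_1,\ldots,Y_j)$. Setting $\W_X = \sW(X_1,\ldots,X_k)$ and $\W_Y = \sW(Y_1,\ldots,Y_j)$, Proposition~\ref{prop:maximal_characterization} (applied with indices $(0,k)$ and $(k, k+j)$) yields the two symmetric identities $\overline{\W_X} = \red(\W_Y^\perp)$ and $\overline{\W_Y} = \red({}^\perp\W_X)$.

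The key step is then to combine these with Lemma~\ref{lem:perp_formula}, applied to each of the two sequences separately. Applied to $(X_1,\ldots,X_k)$ it gives $\aug(\overline{\W_X}^\perp) = \red(\W_X^\perp)$ and $\aug({}^\perp\overline{\W_X}) = \red({}^\perp\W_X) = \overline{\W_Y}$; applied to $(Y_1,\ldots,Y_j)$ it gives $\aug(\overline{\W_Y}^\perp) = \red(\W_Y^\perp) = \overline{\W_X}$ and $\aug({}^\perp\overline{\W_Y}) = \red({}^\perp\W_Y)$. So, writing $\W = \overline{\W_X}$, we have simultaneously $\aug({}^\perp\W) = \overline{\W_Y} \in \pewide\Lambda$ and $\aug(\overline{\W_Y}^\perp) = \W$. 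The symmetric argument shows $\aug(\W^\perp) \in \pewide\Lambda$ and $\aug({}^\perp(\aug(\W^\perp))) = \W$. This simultaneously establishes that both maps land in $\pewide\Lambda$ and that they are mutually inverse.

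It remains to verify that $\W \mapsto \aug(\W^\perp)$ is order-reversing. Since $(-)^\perp$ is order-reversing on $\wide\Lambda$, it suffices to show that $\aug$ preserves inclusions when applied to wide subcategories of the form $\W^\perp$ for $\W \in \pewide\Lambda$. This is a short case analysis: if $\V^\perp \subseteq \W^\perp$ with $\V^\perp$ representation-infinite, then $\W^\perp$ is representation-infinite as well, so both augmentations adjoin $\homo$ via the same join, preserving the inclusion; the remaining cases (one or both representation-finite) are immediate from the definition of $\aug$. The main obstacle I anticipate is purely bookkeeping: making sure that the four identities from Lemma~\ref{lem:perp_formula} applied to the two halves of the maximal sequence are assembled in the correct order to give the two inverse identities, and that the extension to a maximal sequence really does produce a sequence whose ``complementary half'' is para-exceptional in the right way so that Lemma~\ref{lem:perp_formula} applies.
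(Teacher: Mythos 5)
Your proof is correct, but it takes a genuinely different route from the paper for the key step. The paper establishes $\aug({}^\perp(\aug(\W^\perp))) = \W$ by a direct case analysis on the generating para-exceptional sequence $(X_1,\ldots,X_k)$: when it is exceptional or fully non-exceptional, $\W$ is an exceptional subcategory and Corollary~\ref{cor:anti_isom_fg} together with Lemma~\ref{lem:exp_compute} finishes quickly; when it is non-exceptional but not fully non-exceptional, the paper computes explicitly with $\homo$ and $\nonhomo$ via Lemma~\ref{lem:overline_compute} and Proposition~\ref{prop:tube_anti_isom}. You instead extend $(X_1,\ldots,X_k)$ to a maximal para-exceptional sequence (Proposition~\ref{prop:chain_system_union}.\ref{index_arbitrary_full}), apply Proposition~\ref{prop:maximal_characterization} at the two extreme index pairs to identify $\overline{\W_X} = \red(\W_Y^\perp)$ and $\overline{\W_Y} = \red({}^\perp\W_X)$, and then feed both halves into Lemma~\ref{lem:perp_formula}; the two inverse identities and the membership claims fall out simultaneously. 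Your route effectively outsources the case analysis to Proposition~\ref{prop:maximal_characterization} (whose proof contains an analogous case split), which makes the present argument shorter and makes the left--right duality more transparent; the paper's version is more self-contained at this point and exposes the explicit formulas $\aug(\W^\perp) = \W^\perp \oplus \homo$, etc., which are reused nearby. Two small points you flagged are indeed fine: a contiguous subsequence of a para-exceptional sequence is para-exceptional (the defining conditions are pairwise), so Lemma~\ref{lem:perp_formula} applies to $(Y_1,\ldots,Y_j)$; and your order-reversal argument coincides with the paper's observation that $\aug(-)$ is order-preserving. No circularity arises, since all the results you cite precede Proposition~\ref{prop:anti_isom} in the paper and none of them depends on it.
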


\begin{proof}
Recall from Remark~\ref{ortho rem} that $(-)^\perp$ and ${}^\perp(-)$ are order-reversing. 
It is also clear from the definition that $\aug(-)$ is order-preserving. 
Thus both maps in the statement are order-reversing. 
Suppose $\W = \overline{\sW(X_1,\ldots,X_k)} \in \pewide\Lambda$ for some para-exceptional sequence $(X_1,\ldots,X_k)$.
The equivalence \mbox{\ref{rc 1}$\iff$\ref{rc 2}} in Proposition~\ref{prop:reduced_characterization} combines with Lemma~\ref{lem:perp_formula} and Lemma~\ref{lem:ortho_wide} to imply that $\aug(\W^\perp) = \red((X_1,\ldots,X_k)^\perp) \in \pewide\Lambda$. 
Symmetrically, $\aug({}^\perp\W) \in \pewide\Lambda$. 
By symmetry, to show that the maps are inverse to one another, it is enough to prove ${\aug({}^\perp(\aug(\W^\perp))) = \W}$.

Suppose $(X_1,\ldots,X_k)$ is either an exceptional sequence or is fully non-exceptional.
Then $\W$ is an exceptional subcategory by Lemma~\ref{lem:overline_compute} (part~\ref{oc fne}\ref{oc fne 2}~or~\ref{oc e}) and $\W^\perp$ and~${}^\perp \W$ are also exceptional by Corollary~\ref{cor:anti_isom_fg}. 
By Lemma~\ref{lem:exp_compute}, $\aug(\W^\perp) = \W^\perp$ and $\aug({}^\perp\W) = {}^\perp \W$. 
Thus by Corollary~\ref{cor:anti_isom_fg} and then Proposition~\ref{prop:reduced_characterization}, 
\[\aug({}^\perp(\aug(\W^\perp))) = \aug({}^\perp(\W^\perp)) = \aug(\W) = \W.\]

Suppose, on the other hand, that $(X_1,\ldots,X_k)$ is non-exceptional, but not fully non-exceptional. 
Let $\V = \sW(X_1,\ldots,X_k)$, so that $\W = \overline{\V} = \aug(\V) = \V \oplus \homo$ by Lemma~\ref{lem:overline_compute}.\ref{oc nfne}\ref{oc nfne 3}. 
Lemma~\ref{lem:overline_compute}.\ref{oc ne}\ref{oc ne 1} then implies that $\W^\perp = \V^\perp \cap \nonhomo$.
This category is representation-infinite by Lemma~\ref{lem:overline_compute}.\ref{oc nfne}\ref{oc nfne 2}, so Lemma~\ref{lem:join oplus} implies that 
\[\aug(\W^\perp) = \W^\perp \oplus \homo = (\V ^\perp \cap \nonhomo) \oplus \homo \subseteq \reg.\]
Thus $\aug({}^\perp(\aug(\W^\perp))) = \aug({}^\perp((\V^\perp \cap \nonhomo) \oplus \homo))$, and by Lemmas~\ref{lem:hom_ext} and~\ref{lem:tubes_ortho}, then Proposition~\ref{prop:tube_anti_isom}.\ref{tai 1}, then Lemma~\ref{lem:overline_compute}.\ref{oc nfne}\ref{oc nfne 3}, this equals
\[\aug({}^\perp(\V^\perp \cap \nonhomo) \cap \nonhomo) = \aug(\V) = \W.\qedhere\]
\end{proof}

Combining Proposition~\ref{prop:meet_semilattice} and Proposition~\ref{prop:anti_isom}, we obtain the following.

\begin{theorem}\label{thm:join}
Let $\Lambda$ be a connected tame hereditary algebra. Then the poset $\pewide\Lambda$ is a lattice with meet $\W \meet \V = \red(\W \cap \V)$ and join  
\begin{align*}\W \join \V &= \aug({}^\perp(\red(\aug(\W^\perp) \cap \aug(\V^\perp))))\\
&= \aug((\red(\aug({}^\perp\W) \cap \aug({}^\perp\V)))^\perp)\end{align*}
\end{theorem}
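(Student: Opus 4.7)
The plan is to leverage the two results established just before the theorem: Proposition~\ref{prop:meet_semilattice}, which gives us the meet, and Proposition~\ref{prop:anti_isom}, which provides a pair of mutually inverse anti-automorphisms of $\pewide\Lambda$. The combination of a meet-semilattice structure with an order-reversing automorphism is a standard device for producing a lattice: if $\phi : P \to P$ is an order-reversing bijection on a meet-semilattice $P$ whose inverse is also order-reversing, then $P$ is automatically a lattice with $x \join y = \phi^{-1}(\phi(x) \meet \phi(y))$, since $\phi$ converts meets of images into joins of pre-images.

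First I would simply record that the meet formula $\W \meet \V = \red(\W \cap \V)$ is Proposition~\ref{prop:meet_semilattice}. Then, letting $\phi(\W) = \aug(\W^\perp)$ and $\phi^{-1}(\W) = \aug({}^\perp \W)$ denote the mutually inverse anti-automorphisms from Proposition~\ref{prop:anti_isom}, I would observe that for any $\W,\V\in\pewide\Lambda$, the element
\[
\W\join\V := \phi^{-1}(\phi(\W)\meet\phi(\V)) = \aug\bigl({}^\perp\bigl(\red(\aug(\W^\perp)\cap \aug(\V^\perp))\bigr)\bigr)
\]
lies in $\pewide\Lambda$ (as the anti-automorphism and the meet both stay in $\pewide\Lambda$). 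To verify that this is indeed the join, I would check the universal property directly: $\W,\V\subseteq \W\join\V$ follows by applying $\phi^{-1}$ to the order-reversing inequalities $\phi(\W)\meet\phi(\V)\subseteq\phi(\W)$ and $\phi(\W)\meet\phi(\V)\subseteq\phi(\V)$, and upper-boundedness uses that if $\U\supseteq\W,\V$ then $\phi(\U)\subseteq\phi(\W)\meet\phi(\V)$, whence $\U = \phi^{-1}(\phi(\U)) \supseteq \phi^{-1}(\phi(\W)\meet\phi(\V)) = \W\join\V$.

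The second formula for the join comes from running the same argument with the roles of $\phi$ and $\phi^{-1}$ exchanged; that is, applying $\phi$ to the meet of the $\phi^{-1}$-images gives
\[
\W\join\V = \aug\bigl(\bigl(\red(\aug({}^\perp\W)\cap\aug({}^\perp\V))\bigr)^\perp\bigr).
\]
To see that the two formulas agree, one can either note that both expressions satisfy the same universal property (hence coincide), or derive one from the other using the identities $\aug({}^\perp(\aug(\X^\perp))) = \X$ and $\aug((\aug({}^\perp\X))^\perp) = \X$ that are implicit in the proof of Proposition~\ref{prop:anti_isom}.

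There is no serious obstacle: all the substantive work has already been carried out in Proposition~\ref{prop:intersection} (closure of $\pewide\Lambda$ under $\red(-\cap-)$), Proposition~\ref{prop:meet_semilattice} (meet), and Proposition~\ref{prop:anti_isom} (anti-automorphism). The only minor point to handle carefully is confirming that the two expressions for $\W\join\V$ really produce the same element of $\pewide\Lambda$, which reduces to the fact, already used in Proposition~\ref{prop:anti_isom}, that $\phi$ and $\phi^{-1}$ are genuinely inverse as self-maps of $\pewide\Lambda$.
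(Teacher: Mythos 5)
Your proposal is correct and is essentially the paper's own argument: the paper derives the theorem directly by combining Proposition~\ref{prop:meet_semilattice} (the meet) with Proposition~\ref{prop:anti_isom} (the mutually inverse anti-automorphisms), exactly as you do, with the two join formulas arising from conjugating the meet by $\W\mapsto\aug(\W^\perp)$ and by $\W\mapsto\aug({}^\perp\W)$ respectively. Your explicit verification of the universal property and of the agreement of the two formulas just spells out what the paper leaves implicit.
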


\begin{remark}\label{highlight}
We emphasize that Theorem~\ref{thm:join} is true without the assumption, which appeared in some earlier results, that $\Lambda$ has more than one non-homogeneous tube.
Since Theorem~\ref{thm:binary_chain_full} needs that assumption, we will make that assumption in the remaining results in this section.
\end{remark}

\begin{corollary}\label{lattice iff}
Suppose $\Lambda$ is a connected tame hereditary algebra.
Then the following are equivalent.
\begin{enumerate}[label=\rm(\roman*), ref=(\roman*)]
\item \label{garside_one_tube} $\P(\Cx)$ is a combinatorial Garside structure.
\item \label{lattice_one_tube} $\ewide\Lambda$ is a lattice.
\item \label{excep_one_tube} $\ewide\Lambda=\pewide\Lambda$.
\item \label{one_tube_condition} $\Lambda$ has fewer than two non-homogeneous tubes.
\end{enumerate}
\end{corollary}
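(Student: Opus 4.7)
The plan is to prove the four conditions equivalent via a short cycle of implications, with most of the work already packaged in earlier results in the section. By the definition of a combinatorial Garside structure, \ref{garside_one_tube}$\implies$\ref{lattice_one_tube}, and the converse is Corollary~\ref{cor:garside_if_lattice}. The equivalence \ref{excep_one_tube}$\iff$\ref{one_tube_condition} is Corollary~\ref{excep subs}, and the implication \ref{excep_one_tube}$\implies$\ref{lattice_one_tube} is immediate from Theorem~\ref{thm:join}, which gives the lattice property of $\pewide\Lambda$ unconditionally.

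The substantive remaining step is \ref{lattice_one_tube}$\implies$\ref{one_tube_condition}, which I would argue contrapositively: assuming $\Lambda$ has at least two non-homogeneous tubes, I would exhibit a failure of the lattice property in $\ewide\Lambda$. By Corollary~\ref{excep subs} the inclusion $\ewide\Lambda\subsetneq\pewide\Lambda$ is strict, and Proposition~\ref{prop:reduced_characterization}.\ref{rc 4} describes any $\U\in\pewide\Lambda\setminus\ewide\Lambda$ explicitly: $\U$ is non-exceptional, contained in $\reg$, contains $\homo$, and satisfies $\U\cap\T_i$ exceptional for some non-homogeneous tube while $\U\cap\T_j$ is non-exceptional for another. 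The strategy is then to find exceptional subcategories $\W_1,\W_2\in\ewide\Lambda$ whose $\pewide\Lambda$-join equals such a non-exceptional $\U$ (the join exists by Theorem~\ref{thm:join}) and to exhibit two distinct minimal exceptional subcategories containing both $\W_1$ and $\W_2$. The existence of two incomparable minimal upper bounds of $\{\W_1,\W_2\}$ in $\ewide\Lambda$ directly contradicts the lattice property.

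The main obstacle is the explicit construction of the witness. I would translate the problem using Corollary~\ref{McSul to pewide} and Theorem~\ref{thm:NC} to identify $\ewide\Lambda\subseteq\pewide\Lambda$ with $[1,c]_T\subseteq[1,c]_{T\cup F}$ as induced subposets. Under this identification, any non-exceptional $\U$ corresponds to a product involving a translation in $[1,c]_T$, and the several length-two reflection factorizations of such a translation, available precisely when $\Upsilon^c$ is reducible (by the factor structure in Proposition~\ref{if a fact} and the binary relations in Theorem~\ref{Ccplus chain sys}), produce the required distinct minimal exceptional upper bounds via Theorem~\ref{thm:binary_chain_full}.\ref{Cpx labels cov}. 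A shorter but less intrinsic alternative would be, once the isomorphism $\ewide\Lambda\cong[1,c]_T$ is in place, to quote directly the theorem of Digne~\cite{Digne1,Digne2} and McCammond~\cite{McFailure} that $[1,c]_T$ fails the lattice property exactly when the horizontal root system has more than one component.
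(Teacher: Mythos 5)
Your handling of \ref{garside_one_tube}$\iff$\ref{lattice_one_tube}, \ref{excep_one_tube}$\iff$\ref{one_tube_condition}, and \ref{excep_one_tube}$\implies$\ref{lattice_one_tube} coincides with the paper's proof. The issue is the remaining implication \ref{lattice_one_tube}$\implies$\ref{one_tube_condition}, where your primary route is only a strategy, not a proof. You say you would find $\W_1,\W_2\in\ewide\Lambda$ whose join in $\pewide\Lambda$ is non-exceptional and then ``exhibit two distinct minimal exceptional upper bounds,'' but you never produce the witnesses, and you acknowledge this (``the main obstacle is the explicit construction of the witness''). Note also that the non-exceptionality of the $\pewide\Lambda$-join does not by itself preclude a least upper bound inside $\ewide\Lambda$; the existence of two \emph{incomparable minimal} exceptional upper bounds is exactly the hard content, and deriving it from the multiple length-two reflection factorizations of a translation would require a genuine argument (and, routed through Corollary~\ref{McSul to pewide} and Theorem~\ref{Ccplus chain sys}, it leans on the heaviest machinery in the paper for what should be a local computation). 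Your fallback --- quoting Digne and McCammond via the isomorphism $\ewide\Lambda\cong[1,c]_T$ --- is logically valid, but it defeats the stated purpose of the corollary, which the paper advertises precisely as a \emph{representation-theoretic} version of that theorem.

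The paper's own argument for this implication is short, self-contained, and entirely representation-theoretic; no translation or noncrossing-partition combinatorics is needed. Let $X_1,\ldots,X_r$ be the quasi-simples of one non-homogeneous tube $\T_1$ and $Y_1,\ldots,Y_s$ the quasi-simples of the remaining non-homogeneous tube(s). Each $\sW(X_i)$ is exceptional (Propositions~\ref{prop:bricks_in_tube} and~\ref{prop:wide_fg}), each $Y_j^\perp$ is an exceptional upper bound for all of them (Lemma~\ref{lem:tubes_ortho}), and a join $\W$ of the $\sW(X_i)$ in $\ewide\Lambda$ would have to satisfy $\T_1=\sW(X_1,\ldots,X_r)\subseteq\W\subseteq\bigcap_j Y_j^\perp=\T_1\oplus\homo$. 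No exceptional subcategory sits in that range (Propositions~\ref{prop:wide_tube} and~\ref{prop:tubes_ortho}), so the join fails to exist. If you want to complete your own write-up, either carry out an explicit two-element witness of this flavor or adopt this squeeze argument; as it stands, the central implication is not established by your proposal without falling back on the external Coxeter-theoretic citation.
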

\begin{proof}
The equivalence \ref{garside_one_tube}$\iff$\ref{lattice_one_tube} follows from Corollary~\ref{cor:garside_if_lattice} and the definition of a combinatorial Garside structure. The equivalence \ref{excep_one_tube}$\iff$\ref{one_tube_condition} is part of Corollary~\ref{excep subs}. The implication \ref{excep_one_tube}$\implies$\ref{lattice_one_tube} follows from Theorem~\ref{thm:join}. We finish the proof by showing the contrapositive of \ref{lattice_one_tube}$\implies$\ref{one_tube_condition}.

Suppose that $\Lambda$ has more than one non-homogeneous tube, and denote them by $\T_1,\ldots,\T_m$. Let $X_1,\ldots,X_r$ be the quasi-simples in $\T_1$ and let $Y_1,\ldots,Y_s$ be the quasi-simples in the other non-homogeneous tube(s). By Propositions~\ref{prop:bricks_in_tube} and~\ref{prop:wide_fg}, we have that $\sW(X_i)$ and $Y_j^\perp$ are both exceptional for any $i$ and $j$. Moreover, $\sW(X_i) \subseteq Y_j^\perp$ by Lemma~\ref{lem:tubes_ortho}. Now suppose for a contradiction that $\sW(X_1),\ldots,\sW(X_r)$ have a join $\W$ in $\ewide \Lambda$. Then $\T_1 = \sW(X_1,\ldots,X_r) \subseteq \W$. Similarly, by Lemmas~\ref{lem:hom_ext} and~\ref{lem:tubes_ortho}, and~\ref{lem:ortho_wide}, $\W \subseteq \bigcap_{j = 1}^s Y_j^\perp =  \sW(Y_1,\ldots,Y_s)^\perp = \left(\bigoplus_{k = 2}^m \T_k\right)^\perp = \T_1 \oplus \homo.$ This is a contradiction, since there is no exceptional subcategory which both contains $\T_1$ and is contained in $\T_1 \oplus \homo$. (See Propositions~\ref{prop:wide_tube} and~\ref{prop:tubes_ortho}.)
\end{proof}

Recall from Section~\ref{chain sys sec} the notation $\C|_{[x,y]}$ for $x\le y$ in $\P(\C)$.
The following lemma can be seen as an extension of Theorem~\ref{thm:binary_chain_full}.\ref{Cpx labels cov} and an analog of Lemma~\ref{lem:interval_labels_excep}.
We use the notation $\W\mapsto P_\W$ for the inverse of the isomorphism in Theorem~\ref{thm:binary_chain_full}.\ref{Pc to pewide}.

\begin{lemma}\label{lem:interval_labels}
Suppose $\W\subseteq\V$ in $\pewide\Lambda$.
Then $\Cpx|_{[P_\W,P_\V]}$ is the set of para-exceptional sequences such that $\overline{\sW(X_1,\ldots,X_k)} = \red(\aug({}^\perp\W)\cap\V)$.
\end{lemma}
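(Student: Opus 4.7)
The plan is to follow the proof of Lemma~\ref{lem:interval_labels_excep} in the para-exceptional setting, using Proposition~\ref{prop:maximal_characterization} and Theorem~\ref{thm:binary_chain_full} in place of their exceptional counterparts, together with the formulas of Lemmas~\ref{lem:double_red} and~\ref{lem:perp_formula}.

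For the forward direction, given $(X_1,\ldots,X_k)\in\Cpx|_{[P_\W,P_\V]}$, I would first use Theorem~\ref{thm:binary_chain_full} to extend to a maximal para-exceptional sequence $(Y_1,\ldots,Y_i,X_1,\ldots,X_k,Z_1,\ldots,Z_j)\in\Cpx$ with $\overline{\sW(Y_1,\ldots,Y_i)} = \W$ and $\overline{\sW(Y_1,\ldots,Y_i,X_1,\ldots,X_k)} = \V$. Applying Proposition~\ref{prop:maximal_characterization} twice (with middle sequences $(X_1,\ldots,X_k)$ and then $(Y_1,\ldots,Y_i,X_1,\ldots,X_k)$) yields
\[\overline{\sW(X_1,\ldots,X_k)} = \red({}^\perp(Y_1,\ldots,Y_i)\cap (Z_1,\ldots,Z_j)^\perp)\quad\text{and}\quad \V = \red((Z_1,\ldots,Z_j)^\perp).\]
Rewriting the first identity via Lemma~\ref{lem:double_red} and then using Lemma~\ref{lem:perp_formula} (which gives $\red({}^\perp\sW(Y_1,\ldots,Y_i)) = \aug({}^\perp\W)$) together with the identity $\V = \red((Z_1,\ldots,Z_j)^\perp)$ produces the desired equality $\overline{\sW(X_1,\ldots,X_k)} = \red(\aug({}^\perp\W)\cap\V)$.

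For the converse, I would choose a para-exceptional sequence $(Y_1,\ldots,Y_i)$ with $\overline{\sW(Y_1,\ldots,Y_i)} = \W$ via Theorem~\ref{thm:binary_chain_full}.\ref{Pc to pewide} and a para-exceptional sequence $(Z_1,\ldots,Z_j)$ with $\V = \red((Z_1,\ldots,Z_j)^\perp)$ via Proposition~\ref{prop:reduced_characterization}.\ref{rc 2}. The goal is to verify that the concatenation $(Y_1,\ldots,Y_i,X_1,\ldots,X_k,Z_1,\ldots,Z_j)$ is a maximal para-exceptional sequence whose initial segments represent $\W$ and $\V$. The inclusions $\sW(Y_1,\ldots,Y_i)\subseteq\W\subseteq\V\subseteq(Z_1,\ldots,Z_j)^\perp$ and $\sW(X_1,\ldots,X_k)\subseteq\overline{\sW(X_1,\ldots,X_k)}\subseteq\V$ (using Lemma~\ref{lem:closure}.\ref{closure 1} and the hypothesis) immediately handle the Hom- and Ext-vanishing conditions involving the $Z$'s. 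The remaining conditions, that each $X_q\in{}^\perp\sW(Y_1,\ldots,Y_i)$, constitute the main technical obstacle.

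I expect to dispatch this obstacle as follows. By hypothesis and Lemma~\ref{lem:closure}.\ref{closure 1}, each $X_q\in\overline{\sW(X_1,\ldots,X_k)}\subseteq\aug({}^\perp\W)$, so it suffices to show that a non-homogeneous brick lying in $\aug({}^\perp\W)$ must lie in ${}^\perp\W\subseteq{}^\perp\sW(Y_1,\ldots,Y_i)$. I would prove this claim by a case analysis on $(Y_1,\ldots,Y_i)$ via Lemma~\ref{lem:overline_compute}. When $(Y_1,\ldots,Y_i)$ is exceptional or fully non-exceptional, $\W$ is an exceptional subcategory (by Lemma~\ref{lem:overline_compute}.\ref{oc e} or~\ref{oc fne}\ref{oc fne 3}), so ${}^\perp\W$ is also exceptional by Corollary~\ref{cor:anti_isom_fg}; then either ${}^\perp\W$ is representation-finite, in which case $\aug({}^\perp\W)={}^\perp\W$ by definition, or it is representation-infinite and thus contains $\homo$ by Lemma~\ref{lem:homogeneous_stable}, whence again $\aug({}^\perp\W)={}^\perp\W$. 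When $(Y_1,\ldots,Y_i)$ is non-exceptional but not fully so, Lemma~\ref{lem:overline_compute}.\ref{oc nfne}\ref{oc nfne 3} together with Lemma~\ref{lem:overline_compute}.\ref{oc ne}\ref{oc ne 2} shows ${}^\perp\W\subseteq\nonhomo$, and Lemma~\ref{lem:overline_compute}.\ref{oc nfne}\ref{oc nfne 2} with Lemma~\ref{lem:join oplus} gives $\aug({}^\perp\W)={}^\perp\W\oplus\homo$ as an orthogonal decomposition; an indecomposable non-homogeneous brick in this sum must lie in the ${}^\perp\W$ summand. Once the concatenated sequence is known to be para-exceptional, maximality follows from Proposition~\ref{prop:maximal_characterization} applied with middle $(X_1,\ldots,X_k)$, since the forward-direction computation shows $\red({}^\perp(Y_1,\ldots,Y_i)\cap(Z_1,\ldots,Z_j)^\perp) = \red(\aug({}^\perp\W)\cap\V) = \overline{\sW(X_1,\ldots,X_k)}$; a second application with middle $(Y_1,\ldots,Y_i,X_1,\ldots,X_k)$ then yields $\overline{\sW(Y_1,\ldots,Y_i,X_1,\ldots,X_k)} = \red((Z_1,\ldots,Z_j)^\perp) = \V$, completing the verification.
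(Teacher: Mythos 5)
Your proposal is correct and follows the same overall skeleton as the paper's proof: the forward direction via two applications of Proposition~\ref{prop:maximal_characterization} combined with Lemmas~\ref{lem:double_red} and~\ref{lem:perp_formula}, and the converse by choosing $(Y_1,\ldots,Y_i)$ and $(Z_1,\ldots,Z_j)$ representing $\W$ and $\V$, verifying that the concatenation is a maximal para-exceptional sequence, and reading off $\V$ with a final application of Proposition~\ref{prop:maximal_characterization}. The one place you diverge is the containment $\sW(X_1,\ldots,X_k)\subseteq{}^\perp(Y_1,\ldots,Y_i)$: your case analysis via Lemma~\ref{lem:overline_compute} (showing that a non-homogeneous brick in $\aug({}^\perp\W)$ lies in ${}^\perp\W$) is valid, but it is unnecessary work. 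Lemma~\ref{lem:perp_formula}, which you already invoke elsewhere, gives $\aug({}^\perp\W)=\red({}^\perp(Y_1,\ldots,Y_i))\subseteq{}^\perp(Y_1,\ldots,Y_i)$ directly, so the hypothesis $\sW(X_1,\ldots,X_k)\subseteq\aug({}^\perp\W)$ already yields the containment in one line; this is the route the paper takes. Your detour does have the minor virtue of making explicit why the homogeneous part of $\aug({}^\perp\W)$ cannot interfere, but the net effect is the same and the shorter argument is preferable.
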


\begin{proof}
Suppose first that $(X_1,\ldots,X_k) \in \Cpx|_{[P_\W,P_\V]}$. 
Then, by Theorem~\ref{thm:binary_chain_full}, $(X_1,\ldots,X_k)$ is a para-exceptional sequence and  there exists a maximal para-exceptional sequence $(Y_1,\ldots,Y_i,X_1,\ldots,X_k,Z_1,\ldots,Z_j)$ with ${\overline{\sW(Y_1,\ldots,Y_i)} = \W}$ and $\overline{\sW(Y_1,\ldots,Y_i,X_1,\ldots,X_k)} = \V$. 
Arguing as in the proof of Theorem~\ref{thm:binary_chain_full}.\ref{Cpx labels cov}, we have
$\overline{\sW(X_1,\ldots,X_k)} = \red(\V \cap \aug({}^\perp\W))$, as desired.

Conversely, suppose that $(X_1,\ldots,X_k)$ is a para-exceptional sequence such that ${\overline{\sW(X_1,\ldots,X_k)} = \red(\aug({}^\perp\W)\cap\V)}$. 
Since $\W,\V\in\pewide\Lambda$, there is a para-exceptional sequence $(Y_1,\ldots,Y_i)$ with $\W = \overline{\sW(Y_1,\ldots,Y_i)}$ and Proposition~\ref{prop:reduced_characterization} says there is a para-exceptional sequence $(Z_1,\ldots,Z_j)$ with $\V = \red((Z_1,\ldots,Z_j)^\perp)$. 
By Lemma~\ref{lem:closure}.\ref{closure 1} and the definition of $\red(-)$,
\begin{equation}\label{eqn:restriction}\sW(X_1,\ldots,X_k) \subseteq \overline{\sW(X_1,\ldots,X_k)} = \red(\V \cap \aug({}^\perp\W)) \subseteq \V \cap \aug({}^\perp\W).\end{equation}
Again by Lemma~\ref{lem:closure}.\ref{closure 1}, we have $\sW(Y_1,\ldots,Y_i) \subseteq \overline{\sW(Y_1,\ldots,Y_i)} = \W \subseteq \V$.
Also $\V = \red((Z_1,\ldots,Z_j)^\perp) \subseteq (Z_1,\ldots,Z_j)^\perp$, by the definition of $\red$.
Furthermore, combining~\eqref{eqn:restriction} with Lemma~\ref{lem:perp_formula}, we obtain
\[\sW(X_1,\ldots,X_k) \subseteq \aug({}^\perp\W) = \red({}^\perp(Y_1,\ldots,Y_i)) \subseteq {}^\perp(Y_1,\ldots,Y_i).\]
Combining these containments, we have $\set{X_1,\ldots,X_k,Y_1,\ldots,Y_i}\subseteq(Z_1,\ldots,Z_j)^\perp$ and $\set{X_1,\ldots,X_k}\subseteq{}^\perp(Y_1,\ldots,Z_i)$, so $(Y_1,\ldots,Y_i,X_1,\ldots,X_k,Z_1,\ldots,Z_j)$ is a para-exceptional sequence. 
Furthermore, by Lemmas~\ref{lem:perp_formula} and~\ref{lem:double_red}, 
\begin{align*}
\overline{\sW(X_1,\ldots,X_k)} &= \red(\aug({}^{\perp}\W)\cap \V) \\
&= \red(\red({}^{\perp}(Y_1,\ldots,Y_i))\cap \red ((Z_1,\ldots,Z_j)^{\perp}))\\
&= \red({}^{\perp}(Y_1,\ldots,Y_i)\cap (Z_1,\ldots,Z_j)^{\perp}),
\end{align*}
so $(Y_1,\ldots,Y_i,X_1,\ldots,X_k,Z_1,\ldots,Z_j)$ is maximal by Proposition~\ref{prop:maximal_characterization}. 
Applying Proposition~\ref{prop:maximal_characterization} once more, we conclude that 
\[\V = \red((Z_1,\ldots,Z_j)^\perp) = \overline{\sW(Y_1,\ldots,Y_i,X_1,\ldots,X_k)}.\]
Therefore $(X_1,\ldots,X_k)\in\Cpx|_{[P_\W,P_\V]}$.
\end{proof}

\begin{lemma}\label{Cpx weak graded}
Suppose that $\Lambda$ contains $m > 0$ non-homgeneous tubes. Assign each exceptional module the weight $1$ and assign each element of $\nrig$ the weight $2/m$.
The sum of these weights, over any maximal para-exceptional sequence, is~$n$.
\end{lemma}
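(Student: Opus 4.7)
The plan is to split into two cases depending on whether the maximal para-exceptional sequence contains any non-exceptional modules, and then just count. By Proposition~\ref{prop:chain_system_union}.\ref{Cpx Cx} and Proposition~\ref{prop:chain_system_union}.\ref{Cpx Crpx}, a maximal para-exceptional sequence is either a maximal exceptional sequence or a maximal regular para-exceptional sequence (the two possibilities being mutually exclusive when $m>0$, since a maximal regular para-exceptional sequence is fully non-exceptional).

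In the first case, where $(X_1,\ldots,X_k)$ is a maximal exceptional sequence, Proposition~\ref{prop:complete_ex_maximal} gives $k = n$, and every letter has weight $1$, so the total weight is $n$.

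In the second case, where $(X_1,\ldots,X_k)$ is a maximal regular para-exceptional sequence, I would invoke Proposition~\ref{prop:complete_generates_tube} to conclude $k = n - 2 + m$, and invoke Corollary~\ref{cor:maximal_non_exceptional} to conclude that $(X_1,\ldots,X_k)$ is fully non-exceptional. By Definition~\ref{def:full}, the sequence contains exactly one non-exceptional module from each of the $m$ non-homogeneous tubes, so exactly $m$ of the letters lie in $\nrig$ and the remaining $n-2$ letters are exceptional. Summing weights gives $(n-2)\cdot 1 + m \cdot (2/m) = n$, as required. The only potential subtlety is keeping the two assertions of Proposition~\ref{prop:chain_system_union} straight and making sure the fully-non-exceptional count exactly accounts for the $\nrig$-letters; both are straightforward.
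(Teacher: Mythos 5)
Your proof is correct and follows essentially the same route as the paper: the same case split via Proposition~\ref{prop:chain_system_union} into maximal exceptional sequences (length $n$ by Proposition~\ref{prop:complete_ex_maximal}) versus maximal regular para-exceptional sequences (length $n-2+m$ by Proposition~\ref{prop:complete_generates_tube}, fully non-exceptional, hence exactly $m$ letters of weight $2/m$). The only cosmetic difference is that you cite Corollary~\ref{cor:maximal_non_exceptional} directly where the paper extracts the fully-non-exceptional conclusion from Proposition~\ref{prop:chain_system_union}.\ref{Cpx Crpx}.
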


\begin{proof}
Let $(X_1,\ldots,X_k)$ be a maximal para-exceptional sequence. If $(X_1,\ldots,X_k)$ is exceptional, then it is also maximal as an exceptional sequence by Proposition~\ref{prop:chain_system_union}.\ref{Cpx Cx}. Proposition~\ref{prop:complete_ex_maximal} then implies that the sum of the weights in $n$.

If $(X_1,\ldots,X_k)$ is not an exceptional sequence, then it is a maximal regular para-exceptional sequence, and is fully non-exceptional, by Proposition~\ref{prop:chain_system_union}.\ref{Cpx Crpx}.
Therefore, the sequence contains exactly $m$ non-exceptional modules, contributing $m\cdot\frac2m$ to the weight.
Proposition~\ref{prop:complete_generates_tube} says that the length of the sequence is $n-2+m$, so the exceptional modules in the sequence contribute $n-2$ to the weight.
\end{proof}

We can now prove the main result of this section.

\begin{proof}[Proof of Theorem~\ref{thm:garside}]  
Lemma~\ref{Cpx weak graded} says that $\P(\Cpx)$ is weighted-graded.
By Proposition~\ref{chain sys poset} and Theorem~\ref{thm:binary_chain_full}.\ref{Cpx bcs}, $\P(\Cpx)$ has finite height, a unique minimal element, and a unique maximal element. 
Proposition~\ref{prop:chain_system_union}.\ref{index_arbitrary_full} implies that $\P(\Cpx)$ is balanced. Suppose $P_\W\le P_\V$ and $P_{\W'}\le P_{\V'}$ are order relations in $\P(\Cpx)$ such that ${\Cpx|_{[P_\W,P_\V]} \cap \Cpx|_{[P_{\W'},P_{\V'}]} \neq \emptyset}$.
Lemma~\ref{lem:interval_labels} implies that ${\red(\V \cap \aug({}^\perp\W))} = \red(\V' \cap \aug({}^\perp\W'))$, but then applying Lemma~\ref{lem:interval_labels} again, we see that $\Cpx|_{[P_\W,P_\V]} = \Cpx|_{[P_{\W'},P_{\V'}]}$.
We see that $\Cpx$ has the restriction property, so $\P(\Cpx)$ is group-like by Proposition~\ref{res prop}.
Theorem~\ref{thm:binary_chain_full}.\ref{Pc to pewide} and Theorem~\ref{thm:join} combine to say that $\P(\Cpx)$ is a lattice.
\end{proof}

\section{Related constructions}\label{sec:other}

We conclude this paper with a brief discussion of other constructions from the literature that are related to the
lattice of para-exceptional  subcategories.

We first recall that
exceptional subcategories are examples of the more general \newword{semi-stable wide subcategories}. In \cite[Proposition~9.12]{IPT}, Ingalls, Paquette, and Thomas show that, for $\Lambda$ a path algebra over a Euclidean quiver,
a wide subcategory~$\W$ is semi-stable if either (i) $\W$ is exceptional, or (ii) $\W \cap \T$ is representation-infinite for every tube $\T$. They then close the set of semi-stable wide subcategories under intersections to form a lattice. Comparing Proposition~\ref{prop:reduced_characterization} with \cite[Proposition~9.12 and Theorem~10.8]{IPT}, we see that $\pewide\Lambda$ is a proper subposet of the lattice constructed in \cite{IPT}. 
Moreover, one can check that, outside of rank $2$, $\pewide\Lambda$ is not closed under intersections and thus not a sublattice of the lattice constructed in \cite{IPT}. 

    We now briefly explain how one could construct a poset isomorphic to $\pewide\Lambda$ (for $\Lambda$ any connected tame hereditary algebra) by making different choices about when to include the homogeneous tubes. Define an operator $\eta: \pewide\Lambda \rightarrow \ewide\Lambda \cup \wide(\nonhomo)\subseteq \wide \Lambda$ by
    \[\eta(\W) = \begin{cases} \W  & \text{if $\W \in \ewide \Lambda$}\\\W\cap \nonhomo & \text{otherwise}.\end{cases}\]
    Explicitly, for $(X_1,\ldots,X_k)$ a para-exceptional sequence, we have
    \[\eta\left(\overline{\sW(X_1,\ldots,X_k)}\right) = \begin{cases} \overline{\sW(X_1,\ldots,X_k)} & \text{if $(X_1,\ldots,X_k)$ is fully non-exceptional}\\\sW(X_1,\ldots,X_k) & \text{otherwise}
    \end{cases}\]
    by Assertions \ref{oc fne}\ref{oc fne 3}, \ref{oc nfne}\ref{oc nfne 3}, and \ref{oc e} of Lemma~\ref{lem:overline_compute}.
    Now by Propositions~\ref{prop:wide_tube} and~\ref{prop:tubes_ortho}, we see that the complement of the image of $\eta$ consists of those wide subcategories $\W \subseteq \nonhomo$ for which $\W \cap \T$ is representation-infinite for every non-homogeneous tube~$\T$. In particular, $\eta$ is not surjective as long as there is at least one non-homogeneous tube. On the other hand, it follows from Proposition~\ref{prop:reduced_characterization} and Lemma~\ref{lem:homogeneous_stable} that $\eta$ is an order-isomorphism onto its image.

Suppose from now on that $\Lambda$ is a path algebra of type $\widetilde{A}$.  Then all para-exceptional modules are also \newword{string modules}. The poset of wide subcategories generated by string modules is studied in the recent preprint \cite{page}. (To be precise, the cited preprint studies those \newword{thick subcategories} of the bounded derived category of a \newword{gentle algebra} which are generated by string objects. Path algebras of type $\widetilde{A}$ arise as a special case. Moreover, since path algebras are hereditary, there is an order-preserving isomorphism between the wide subcategories of $\mods\Lambda$ and the thick subcategories of the bounded derived category, see \cite{bruning}.) The map~$\eta$ defined above thus allows us to identify $\pewide \Lambda$ with a proper subposet of the poset considered in \cite{page} (for $\Lambda$ a path algebra of type $\widetilde{A}$).

The above paragraph raises another interesting consideration. 
For path algebras of type $\widetilde{A}$, one could also consider $\mathcal{S}$-brick sequences for $\mathcal{S}$ the set of bricks which are string modules. There are always exactly two tubes that contain string modules, which we denote $\mathcal{S}_1$ and $\mathcal{S}_2$. If the algebra has two non-homogeneous tubes, they are $\mathcal{S}_1$ and $\mathcal{S}_2$, and so $\mathcal{S}$-brick sequences are precisely para-exceptional sequences. Thus suppose there are fewer than two non-homogeneous tubes. (There cannot be three non-homogeneous tubes in type $\widetilde{A}$). One can then repeat the considerations of Sections~\ref{sec:rep_theory_tubes}--\ref{sec:lattice}, replacing $\nrig$ with $\mathcal{S} \setminus \excep$, replacing $\nonhomo$ with $\mathcal{S}_1 \oplus \mathcal{S}_2$, and replacing $\homo$ with the additive closure of the other (homogeneous) tubes. The result is a binary chain system $\C_{\mathrm{string}}$ whose corresponding labeled poset can be realized as a subposet of $\wide \Lambda$ which properly contains $\ewide \Lambda$. 
In this case, both $\ewide\Lambda$ (by Corollary~\ref{cor:garside_if_lattice} and Theorem~\ref{thm:exceptional_binary_chain}.\ref{Pc to fwide}) and the larger poset (by analogs of Theorem~\ref{thm:garside} and Theorem~\ref{thm:binary_chain_full}.\ref{Pc to pewide}) are combinatorial Garside structures. 
The case where there are fewer than two non-homogeneous tubes also has meaning in the combinatorial model of noncrossing partitions of an annulus, described in Section~\ref{type sec}: It is the case where there is exactly one inner point or exactly one outer point.
In this case, the lattice of noncrossing partitions of the annulus models the larger poset obtained from $\S$-brick sequences, so \cite[Theorem~3.15]{affncA} provides a different proof that this larger poset is a graded lattice.

\end{document}